\DeclarePairedDelimiterX\set[1]\lbrace\rbrace{#1}
\newtheorem{thm}{Theorem}[section]
\newtheorem{cor}[thm]{Corollary}
\newtheorem{prop}[thm]{Proposition}
\newtheorem{lem}[thm]{Lemma}
\theoremstyle{definition}
\newtheorem{defn}[thm]{Definition}
\theoremstyle{remark}
\newtheorem{rem}[thm]{Remark}
\newtheorem{rems}[thm]{Remarks}
\definecolor{energy}{RGB}{114,0,172}
\definecolor{freq}{RGB}{45,177,93}
\definecolor{spin}{RGB}{251,0,29}
\definecolor{signal}{RGB}{203,23,206}
\definecolor{circle}{RGB}{217,86,16}
\definecolor{average}{RGB}{203,23,206}
\definecolor{kb}{rgb}   {.6, 0, 0}
\newcommand{\llangle}{\langle\!\langle}
\newcommand{\rrangle}{\rangle\!\rangle}
\colorlet{shadecolor}{gray!20}
\pgfplotsset{compat=1.9}
\let\c@equation\c@thm
\numberwithin{equation}{section}
\author{Khalid Baadi}
\address{Université Paris-Saclay, CNRS, Laboratoire de Mathématiques d’Orsay, 91405 Orsay, France}
\email{khalid.baadi@universite-paris-saclay.fr}
\keywords{Degenerate parabolic equations, Muckenhoupt weights, Cauchy problems, fundamental solution, Gaussian bounds, heat kernel, mixed Lebesgue spaces, mixed Lorentz spaces.}
\date{December 3, 2025}
\subjclass[2010]{Primary: 35K65, 35K10, 35A08, 35K15 Secondary: 35K40, 26A33.}
\title[Degenerate parabolic equations with unbounded lower-order terms]{On well-posedness for second-order degenerate parabolic equations with unbounded lower-order terms}
\DeclareMathOperator*{\supess}{ess\,sup}
\begin{document}

\begin{abstract}
In this paper, we establish the well-posedness of Cauchy problems for weak solutions to second-order degenerate parabolic equations with a non-smooth, time-dependent degenerate elliptic part that includes both bounded and unbounded lower-order terms. The unbounded lower-order terms are allowed to lie in mixed time-space Lebesgue or even Lorentz spaces. Our notion of weak solutions is formulated under minimal assumptions. We prove the existence and uniqueness of a fundamental solution, which coincides with the associated evolution family for the homogeneous problem (\textit{i.e.}, with zero source term) and provides a representation formula for all weak solutions. We also establish $L^2$ off-diagonal estimates for the fundamental solution and derive Gaussian upper bounds under the weak assumption of Moser's $L^2$–$L^\infty$ estimates for weak solutions. Our approach is purely variational and avoids any \textit{a priori} regularity assumptions on weak solutions or regularization via smooth approximations. Two key ingredients are norm inequalities for fractional powers of the degenerate Laplacian, and a set of embeddings that ensure time continuity of weak solutions, extending the classical Lions regularity theorem and accommodating a wide class of source terms.
\end{abstract}

\maketitle

\tableofcontents

\section{Introduction}\label{section 1}

The aim of this paper is to construct solutions to parabolic Cauchy problems of the form
\begin{align}\label{eq: pb principal}
    \left\{
    \begin{array}{ll}
        \partial_t u + \mathcal{B}u  =  f \quad \mathrm{in} \  \mathcal{D}'((0,\mathfrak{T})\times \mathbb{R}^n), \\
        u(0) = \psi \in L^2(\mathbb{R}^n,\mathrm{d}\omega),
    \end{array}\right.
\end{align} 
where $\mathcal{B}$ denotes the degenerate elliptic part, which is non-autonomous and includes both bounded and unbounded lower-order terms. More precisely, we consider
\begin{equation}\label{eq: B}
    \mathcal{B}u = -\omega^{-1} \mathrm{div}_x(A\nabla_x u) -\omega^{-1} \mathrm{div}_x(\omega \, a u) + b \cdot \nabla_x u + c u,
\end{equation}
where the coefficients $A$, $a$, $b$, and $c$ depend on both time and space variables $(t,x)$. The leading-order term involves a matrix-valued function $A = A(t,x)$ with complex measurable entries. The degeneracy of the operator is governed by a spatial weight $\omega = \omega(x)$, which is assumed to be time-independent and to belong to the Muckenhoupt class $A_2(\mathbb{R}^n)$. In this setting, we consider $\omega^{-1}A$ to be bounded, and we may also assume that it satisfies the classical uniform ellipticity condition. For the lower-order terms $a$, $b$, and $c$, we assume that each can be decomposed as the sum of a bounded term and an unbounded one. Our main interest is in unbounded terms in critical or near-critical spaces, such as mixed time-space Lebesgue or Lorentz spaces, which arise in parabolic PDEs with rough coefficients and degenerate elliptic structures.

The main and natural questions one may ask are the following:
\begin{itemize}
    \item What conditions should be imposed on the coefficients $a$, $b$, and $c$ to establish the well-posedness of \eqref{eq: pb principal} ?
    \item How may these coefficients depend on the weight $\omega$, and what additional assumptions, beyond the $A_2$ condition, might be required on $\omega$ ?
    \item In what sense is \eqref{eq: pb principal} well-posed ?
    \item What is the appropriate existence and uniqueness class, and for which classes of source terms can one solve \eqref{eq: pb principal} ?
    \item What is the regularity of weak solutions ?
    \item Is there a fundamental solution for the degenerate parabolic operator $\partial_t + \mathcal{B}$ that represents weak solutions ? 
    \item If so, does it satisfy bounds similar to those known in the classical literature when $\omega = 1$ ?
\end{itemize}

This paper provides us with an answer to all these questions. Before summarizing our results, it is useful to briefly review some classical and well-known results that address these questions in particular cases, such as the unweighted case (\textit{i.e.}, $\omega = 1$) or situations without unbounded lower-order terms. The existing literature on the subject is vast, so we will highlight only a few representative works to give context and motivation.

Let us begin with the unweighted case $\omega = 1$. When the coefficients are sufficiently regular, several classical methods are available to establish well-posedness and construct the fundamental solution. One of the most effective techniques combines a parametrix construction with the freezing point method \cite{friedman2008partial}. This approach simplifies the problem by locally freezing the coefficients, making them effectively space-independent, and yields explicit solutions represented by smooth kernels $\Gamma(t, x, s, y)$ with Gaussian decay. In the case where the coefficients are merely real, measurable, and possibly unbounded, the classical theory of Ladyzhenskaya, Solonnikov, and Ural'tseva \cite{ladyzhenskaia1968linear} provides a foundational framework. Along similar lines, and still in the case of real coefficients and an elliptic part of order two, Aronson \cite{aronson1967bounds, aronson1968non} constructed generalized fundamental solutions using Nash’s regularity theory \cite{nash1958continuity} and proved upper and lower Gaussian bounds. More recently, Auscher and Egert \cite{auscheregert2023universal} revisited these problems of constructing solutions and extended the results of \cite{ladyzhenskaia1968linear} to more general settings, allowing complex coefficients and even unbounded lower-order terms lying in mixed Lorentz spaces. Their work relies on a variational approach that unifies and generalizes many earlier methods. For additional related works, see also \cite{AMP2019,MR4387945}.

In the weighted case ($\omega \neq 1$) and in the pure second-order setting (\textit{i.e.}, $a = b = c = 0$), the existence of a fundamental solution was established by Cruz-Uribe and Rios \cite{cruz2014corrigendum}, under the additional assumptions that $A$ is real-valued, symmetric, and independent of time. In this case, the fundamental solution is constructed via the semigroup $(e^{-t\mathcal{B}})_{t > 0}$ associated with the operator $\mathcal{B} = -\omega^{-1} \mathrm{div}_x(A(x) \nabla_x)$. More recently, Ataei and Nyström \cite{ataei2024fundamental} extended this result to the case where $A = A(t,x)$ depends on time, still within the pure second-order framework and under the assumption that $A$ remains real-valued. Their construction relies on an approximation argument inspired by Kato’s abstract theory \cite{kato1961abstract}. Both works make use of H\"older continuity estimates for weak solutions to derive pointwise Gaussian upper bounds. More recently, the author \cite{baadi2025degenerate}, using a variational framework, reproved and generalized these results to allow for complex coefficients, and obtained upper pointwise Gaussian bounds using simpler arguments and under weaker assumptions than those required for H\"older continuity estimates of weak solutions. While all of these approaches can be readily extended to include bounded lower-order terms, the situation becomes significantly more delicate in the presence of unbounded ones, and the question of constructing weak solutions has not been addressed.  

Although it is a different question, regularity of local weak solutions has been studied. Ishige \cite{Ishige99} establishes Harnack inequalities for nonnegative weak solutions with real coefficients and some unbounded lower-order terms, extending previous works in the purely second-order setting \cite{MR772255,MR748366,Chiarenza85}. His assumptions rely on a strong reverse doubling condition on the weight (assumption (A5) in \cite{Ishige99}). We will consider different conditions, so regularity theory under our conditions will require some further work that we plan to do subsequently. 

\medskip

We now turn to the presentation of our results. We restrict ourselves to one representative result in the case where only the lower-order coefficients are unbounded and $\mathfrak{T} = \infty$, corresponding to the homogeneous version of our theory. Further extensions and variants will be discussed in the paper.

Fix $r, q \in [2, \infty)$ and assume that
\begin{equation*}
    \omega \in A_2(\mathbb{R}^n) \cap RH_{\frac{q}{2}}(\mathbb{R}^n)
    \quad \text{and} \quad
    \frac{1}{r} + \frac{n}{2q} = \frac{n}{4}.
\end{equation*}
Note that $r = 2$ is possible, in which case $n \ge 3$ and $q = 2^\star := \frac{2n}{n - 2}$, but we always have $q > 2$ since $\frac{1}{r} > 0$. For any open interval $I \subset \mathbb{R}$, we define the mixed space
\begin{equation*}
    \dot{\Sigma}^{r,q}(I):= \left\{ u \in L^1_{\mathrm{loc}}(I;L^2_\omega(\mathbb{R}^n)): u \in L^r(I;L^q_{\omega^{q/2}}(\mathbb{R}^n)) \quad \text{and} \quad \nabla_x u \in L^2(I;L^2_\omega(\mathbb{R}^n)^n) \right\},
\end{equation*}
and endow it with the norm
\begin{equation*}
    \|u \|_{\dot{\Sigma}^{r,q}(I)}:= \|u \|_{L^r(I;L^q_{\omega^{q/2}}(\mathbb{R}^n))}+ \| \nabla_x u \|_{L^2(I;L^2_\omega(\mathbb{R}^n)^n)}.
\end{equation*}
Set
\begin{equation}\label{eq: P_rq}
P_{r,q}:=  \| a \|_{L^{{\frac{2r}{r-2}}}(\mathbb{R};{L^{\frac{2q}{q-2}}(\mathbb{R}^n)^n})} + \| b \|_{L^{{\frac{2r}{r-2}}}(\mathbb{R};{L^{\frac{2q}{q-2}}(\mathbb{R}^n)^n})}+\| c \|_{L^{{\frac{r}{r-2}}}(\mathbb{R};{L^{\frac{q}{q-2}}(\mathbb{R}^n)})}. 
\end{equation}
Set $M:=\| \omega^{-1} A \|_{L^\infty(\mathbb{R}^{n+1})}$. Assuming only $M < \infty$ and $P_{r,q} < \infty$ allows us to define the operator
\begin{equation*}
\partial_t + \mathcal{B} : \dot{\Sigma}^{r,q}(I) \rightarrow \mathcal{D}'(I\times\mathbb{R}^{n}).
\end{equation*}
To construct solutions, we begin with the case of $I=\mathbb{R}$, under the hypothesis that the restriction
\begin{equation*}
    \partial_t + \mathcal{B} : \dot{V}_0 \rightarrow \dot{V}_0^\star
\end{equation*}
is invertible, where $\dot{V}_0$ is a variational space that can be viewed as  
\begin{equation*}
    \dot{V}_0 = L^2(\mathbb{R}; \dot{H}^1_\omega(\mathbb{R}^n)) \cap \dot{H}^{\tfrac{1}{2}}(\mathbb{R}; L^2_\omega(\mathbb{R}^n)).
\end{equation*}
The space $\dot{V}_0^\star$ is the anti-dual of $\dot{V}_0$, and this is what we call the "variational approach".

Let us formulate our representative result. We assume that $A$ is elliptic in the sense of Gårding; that is, there exists $\nu > 0$ such that for all $t \in \mathbb{R}$ and all $u \in H^1_\omega(\mathbb{R}^n)$,
\begin{equation*}
    \nu \int_{\mathbb{R}^n} |\nabla_x u(x)|^2 \, \omega(x) \mathrm{d}x  \le \int_{\mathbb{R}^n} \mathrm{Re}\left( A(t,x) \nabla_x u(x) \cdot \overline{\nabla_x u(x)}\right) \, \mathrm{d}x.
\end{equation*} 
We use fractional powers of the degenerate Laplacian $-\Delta_\omega= -\omega^{-1}\mathrm{div}_x(\omega \nabla_x)$. See Section \ref{ssection 2.2}.
\begin{thm}\label{thm: intro}
    There exists a constant $\varepsilon_0 = \varepsilon_0(M, \nu, [\omega]_{A_2}, [\omega]_{RH_{\frac{q}{2}}}, n, q) > 0$ such that, if $P_{r, q} \leq \varepsilon_0$, then the following assertions hold.
    \begin{enumerate}
    \item Let $\rho \in [2, \infty]$, and define $\beta = \frac{2}{\rho} \in [0, 1]$ and let $\rho'$ denotes the H\"older conjugate of $\rho$. Let $h \in L^{\rho'}((0, \infty);L^2_\omega(\mathbb{R}^n))$ and $\psi \in L^2_\omega(\mathbb{R}^n)$. There exists a unique $u \in \dot{\Sigma}^{r,q}((0,\infty))$ solution to the Cauchy problem 
    \begin{align*}
    \left\{
    \begin{array}{ll}
        \partial_t u + \mathcal{B}u  = (-\Delta_\omega)^{\beta/2}h \quad \mathrm{in} \  \mathcal{D}'((0,\infty)\times \mathbb{R}^n), \\
        u(t) \rightarrow \psi \  \mathrm{ in } \ \mathcal{D'}(\mathbb{R}^n) \ \mathrm{as} \ t \rightarrow 0^+.
    \end{array}\right.
    \end{align*} 
    Moreover, $u\in C_0([0,\infty);L^2_\omega(\mathbb{R}^n))$, with $u(0)=\psi$, $t \mapsto \| u(t)  \|^2_{2,\omega}$ is absolutely continuous on $[0,\infty)$ and we can write the energy equalities. Furthermore, there exists a constant $C \in (0,\infty)$, depending only on the structural constants $M$, $\nu$, $[\omega]_{A_2}$, $[\omega]_{RH_{\frac{q}{2}}}$, $n$ and $q$, such that
    \begin{align*}
            \sup_{t \ge 0} \| u(t) \|_{2,\omega}+ \| \nabla_x u\|_{L^2((0,\infty);L^2_\omega(\mathbb{R}^n)^n)} 
            \leq C  ( \left \| h \right \|_{L^{\rho'}((0,\infty);L^2_\omega(\mathbb{R}^n))}+  \| \psi  \|_{2,\omega}  ).
    \end{align*} 
    \item There exists a unique fundamental solution $\Gamma=(\Gamma(t,s))_{0\leq s \leq t <\infty }$  for $\partial_t+\mathcal{B}$. In particular, for all $t \ge 0$, we have the following representation formula for $u$, the solution in item (1):
    \begin{align*}
    u(t) = \Gamma(t,0)\psi  + \int_{0}^{t} \Gamma(t,\tau) (-\Delta_\omega)^{\beta/2}h(\tau)\ \mathrm d \tau,
    \end{align*}
    where the integral is weakly defined in $L^2_\omega(\mathbb{R}^n)$, and also strongly defined in the Bochner sense when $\rho = \infty$.
    \end{enumerate}
\end{thm}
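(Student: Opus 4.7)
The plan is to reduce the Cauchy problem on $(0,\infty)$ to the variational problem on the whole line $\mathbb{R}$, verify the invertibility hypothesis $\partial_t + \mathcal{B}: \dot V_0 \to \dot V_0^\star$ from the smallness $P_{r,q} \le \varepsilon_0$, and then transfer the solution back using the Lions-type embedding that guarantees $L^2_\omega$-continuity in time.

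First I would verify the invertibility on $\mathbb{R}$. When $a=b=c=0$, invertibility of $\partial_t - \omega^{-1}\mathrm{div}_x(A\nabla_x): \dot V_0 \to \dot V_0^\star$ follows from the standard hidden coercivity argument: Plancherel on the $\dot H^{1/2}$ factor combined with the Gårding ellipticity of $A$. Writing $\mathcal{B} = \mathcal{B}_0 + \mathcal{R}$, with $\mathcal{R}$ collecting the lower-order terms driven by $a$, $b$, $c$, I would bound $\|\mathcal{R}\|_{\dot V_0 \to \dot V_0^\star} \lesssim P_{r,q}$ via Hölder's inequality and the embedding $\dot V_0 \hookrightarrow L^r(\mathbb{R}; L^q_{\omega^{q/2}}(\mathbb{R}^n))$ available from the scaling $\tfrac{1}{r}+\tfrac{n}{2q}=\tfrac{n}{4}$ and the hypothesis $\omega \in A_2 \cap RH_{q/2}$. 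For $\varepsilon_0$ sufficiently small, a Neumann series then inverts $\partial_t + \mathcal{B}$ on $\dot V_0$.

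Second, I would construct the solution. Extending $h$ by zero to $\mathbb{R}$, the source $(-\Delta_\omega)^{\beta/2} h$ is interpreted as an element of $\dot V_0^\star$ using the fractional calculus for the degenerate Laplacian from Section \ref{ssection 2.2}; the choice $\beta = 2/\rho$ is dictated by the natural interpolation scaling between $L^2(\mathbb{R}; \dot H^1_\omega)$ and $\dot H^{1/2}(\mathbb{R}; L^2_\omega)$, which makes $(-\Delta_\omega)^{\beta/2}: L^{\rho'}(\mathbb{R}; L^2_\omega) \to \dot V_0^\star$ bounded with a structural-constant-only norm. The initial datum $\psi$ is then incorporated via the Lions-type continuity embedding: solve globally on $\mathbb{R}$, obtain a $\dot V_0$ solution that is continuous in $L^2_\omega$, and adjust by a homogeneous correction so that the trace at $t=0$ matches $\psi$. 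The a priori estimate, the energy equality, and the absolute continuity of $t \mapsto \|u(t)\|_{2,\omega}^2$ are obtained by pairing the equation with $u$ in the $\dot V_0^\star$--$\dot V_0$ duality and invoking the extended Lions regularity theorem announced in the abstract; the lower-order contributions are absorbed using $P_{r,q}\le\varepsilon_0$.

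Uniqueness in $\dot\Sigma^{r,q}((0,\infty))$ follows from the energy equality applied to the difference of two solutions with $h=0$ and $\psi=0$. For part (2), the fundamental solution is obtained by time-translation: for $s \ge 0$ and $\psi \in L^2_\omega$, set $\Gamma(t,s)\psi := u(t)$ where $u$ solves the homogeneous Cauchy problem on $(s,\infty)$ starting from $\psi$ at time $s$; linearity and $L^2_\omega \to L^2_\omega$ boundedness come from the a priori estimate. The Duhamel representation is established by checking that both sides of the formula solve the same Cauchy problem and invoking uniqueness, and the Bochner-strong integrability when $\rho=\infty$ follows from the strong $L^2_\omega$-continuity of $\tau \mapsto \Gamma(t,\tau)\phi$. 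The main obstacle I anticipate is ensuring that $(-\Delta_\omega)^{\beta/2} h$ lies in $\dot V_0^\star$ with a scale-invariant bound depending only on $[\omega]_{A_2}$, $[\omega]_{RH_{q/2}}$, $n$, and $q$: the required compatibility between the fractional calculus for the degenerate Laplacian and the mixed-norm embedding for $\dot V_0$ is precisely what forces the smallness threshold $\varepsilon_0$ to depend only on the structural constants.
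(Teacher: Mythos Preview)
Your outline captures the right variational framework but has two genuine gaps in the existence construction.

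\textbf{Initial data and causality.} Your proposal to ``solve globally on $\mathbb{R}$, obtain a $\dot V_0$ solution that is continuous in $L^2_\omega$, and adjust by a homogeneous correction so that the trace at $t=0$ matches $\psi$'' is circular: producing that homogeneous correction \emph{is} the Cauchy problem for $\partial_t+\mathcal{B}$ with data $\psi-u(0)$, which is what you are trying to prove. The paper avoids this by adding a Dirac-mass source $\delta_0\otimes\psi$ and solving $\partial_t\tilde u+\mathcal{B}\tilde u=\delta_0\otimes\psi+(-\Delta_\omega)^{\beta/2}h$ on all of $\mathbb{R}$ (Proposition~\ref{cor: CorRadon}). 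For the restriction $\tilde u|_{(0,\infty)}$ to solve the Cauchy problem with trace $\psi$ at $0^+$, one needs a separate \emph{causality} result (Theorem~\ref{thm: Causality}): if $v\in\dot\Sigma^{r,q}(I)$ solves $\partial_t v+\mathcal{B}v=0$ on an interval containing a neighborhood of $-\infty$, then $v=0$. This is proved by an energy argument with its own $\varepsilon_0$-smallness threshold, distinct from the invertibility threshold, and is not mentioned in your outline. (Your uniqueness argument via the energy equality is essentially this causality estimate run forward from $t=0$, so you have the right idea there; the missing piece is using it backward in time to build existence.)

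\textbf{The endpoint $\rho=\infty$.} When $\rho=\infty$ the source is $h\in L^1((0,\infty);L^2_\omega)$, and one has $L^1(\mathbb{R};L^2_\omega)\not\subset\dot V_0^\star$, so your claim that $(-\Delta_\omega)^{\beta/2}\colon L^{\rho'}(\mathbb{R};L^2_\omega)\to\dot V_0^\star$ is bounded fails at this endpoint. The paper handles $L^1$ sources by a duality scheme (Proposition~\ref{thm: existence L1}): the solution operator is defined through the pairing $\llangle f,(\mathcal{H}^\star)^{-1}\varphi\rrangle$, using that $(\mathcal{H}^\star)^{-1}$ maps $L^2(\mathbb{R};D_{S,-1})$ into $C_0(\mathbb{R};L^2_\omega)$, and then approximated by smooth data. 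The Dirac source $\delta_0\otimes\psi$ is likewise outside $\dot V_0^\star$ and is treated as a limiting case of $L^1$ sources.
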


Let us comment this result. First, it is perturbative in nature with respect to the pure second-order case and follows as a particular instance of Theorem \ref{thm: Pb Cauchy homogène}. The smallness of $P_{r,q}$ is sufficient to guarantee the invertibility and causality of the degenerate parabolic operator $\partial_t + \mathcal{B}$ on $\mathbb{R}$, but it is not necessary. For alternative invertibility conditions that do not require a smallness assumption, as well as for the definition of a fundamental solution, the interpretation of the equations, and further details, we refer the reader to Sections \ref{section 2} and \ref{section 3}. 

Second, note that the quantity $P_{r,q}$ does not depend on $\omega$: it is exactly the one taken in the unweighted theory \cite{ladyzhenskaia1968linear, auscheregert2023universal}. This may seem surprising at first. The reason is that we use the Hardy-Littlewood-Sobolev estimates in Theorem \ref{thm:fractionalHLS} for the degenerate Laplacian $-\Delta_\omega $, 
and this is precisely where the assumption $\omega \in RH_{\frac{q}{2}}(\mathbb{R}^n)$ comes into play. These estimates are a key ingredient of our theory. Third, despite the apparently abstract formulation of our source terms, it covers in fact the following concrete ones and their linear combinations:
\begin{itemize}
\item When $\rho = \infty$, we have $(\Delta_\omega)^{\beta/2} h = h \in L^1((0,\infty); L^2_\omega(\mathbb{R}^n))$. 
\item When $\rho = 2$, we can write, for any $F \in L^{2}((0,\infty); L^2_\omega(\mathbb{R}^n)^n)$, $- \omega^{-1} \mathrm{div}_x(\omega F) = (-\Delta_\omega)^{1/2} h$ for some $h \in L^{2}((0,\infty); L^2_\omega(\mathbb{R}^n))$. See Lemma \ref{lem: div}.
\item When $2 \le \rho < \infty$, set $\Tilde{r} = \rho$ and let $\Tilde{q} \in (2, \infty)$ satisfy $\frac{1}{\Tilde{r}} + \frac{n}{2 \Tilde{q}} = \frac{n}{4}$. If $\omega \in RH_{\Tilde{q}/2}(\mathbb{R}^n)$, then any $g \in L^{\Tilde{r}'}((0,\infty); L_{\omega^{\Tilde{q}'/2}}^{\Tilde{q}'}(\mathbb{R}^n))$ can be written as $g = (-\Delta_\omega)^{\beta/2} h$ for some $h \in L^{\Tilde{r}'}((0,\infty); L_\omega^2(\mathbb{R}^n))$, 
see Lemma \ref{lem:xSobolev}. If, in addition, $\rho \ge r$, the condition on $\omega$ in the last statement is fulfilled; see Point (2) of Remarks \ref{rems: RH}.
\end{itemize}

Note that it is also possible to consider lower-order terms lying in mixed Lorentz spaces. Namely, we may assume that 
\begin{equation*}
L_{r,q} := \| a \|_{L^{\frac{2r}{r-2},\infty}(\mathbb{R}; L^{\frac{2q}{q-2},\infty}(\mathbb{R}^n)^n)} + \| b \|_{L^{\frac{2r}{r-2},\infty}(\mathbb{R}; L^{\frac{2q}{q-2},\infty}(\mathbb{R}^n)^n)} + \| c \|_{L^{\frac{r}{r-2},\infty}(\mathbb{R}; L^{\frac{q}{q-2},\infty}(\mathbb{R}^n))} < \infty,
\end{equation*}
where $L^{p,q}$ denotes Lorentz spaces. An example of such coefficients is
\begin{equation*}
    a(t,x)=\frac{a_\infty(t,x)}{|t|^{n\frac{r-2}{2r}}|x|^{n\frac{q-2}{2q}}},\ b(t,x)=\frac{b_\infty(t,x)}{|t|^{n\frac{r-2}{2r}}|x|^{n\frac{q-2}{2q}}}, \ c(t,x)=\frac{c_\infty(t,x)}{|t|^{n\frac{r-2}{r}}|x|^{n\frac{q-2}{q}}} \ \ \ (t,x)\in \mathbb{R}^{1+n},
\end{equation*}
where $a_\infty$, $b_\infty$, and $c_\infty$ are measurable and bounded functions on $\mathbb{R}^{1+n}$ taking values in $\mathbb{C}^n$, $\mathbb{C}^n$, and $\mathbb{C}$, respectively. For instance, when $r=2$, this gives us $a(t,x)=\frac{a_\infty(t,x)}{|x|}$, $b(t,x)=\frac{b_\infty(t,x)}{|x|}$, $c(t,x)=\frac{c_\infty(t,x)}{|x|^2}$, the usual size for drifts and potentials. The assumption $L_{r,q}<\infty$, together with invertibility of $\partial_t+\mathcal{B}$ on a variational space, is sufficient to construct solutions of equations of the form $\partial_t u + \mathcal{B} u = f$ in $\mathcal{D}'(\mathbb{R} \times \mathbb{R}^n)$. These solutions belong to a suitable class $\dot{\mathcal{L}}^{r,q}(\mathbb{R})$, which is contained in $\dot{\Sigma}^{r,q}(\mathbb{R})$, and we allow for a wider class of source terms $f$ also measured by mixed Lorentz spaces. In the context of parabolic Cauchy problems, we require the stronger condition
\begin{equation*}
\ell_{r,q} := \| a \|_{L^{\frac{2r}{r-2}}((0,\infty); L^{\frac{2q}{q-2},\infty}(\mathbb{R}^n)^n)} + \| b \|_{L^{\frac{2r}{r-2}}((0,\infty); L^{\frac{2q}{q-2},\infty}(\mathbb{R}^n)^n)} + \| c \|_{L^{\frac{r}{r-2}}((0,\infty); L^{\frac{q}{q-2},\infty}(\mathbb{R}^n))} < \infty,
\end{equation*}
which remains a weaker assumption than $P_{r,q}<\infty$. This distinction is used to ensure causality results. We refer the reader to Section \ref{section 4} for further details.

Now, when each of the lower-order terms $a$, $b$, and $c$ is decomposed into the sum of a bounded part and an unbounded one (“unbounded’’ meaning belonging to the mixed Lebesgue or Lorentz spaces introduced above), this corresponds to the inhomogeneous version of our theory, which is treated in Section \ref{section 5}. This is the case, for any pair $(\Tilde{r},\Tilde{q})$ with $r\le \Tilde{r}$, $2<\Tilde{q}$ and $\frac{1}{\Tilde{r}} + \frac{n}{2 \Tilde{q}} > \frac{n}{4}$, when
\begin{equation*}
    a, b \in L^{\frac{2\Tilde{r}}{\Tilde{r}-2}}(\mathbb{R}; L^{\frac{2 \Tilde{q}}{\Tilde{q}-2}}(\mathbb{R}^n)^n)\quad \text{and}  \quad c \in L^{\frac{\Tilde{r}}{\Tilde{r}-2}}(\mathbb{R}; L^{\frac{\Tilde{q}}{\Tilde{q}-2}}(\mathbb{R}^n)),
\end{equation*}
with the additional condition $\frac{1}{\Tilde{r}} - \frac{1}{2\Tilde{q}} < \frac{1}{4}$ when $n=1$. See Remark \ref{rem: Subcritical exponents} for further details and the connection with subcritical exponents appearing in the previously cited literature.

In the final section, Section \ref{section 6}, we establish $L^2$ off-diagonal estimates for the fundamental solution. We also derive Gaussian upper bounds under the assumption that Moser’s $L^2$–$L^\infty$ estimates hold for local weak solutions.

It is important to note that in both the homogeneous and inhomogeneous frameworks, we obtain weighted versions of the results in \cite{ladyzhenskaia1968linear,auscheregert2023universal}, including the case $r = 2$, which is excluded by the classical theory \cite{ladyzhenskaia1968linear}. Conversely, the case $r = \infty$, covered by the unweighted theory \cite{ladyzhenskaia1968linear}, is excluded from our setting, as in the unweighted version \cite{auscheregert2023universal}, since the method is variational.

In \cite{auscheregert2023universal}, all the function spaces under consideration are embedded in the ambient space of distributions $\mathcal{D}'(\mathbb{R}^{1+n})$, and the analysis relies on the use of the Fourier transform in the $x$-variable. However, this approach is no longer applicable when dealing with weighted spaces and degenerate operators. A key feature of our approach is the use of distribution spaces adapted to the degenerate operators under consideration, following the framework introduced in \cite{auscherbaadi2024fundamental}. These spaces may appear abstract, as they are tailored to the functional analytic structure of the operator and do not necessarily sit inside $\mathcal{D}'(\mathbb{R}^{1+n})$ in any obvious way. Nonetheless, they provide the appropriate setting to establish existence results for weak solutions. It is important to stress, however, that this level of abstraction is used solely as a technical tool. All our results concerning well-posedness such as existence, uniqueness, and representation formulas are formulated in concrete terms, within explicit functional spaces that preserve a strong connection to the PDE framework.

\medskip

For further details, we refer the reader to the main text.

\subsubsection*{\textbf{Notation:}} 
\begin{enumerate}[label=$\blacklozenge$]
\item Throughout the paper, we fix an integer $n \ge 1$. 
\item We use the notation $\mathcal{D}(\Omega)$ for the space of smooth ($C^\infty$) and compactly supported test functions on an open set $\Omega$. Variables will be indicated at the time of use.
\item We use the sans-serif font $C_0$ to denote the space of continuous functions, valued in a normed vector space, that vanish at infinity, and the sans-serif font "loc" to indicate that the prescribed property holds on all compact subsets of the given set.
\item For a given complex normed space $E$, the space $E^\star$ denotes its anti-dual space.
\item By convention, the notation $C = C(a,b,\dots)$ for a constant means that $ C \in (0, \infty)$ and depends only on $(a,b,\dots)$.
\item For any $\rho \in [1, \infty]$, $\rho'$ denotes its H\"older conjugate.
\item $L^p_{\Tilde{\omega}}(\mathbb{R}^n)$ (resp. $L^{p,q}_{\Tilde{\omega}}(\mathbb{R}^n)$) denote Lebesgue (resp. Lorentz) spaces with respect to the measure $\Tilde{\omega}(x)\mathrm{d}x$. When $\Tilde{\omega}=1$, we simply write $L^p(\mathbb{R}^n)$ (resp. $L^{p,q}(\mathbb{R}^n)$). \item We use the same notation for the norms of Lebesgue and Lorentz spaces when dealing with functions taking values in $\mathbb{C}$, $\mathbb{C}^n$ or $M_n(\mathbb{C})$. For example, we write $\| a \|_{L^{\frac{2r}{r-2}}(I; L^{\frac{2q}{q-2}}(\mathbb{R}^n))}$, $\| \nabla_x u \|_{L^2(I; L^2_\omega(\mathbb{R}^n))}$ instead of $\| a \|_{L^{\frac{2r}{r-2}}(I; L^{\frac{2q}{q-2}}(\mathbb{R}^n)^n)}$, $\| \nabla_x u \|_{L^2(I; L^2_\omega(\mathbb{R}^n)^n)}$, etc. We also use the Cauchy-Schwarz inequality implicitly when applying H\"older inequalities to functions taking values in $\mathbb{C}^n$.
\end{enumerate}

\subsubsection*{\textbf{Acknowledgements}}
I am deeply grateful to my PhD thesis advisor, Professor Pascal Auscher, for suggesting this problem, for fruitful discussions, and for providing valuable suggestions that greatly improved earlier versions of this manuscript.

\section{Basic Assumptions, Preliminaries, and Parabolic Tools}\label{section 2}

In preparation for the analysis of the well-posedness of parabolic equations in weighted settings, we begin by specifying the main assumptions on the weights and the underlying functional framework. We then present some preliminaries and parabolic tools that will be used repeatedly in the sequel.

\subsection{Muckenhoupt and reverse H\"older weight classes}

We refer to \cite[Ch. V]{Stein1993_HA}, \cite{garcia2011weighted}, and \cite[Ch. 7]{grafakos2008classical} for classical facts on weights. We say that an almost everywhere (for Lebesgue measure) defined function $\omega : \mathbb{R}^n \rightarrow [0,\infty]$ is a weight if it is locally integrable on $\mathbb{R}^n$. The Muckenhoupt class $A_2(\mathbb{R}^n)$ is defined as the set of all weights satisfying
\begin{equation}\label{MuckWeight}
      [ \omega  ]_{A_2}:= \sup_{Q \subset \mathbb{R}^n  } \left ( \frac{1}{|Q|} \int_Q \omega(x) \, \mathrm{d}x   \right ) \left ( \frac{1}{|Q|} \int_Q \omega^{-1}(x) \, \mathrm{d}x   \right ) < \infty,
\end{equation}
where the supremum is taken with respect to all cubes $Q \subset \mathbb{R}^n$ with sides parallel to the axes and $|Q|$ is the Lebesgue measure of $Q$.

The reverse H\"older class $RH_q(\mathbb{R}^n)$, $q\in [1,\infty)$, is defined as the set of all weights $\omega$ verifying, for some constant $C$,
\begin{equation*}
    \left ( \frac{1}{|Q|} \int_Q \omega^q(x) \, \mathrm{d}x   \right )^{\frac{1}{q}} \leq  \frac{C}{|Q|} \int_Q \omega(x) \, \mathrm{d}x,
\end{equation*}
for all cubes $Q \subset \mathbb{R}^n$ with sides parallel to the axes. The infimum of such constants $C$ is what we denote by $[ \omega  ]_{RH_q}$. Note that $RH_1(\mathbb{R}^n)$ is the class of all weights.

In the following proposition, we summarize some properties of these classes of weights. 

\begin{prop}\label{prop:weights} We have the following properties.
    \begin{enumerate}
        \item $RH_q(\mathbb{R}^n) \subset RH_p(\mathbb{R}^n)$ for all $1 < p \leq q < \infty$.
        \item If $\omega \in RH_q(\mathbb{R}^n)$, $1<q<\infty$, then there exists $p \in (q,\infty)$ such that $\omega \in RH_p(\mathbb{R}^n)$.
        \item $A_2(\mathbb{R}^n) \subset \bigcup_{1< q< \infty} RH_q(\mathbb{R}^n) $.
    \end{enumerate}
\end{prop}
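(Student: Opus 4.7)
\emph{Plan of proof.} All three statements are classical and can be found in the references cited at the beginning of the subsection, so my argument would be concise, essentially explaining how each follows from standard tools.

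For item (1), my plan is to apply Jensen's inequality on the normalized cube $Q$ viewed as a probability space with measure $|Q|^{-1}\mathrm{d}x$. Since the map $t \mapsto t^{q/p}$ is convex on $[0,\infty)$ when $1 < p \le q$, one obtains
\[
\left( \frac{1}{|Q|} \int_Q \omega^p(x) \, \mathrm{d}x \right)^{1/p} \le \left( \frac{1}{|Q|} \int_Q \omega^q(x) \, \mathrm{d}x \right)^{1/q},
\]
and combining this with the $RH_q$ inequality yields the $RH_p$ inequality with the same constant; in particular $[\omega]_{RH_p} \le [\omega]_{RH_q}$.

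For item (2), the self-improvement (openness) of reverse H\"older classes, the plan is to appeal to Gehring's lemma. First, I would observe that if $\omega \in RH_q(\mathbb{R}^n)$, then the function $g := \omega^q$ satisfies, on every cube $Q$,
\[
\frac{1}{|Q|} \int_Q g(x)\, \mathrm{d}x \le C^q \left( \frac{1}{|Q|} \int_Q g(x)^{1/q}\, \mathrm{d}x \right)^{q},
\]
i.e., the average of $g$ is controlled by a power of a lower average of $g$. This is precisely the hypothesis of Gehring's lemma, which then produces $\varepsilon > 0$ such that $g \in L^{1+\varepsilon}_{\mathrm{loc}}$ with an analogous reverse inequality. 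Translating back in terms of $\omega$ yields $\omega \in RH_{q(1+\varepsilon)}(\mathbb{R}^n)$. I would reference, for instance, \cite[Ch.~V]{Stein1993_HA} or \cite{garcia2011weighted} for the proof of Gehring's lemma in this setting.

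For item (3), the plan is to combine two classical facts. First, by the Cauchy-Schwarz inequality applied to \eqref{MuckWeight}, every $A_2$ weight lies in $A_\infty(\mathbb{R}^n)$, so it satisfies the $A_\infty$ characterization through a uniform quantitative doubling-type estimate. Second, by the Coifman-Fefferman theorem, every $A_\infty$ weight satisfies a reverse H\"older inequality for some exponent $q > 1$; the proof proceeds by a Calder\'on-Zygmund stopping-time decomposition applied to $\omega$ at a high level together with a good-$\lambda$ principle. Combining both gives $A_2(\mathbb{R}^n) \subset \bigcup_{1<q<\infty} RH_q(\mathbb{R}^n)$.

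The main obstacle, if any, is item (2), since Gehring's lemma is a nontrivial result requiring a stopping-time argument, as is the Coifman-Fefferman inequality underlying item (3). In a research paper of this scope, however, both would be invoked as black boxes from the cited textbooks, while item (1) is essentially immediate from Jensen's inequality.
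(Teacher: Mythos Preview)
Your proposal is correct. The paper itself does not prove this proposition at all: it is stated as a summary of classical facts, with the references \cite{Stein1993_HA}, \cite{garcia2011weighted}, and \cite{grafakos2008classical} cited at the beginning of the subsection serving as the justification. Your outline (Jensen for (1), Gehring's lemma for (2), and the Coifman--Fefferman characterization of $A_\infty$ for (3)) is exactly the standard route found in those textbooks, and your closing remark that these would be invoked as black boxes matches precisely how the paper treats them.
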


\subsection{Function spaces and the degenerate Laplacian}\label{ssection 2.2}

Throughout this paper, \textbf{we fix a weight $\omega$ belonging to the Muckenhoupt class $A_2(\mathbb{R}^n)$}. We introduce the measure $\mathrm d \omega := \omega(x) \mathrm d x$. If $E \subset \mathbb{R}^n$ a Lebesgue measurable set, we write $\omega(E)$ instead of $\int_E \mathrm d \omega$. 

It follows from \eqref{MuckWeight} that there exists a constant $D=D(n,[ \omega  ]_{A_2})$, called the doubling constant for $\omega$, such that
\begin{equation}\label{DoublingMuck}
    \omega(2Q)\leq D \omega(Q),
\end{equation}
for all cubes $Q \subset \mathbb{R}^n$ with sides parallel to the axes. We may replace cubes by Euclidean balls. For simplicity, we keep using the same notation and constants.

For every $p\ge 1$ and $K \subset \mathbb{R}^n$ a measurable set, we denote by $L^p_\omega(K)$ the space of all measurable functions $f:K \rightarrow \mathbb{C}$ such that $$\|f\|_{L^p_\omega(K)}:=\left ( \int_K |f|^p \, \mathrm d \omega  \right )^{1/p}<\infty.$$
In particular, $L^2_\omega(\mathbb{R}^n)$ is the Hilbert space of square-integrable functions on $\mathbb{R}^n$ with respect to $\mathrm{d}\omega$. We denote its norm by ${\lVert \cdot \rVert}_{2,\omega}$ and its inner product by $\langle \cdot , \cdot \rangle_{2,\omega}$.  The class $\mathcal{D}(\mathbb{R}^n)$ is dense in $L^2_{\omega}(\mathbb{R}^n)$ as $\mathrm d \omega$ is a Radon measure on $\mathbb{R}^n$. Moreover, by \eqref{MuckWeight}, we have 
\begin{equation*}
L^2_{\omega}(\mathbb{R}^n) \subset L^1_{\text{loc}}(\mathbb{R}^n).
\end{equation*}

We define $H^1_{\omega}(\mathbb{R}^n)$ as the space of functions $f \in L^2_\omega(\mathbb{R}^n)$ for which the distributional gradient $\nabla_x f$ belongs to $L^2_\omega(\mathbb{R}^n)^n$, and equip this space with the norm
$\left\| f \right\|_{H^1_\omega} := ( \left\| f \right\|_{2,\omega}^2 + \left\| \nabla_x f \right\|_{2,\omega}^2 )^{1/2}$ making it a Hilbert space. The class $\mathcal{D}(\mathbb{R}^n)$ is dense in $H^1_{\omega}(\mathbb{R}^n)$ and this follows from standard truncation and convolution techniques combined with the boundedness of the (unweighted) maximal operator on $L^2_\omega(\mathbb{R}^n)$. For a proof, see \cite[Thm. 2.5]{kilpelainen1994weighted}.

We define the degenerate Laplacian $-\Delta_\omega$ as the unbounded self-adjoint operator on $L^2_\omega(\mathbb{R}^n)$ associated to the positive symmetric sesquilinear form on $H^1_\omega(\mathbb{R}^n) \times H^1_\omega(\mathbb{R}^n)$ defined by
\begin{equation*}
    (u,v) \mapsto \int_{\mathbb{R}^n} \nabla_x u \cdot \overline{\nabla_x v}  \ \mathrm d \omega.
\end{equation*}
The operator $-\Delta_\omega$ is injective, since the measure $\mathrm{d}\omega$ has infinite mass, as it is doubling \eqref{DoublingMuck}. For all $\beta \in \mathbb{R}$, we define $(-\Delta_\omega)^\beta$ as the self-adjoint operator constructed via the functional calculus for sectorial operators, or equivalently, through the Borel functional calculus, since $-\Delta_\omega$ is self-adjoint. For further details, we refer to \cite{reed1980methods, McIntosh86, haase2006functional}. Let the domain of $(-\Delta_\omega)^\beta$ be denoted by $D((-\Delta_\omega)^\beta)$. For $\beta=0$, $(-\Delta_\omega)^\beta=I$.

The following result concerns norm inequalities for fractional powers of the degenerate Laplacian $-\Delta_\omega$. For the proof, see the case $p = 2$ in \cite[Theorem~1.1 and Proposition~3.24]{auscherbaadi2025hardy}.

\begin{thm}\label{thm:fractionalHLS}
Let $q \in [2,\infty)$. The following statements are equivalent.
\begin{enumerate}
\item $\omega \in RH_{\frac{q}{2}}(\mathbb{R}^n)$.
\item For $\alpha=n\left ( \frac{1}{2}-\frac{1}{q} \right )$, there exists a constant $C$ such that
\begin{equation}\label{eq:fractionalHLS}
\|  \sqrt{\omega} \, f \|_{L^q(\mathbb{R}^n)}= \| f \|_{L^q_{{\omega}^{q/2}}(\mathbb{R}^n)} \leq C \| (-\Delta_\omega)^{\alpha/2} f \|_{L^2_{\omega}(\mathbb{R}^n)}, \quad \forall f \in D((-\Delta_\omega)^{\alpha/2}).
\end{equation}
\end{enumerate}
Moreover, the constant $C$ in (2) depends only on $[\omega]_{A_2}$, $[\omega]_{RH_{\frac{q}{2}}}$, $n$, and $q$.
\end{thm}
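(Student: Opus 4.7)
The plan is to treat the two implications separately, with the heat semigroup $(e^{t\Delta_\omega})_{t\ge 0}$ on $L^2_\omega(\mathbb{R}^n)$ and the subordination formula
\[
(-\Delta_\omega)^{-\alpha/2} = \frac{1}{\Gamma(\alpha/2)} \int_0^\infty t^{\alpha/2 - 1} e^{t\Delta_\omega}\, \mathrm{d}t,
\]
(valid via the functional calculus for the self-adjoint, injective operator $-\Delta_\omega$) as the common workhorse. This reduces fractional embedding estimates to ultracontractivity bounds for $e^{t\Delta_\omega}$, which are precisely what the geometry of $\omega$ controls naturally.

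For (1) $\Rightarrow$ (2), first I would establish the ultracontractive estimate $\|e^{t\Delta_\omega}\|_{L^2_\omega\to L^q_{\omega^{q/2}}} \le C t^{-\alpha/2}$ for $t>0$, with $\alpha = n(\tfrac{1}{2} - \tfrac{1}{q})$. This rests on two ingredients: (i) a pointwise Gaussian upper bound for the heat kernel $p_t(x,y)\le C\, \omega(B(x,\sqrt t))^{-1} e^{-c|x-y|^2/t}$, which follows from $\omega\in A_2(\mathbb{R}^n)$ alone (available along the lines of \cite{baadi2025degenerate}); and (ii) the direct consequence of $RH_{q/2}$ on balls
\[
\omega^{q/2}(B(x,\sqrt t))^{1/q} \le C\, t^{-\alpha/2}\, \omega(B(x,\sqrt t))^{1/2},
\]
obtained by combining the reverse Hölder inequality with $|B(x,\sqrt t)| \simeq t^{n/2}$. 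Integrating the kernel estimate against the weight $\omega^{q/2}$ and using this ratio bound produces the claimed ultracontractivity. Inserting it in the subordination formula and performing a Calderón-type splitting of the time integral at a cutoff depending on $\|g\|_{L^q_{\omega^{q/2}}}/\|g\|_{L^2_\omega}$ yields (2) after applying the resulting fractional integral bound to $g = (-\Delta_\omega)^{\alpha/2} f$.

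For (2) $\Rightarrow$ (1), I would reverse-engineer the reverse Hölder condition by testing (2) against functions concentrating on balls. Given a ball $B=B(x_0,r)$, apply (2) to $f = e^{-r^2(-\Delta_\omega)} g$ and use the spectral-calculus bound $\|(-\Delta_\omega)^{\alpha/2} e^{-t(-\Delta_\omega)}\|_{L^2_\omega\to L^2_\omega} \lesssim t^{-\alpha/2}$ to derive
\[
\|e^{-r^2(-\Delta_\omega)} g\|_{L^q_{\omega^{q/2}}} \le C\, r^{-\alpha}\, \|g\|_{L^2_\omega}.
\]
Choosing $g = \omega(B)^{-1/2}\chi_B$ and using a pointwise on-diagonal lower bound of the heat kernel on $B$ (again available from the $A_2$ structure), the left-hand side is bounded below by $C'\,\omega^{q/2}(B)^{1/q}\omega(B)^{-1/2}$. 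Rearranging and using $|B|\simeq r^n$ recovers precisely the reverse Hölder inequality for $\omega$ on $B$ with exponent $q/2$.

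The main obstacle is the handling of the fractional power $(-\Delta_\omega)^{\alpha/2}$ in the weighted setting, where Fourier-side representations are unavailable and Riesz potential kernels are not explicit. Passing systematically through the heat semigroup and the functional calculus circumvents this, but shifts the burden onto sharp two-sided heat kernel bounds with the natural volume factor $\omega(B(\cdot,\sqrt t))^{-1}$. Establishing those is where $A_2$ is used in an essential way; once they are in hand, $RH_{q/2}$ enters only through the ball-wise ratio estimate above, and the rest of the argument is a standard interpolation/splitting in the spirit of the classical Hardy-Littlewood-Sobolev theorem.
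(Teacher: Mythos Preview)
The paper does not prove this theorem in the text; it cites \cite[Theorem~1.1 and Proposition~3.24]{auscherbaadi2025hardy}. So there is no in-paper argument to compare against, and your proposal has to be judged on its own merits.

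Your $(2)\Rightarrow(1)$ direction is sound: testing the Sobolev inequality on $f=e^{-r^2(-\Delta_\omega)}(\omega(B)^{-1/2}\chi_B)$, using the spectral bound $\|(-\Delta_\omega)^{\alpha/2}e^{-t(-\Delta_\omega)}\|_{L^2_\omega\to L^2_\omega}\lesssim t^{-\alpha/2}$ and the Gaussian \emph{lower} bound on $p_{r^2}$, recovers exactly $\omega^{q/2}(B)^{1/q}\le C r^{-\alpha}\omega(B)^{1/2}$, which is $RH_{q/2}$.

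The $(1)\Rightarrow(2)$ direction, however, has a genuine gap at the subordination step. Even granting your ultracontractivity estimate $\|e^{t\Delta_\omega}\|_{L^2_\omega\to L^q_{\omega^{q/2}}}\le C t^{-\alpha/2}$ (whose justification ``integrating the kernel estimate against $\omega^{q/2}$'' is itself too vague: the resulting spatial integral over $\mathbb{R}^n$ does not converge, and no interpolation endpoint $L^2_\omega\to L^\infty$ with a power of $t$ is available for general $A_2$ weights), inserting it into $(-\Delta_\omega)^{-\alpha/2}g=c\int_0^\infty t^{\alpha/2-1}e^{t\Delta_\omega}g\,\mathrm{d}t$ produces the divergent integrand $t^{-1}$. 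Your proposed ``Cald\'eron-type splitting at a cutoff depending on $\|g\|_{L^q_{\omega^{q/2}}}/\|g\|_{L^2_\omega}$'' cannot repair this: that ratio is not defined, since $g\in L^2_\omega$ is the only datum. What is actually needed is the self-improvement $RH_{q/2}\Rightarrow RH_{r/2}$ for some $r>q$ (Proposition~\ref{prop:weights}(2)), which yields ultracontractivity into $L^r_{\omega^{r/2}}$ with the \emph{faster} rate $t^{-\alpha_r/2}$, $\alpha_r>\alpha$. With this and the $L^2_\omega$-contraction one can run the standard level-set splitting (choose $T=T(\lambda)$) to get a weak-type $(2,q)$ bound for $(-\Delta_\omega)^{-\alpha/2}$, and then interpolate. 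Without invoking the self-improvement, the argument as written does not close.
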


\begin{rem}
A typical example of a weight $\omega \in A_2(\mathbb{R}^n) \cap RH_{\frac{q}{2}}(\mathbb{R}^n)$ with $2\le q$, is given by
\begin{equation*}
\omega(x) = |x|^\beta, \quad \forall x \in \mathbb{R}^n, \quad \text{with} \  -\frac{2n}{q} < \beta < n.
\end{equation*}
\end{rem}

\subsection{The distributional anti-duality bracket}
In this paper, we adopt the following definition of the distributional anti-duality bracket, which differs from the standard one. All equalities in $\mathcal{D}'$ are understood in this sense when tested against functions in $\mathcal{D}$.
\begin{defn}[The distributional bracket]\label{def: distributions}
Let $I \subset \mathbb{R}$ be an open interval.
\begin{enumerate}
    \item For $F \in L^{1}_{\mathrm{loc}}(I;L^2_\omega(\mathbb{R}^n)^n)$, we define $- \omega^{-1} \mathrm{div}_x(\omega F)\in \mathcal{D'}(I\times \mathbb{R}^n)$ by setting, for all $\varphi \in \mathcal{D}(I \times \mathbb{R}^n)$,
    \begin{equation*}
     \llangle - \omega^{-1} \mathrm{div}_x(\omega F) , \varphi \rrangle:=\iint_{I \times \mathbb{R}^n} F(t,x) \cdot \nabla_x \overline{\varphi}(t,x) \, \mathrm{d}\omega(x) \mathrm{d}t=\int_{I} \langle F(t), \nabla_x \varphi(t)\rangle_{2,\omega} \, \mathrm{d}t.
    \end{equation*}
    \item For $h \in L^{1}_{\mathrm{loc}}(I;L^2_\omega(\mathbb{R}^n))$ and $\beta \in [0,1]$, we define $(-\Delta_\omega)^{\beta/2}h \in \mathcal{D'}(I\times \mathbb{R}^n)$ by setting, for all $\varphi \in \mathcal{D}(I \times \mathbb{R}^n)$,
    \begin{equation*}
    \llangle (-\Delta_\omega)^{\beta/2}h , \varphi \rrangle:=\iint_{I \times \mathbb{R}^n} h(t,x)  (\overline{(-\Delta_\omega)^{\beta/2}\varphi}(t,\cdot))(x) \, \mathrm{d}\omega(x) \mathrm{d}t = \int_{I} \langle h(t), ((-\Delta_\omega)^{\beta/2}\varphi(t)\rangle_{2,\omega} \, \mathrm{d}t.
    \end{equation*}
    \item If $q_1 \in [2,\infty)$ and $q_2 \in [1, \infty)$ are such that $\omega^{q_1/2}$ is locally integrable on $\mathbb{R}^n$, and if $g \in L^1_{\mathrm{loc}}(I; L^{q_2}_{\omega^{q'_1/2}}(\mathbb{R}^n))$, then we define, for all $\varphi \in \mathcal{D}(I \times \mathbb{R}^n)$,
    \begin{equation*}
    \llangle g , \varphi \rrangle:=\iint_{I \times \mathbb{R}^n} g(t,x) \cdot \overline{\varphi}(t,x) \, \mathrm{d}\omega(x) \mathrm{d}t=\int_{I} \langle g(t) , \varphi(t)\rangle_{2,\omega} \, \mathrm{d}t.
    \end{equation*}
    \item (Time derivative) If $u \in L^{1}_{\mathrm{loc}}(I;L^2_\omega(\mathbb{R}^n))$, we define its distributional time derivative $\partial_t u \in \mathcal{D'}(I\times \mathbb{R}^n)$ by setting, for all $\varphi \in \mathcal{D}(I \times \mathbb{R}^n)$,
    \begin{equation*}
    \llangle \partial_t u , \varphi \rrangle:=\iint_{I \times \mathbb{R}^n} -u(t,x) \cdot \partial_t \overline{\varphi}(t,x) \, \mathrm{d}\omega(x) \mathrm{d}t=\int_{I} -\langle u(t), \partial_t \varphi(t) \rangle_{2,\omega} \, \mathrm{d}t.
    \end{equation*}
\end{enumerate}
For $\varphi$ with spatially compact support, the integral in (1) exists whenever $\varphi \in L^\infty(I;H^1_\omega(\mathbb{R}^n))$, in (2) whenever $\varphi \in L^\infty(I;D((-\Delta_\omega)^{\beta/2}))$, and in (3) whenever $\varphi \in L^\infty(I;L^{q'_2}_{\omega^{q_1/2}}(\mathbb{R}^n))$. Thus, in the same way, and with a slight abuse of notation, we keep writing the distributional anti-duality bracket with the test function $\varphi$ on the right-hand side whenever this is meaningful, even if $\varphi \notin \mathcal{D}(I \times \mathbb{R}^n)$.
\end{defn}

\begin{rem}
    In the above definition, we use sesquilinear brackets. However, we may transition to a distributional duality bracket without complex conjugation, since
    $$\overline{(-\Delta_\omega)^{\beta/2} \, \overline{\phi}} = (-\Delta_\omega)^{\beta/2} \phi, \quad \text{for all } \beta \in [0,1] \text{ and } \phi \in \mathcal{D}(\mathbb{R}^n).$$
\end{rem}

\subsection{Parabolic embeddings and integral identities}\label{Subsection: Tools}

In this section, we collect some relevant abstract tools taken from \cite{auscherbaadi2024fundamental}, to which we refer the reader for details. We do so to avoid repetition in both the homogeneous and the inhomogeneous versions of the theory.

Let $H$ be a separable complex Hilbert space, and let $S$ be a positive, self-adjoint, and injective operator on $H$. There exists a family $(D_{S,\alpha})_{\alpha \in \mathbb{R}}$ of completions of the domains $D(S^\alpha)$ with respect to the homogeneous norms $\|S^\alpha \cdot\|_H$. These spaces embed into a common ambient space $E_\infty$, the anti-dual of the Fr\'echet space $E_{-\infty} := \bigcap_{\alpha \in \mathbb{R}} D(S^\alpha)$. Moreover, $D_{S,\alpha} \cap H = D(S^\alpha)$ for all $\alpha \in \mathbb{R}$.

We refer to \cite{hytonen2016analysis} for the definition and properties of Banach-valued $L^p(I;B)$ spaces. We recall the hierarchy of intermediate homogeneous solution and source spaces, for $ \alpha \in [-1,1]$, 
\begin{align*}
    \dot{V}_\alpha &:=\left \{ u \in L^2(\mathbb{R};D_{S,1}) : D_t^{\frac{1-\alpha}{2}}u \in L^2(\mathbb{R};D_{S,\alpha})\right \},\\
    \dot{W}_\alpha &:= \left \{ D_t^{\frac{1+\alpha}{2}}g \ : \ g \in L^2(\mathbb{R};D_{S,\alpha}) \right \},
\end{align*}
with 
\begin{align*}
    \| u \|_{\dot{V}_\alpha}&:=\left ( \left \| u \right \|_{L^2(\mathbb{R};D_{S,1})}^2+   \| D_t^{\frac{1-\alpha}{2}}u  \|_{L^2(\mathbb{R}; D_{S,\alpha})}^2\right )^{1/2},
   \\ \| f\|_{\dot{W}_\alpha}&:=\| D_t^{-\frac{1+\alpha}{2}}f\|_{L^2(\mathbb{R};D_{S,\alpha})}.
\end{align*}
We have set $D_t^{\gamma} g := \mathcal{F}^{-1}\left( \left| \tau \right|^{\gamma} \mathcal{F}g \right)$ where $\mathcal{F}$ is the Fourier transform on $\mathbb{R}$.

\begin{prop}[Various homogeneous embeddings]\label{prop:embeddings} We have the following embeddings.
    \begin{enumerate}
        \item \textbf{(Hierarchy and anti-duals)} For all $\alpha, \alpha' \in [-1,1]$ with $\alpha \le \alpha'$, we have a continuous inclusion $\dot{V}_\alpha \subset \dot{V}_{\alpha'}$ and $\dot{V}_\alpha^\star = L^2(\mathbb{R}; D_{S,-1}) + \dot{W}_{-\alpha}$. 
        \item \textbf{(Extended Lions' embedding)} For $\alpha \in [-1,0)$, we have $\dot{V}_\alpha \hookrightarrow C_0(\mathbb{R};H)$.
        \item \textbf{(Hardy-Littlewood-Sobolev embedding)} Let $\alpha \in (0,1]$ and let $r=\frac{2}{\alpha} \in [2,\infty)$. Then,  we have $\dot{V}_{\alpha} \hookrightarrow L^{r}(\mathbb{R};D_{S,\alpha})$ and there is a constant $C=C(r)$ such that for all $u \in \dot{V}_{\alpha}$,  
        \begin{equation*} 
        \left\| u \right\|_{L^r(\mathbb{R};D_{S,\alpha})} \leq C \|D_t^{\frac{1-\alpha}{2}}u \|_{L^2(\mathbb{R};D_{S,\alpha})}.
        \end{equation*}
        Consequently, we have $ L^{r'}(\mathbb{R};D_{S,-\alpha}) \hookrightarrow \dot{W}_{-\alpha}$.
    \item \textbf{(Mixed norm embeddings)} For $r\in (2,\infty)$ and $\alpha= 2/ r \in (0,1)$, we have the embeddings
    \begin{align*}
        \dot{V}_1\cap L^{\infty}(\mathbb{R}; H)\hookrightarrow L^{r}(\mathbb{R};D_{S,\alpha}) \ \ \text{and} \ \ \dot{V}_{0} \hookrightarrow L^{r}(\mathbb{R};D_{S,\alpha}), 
    \end{align*}  
    with 
    \begin{align*}
        \left\| u \right\|_{L^r(\mathbb{R};D_{S,\alpha})} &\leq \|u \|_{L^2(\mathbb{R};D_{S,1})}^\alpha \|u \|_{L^\infty(\mathbb{R};H)}^{1-\alpha},\\
        \left\| u \right\|_{L^r(\mathbb{R};D_{S,\alpha})} &\leq C(r)  \|u \|_{L^2(\mathbb{R};D_{S,1})}^\alpha \|D_t^{{1}/{2}}u \|_{L^2(\mathbb{R};H)}^{1-\alpha}.
    \end{align*}
    Consequently,
    \begin{align*}
       L^{r'}(\mathbb{R};D_{S,-\alpha}) \hookrightarrow L^2(\mathbb{R}; D_{S,-1}) + L^1(\mathbb{R};H)\ \ \text{and} \ \ L^{r'}(\mathbb{R};D_{S,-\alpha}) \hookrightarrow L^2(\mathbb{R}; D_{S,-1}) + \dot{W}_0=\dot{V}^\star_0.
    \end{align*}
    \end{enumerate}
\end{prop}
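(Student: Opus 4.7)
The plan is to handle the four items in turn, systematically using Plancherel in the time variable $\tau$ and the Borel functional calculus of $S$. For item (1), I would apply the spectral theorem to $S$ and reduce the continuous inclusion $\dot{V}_\alpha \subset \dot{V}_{\alpha'}$ (with $-1 \le \alpha \le \alpha' \le 1$) to the pointwise multiplier inequality $|\tau|^{1-\alpha'}\lambda^{2\alpha'} \lesssim |\tau|^{1-\alpha}\lambda^{2\alpha} + \lambda^{2}$ for $\tau \ne 0$, $\lambda > 0$. Setting $x = \lambda^{2}/|\tau|$ rewrites this as $x^{\alpha'} \le x^{\alpha} + x$, which is checked by separating $x \le 1$ and $x \ge 1$. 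The anti-dual identification $\dot{V}_\alpha^\star = L^2(\mathbb{R}; D_{S,-1}) + \dot{W}_{-\alpha}$ is then obtained by realising $\dot{V}_\alpha$ as the isometric image of $u \mapsto (u, D_t^{(1-\alpha)/2} u)$ into $L^2(\mathbb{R}; D_{S,1}) \times L^2(\mathbb{R}; D_{S,\alpha})$, whose anti-dual is $L^2(\mathbb{R}; D_{S,-1}) \times L^2(\mathbb{R}; D_{S,-\alpha})$; the sum structure of $\dot V_\alpha^\star$ comes from the standard quotient description of the anti-dual of a diagonal subspace, recognising $\dot W_{-\alpha}$ as the image of $L^2(\mathbb{R}; D_{S,-\alpha})$ under $D_t^{(1-\alpha)/2}$.

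For item (2), the anchor is the classical Lions embedding $\dot{V}_{-1} \hookrightarrow C_0(\mathbb{R};H)$, which stems from the identity $\|u(t)\|_H^2 - \|u(s)\|_H^2 = 2\,\mathrm{Re}\int_s^t \llangle \partial_\tau u, u\rrangle\,\mathrm{d}\tau$ on smooth approximants combined with a density argument. Since the hierarchy of (1) yields $\dot{V}_{-1} \subset \dot{V}_\alpha$ (the wrong direction for a reduction), the range $\alpha \in (-1, 0)$ demands a direct Fourier-side estimate. I would apply the Heinz inequality $\|\widehat{u}(\tau)\|_H \le \|\widehat{u}(\tau)\|_{D_{S,1}}^\theta \|\widehat{u}(\tau)\|_{D_{S,\alpha}}^{1-\theta}$ with $\theta = -\alpha/(1-\alpha)$, and then control $\sup_t \|u(t)\|_H \le (2\pi)^{-1}\int \|\widehat{u}(\tau)\|_H \,\mathrm{d}\tau$ by a weighted Cauchy–Schwarz split dyadically between $|\tau| \le 1$ and $|\tau| \ge 1$, so that the $L^2(\mathbb{R}; D_{S,1})$ datum governs the low-frequency regime and the $|\tau|^{(1-\alpha)/2}$-weighted $L^2(\mathbb{R}; D_{S,\alpha})$ datum governs the high-frequency regime. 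Decay as $|t| \to \infty$ is then recovered by approximating $u$ in $\dot{V}_\alpha$ by elements with compactly supported Fourier transform in $\tau$.

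For item (3), the bound $\|u\|_{L^r(\mathbb{R}; D_{S,\alpha})} \lesssim \|D_t^{(1-\alpha)/2} u\|_{L^2(\mathbb{R}; D_{S,\alpha})}$ with $r = 2/\alpha$ is the $D_{S,\alpha}$-valued version of the scalar Sobolev embedding $\dot{H}^{(1-\alpha)/2}(\mathbb{R}) \hookrightarrow L^r(\mathbb{R})$, obtained by applying the scalar Hardy–Littlewood–Sobolev inequality to the numerical function $\tau \mapsto \|\widehat{u}(\tau)\|_{D_{S,\alpha}}$; equivalently one interprets $D_t^{-(1-\alpha)/2}$ as a Riesz potential in time acting on a Bochner-$L^2$ function valued in the Hilbert space $D_{S,\alpha}$. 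The consequence $L^{r'}(\mathbb{R}; D_{S,-\alpha}) \hookrightarrow \dot{W}_{-\alpha}$ then follows by anti-duality against this inequality together with the definition of $\dot{W}_{-\alpha}$.

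For item (4), the first estimate follows at once from the pointwise Heinz interpolation $\|u(t)\|_{D_{S,\alpha}} \le \|u(t)\|_{D_{S,1}}^\alpha \|u(t)\|_H^{1-\alpha}$ (valid since $S$ is positive self-adjoint), by raising to the $r$-th power with $r\alpha = 2$ and integrating in $t$. For the second, the route is: the hierarchy of (1) gives $\|D_t^{(1-\alpha)/2} u\|_{L^2(\mathbb{R}; D_{S,\alpha})} \lesssim \|u\|_{L^2(\mathbb{R}; D_{S,1})} + \|D_t^{1/2} u\|_{L^2(\mathbb{R}; H)}$; the HLS embedding (3) then bounds the left-hand side of the target inequality in sum form, and the time-dilation $u \mapsto u(\lambda\,\cdot)$ with optimisation in $\lambda > 0$ upgrades it to the product form with exponents $\alpha$ and $1-\alpha$, using the invariance of $\|D_t^{1/2} u\|_{L^2(\mathbb{R}; H)}$ under this scaling. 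The two source-space consequences are anti-duality statements built on (1), noting $\dot V_1^\star = L^2(\mathbb{R}; D_{S,-1})$ and $L^\infty(\mathbb{R}; H)^\star \supset L^1(\mathbb{R}; H)$. The main technical obstacle across the proposition is expected to be (2) in the fractional range $\alpha \in (-1, 0)$: a single naive weight $w(\tau) = |\tau|$ in the Cauchy–Schwarz step produces a non-integrable $|\tau|^{-1}$ singularity at both $0$ and $\infty$, so the dyadic splitting and careful balancing of the two $L^2$ data is the genuinely delicate ingredient.
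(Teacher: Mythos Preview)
The paper does not prove this proposition; Section~\ref{Subsection: Tools} merely collects the statements from \cite{auscherbaadi2024fundamental} and refers the reader there. Your plans for items (1), (3), and the two direct inequalities in (4) are sound. One minor caveat on the dual consequences in (4): the embedding $L^{r'}(\mathbb{R};D_{S,-\alpha}) \hookrightarrow L^2(\mathbb{R};D_{S,-1}) + L^1(\mathbb{R};H)$ does not follow from abstract anti-duality alone, since $(L^\infty(\mathbb{R};H))^\star$ is strictly larger than $L^1(\mathbb{R};H)$; one should produce the decomposition directly, for instance via a $t$-dependent spectral cut of $S$ at level $\mu(t)=\|f(t)\|_{D_{S,-\alpha}}^{1/(2-\alpha)}$.

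The genuine gap is in item (2) for $\alpha\in(-1,0)$. Your starting point $\sup_t\|u(t)\|_H\le (2\pi)^{-1}\int\|\hat u(\tau)\|_H\,\mathrm{d}\tau$ followed by Heinz gives $\|\hat u(\tau)\|_H\le a(\tau)^\theta b(\tau)^{1-\theta}$ with $a=\|S\hat u(\tau)\|_H$, $b=\|S^\alpha\hat u(\tau)\|_H$, $\theta=-\alpha/(1-\alpha)$; setting $c=|\tau|^{(1-\alpha)/2}b$ leaves $\int a^\theta c^{1-\theta}|\tau|^{-1/2}\,\mathrm{d}\tau$, with only $\|a\|_2,\|c\|_2$ at hand. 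This integral is \emph{not} controlled by those two quantities: any H\"older distribution of the weight produces a divergent $\int|\tau|^{-1}$, and a dyadic split in $\tau$ reduces to $\sum_j A_j^\theta C_j^{1-\theta}$ with $(A_j),(C_j)\in\ell^2$, which you cannot sum in $\ell^1$. In fact there exist $u\in\dot V_\alpha$ with $\hat u\notin L^1(\mathbb{R};H)$: in the spectral picture, concentrate the symbol $v(\tau,\lambda)$ near the parabola $\lambda^2=\tau$ with profile $f(\tau)=\tau^{-1}(\log\tau)^{-1}\mathbb{1}_{\tau>2}$; both $\dot V_\alpha$ seminorms reduce to $\int\tau|f|^2\,\mathrm{d}\tau<\infty$, yet $\int\|\hat u(\tau)\|_H\,\mathrm{d}\tau\approx\int|f|\,\mathrm{d}\tau=\infty$. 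Heinz has discarded the joint $(\tau,\lambda)$ structure too early. What does work is to keep the spectral variable and split the $\tau$-integral at the $\lambda$-\emph{dependent} parabolic scale $|\tau|=\lambda^2$: writing $u_\lambda(t)$ for the spectral components, Cauchy--Schwarz on $\{|\tau|<\lambda^2\}$ (paired with the $L^2(\mathbb{R};D_{S,1})$ datum) and on $\{|\tau|\ge\lambda^2\}$ (paired with the weighted $L^2(\mathbb{R};D_{S,\alpha})$ datum) gives
\[
|u_\lambda(t)|^2\lesssim \lambda^2\int_{|\tau|<\lambda^2}|v(\tau,\lambda)|^2\,\mathrm{d}\tau+\lambda^{2\alpha}\int_{|\tau|\ge\lambda^2}|\tau|^{1-\alpha}|v(\tau,\lambda)|^2\,\mathrm{d}\tau,
\]
and integrating in $\mathrm{d}\nu(\lambda)$ closes against $\|u\|_{\dot V_\alpha}^2$. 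Equivalently, the right mixed norm is $\big\|\,\|v(\cdot,\lambda)\|_{L^1_\tau}\big\|_{L^2_\lambda}$, not the larger $\big\|\,\|v(\tau,\cdot)\|_{L^2_\lambda}\big\|_{L^1_\tau}$ that your route implicitly targets.
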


\begin{prop}[Integral identities]\label{prop: Lions non borné}
Let $I$ be an open interval of $\mathbb{R}$. Assume that $u \in L^{2}(I;D_{S,1})$ if $I$ is unbounded, and that $u \in L^{2}(I;D_{S,1}) \cap L^{1}(I;H)$ if $I$ is bounded. Let $\rho \in (2,\infty]$. Assume that
\begin{equation*}
    \partial_t u =  f+g \quad \text{in} \ \mathcal{D}'(I;E_\infty),
\end{equation*}
with $f \in L^2(I;D_{S,-1})$ and $g \in L^{\rho'}(I;D_{S,-{\beta}})$, where ${\beta}={2}/{\rho} \in [0,1)$. Then $u \in C(\Bar{I},H)$ ($C_0(\Bar{I},H)$ if $I$ is unbounded), $ t \mapsto \left \| u(t) \right \|^2_H$ is absolutely continuous on $\Bar{I}$ and for all $ \sigma, \tau \in \Bar{I}$ such that $  \sigma < \tau$,
\begin{align}\label{eq:integralidentity}
\left \| u(\tau) \right \|^2_H-\left \| u(\sigma) \right \|^2_H = 2\mathrm{Re}\int_{\sigma}^{\tau} \langle f(t),u(t)\rangle_{H,-1}  + \langle g(t),u(t)\rangle_{H, -\beta}\  \mathrm d t.
\end{align}
\end{prop}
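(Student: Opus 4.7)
The statement is a Lions-type regularity theorem for a distributional equation whose right-hand side is split into two pieces of different time integrability. I would reduce the problem to the global case $I=\mathbb{R}$, identify the resulting function as an element of $\dot V_0$ to get continuity from Proposition~\ref{prop:embeddings}(2), and then prove the integral identity by a time mollification argument.

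\textbf{Step 1: Localisation and global extension.} Pick a smooth real cut-off $\chi$ with compact support in $\bar I$ (allowed to equal $1$ on any prescribed compact subinterval), and set $v:=\chi u$, extended by $0$ to $\mathbb{R}$. One computes, on $\mathbb{R}$,
\begin{equation*}
\partial_t v \;=\; \chi f \,+\, \chi g \,+\, \chi' u .
\end{equation*}
Here $\chi f \in L^2(\mathbb{R};D_{S,-1})$ and $\chi g \in L^{\rho'}(\mathbb{R};D_{S,-{\beta}})$. The remainder $\chi' u$ lies in $L^2(\mathbb{R};D_{S,1}) \hookrightarrow L^2(\mathbb{R};D_{S,-1})$ when $I$ is unbounded (taking $\chi$ with $\chi' \in L^\infty$), and in $L^1(\mathbb{R};H)$ when $I$ is bounded, thanks to the extra hypothesis $u \in L^1(I;H)$; in both cases Proposition~\ref{prop:embeddings}(1)--(4) gives $\chi' u \in \dot V_0^\star$. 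Altogether, $v\in L^2(\mathbb{R};D_{S,1})$ and $\partial_t v \in \dot V_0^\star = L^2(\mathbb{R};D_{S,-1})+\dot W_0$.

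\textbf{Step 2: Upgrading to $\dot V_0$ and continuity.} Writing $\partial_t v = h_1 + D_t^{1/2}h_2$ with $h_1\in L^2(\mathbb{R};D_{S,-1})$ and $h_2\in L^2(\mathbb{R};H)$, a direct Fourier-Plancherel argument in the time variable (using $\widehat{\partial_t v}(\tau)=i\tau\widehat{v}(\tau)$ and balancing the two contributions against the $L^2(\mathbb{R};D_{S,1})$ control of $v$) yields $D_t^{1/2}v\in L^2(\mathbb{R};H)$. Hence $v\in\dot V_0$, and by the extended Lions embedding (Proposition~\ref{prop:embeddings}(2) with $\alpha=0$), $v\in C_0(\mathbb{R};H)$. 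Since $\chi$ was arbitrary, covering $\bar I$ with such cut-offs gives $u\in C(\bar I;H)$, and $C_0(\bar I;H)$ when $I$ is unbounded (the values at an endpoint at infinity come from the $C_0$ property of $v$). Moreover, Proposition~\ref{prop:embeddings}(4) applied to $v$ gives $v\in L^\rho_{\mathrm{loc}}(\bar I;D_{S,\beta})$ when $\rho<\infty$; for $\rho=\infty$ this is automatic from $v\in C_0(\mathbb{R};H)=C_0(\mathbb{R};D_{S,0})$. This will make the pairing with $g$ absolutely integrable.

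\textbf{Step 3: The integral identity.} Fix $\sigma<\tau$ in $\bar I$, choose $\chi\equiv 1$ on a neighbourhood of $[\sigma,\tau]$ and keep the notation $v$ from above. Let $\rho_\varepsilon$ be a standard time mollifier and set $v_\varepsilon:=v\ast\rho_\varepsilon$. Then $v_\varepsilon\in C^\infty(\mathbb{R};D_{S,1})$ and
\begin{equation*}
\|v_\varepsilon(\tau)\|_H^2-\|v_\varepsilon(\sigma)\|_H^2 \;=\; 2\,\mathrm{Re}\!\int_\sigma^\tau \langle \partial_t v_\varepsilon(t), v_\varepsilon(t)\rangle_H\,\mathrm{d}t,
\end{equation*}
with $\partial_t v_\varepsilon = (\chi f)\ast\rho_\varepsilon + (\chi g)\ast\rho_\varepsilon + (\chi' u)\ast\rho_\varepsilon$. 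As $\varepsilon\to 0$, the left-hand side converges to $\|u(\tau)\|_H^2-\|u(\sigma)\|_H^2$ by the continuity of $u$ in $H$. On the right-hand side, the $\chi'$ piece vanishes on $[\sigma,\tau]$, and splitting the first two pieces against $v_\varepsilon$ in the dualities $(D_{S,-1},D_{S,1})$ and $(D_{S,-\beta},D_{S,\beta})$ respectively, H\"older's inequality in time (with exponents $(2,2)$ and $(\rho',\rho)$) plus the convergence of mollifications in $L^2(I;D_{S,1})$ and $L^\rho_{\mathrm{loc}}(I;D_{S,\beta})$ give the limit identity \eqref{eq:integralidentity}. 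Absolute continuity of $t\mapsto\|u(t)\|_H^2$ is an immediate consequence since the right-hand side is an integral of an $L^1_{\mathrm{loc}}$ function.

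\textbf{Main obstacle.} The delicate point is that $\partial_t u$ is not in $L^2_{\mathrm{loc}}(I;D_{S,-1})$ in general because of the less regular source $g$. Making the pairing $\langle g(t),u(t)\rangle_{H,-\beta}$ genuinely $L^1$ in $t$ (so that passage to the limit in the mollification is legitimate) hinges entirely on the gain of space-time integrability $v\in L^\rho(\mathbb{R};D_{S,\beta})$, which is only available through the mixed-norm embedding of Proposition~\ref{prop:embeddings}(4). This is precisely the nontrivial input beyond the classical Lions argument, and the reason the hypothesis $\rho>2$ (equivalently $\beta<1$) cannot be relaxed.
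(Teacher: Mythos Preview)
The paper does not supply its own proof of this proposition; it is quoted as an abstract tool from \cite{auscherbaadi2024fundamental}, so there is no in-paper argument to compare against. That said, your proposal has two genuine gaps.

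First, in Step~1 you assert $L^2(\mathbb{R};D_{S,1})\hookrightarrow L^2(\mathbb{R};D_{S,-1})$ (for the unbounded case) and $L^1(\mathbb{R};H)\subset\dot V_0^\star$ (for the bounded case). Neither holds. The $D_{S,\alpha}$ are \emph{homogeneous} completions and do not nest; for $S=(-\Delta_\omega)^{1/2}$ this would read $\dot H^1_\omega\hookrightarrow\dot H^{-1}_\omega$, which already fails for $\omega=1$ in dimension one. The second claim is equivalent by duality to $\dot V_0\hookrightarrow L^\infty(\mathbb{R};H)$, which is exactly the continuity you are trying to establish.

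Second, and more decisively, in Step~2 you invoke Proposition~\ref{prop:embeddings}(2) with $\alpha=0$, but that embedding is stated only for $\alpha\in[-1,0)$, and the endpoint genuinely fails: in the scalar model $H=\mathbb{C}$, $S=\mathrm{id}$, one has $\dot V_0=H^{1/2}(\mathbb{R})$, which does not embed into $L^\infty(\mathbb{R})$. So obtaining $v\in\dot V_0$ is not enough for $v\in C_0(\mathbb{R};H)$. The correct route is to aim for $\dot V_{-\gamma}$ with $\gamma>0$: from $u\in L^2(\mathbb{R};D_{S,1})$ and $\partial_t u\in \dot W_{-1}+\dot W_{-\beta}$ (the latter via Proposition~\ref{prop:embeddings}(3) applied to $g$) one lands in $\dot V_{-\beta}$ when $\beta>0$, and then Proposition~\ref{prop:embeddings}(2) applies legitimately. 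The borderline case $\beta=0$ (i.e.\ $\rho=\infty$, $g\in L^1(I;H)$) escapes this mechanism and requires a separate device, for instance subtracting the $H$-valued absolutely continuous primitive $t\mapsto\int_{t_0}^t g(s)\,\mathrm{d}s$ to reduce to a right-hand side in $L^2(I;D_{S,-1})$ alone. Your mollification Step~3 is sound once continuity and the membership $u\in L^\rho_{\mathrm{loc}}(I;D_{S,\beta})$ have been secured by the corrected Step~2.
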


\section{Lower-order coefficients in mixed Lebesgue spaces}\label{section 3}

In this section, we develop our variational framework for solving second-order degenerate parabolic equations with unbounded lower-order terms in mixed Lebesgue spaces, and obtain existence, uniqueness, regularity, and representation for the equations and their associated Cauchy problems.

\subsection{The variational space and an identification}

On $L^2_\omega(\mathbb{R}^n)$, we define the following positive, self-adjoint, and injective operator:
\begin{equation*}
    S := (-\Delta_\omega)^{1/2}, \quad \text{with} \quad D(S) = D((-\Delta_\omega)^{1/2}) = H^1_\omega(\mathbb{R}^n).
\end{equation*}
We recall that $\mathcal{D}(\mathbb{R}^n)$ is dense in $D(S) = H^1_\omega(\mathbb{R}^n)$ with respect to the graph norm.

We adopt the definitions from Section \ref{Subsection: Tools}, with $S = (-\Delta_\omega)^{1/2}$. The space $\dot{V}_0$ will play a central role as the variational space throughout this work.

\medskip

We record the following useful identification result.
\begin{lem}[Linking $L^2(I; D_{S,-1})$ and $L^2(I; \dot{H}^{-1}_\omega(\mathbb{R}^n))$]\label{lem: div}
Let $I \subset \mathbb{R}$ be an open interval. For any $F \in L^2(I; L^2_\omega(\mathbb{R}^n)^n)$, there exists a unique 
$h \in L^2(I; L^2_\omega(\mathbb{R}^n))$ such that 
\begin{equation*}
    \| h \|_{L^2(I; L^2_\omega(\mathbb{R}^n))} \le \| F \|_{L^2(I; L^2_\omega(\mathbb{R}^n))},
\end{equation*}
and
\begin{equation*}
    \llangle - \omega^{-1} \mathrm{div}_x(\omega F), \varphi \rrangle 
= \int_I \langle h(t), S \varphi(t) \rangle_{2,\omega} \, \mathrm{d}t,
\quad \forall \varphi \in L^2(I; H^1_\omega(\mathbb{R}^n)) \supset \mathcal{D}(I \times \mathbb{R}^n).
\end{equation*}
Conversely, for any $h \in L^2(I; L^2_\omega(\mathbb{R}^n))$, there exists $F \in L^2(I; L^2_\omega(\mathbb{R}^n))$ such that 
\begin{equation*}
\| F \|_{L^2(I; L^2_\omega(\mathbb{R}^n))} \le \| h \|_{L^2(I; L^2_\omega(\mathbb{R}^n))},
\end{equation*}
and the same identity holds:
\begin{equation*}
\llangle - \omega^{-1} \mathrm{div}_x(\omega F), \varphi \rrangle 
= \int_I \langle h(t), S \varphi(t) \rangle_{2,\omega} \, \mathrm{d}t,\quad \forall \varphi \in L^2(I; H^1_\omega(\mathbb{R}^n)) \supset \mathcal{D}(I \times \mathbb{R}^n).
\end{equation*}
In both cases, we have $- \omega^{-1} \mathrm{div}_x(\omega F) = Sh = (-\Delta_\omega)^{1/2} h$ in the weak sense against functions $\varphi \in L^2(I; H^1_\omega(\mathbb{R}^n)) \supset \mathcal{D}(I \times \mathbb{R}^n)$.

\end{lem}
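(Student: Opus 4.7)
My plan hinges on one identity: since $-\Delta_\omega$ is the self-adjoint operator associated with the closed symmetric form $a(u,v)=\int \nabla_x u \cdot \overline{\nabla_x v}\,\mathrm{d}\omega$ on $H^1_\omega(\mathbb{R}^n)\times H^1_\omega(\mathbb{R}^n)$, the form domain coincides with $D(S)=D((-\Delta_\omega)^{1/2})$ and
\begin{equation*}
\|Sf\|_{2,\omega}^2 = a(f,f) = \|\nabla_x f\|_{2,\omega}^2, \qquad \forall f \in H^1_\omega(\mathbb{R}^n).
\end{equation*}
This is the only nontrivial ingredient; everything else is abstract Hilbert space manipulation. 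Thus the linear map $U_0 : \nabla_x f \mapsto Sf$ is a well-defined isometry from the linear subspace $\nabla_x(H^1_\omega(\mathbb{R}^n))$ of $L^2_\omega(\mathbb{R}^n)^n$ into $L^2_\omega(\mathbb{R}^n)$. Since $S$ is injective and self-adjoint, its range is dense in $L^2_\omega(\mathbb{R}^n)$, so $U_0$ extends uniquely to a unitary isomorphism $U : \mathcal{H} \to L^2_\omega(\mathbb{R}^n)$, where $\mathcal{H}$ is the closure of $\nabla_x(H^1_\omega(\mathbb{R}^n))$ in $L^2_\omega(\mathbb{R}^n)^n$.

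For the first (forward) direction, given $F \in L^2(I;L^2_\omega(\mathbb{R}^n)^n)$ I would decompose $F(t) = P F(t) + (I-P)F(t)$ pointwise, where $P$ is the orthogonal projection in $L^2_\omega(\mathbb{R}^n)^n$ onto $\mathcal{H}$, and define $h(t) := U(PF(t))$. Bochner measurability is preserved because $P$ and $U$ are bounded linear, and the norm bound is immediate: $\|h\|_{L^2(I;L^2_\omega)} = \|PF\|_{L^2(I;L^2_\omega)} \le \|F\|_{L^2(I;L^2_\omega)}$. To verify the integral identity for $\varphi \in L^2(I;H^1_\omega(\mathbb{R}^n))$, observe that $\nabla_x\varphi(t) \in \mathcal{H}$ a.e., so
\begin{equation*}
\langle F(t), \nabla_x\varphi(t)\rangle_{2,\omega} = \langle PF(t), \nabla_x\varphi(t)\rangle_{2,\omega} = \langle U(PF(t)), U(\nabla_x\varphi(t))\rangle_{2,\omega} = \langle h(t), S\varphi(t)\rangle_{2,\omega},
\end{equation*}
and integration over $t$ together with the definition of $-\omega^{-1}\mathrm{div}_x(\omega F)$ from Definition \ref{def: distributions} yields the claimed identity. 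Uniqueness of $h$ follows by testing against $\varphi(t,x)=\chi(t)\psi(x)$ with $\chi \in L^2(I)$ and $\psi \in H^1_\omega(\mathbb{R}^n)$: any two candidates differ by an element whose pairing with every $S\psi$ vanishes a.e.~in $t$, and the density of $S(H^1_\omega(\mathbb{R}^n))$ in $L^2_\omega(\mathbb{R}^n)$ forces equality.

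For the converse direction, given $h \in L^2(I;L^2_\omega(\mathbb{R}^n))$ I simply set $F(t) := U^{-1}(h(t)) \in \mathcal{H} \subset L^2_\omega(\mathbb{R}^n)^n$; this is measurable and satisfies $\|F\|_{L^2(I;L^2_\omega)} = \|h\|_{L^2(I;L^2_\omega)}$ because $U$ is unitary. The same computation as above delivers the integral identity, and the final statement that $-\omega^{-1}\mathrm{div}_x(\omega F) = Sh$ in the weak sense is then just a rewriting of the identity using part (2) of Definition \ref{def: distributions} with $\beta=1$. The only delicate point in the whole argument is confirming the form-domain equality $D(S)=H^1_\omega(\mathbb{R}^n)$ with matching homogeneous norms, but this is standard once we rely on the construction of $-\Delta_\omega$ as the Friedrichs/form operator recalled in Section \ref{ssection 2.2}.
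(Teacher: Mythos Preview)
Your proof is correct. The paper's own proof is a two-line sketch: it invokes the duality $L^2(I;D_{S,1})^\star = L^2(I;D_{S,-1})$ for the forward direction and the Hahn--Banach theorem for the converse, omitting all details. Your argument unpacks exactly what these invocations amount to: the duality identification rests on the form-domain identity $\|Sf\|_{2,\omega}=\|\nabla_x f\|_{2,\omega}$, which you make explicit via the unitary $U$, and your use of orthogonal projection onto $\mathcal{H}$ in place of Hahn--Banach is the natural Hilbert-space substitute (and has the bonus of singling out the norm-minimizing $F$ in the converse direction). The two routes are equivalent in content, but yours is fully constructive and self-contained where the paper only gestures.
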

\begin{proof}
    The first result is straightforward by using the duality between $L^2(I; D_{S,1})$ and $L^2(I; D_{S,-1})$. The converse also follows by an application of the Hahn-Banach theorem. We omit the details.
\end{proof}

\subsection{Embeddings}

We adopt the following convention, since $L^1_{\mathrm{loc}}(I; L^2_\omega(\mathbb{R}^n))$ will serve as our main ambient space. For a given normed space $E$ of Banach-valued functions defined on $I$, we define
\begin{equation*}
    E^{\mathrm{loc}} := E \cap L^1_{\mathrm{loc}}(I; L^2_\omega(\mathbb{R}^n)),
\end{equation*} 
equipped with the same norm as $E$. This condition is to be understood in a qualitative sense. For example, for a measurable set $I \subset \mathbb{R}$, $r \in [1,\infty]$, and $\alpha \in \mathbb{R}$, noting that $D_{S,\alpha} \cap L^2_\omega(\mathbb{R}^n) = D(S^{\alpha})$, we have
\begin{equation*}
L^r(I; D_{S,\alpha})^{\mathrm{loc}} = \left\{ u \in L^1_{\mathrm{loc}}(I; L^2_\omega(\mathbb{R}^n)) : \ u(t) \in D(S^\alpha) \ \text{for a.e. } t \in I, \ \text{and} \ \| S^\alpha u \|_{L^r(I; L^2_\omega(\mathbb{R}^n))} < \infty \right\}.
\end{equation*}
In the case of $\dot{V}_\alpha$ for $-1 \le \alpha \le 1$, $I = \mathbb{R}$ automatically when considering $\dot{V}_\alpha^{\mathrm{loc}}$.

\medskip

We now recall the definition of mixed spaces. For pairs of exponents $(r, q) \in [1, \infty]^2$, intervals $I \subset \mathbb{R}$, open sets $\Omega \subset \mathbb{R}^n$, and a weight $\tilde{\omega}$ on $\Omega$, we write $L^r(I;L^q_{\Tilde{\omega}}(\Omega))$ for the mixed norm space of measurable functions $u: I \times \Omega \rightarrow \mathbb{C} $ with 
\begin{equation*}
    \| u \|_{L^r(I;L^q_{\Tilde{\omega}}(\Omega))}:= \left ( \int_I  \left ( \int_\Omega \left | u(t,x)\right |^q \, \mathrm d \Tilde{\omega}(x) \right )^{r/q}  \mathrm d t \right )^{1/r} < \infty,
\end{equation*}
with the usual modifications if either $r = \infty$ or $q = \infty$, or if the functions take values in $\mathbb{C}^n$.

For any open interval $I \subset \mathbb{R}$, we recall that we have set
\begin{equation*}
    \dot{\Sigma}^{r,q}(I)= \left\{ u \in L^1_{\mathrm{loc}}(I;L^2_\omega(\mathbb{R}^n)): u \in L^r(I;L^q_{\omega^{q/2}}(\mathbb{R}^n)) \quad \text{and} \quad \nabla_x u \in L^2(I;L^2_\omega(\mathbb{R}^n)^n) \right\},
\end{equation*}
where the gradient is taken in the sense of distributions, with norm
\begin{equation*}
    \|u \|_{\dot{\Sigma}^{r,q}(I)}:= \|u \|_{L^r(I;L^q_{\omega^{q/2}}(\mathbb{R}^n))}+ \| \nabla_x u \|_{L^2(I;L^2_\omega(\mathbb{R}^n))}.
\end{equation*}
Note that when $\omega^{q/2}$ is locally integrable on $\mathbb{R}^n$, then $\mathcal{D}(I\times \mathbb{R}^n)$ is dense in $L^r(I;L^q_{\omega^{q/2}}(\mathbb{R}^n))$. In particular, $L^r(I;L^q_{\omega^{q/2}}(\mathbb{R}^n))^{\mathrm{loc}}$ is a dense subspace of $L^r(I;L^q_{\omega^{q/2}}(\mathbb{R}^n))$. Moreover, by using the map $u \mapsto (u,\nabla_x u)$ together with Lemma \ref{lem: div}, the duality theory is easily understood by identifying isometrically $\dot{\Sigma}^{r,q}(I)$ to a subspace of $L^r(I;L^q_{\omega^{q/2}}(\mathbb{R}^n)) \times L^2(I;L^2_\omega(\mathbb{R}^n)^n) $. As a consequence, when $(r,q) \in [1,\infty)^2$, we have 
\begin{equation*}
    \dot{\Sigma}^{r,q}(I)^\star = L^2(I;D_{S,-1})+L^{r'}(I,L^{q'}_{\omega^{q'/2}}(\mathbb{R}^n)).
\end{equation*}
More precisely, for $\varphi \in \dot{\Sigma}^{r,q}(I)^\star$, there exists $(F,g) \in L^2(I;L^2_\omega(\mathbb{R}^n)^n) \times L^{r'}(I;L^{q'}_{\omega^{q'/2}}(\mathbb{R}^n)) $ such that 
\begin{align*}
    \varphi=-\omega^{-1} \mathrm{div}_x(\omega \, F)+g,
\end{align*}
that is
\begin{align*}
\ \varphi(u)=\int_I \left ( \langle F(t), \nabla_x u(t) \rangle_{2,\omega}+ \langle g(t), u(t)\rangle_{2,\omega} \right ) \, \mathrm{dt}, \quad \forall u \in \dot{\Sigma}^{r,q}(I),
\end{align*}
with 
\begin{align*}
    \|  \varphi \|_{\dot{\Sigma}^{r,q}(I)^\star} \simeq \inf_{\varphi =-\omega^{-1} \mathrm{div}_x(\omega F)+g} \| F \|_{L^2(I;L^2_\omega(\mathbb{R}^n))} +\| g \|_{L^{r'}(I,L^{q'}_{\omega^{q'/2}}(\mathbb{R}^n))} .
\end{align*}
\begin{rem}
    Here, the inner product $\langle \cdot, \cdot \rangle_{2,\omega}$ serves as the (natural) anti-duality pairing between the spaces $L^q_{\omega^{q/2}}(\mathbb{R}^n)$ and $L^{q'}_{\omega^{q'/2}}(\mathbb{R}^n)$.
\end{rem}

\begin{defn}
    A pair $(r, q)$ is admissible if $\omega \in RH_{\frac{q}{2}}(\mathbb{R}^n)$ and $\frac{1}{r} + \frac{n}{2q} = \frac{n}{4}$, with $2 \le r, q < \infty$. 
\end{defn}
\begin{rems}
\begin{enumerate}\label{rems: RH}
    \item If $\omega \in RH_{\frac{q}{2}}(\mathbb{R}^n)$, then $\omega^{q/2}$ is locally integrable on $\mathbb{R}^n$.
    \item If $(r, q)$ is admissible and $(\Tilde{r}, \Tilde{q})$ is another pair with $r \le \Tilde{r} < \infty$ and $\frac{1}{\Tilde{r}} + \frac{n}{2\Tilde{q}} = \frac{n}{4}$, then it is automatically admissible. Indeed, we have $\Tilde{q} \in (2, q]$ and $\omega \in RH_{\frac{q}{2}}(\mathbb{R}^n) \subset RH_{\frac{\Tilde{q}}{2}}(\mathbb{R}^n)$ by Point (1) of Proposition \ref{prop:weights}. 
\end{enumerate}
\end{rems}

This definition of admissible pairs is motivated by the forthcoming results.

\begin{lem}\label{lem:xSobolev}
    If $(r,q)$ is an admissible pair and $I \subset \mathbb{R}$ is an interval, then we have a continuous inclusion
    \begin{equation*}
        L^r(I;D_{S,\frac{2}{r}})^{\mathrm{loc}} \hookrightarrow L^r(I;L^q_{\omega^{q/2}}(\mathbb{R}^n))^{\mathrm{loc}}.
    \end{equation*}
    By duality, this yields the continuous embedding
    \begin{equation*}
        L^{r'}(I;L^{q'}_{\omega^{q'/2}}(\mathbb{R}^n)) \hookrightarrow L^{r'}(I;D_{S,-\frac{2}{r}})\footnote{This is not a genuine set-theoretic inclusion. The meaning of this embedding is clarified in the proof.}.
    \end{equation*}
\end{lem}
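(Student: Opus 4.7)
The plan is to derive the first inclusion by applying Theorem \ref{thm:fractionalHLS} pointwise in time, and then deduce the dual statement through a standard Bochner-duality argument. The admissibility relation $\frac{1}{r} + \frac{n}{2q} = \frac{n}{4}$ rearranges as $\frac{2}{r} = n(\frac{1}{2} - \frac{1}{q})$, so setting $\alpha = 2/r$ and recalling $S = (-\Delta_\omega)^{1/2}$, the hypotheses of Theorem \ref{thm:fractionalHLS} are met. It yields, for some $C = C([\omega]_{A_2}, [\omega]_{RH_{q/2}}, n, q)$ and every $f \in D(S^{2/r})$,
\[
\|f\|_{L^q_{\omega^{q/2}}(\mathbb{R}^n)} \le C \|S^{2/r} f\|_{L^2_\omega(\mathbb{R}^n)}.
\]

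For the first embedding, I would take $u \in L^r(I; D_{S,2/r})^{\mathrm{loc}}$. By the convention recalled in this section, $u \in L^1_{\mathrm{loc}}(I; L^2_\omega(\mathbb{R}^n))$ and $u(t) \in D_{S,2/r} \cap L^2_\omega(\mathbb{R}^n) = D(S^{2/r})$ for a.e.\ $t \in I$, with $S^{2/r}u \in L^r(I; L^2_\omega(\mathbb{R}^n))$. Applying the pointwise estimate above and integrating the $r$-th power over $I$ gives
\[
\|u\|_{L^r(I; L^q_{\omega^{q/2}}(\mathbb{R}^n))} \le C \|u\|_{L^r(I; D_{S,2/r})},
\]
and the local $L^2_\omega$ integrability of $u$ guarantees that $u$ actually belongs to $L^r(I; L^q_{\omega^{q/2}}(\mathbb{R}^n))^{\mathrm{loc}}$.

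For the dual statement, the issue signalled in the footnote is that the two spaces do not sit inside a common ambient space set-theoretically, so the embedding must be understood as a continuous injection induced by duality. Since $D_{S,2/r}$ is a Hilbert space with anti-dual $D_{S,-2/r}$ and $r \in [2,\infty)$, standard Bochner duality yields $(L^r(I; D_{S,2/r}))^\star = L^{r'}(I; D_{S,-2/r})$. For $v \in L^{r'}(I; L^{q'}_{\omega^{q'/2}}(\mathbb{R}^n))$, I define
\[
\Lambda_v(\varphi) := \int_I \langle \varphi(t), v(t) \rangle_{2,\omega} \, \mathrm{d}t
\]
for $\varphi$ in the dense subspace $L^r(I; D_{S,2/r})^{\mathrm{loc}}$ of $L^r(I; D_{S,2/r})$ (density holding because the former contains $\mathcal{D}(I \times \mathbb{R}^n)$, and the latter is covered by smooth compactly supported approximations using that $\omega^{q/2}\in L^1_{\mathrm{loc}}(\mathbb{R}^n)$ by Remarks \ref{rems: RH}). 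Combining the first embedding with H\"older in space and time, I get $|\Lambda_v(\varphi)| \le C \|v\|_{L^{r'}(I; L^{q'}_{\omega^{q'/2}})} \|\varphi\|_{L^r(I; D_{S,2/r})}$, so $\Lambda_v$ extends uniquely to a continuous anti-linear form on $L^r(I; D_{S,2/r})$, i.e.\ an element of $L^{r'}(I; D_{S,-2/r})$ with the desired norm bound. Injectivity of $v \mapsto \Lambda_v$ follows by testing against $\mathcal{D}(I \times \mathbb{R}^n)$ and invoking density of $\mathcal{D}(\mathbb{R}^n)$ in $L^q_{\omega^{q/2}}(\mathbb{R}^n)$.

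The whole analytic substance is concentrated in Theorem \ref{thm:fractionalHLS}; the remainder is routine exponent matching via admissibility and bookkeeping for the duality interpretation. I do not expect any serious obstacle, and the mildly delicate point is simply articulating the meaning of the dual embedding, since the two spaces are not genuinely comparable set-theoretically.
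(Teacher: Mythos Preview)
Your proposal is correct and follows essentially the same route as the paper: the first inclusion is obtained by applying Theorem \ref{thm:fractionalHLS} pointwise in $t$ (using that admissibility gives $\tfrac{2}{r}=n(\tfrac12-\tfrac1q)$) and integrating, and the dual statement is obtained by bounding the pairing $\llangle g,\varphi\rrangle$ via H\"older plus the first inclusion on the dense subspace $L^r(I;D_{S,2/r})^{\mathrm{loc}}$. The only cosmetic difference is that the paper makes the target element explicit by writing $g=S^{2/r}h$ with $h\in L^{r'}(I;L^2_\omega(\mathbb{R}^n))$, whereas you invoke the abstract identification $(L^r(I;D_{S,2/r}))^\star=L^{r'}(I;D_{S,-2/r})$; these amount to the same thing.
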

\begin{proof}
If $u \in L^r(I; D_{S, \frac{2}{r}})^{\mathrm{loc}}$, then $u(t) \in D(S^{\frac{2}{r}})$ for almost every $t \in I$. Moreover, the pair $(r, q)$ is admissible if and only if $\omega \in RH_{\frac{q}{2}}(\mathbb{R}^n)$ and $\frac{2}{r} = n \left( \frac{1}{2} - \frac{1}{q} \right)$. Therefore, using Theorem~\ref{thm:fractionalHLS}, there is a constant $C = C([ \omega  ]_{A_2},[ \omega  ]_{RH_{\frac{q}{2}}},n,q)$ such that for almost every $t \in I$,
\begin{equation*}
    \| u(t) \|_{L^q_{\omega^{q/2}}(\mathbb{R}^n)} \le C \| (S^{\frac{2}{r}} u)(t) \|_{L^2_\omega(\mathbb{R}^n)}.
\end{equation*}
The first continuous inclusion then follows by raising both sides of this inequality to the power $r$ and integrating with respect to $t \in I$.

To prove the dual embedding result, fix $g \in L^{r'}(I; L^{q'}_{\omega^{q'/2}}(\mathbb{R}^n))$. Then for all $\varphi \in L^r(I; D_{S,\frac{2}{r}})^{\mathrm{loc}}$, we have by H\"older inequality,
\begin{equation*}
|\llangle g, \varphi \rrangle| \le \| g \|_{L^{r'}(I; L^{q'}_{\omega^{q'/2}}(\mathbb{R}^n))}  \| \varphi \|_{L^r(I; L^q_{\omega^{q/2}}(\mathbb{R}^n))} 
\le C \| g \|_{L^{r'}(I; L^{q'}_{\omega^{q'/2}}(\mathbb{R}^n))}  \| \varphi \|_{L^r(I; D_{S,\frac{2}{r}})}.
\end{equation*}
Moreover, we have $\mathcal{D}(I \times \mathbb{R}^n) \subset L^r(I; D_{S,\frac{2}{r}})^{\mathrm{loc}}$, and it is a dense subspace of both $L^r(I; L^q_{\omega^{q/2}}(\mathbb{R}^n))$ and $L^{r}(I; D_{S,\frac{2}{r}})$. Thus, there exists a unique $h \in L^{r'}(I; L^2_\omega(\mathbb{R}^n))$ such that
\begin{equation*}
 \| h \|_{L^{r'}(I; L^2_\omega(\mathbb{R}^n))}  \le C \| g \|_{L^{r'}(I; L^{q'}_{\omega^{q'/2}}(\mathbb{R}^n))},
\end{equation*}
and
\begin{equation*}
\llangle g, \varphi \rrangle = \llangle h, S^{\frac{2}{r}} \varphi \rrangle, \quad \forall \varphi \in L^r(I; D_{S,\frac{2}{r}})^{\mathrm{loc}} \supset \mathcal{D}(I \times \mathbb{R}^n).
\end{equation*}
\end{proof}

\begin{cor}\label{cor:embeddings}
If $(r,q)$ is an admissible pair, then 
\begin{equation*}
\dot{V}_0^{\mathrm{loc}} \hookrightarrow \dot{V}_{\frac{2}{r}}^{\mathrm{loc}} \hookrightarrow L^r(\mathbb{R};D_{S,\frac{2}{r}})\cap L^2(\mathbb{R};D_{S,1})^{\mathrm{loc}} \hookrightarrow \dot{\Sigma}^{r,q}(\mathbb{R}).
\end{equation*}
By duality, this yields the continuous inclusions
\begin{equation*}
\dot{\Sigma}^{r,q}(\mathbb{R})^{\star}  \hookrightarrow L^2(\mathbb{R};D_{S,-1})+L^{r'}(\mathbb{R};D_{S,-\frac{2}{r}}) \hookrightarrow \dot{V}_{\frac{2}{r}}^{\star} \hookrightarrow \dot{V}_0 ^{\star}.
\end{equation*}
\end{cor}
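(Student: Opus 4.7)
The plan is to obtain the primal chain by three direct applications of Proposition \ref{prop:embeddings} and Lemma \ref{lem:xSobolev}, and then to transpose each inclusion for the dual chain, using the representation of $\dot{\Sigma}^{r,q}(\mathbb{R})^\star$ recalled just above the corollary. Throughout, set $\alpha := 2/r \in (0,1]$, so $r = 2/\alpha$.

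For the primal chain, the first inclusion $\dot{V}_0^{\mathrm{loc}} \hookrightarrow \dot{V}_\alpha^{\mathrm{loc}}$ is simply the hierarchy $\dot{V}_0 \hookrightarrow \dot{V}_\alpha$ of Proposition \ref{prop:embeddings}(1), restricted to the common ambient space $L^1_{\mathrm{loc}}(\mathbb{R};L^2_\omega(\mathbb{R}^n))$. For the second inclusion, the very definition of $\dot{V}_\alpha$ gives $\dot{V}_\alpha \hookrightarrow L^2(\mathbb{R};D_{S,1})$ for free, while Proposition \ref{prop:embeddings}(3), valid on the full range $\alpha \in (0,1]$ (the case $\alpha = 1$ being trivial), yields $\dot{V}_\alpha \hookrightarrow L^r(\mathbb{R};D_{S,\alpha})$; intersecting with $L^1_{\mathrm{loc}}(\mathbb{R};L^2_\omega(\mathbb{R}^n))$ delivers the claim. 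The third inclusion is where the admissibility hypothesis is used: Lemma \ref{lem:xSobolev} provides $u \in L^r(\mathbb{R};L^q_{\omega^{q/2}}(\mathbb{R}^n))$ with the correct norm bound, while the identifications $D(S) = H^1_\omega(\mathbb{R}^n)$ and $\|Sv\|_{2,\omega} = \|\nabla_x v\|_{2,\omega}$ transfer the $L^2(\mathbb{R};D_{S,1})^{\mathrm{loc}}$ control to $\nabla_x u \in L^2(\mathbb{R};L^2_\omega(\mathbb{R}^n)^n)$; here the ``loc'' condition is what ensures that the pointwise-in-time spatial gradient really coincides with the distributional gradient of $u$ on $\mathbb{R} \times \mathbb{R}^n$.

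The dual chain then follows by anti-dualizing each step of the primal chain. Using the representation of $\dot{\Sigma}^{r,q}(\mathbb{R})^\star$ recalled above, the dual statement of Lemma \ref{lem:xSobolev} applied to the second summand yields the first dual inclusion $\dot{\Sigma}^{r,q}(\mathbb{R})^\star \hookrightarrow L^2(\mathbb{R};D_{S,-1}) + L^{r'}(\mathbb{R};D_{S,-\alpha})$. For the middle dual inclusion, Proposition \ref{prop:embeddings}(1) gives $\dot{V}_\alpha^\star = L^2(\mathbb{R};D_{S,-1}) + \dot{W}_{-\alpha}$, and Proposition \ref{prop:embeddings}(3) provides $L^{r'}(\mathbb{R};D_{S,-\alpha}) \hookrightarrow \dot{W}_{-\alpha}$, so adding the two summands produces the desired embedding. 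The last dual inclusion $\dot{V}_\alpha^\star \hookrightarrow \dot{V}_0^\star$ is the anti-dual of $\dot{V}_0 \hookrightarrow \dot{V}_\alpha$ from Proposition \ref{prop:embeddings}(1). No single step is genuinely hard; the whole proof is essentially a bookkeeping assembly of results already in hand. The only point requiring care is that the ``loc'' decorations in the primal chain encode membership in a qualitative ambient distribution space rather than a normed restriction, so they must be simply discarded (not dualized) when transposing to the dual chain.
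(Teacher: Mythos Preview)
Your proof is correct and follows essentially the same approach as the paper: the three primal embeddings come from Proposition~\ref{prop:embeddings}(1), Proposition~\ref{prop:embeddings}(3), and Lemma~\ref{lem:xSobolev} respectively, and the dual chain is obtained by transposition. Your treatment is in fact more detailed than the paper's, which dispatches the dual inclusions with ``follow straightforwardly, and we skip details''; your explicit use of the identification $\dot{V}_\alpha^\star = L^2(\mathbb{R};D_{S,-1}) + \dot{W}_{-\alpha}$ together with $L^{r'}(\mathbb{R};D_{S,-\alpha}) \hookrightarrow \dot{W}_{-\alpha}$ for the middle dual step, and your remark on handling the ``loc'' decorations, are welcome clarifications.
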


\begin{proof}
    For the first embedding, holds because the inclusion $\dot{V}_0 \hookrightarrow \dot{V}_{\frac{2}{r}}$ is continuous. The second embedding is the Hardy–Littlewood–Sobolev embedding in the time variable, as stated in Point (3) of Proposition \ref{prop:embeddings}. The third embedding is precisely the result of Lemma \ref{lem:xSobolev}. The embeddings of the dual spaces follow straightforwardly, and we skip details.
\end{proof}

\begin{rem}
    Roughly speaking, what happens here is that we first apply the time Hardy–Littlewood –Sobolev embedding to obtain the inclusion 
    $\dot{H}^{\frac{1-\alpha}{2}}(\mathbb{R}; D_{S,\alpha}) \hookrightarrow L^r(\mathbb{R}; D_{S,\alpha})$, 
    with $\alpha = \frac{2}{r}$. Then, we apply the spatial Hardy–Littlewood–Sobolev embedding 
    $D_{S,\alpha} \left( = \dot{H}_\omega^\alpha(\mathbb{R}^n)\right) \hookrightarrow L^q_{\omega^{q/2}}(\mathbb{R}^n)$, for $\alpha = n \left( \frac{1}{2} - \frac{1}{q} \right)$. Combining these yields the mixed-norm space $L^r L^q_{\omega^{q/2}}$ as an endpoint embedding space. Solving for $\alpha$ in terms of $r$ and $q$ leads to the definition of $(r, q)$ as an admissible pair.

\end{rem}

\subsection{The degenerate parabolic operator \texorpdfstring{$ \partial_t  + \mathcal{B}$}{dtplusB}}

    The parabolic operator $\partial_t + \mathcal{B}$ on $\mathbb{R}\times \mathbb{R}^n$ has a non-autonomous elliptic part $\mathcal{B}$ with unbounded lower-order terms, as defined in \eqref{eq: B}. Its coefficients $A$, $a$, $b$, and $c$ depend on $(t,x)$. The first coefficient is a matrix-valued function $A: \mathbb{R}\times \mathbb{R}^n \rightarrow M_n(\mathbb{C})$ with complex measurable coefficients such that $\omega^{-1}A$ is bounded, so for all $t\in \mathbb{R}$ and $u,v \in H^1_\omega(\mathbb{R}^n)$,
    \begin{equation}\label{eq: borne ponctuelle A}
         |\langle \omega^{-1} A(t) \nabla_x u , \nabla_x v \rangle_{2,\omega}|  \le M \| \nabla_x u \|_{2,\omega} \| \nabla_x v \|_{2,\omega},
    \end{equation}
    with $M:=\| \omega^{-1} A \|_{L^\infty(\mathbb{R}^{n+1})}$. In particular, we have 
    \begin{equation}\label{eq: borne A}
         \int_\mathbb{R} |\langle \omega^{-1} A(t) \nabla_x u(t) , \nabla_x v(t) \rangle_{2,\omega}| \, \mathrm d t \le M \| \nabla_x u \|_{L^2(I;L^2_\omega (\mathbb{R}^n))} \| \nabla_x v \|_{L^2(I;L^2_\omega (\mathbb{R}^n))}, 
    \end{equation}
    for all distributions $u,v \in \mathcal{D}'(\mathbb{R}^{n+1})$ with $\nabla_xu, \nabla_x v \in L^2(\mathbb{R};L^2_\omega(\mathbb{R}^n)^n)$. For all $t\in \mathbb{R}$ and $u,v\in \mathcal{D}'(\mathbb{R}^n)$ with $\nabla_xu, \nabla_x v \in L^2_\omega(\mathbb{R}^n)^n$, we set 
    \begin{equation*}
        \mathcal{A}(t)(u,v):= \langle \omega^{-1} A(t) \nabla_x u(t) , \nabla_x v(t) \rangle_{2,\omega}.
    \end{equation*}
    For all distributions $u,v \in \mathcal{D}'(\mathbb{R}^{n+1})$ with $\nabla_xu, \nabla_x v \in L^2(\mathbb{R};L^2_\omega(\mathbb{R}^n)^n)$, we set the form
    \begin{equation*}
         \mathcal{A}(u,v):= \int_\mathbb{R} \mathcal{A}(t)(u(t),v(t)) \, \mathrm d t= \int_\mathbb{R} \langle \omega^{-1} A(t) \nabla_x u(t) , \nabla_x v(t) \rangle_{2,\omega} \, \mathrm d t. 
    \end{equation*}
    The lower-order coefficients $a$, $b$ are $n$-vectors of complex-valued, measurable functions on $\mathbb{R}^{1+n}$, and $c$ is a complex-valued, measurable functions on $\mathbb{R}^{1+n}$. Next, we introduce the sesquilinear pairings corresponding to the lower-order terms. For all $t\in \mathbb{R}$, we set 
    \begin{equation*}
        \beta(t)(u,v):= \langle a(t) u , \nabla_x v \rangle_{2,\omega}+ \langle b(t) \cdot \nabla_x u , v \rangle_{2,\omega}+\langle c(t) u , v \rangle_{2,\omega}.
    \end{equation*}
    We also introduce the notation
    \begin{equation*}
        \beta(u,v):= \int_{\mathbb{R}} \beta(t)(u(t),v(t)) \, \mathrm{d}t.
    \end{equation*}
    The formal complex adjoint of $\mathcal{B}$ is given by 
    $$\mathcal{B}^\star u=-\omega^{-1} \mathrm{div}_x(A^\star\nabla_x u) -\omega^{-1} \mathrm{div}_x(\omega \, \overline{b} u)+\overline{a} \cdot \nabla_x u + \overline{c} u.$$
    The quantity $P_{r,q}$ defined in \eqref{eq: P_rq}, linked to the lower-order terms, arises in the following two results.
    \begin{lem}\label{lem:conditions coeff}
        Let $(r,q)$ be an admissible pair and assume that $P_{r,q}<\infty$. For all $u,v \in \dot{\Sigma}^{r,q}$ and almost every $t\in \mathbb{R}$, we have 
        \begin{align*}
            |\beta(u,v)(t)| \le &\| a(t) \|_{L^{\frac{2q}{q-2}}(\mathbb{R}^n)} \|u(t) \|_{L^q_{\omega^{q/2}}(\mathbb{R}^n)} \| \nabla_x v(t) \|_{L^2_{\omega}(\mathbb{R}^n)}\\&+ \| b(t) \|_{L^{\frac{2q}{q-2}}(\mathbb{R}^n)} \|v(t) \|_{L^q_{\omega^{q/2}}(\mathbb{R}^n)} \| \nabla_x u(t) \|_{L^2_{\omega}(\mathbb{R}^n)}\\&+ \| c(t) \|_{L^{\frac{q}{q-2}}(\mathbb{R}^n)} \|u(t) \|_{L^q_{\omega^{q/2}}(\mathbb{R}^n)} \|v(t) \|_{L^q_{\omega^{q/2}}(\mathbb{R}^n)} .
        \end{align*}
        In particular, $t \mapsto \beta(u,v)(t) \in L^1(\mathbb{R})$ and we have 
        \begin{equation*}
            |\beta(u,v)| \le P_{r,q} \|u \|_{\dot{\Sigma}^{r,q}(\mathbb{R})} \|v \|_{\dot{\Sigma}^{r,q}(\mathbb{R})}.
        \end{equation*}
    \end{lem}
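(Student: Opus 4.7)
The proof reduces to a double application of Hölder's inequality: first in the spatial variable to obtain the pointwise-in-time bound, and then in the time variable to obtain the global bound involving $P_{r,q}$ and the $\dot{\Sigma}^{r,q}(\mathbb{R})$ norms. Neither the admissibility of $(r,q)$ nor the weight hypothesis $\omega \in RH_{q/2}(\mathbb{R}^n)$ intervene directly in the argument; they only enter through the well-definedness of the function spaces involved.

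For the pointwise-in-time estimate, I plan to redistribute the weight in each of the three sesquilinear pairings defining $\beta(t)(u,v)$ by writing $\omega = \sqrt{\omega}\cdot\sqrt{\omega}$. For example, for the $a$-term,
\[
|\langle a(t) u(t), \nabla_x v(t)\rangle_{2,\omega}| \le \int_{\mathbb{R}^n} |a(t,x)| \cdot |u(t,x)|\sqrt{\omega(x)} \cdot |\nabla_x v(t,x)|\sqrt{\omega(x)} \, \mathrm{d}x,
\]
and I apply Hölder in $x$ with exponents $\tfrac{2q}{q-2}$, $q$, $2$, which satisfy $\tfrac{q-2}{2q} + \tfrac{1}{q} + \tfrac{1}{2} = 1$. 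This produces exactly $\|a(t)\|_{L^{2q/(q-2)}} \|u(t)\|_{L^q_{\omega^{q/2}}} \|\nabla_x v(t)\|_{L^2_\omega}$, i.e. the first line of the asserted pointwise bound. The $b$-term is handled identically after exchanging the roles of $u$ and $v$, while for the $c$-term I would apply Hölder with exponents $\tfrac{q}{q-2}$, $q$, $q$ (again summing to $1$).

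To pass to the global estimate, I integrate the pointwise bound in $t$ and apply Hölder in time. For the $a$- and $b$-terms, the natural exponents are $\tfrac{2r}{r-2}$, $r$, $2$, which again sum to $1$; for the $c$-term, $\tfrac{r}{r-2}$, $r$, $r$. These match precisely the time norms in the definition of $P_{r,q}$ and those controlled by $\|u\|_{\dot{\Sigma}^{r,q}(\mathbb{R})}$ and $\|v\|_{\dot{\Sigma}^{r,q}(\mathbb{R})}$, so the desired estimate follows immediately. Measurability in $t$ of each scalar factor (and hence of $\beta(u,v)(t)$) is a standard Tonelli-type observation once the $L^p$-norms of Bochner-measurable functions are invoked.

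The main ``obstacle'' here is really just arithmetic bookkeeping: one must check that the two triples of Hölder exponents sum to one in both the space and time estimates. There is no analytic difficulty, and in particular no tool beyond elementary Hölder is needed, which is precisely why this lemma serves as the entry point for the subsequent analysis.
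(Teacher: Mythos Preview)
Your proposal is correct and follows essentially the same approach as the paper: split the weight as $\sqrt{\omega}\cdot\sqrt{\omega}$, apply H\"older in $x$ with exponents $\tfrac{2q}{q-2},q,2$ (resp.\ $\tfrac{q}{q-2},q,q$) for the first-order (resp.\ zeroth-order) terms, then H\"older in $t$ with the analogous triples $\tfrac{2r}{r-2},r,2$ and $\tfrac{r}{r-2},r,r$. Your observation that neither admissibility nor the $RH_{q/2}$ condition is used here matches the paper's own remark immediately following the proof.
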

    \begin{proof}
        For the first inequality, we proceed as follows. Regarding the term involving $a$, we write:
        \begin{equation*}
            \int_{\mathbb{R}^n} |a(t,x)| |u(t,x)| |\nabla_x v(t,x)| \, \mathrm d \omega(x)= \int_{\mathbb{R}^n} |a(t,x)| \times |u(t,x)|\omega^{1/2}(x) \times |\nabla_x v(t,x)| \omega^{1/2}(x) \, \mathrm d x,
        \end{equation*}
        and the result follows from H\"older inequality, since
        $$\frac{1}{\frac{2q}{q-2}}+\frac{1}{q}+\frac{1}{2}=1.$$
        The term involving $b$ is treated in the same way. The same applies to the term involving $c$, since
        $$\frac{1}{\frac{q}{q-2}}+\frac{1}{q}+\frac{1}{q}=1.$$
        The second inequality is treated in exactly the same way, using H\"older's inequality, since
        $$\frac{1}{\frac{2r}{r-2}} + \frac{1}{r} + \frac{1}{2} = 1 \quad \text{and} \quad \frac{1}{\frac{r}{r-2}} + \frac{1}{r} + \frac{1}{r} = 1.$$
    \end{proof}
    Remark that at this stage, we have not used the $RH_{q/2}$ condition on $\omega$.
    By combining Lemma \ref{lem:conditions coeff} and Corollary \ref{cor:embeddings}, we arrive at the following result.
    \begin{cor}\label{cor: borne Beta}
    Let $(r, q)$ be an admissible pair and assume that $P_{r,q} < \infty$. Then there exists a constant $C = C([\omega]_{A_2}, [\omega]_{RH_{\frac{q}{2}}}, n, q) $ such that
    \begin{equation*}
        |\beta(u, v)| \le C P_{r,q} \|u\|_{\dot{V}_0} \|v\|_{\dot{V}_0},
    \end{equation*}
    for all $u, v \in \dot{V}_0^{\mathrm{loc}}$. In other words, $\beta :  \dot{V}_0^{\mathrm{loc}}  \times \dot{V}_0^{\mathrm{loc}} \rightarrow \mathbb{C}$ is a bounded sesquilinear form.
    \end{cor}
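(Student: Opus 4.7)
The plan is to combine the two ingredients already established, namely Lemma \ref{lem:conditions coeff} and Corollary \ref{cor:embeddings}, in the obvious way. Lemma \ref{lem:conditions coeff} gives the bound
\begin{equation*}
|\beta(u,v)| \le P_{r,q}\, \|u\|_{\dot{\Sigma}^{r,q}(\mathbb{R})}\, \|v\|_{\dot{\Sigma}^{r,q}(\mathbb{R})},
\end{equation*}
so it suffices to dominate the $\dot{\Sigma}^{r,q}(\mathbb{R})$ norm by the $\dot{V}_0$ norm on $\dot{V}_0^{\mathrm{loc}}$, which is precisely the content of the chain of embeddings recorded in Corollary \ref{cor:embeddings}.

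More concretely, for $u \in \dot{V}_0^{\mathrm{loc}}$, I would apply first the continuous embedding $\dot{V}_0 \hookrightarrow \dot{V}_{2/r}$ from Point (1) of Proposition \ref{prop:embeddings}, then the time Hardy–Littlewood–Sobolev embedding of Point (3) of Proposition \ref{prop:embeddings}, which controls $\|u\|_{L^r(\mathbb{R}; D_{S,2/r})}$ by $\|u\|_{\dot{V}_0}$, and finally the spatial embedding from Lemma \ref{lem:xSobolev}, which yields
\begin{equation*}
\|u\|_{L^r(\mathbb{R}; L^q_{\omega^{q/2}}(\mathbb{R}^n))} \le C \|u\|_{L^r(\mathbb{R}; D_{S,2/r})},
\end{equation*}
with $C = C([\omega]_{A_2}, [\omega]_{RH_{q/2}}, n, q)$. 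The $L^2(\mathbb{R}; L^2_\omega(\mathbb{R}^n))$ control on $\nabla_x u$ is trivial from the definition of $\dot{V}_0$ via $\|S u\|_{L^2(\mathbb{R}; L^2_\omega(\mathbb{R}^n))} = \|\nabla_x u\|_{L^2(\mathbb{R}; L^2_\omega(\mathbb{R}^n))}$. Adding the two estimates yields $\|u\|_{\dot{\Sigma}^{r,q}(\mathbb{R})} \le C \|u\|_{\dot{V}_0}$ with the same dependence for the constant.

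Applying the same bound to $v$ and inserting both into Lemma \ref{lem:conditions coeff} gives the result. There is no real obstacle here; the only mild subtlety worth checking is the dependence of the constants. The Hardy–Littlewood–Sobolev step brings a constant depending on $r$, but since $\frac{1}{r} + \frac{n}{2q} = \frac{n}{4}$ the exponent $r$ is determined by $n$ and $q$, so this dependence is absorbed into $C(n,q)$; the remaining constant from Theorem \ref{thm:fractionalHLS} depends exactly on $[\omega]_{A_2}, [\omega]_{RH_{q/2}}, n, q$, matching the announced dependence. Sesquilinearity of $\beta$ on $\dot{V}_0^{\mathrm{loc}} \times \dot{V}_0^{\mathrm{loc}}$ is immediate from the definition.
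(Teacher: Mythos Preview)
Your proof is correct and follows exactly the approach indicated in the paper, which simply states that the result is obtained by combining Lemma \ref{lem:conditions coeff} and Corollary \ref{cor:embeddings}. You have in fact provided more detail than the paper itself, including a careful check of the constant dependencies.
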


    \begin{defn}
        A pair $(r, q)$ is said to be \emph{compatible for lower-order terms} if it is admissible and satisfies $P_{r,q} < \infty$.
    \end{defn}
    
    \begin{rem}
    We could have chosen admissible pairs $(r_i, q_i)$ possibly different for each of the entries $a$, $b$, and $c$ in Lemma \ref{lem:conditions coeff}. In this case, we must assume that $\omega \in RH_{\frac{\max(q_i)}{2}}(\mathbb{R}^n)$. One would then work in the intersection of the spaces $\dot{\Sigma}^{r_i, q_i}$, and all our results remain valid, provided the embedding results are adjusted accordingly. To simplify the presentation, we assume they are the same.
    \end{rem}

\begin{defn}[Forward parabolic operator $\partial_t + \mathcal{B}$ applied to $u \in \dot{\Sigma}^{r,q}(I)$]
    Let $(r,q)$ be a compatible pair for lower-order terms, and let $I \subset \mathbb{R}$ be an open interval. For any $u \in \dot{\Sigma}^{r,q}(I)$ and all test functions $\varphi \in \mathcal{D}(I \times \mathbb{R}^n)$, we define
    \begin{align*}
        \llangle (\partial_t + \mathcal{B})u, \varphi \rrangle
        &:= - \llangle u, \partial_t \varphi \rrangle + \llangle \mathcal{B}u, \varphi \rrangle \\
        &:= \int_I - \langle u(t), \partial_t \varphi(t) \rangle_{2,\omega} \, \mathrm{d}t
        + \int_I \langle A(t) \nabla_x u(t), \nabla_x \varphi(t) \rangle_{2,\omega} 
        + \beta(t)(u(t), \varphi(t)) \, \mathrm{d}t.
    \end{align*}
\end{defn}
\begin{rem}\label{rem: beta et A}
From \eqref{eq: borne A}, we can define the bounded operator, with bound $M$, 
$$\mathcal{A}: L^2(\mathbb{R};D_{S,1}) \rightarrow L^2(\mathbb{R};D_{S,-1}),$$
by setting, for all $u,v \in L^2(\mathbb{R};D_{S,1})$,
$$\llangle \mathcal{A}u, v \rrangle = \mathcal{A}(u,v) .$$
Similarly, if $(r,q)$ is a compatible pair for lower-order terms, then we define the bounded operator, with bound $P_{r,q}$,
$$\beta: \dot{\Sigma}^{r,q}(\mathbb{R}) \rightarrow \dot{\Sigma}^{r,q}(\mathbb{R})^\star= L^2(\mathbb{R};D_{S,-1})+L^{r'}(\mathbb{R},L^{q'}_{\omega^{q'/2}}(\mathbb{R}^n)), $$
by setting, for all $u,v \in \dot{\Sigma}^{r,q}(\mathbb{R})$,
$$\llangle \beta u, v \rrangle = \beta(u,v).$$
Note that we have $\mathcal{B}u= \mathcal{A}u+\beta u$, in $\mathcal{D}'(\mathbb{R}^{1+n})$, for all $u \in \dot{\Sigma}^{r,q}(\mathbb{R})$ and $\mathcal{B}u\in \dot{\Sigma}^{r,q}(\mathbb{R})^\star$. This remains valid when $\mathbb{R}$ is replaced by any open interval $I \subset \mathbb{R}$.
\end{rem}

    We conclude this section with the following result, which states that in suitable settings, the abstract test functions valued in $E_{-\infty}$ (adapted to $S$) can be replaced by standard test functions. This is a key ingredient, as the existence results will be established in abstract spaces.

\begin{prop}[From the concrete to the abstract and vice-versa]\label{prop : passage au concret}
Let $(r,q)$ be a compatible pair for lower-order terms, and let $I \subset \mathbb{R}$ be an open interval. Suppose that $u \in \dot{\Sigma}^{r,q}(I)$. For $f$ in a suitable class of source terms, the following holds: $u$ is a solution to the equation
\begin{equation*}
  \partial_t u + \mathcal{B}u = f 
  \quad \text{in } \mathcal{D}'(I \times \mathbb{R}^n),
\end{equation*}
if and only if $u$ is a solution to the equation
\begin{equation*}
  \partial_t u + \mathcal{B}u = f 
  \quad \text{in } \mathcal{D}'(I; E_\infty),
\end{equation*}
where the latter distributional formulation is interpreted via the anti-duality bracket defined previously,  
against test functions $\varphi \in \mathcal{D}(I; E_{-\infty})$.

\medskip

The same equivalence holds for Cauchy problems with initial data in $L^2_\omega(\mathbb{R}^n)$.

\medskip

Such source terms include
\begin{equation*}
     f = - \omega^{-1} \mathrm{div}_x(\omega F_1) + F_2 
      + (-\Delta_\omega)^{\beta/2} h_1 + h_2 + h_3,
\end{equation*}
where $F_1 \in L^2(I; L^2_\omega(\mathbb{R}^n)^n)$, $F_2 \in L^2(I; L^2_\omega(\mathbb{R}^n))$, $h_1, h_2 \in L^{\rho'}(I; L^2_\omega(\mathbb{R}^n))$ where $\rho \in (2, \infty)$ and $\beta = \frac{2}{\rho} \in (0,1)$ and $h_3 \in L^1(I; L^2_\omega(\mathbb{R}^n))$. 

\end{prop}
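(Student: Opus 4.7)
My plan is to prove the equivalence by a density and continuity argument. The key idea is to identify a common space $\mathcal{T}$ of test functions containing both $\mathcal{D}(I\times \mathbb{R}^n)$ and $\mathcal{D}(I;E_{-\infty})$, to verify that every bracket appearing in the equation is continuous on $\mathcal{T}$, and to show that each of the two test-function classes is dense in $\mathcal{T}$. Combining \eqref{eq: borne A}, Lemma \ref{lem:conditions coeff}, Corollary \ref{cor:embeddings}, Lemma \ref{lem: div}, and Lemma \ref{lem:xSobolev} (to recast the various source terms in a form whose pairing with $\varphi$ one can explicitly estimate), each of the four brackets
\[
-\int_I \langle u(t), \partial_t\varphi(t)\rangle_{2,\omega}\,\mathrm{d}t,\qquad \mathcal{A}(u,\varphi),\qquad \beta(u,\varphi),\qquad \langle f,\varphi\rangle
\]
is continuous in $\varphi$ provided $\varphi$ is compactly supported in $t$ and satisfies $\varphi \in L^2(I; H^1_\omega(\mathbb{R}^n)) \cap L^r(I; L^q_{\omega^{q/2}}(\mathbb{R}^n))$ with $\partial_t \varphi$ in a suitable Lebesgue–Bochner space (depending on which $h_k$ or $F_k$ appears in $f$). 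This pins down the norm on $\mathcal{T}$.

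To establish density of $\mathcal{D}(I\times \mathbb{R}^n)$ in $\mathcal{T}$, which yields the direction concrete equation $\Rightarrow$ abstract equation, I take an arbitrary $\varphi \in \mathcal{D}(I;E_{-\infty})$; since $\varphi(t) \in E_{-\infty} \subset H^1_\omega(\mathbb{R}^n) \cap D((-\Delta_\omega)^{1/r})$ for each $t$, I combine density of $\mathcal{D}(\mathbb{R}^n)$ in $H^1_\omega(\mathbb{R}^n)$ with mollification and spatial truncation (uniformly in $t$) to build $\varphi_n \in \mathcal{D}(I\times \mathbb{R}^n)$ supported in a fixed compact subset of $I\times \mathbb{R}^n$ with $\varphi_n \to \varphi$ in $L^2(I;H^1_\omega)$ and $\partial_t \varphi_n \to \partial_t \varphi$ in the required Bochner norm. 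Convergence in $L^r(I;L^q_{\omega^{q/2}})$ then follows from the spatial Hardy–Littlewood–Sobolev embedding $D((-\Delta_\omega)^{1/r}) \hookrightarrow L^q_{\omega^{q/2}}(\mathbb{R}^n)$ supplied by Theorem~\ref{thm:fractionalHLS}.

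For the converse direction, I regularize a concrete test function by the heat semigroup. Let $T_\epsilon := e^{\epsilon \Delta_\omega}$ denote the heat semigroup associated to $-\Delta_\omega$, constructed through the Borel functional calculus for this positive self-adjoint operator. Since $\lambda \mapsto \lambda^\alpha e^{-\epsilon \lambda}$ is bounded on $[0,\infty)$ for every $\alpha\ge 0$, we obtain $T_\epsilon L^2_\omega(\mathbb{R}^n) \subset \bigcap_{\alpha\ge 0} D((-\Delta_\omega)^\alpha) = E_{-\infty}$, and $T_\epsilon$ is strongly continuous at $\epsilon = 0^+$ on every $D((-\Delta_\omega)^\alpha)$. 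For $\varphi \in \mathcal{D}(I\times \mathbb{R}^n)$, setting $\varphi_\epsilon(t) := T_\epsilon \varphi(t,\cdot)$ yields an element of $\mathcal{D}(I;E_{-\infty})$ with $t$-support equal to that of $\varphi$, and $\varphi_\epsilon \to \varphi$ as $\epsilon \downarrow 0^+$ in all the norms defining $\mathcal{T}$ (again using Theorem~\ref{thm:fractionalHLS} for the $L^r(I;L^q_{\omega^{q/2}})$ convergence). Passing to the limit in the tested equation in both directions yields the equivalence. The Cauchy problem case is identical, since the initial condition $u(t) \to \psi$ is phrased in $L^2_\omega(\mathbb{R}^n)$ independently of the class of test functions.

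I expect the main technical obstacle to lie in ensuring simultaneous convergence of $\varphi_n \to \varphi$ (resp.\ $\varphi_\epsilon \to \varphi$) in all the mixed-norm spaces defining $\mathcal{T}$, especially in $L^r(I;L^q_{\omega^{q/2}}(\mathbb{R}^n))$, where the admissibility condition $\omega \in RH_{q/2}(\mathbb{R}^n)$ is essential in order to invoke Theorem~\ref{thm:fractionalHLS}; a minor subsidiary point is choosing the time-mollifiers supported in a small neighborhood of the support of $\varphi$ to preserve the compact $t$-support throughout the approximation.
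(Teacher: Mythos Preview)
Your proposal is correct and takes essentially the same approach as the paper: the paper's proof simply invokes \cite[Lemma~8.2]{auscherbaadi2024fundamental} together with the density of $\mathcal{D}(\mathbb{R}^n)$ in $H^1_\omega(\mathbb{R}^n)$ and Lemmas~\ref{lem: div} and~\ref{lem:xSobolev}, and your density-and-continuity argument (with heat-semigroup regularization for one direction and spatial mollification/truncation for the other) is precisely the content such a lemma would encapsulate. Your write-up is in fact more detailed than the paper's, which omits the argument entirely by citation.
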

\begin{proof}
This follows directly from \cite[Lemma 8.2]{auscherbaadi2024fundamental}, since $\mathcal{D}(\mathbb{R}^n)$ is dense in $D(S) = H^1_\omega(\mathbb{R}^n)$ with respect to the graph norm. Note that $u \in \dot{\Sigma}^{r,q}(I) \subset L^1_{\mathrm{loc}}(I; L^2_\omega(\mathbb{R}^n))$ by assumption, and we also use Lemma \ref{lem: div}, and Lemma \ref{lem:xSobolev} for the norms involving $\|\cdot \|_{L^r(I;L^q_{\omega^{q/2}}(\mathbb{R}^n))}$. The details are omitted.
\end{proof}

\subsection{The variational approach}

We seek weak solutions $u \in \dot{\Sigma}^{r,q}(I)$ to the equation $\partial_t u + \mathcal{B}u = f$ for suitable source terms. We begin with the case $I = \mathbb{R}$ by introducing a variational parabolic operator, allowing the use of the Fourier transform in the time variable. We will later restrict to unbounded or bounded time intervals in the context of Cauchy problems.

We write $H_t$ for the Hilbert transform in the time variable, defined via its symbol $i \tau / |\tau|$. For details, see \cite[Section~6.1]{auscherbaadi2024fundamental}. We define a bounded sesquilinear form 
 $$B_{\dot{V}_0}: \dot{V}_0^{\mathrm{loc}} \times \dot{V}_0^{\mathrm{loc}} \rightarrow \mathbb{C}$$
 by  
\begin{align*}
      B_{\dot{V}_0}(u,v):&=\int_\mathbb{R}\langle H_t D_t^{1/2}u(t), D_t^{1/2}v(t) \rangle_{2,\omega} +  \langle A(t) \nabla_x u(t), \nabla_x \varphi(t) \rangle_{2,\omega} 
        + \beta(u, v)(t)  \, \mathrm{d}t
        \\&= \int_\mathbb{R}\langle H_t D_t^{1/2}u(t), D_t^{1/2}v(t) \rangle_{2,\omega} \, \mathrm{d}t + \mathcal{A}(u, v) + \beta(u, v),
\end{align*}
for all $u,v \in \dot{V}_0^{\mathrm{loc}}$. We define $\mathcal{H} \colon \dot{V}_0^{\mathrm{loc}} \rightarrow ( \dot{V}_0^{\mathrm{loc}} )^\star$ by 
\begin{equation*} 
\llangle \mathcal{H}u, v \rrangle_{\dot{V}_0^\star, \dot{V}_0}:= B_{\dot{V}_0}(u,v), \ u,v \in \dot{V}_0^{\mathrm{loc}}.
\end{equation*}
We have 
\begin{equation*}
   \left ( \partial_t+\mathcal{B} \right )_{\scriptscriptstyle{\vert \dot{V}_0^{\mathrm{loc}}}} = \mathcal{H} \ , \ \ \left ( -\partial_t+\mathcal{B}^\star \right )_{\scriptscriptstyle{\vert \dot{V}_0^{\mathrm{loc}}}} = \mathcal{H}^\star, 
\end{equation*}
where $\mathcal{H}^\star : \dot{V}_0^{\mathrm{loc}} \rightarrow  ({\dot{V}_0^{\mathrm{loc}}})^\star$ is the adjoint of $\mathcal{H}$. Indeed, we have the equality 
$$\int_{\mathbb{R}}\langle H_t D_t^{1/2}u(t), D_t^{1/2}v(t) \rangle_{2,\omega} \, \mathrm{d}t=- \int_{\mathbb{R}} \langle u(t), \partial_t v(t) \rangle_{2,\omega} \, \mathrm{d}t $$
when $u \in \dot{V}_0^{\mathrm{loc}}$ and $v\in \mathcal{D}(\mathbb{R}^{1+n})$. Thus, we may refer to $\mathcal{H}$ (more precisely, to its unique extension from $\dot{V}_0$ to $\dot{V}_0^\star$) as the variational parabolic operator associated with $\mathcal{B}$, since it arises from the sesquilinear form $B_{\dot{V}_0}$ (more precisely, from its unique extension to $\dot{V}_0 \times \dot{V}_0$), and the space $\dot{V}_0$ serves as the underlying variational space.

\subsection{Invertibility and causality} In this section, we give sufficient conditions for invertibility and causality, both of which will be defined rigorously. The first two results are of perturbative nature with respect to the pure second-order case, \textit{i.e.}, they hold when $P_{r,q}$ is small enough, while the last one only requires a lower bound.

We start with results of a perturbative nature. We recall that $A$ is said to be elliptic in the sense of Gårding if there exists $\nu > 0$ such that for all $t \in \mathbb{R}$ and all $u \in H^1_\omega(\mathbb{R}^n)$,
\begin{equation}\label{eq: ellipticité}
    \nu \| \nabla_x u \|^{2}_{2,\omega} \le \mathrm{Re} \left( \langle \omega^{-1} A(t) \nabla_x u, \nabla_x u \rangle_{2,\omega} \right).
\end{equation}
A sufficient condition to obtain this ellipticity is to assume that $A$ is uniformly elliptic, that is 
\begin{equation*}
    \nu \left | \xi \right |^2\omega(x)\leq \mathrm{Re}(A(t,x)\xi\cdot \overline{\xi}),
\end{equation*}
for all $ \xi \in \mathbb{C}^n$ and $(t,x) \in \mathbb{R}\times \mathbb{R}^n$.

\begin{thm}[Invertibility]\label{thm: Invertibility}
    Let $(r, q)$ be compatible pair for lower-order terms. Then the operator $\mathcal{H}$ extends uniquely to an operator $\widetilde{\mathcal{H}} \colon \dot{V}_0 \rightarrow \dot{V}_0^\star$. Moreover, if \eqref{eq: ellipticité} holds, there exists a constant $\varepsilon_0 = \varepsilon_0(M, \nu, [\omega]_{A_2}, [\omega]_{RH_{\frac{q}{2}}}, n, q) $ such that if $P_{r, q} \leq \varepsilon_0$, then the operator $\widetilde{\mathcal{H}}$ is invertible, with a bound of $\widetilde{\mathcal{H}}^{-1}$ depending only on $M$ and $\nu$.
\end{thm}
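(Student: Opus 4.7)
The plan is to obtain the extension by a density argument and then establish invertibility by viewing $\widetilde{\mathcal{H}}$ as a small perturbation of the pure second-order variational operator, for which invertibility is known from prior work.

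For the extension, I would verify that each of the three contributions to $B_{\dot{V}_0}$ is a bounded sesquilinear form on $\dot{V}_0 \times \dot{V}_0$. The time-derivative piece $(u,v) \mapsto \int_{\mathbb{R}} \langle H_t D_t^{1/2}u, D_t^{1/2}v\rangle_{2,\omega}\,\mathrm{d}t$ is controlled by $\|D_t^{1/2}u\|_{L^2(\mathbb{R};L^2_\omega)}\|D_t^{1/2}v\|_{L^2(\mathbb{R};L^2_\omega)}$ via Plancherel and the $L^2$-boundedness of $H_t$, and both factors are dominated by $\|\cdot\|_{\dot{V}_0}$ by definition. The leading-order piece $\mathcal{A}(u,v)$ is bounded by $M\|u\|_{\dot{V}_0}\|v\|_{\dot{V}_0}$ via \eqref{eq: borne A}. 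Corollary \ref{cor: borne Beta} gives $|\beta(u,v)| \le C P_{r,q}\|u\|_{\dot{V}_0}\|v\|_{\dot{V}_0}$ with $C = C([\omega]_{A_2}, [\omega]_{RH_{q/2}}, n, q)$. Since $\mathcal{D}(\mathbb{R}^{1+n}) \subset \dot{V}_0^{\mathrm{loc}}$ is dense in $\dot{V}_0$, the form extends uniquely and continuously to $\dot{V}_0 \times \dot{V}_0$, yielding the bounded operator $\widetilde{\mathcal{H}} \colon \dot{V}_0 \to \dot{V}_0^\star$.

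For invertibility, I would denote by $\mathcal{H}_0 \colon \dot{V}_0 \to \dot{V}_0^\star$ the analogous variational operator corresponding to taking $a \equiv b \equiv c \equiv 0$, i.e.\ the one built from $B_{\dot{V}_0}$ without the $\beta$ term. The key input is that under Gårding ellipticity \eqref{eq: ellipticité}, $\mathcal{H}_0$ is invertible with $\|\mathcal{H}_0^{-1}\|_{\dot{V}_0^\star \to \dot{V}_0}$ depending only on $M$ and $\nu$; this pure-second-order fact was established within the same variational framework in \cite{auscherbaadi2024fundamental, baadi2025degenerate} and would be invoked as a black box. Writing $\widetilde{\mathcal{H}} = \mathcal{H}_0 + \beta$ and factoring $\widetilde{\mathcal{H}} = \mathcal{H}_0\bigl(I + \mathcal{H}_0^{-1}\beta\bigr)$, the estimate $\|\mathcal{H}_0^{-1}\beta\|_{\dot{V}_0 \to \dot{V}_0} \le C \|\mathcal{H}_0^{-1}\|\,P_{r,q}$ shows that choosing $\varepsilon_0 = \varepsilon_0(M, \nu, [\omega]_{A_2}, [\omega]_{RH_{q/2}}, n, q)$ so small that $C \|\mathcal{H}_0^{-1}\|\,\varepsilon_0 \le 1/2$ makes $I + \mathcal{H}_0^{-1}\beta$ invertible on $\dot{V}_0$ by a Neumann series, with inverse of operator norm at most $2$. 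Hence $\widetilde{\mathcal{H}}$ is invertible, and $\|\widetilde{\mathcal{H}}^{-1}\|_{\dot{V}_0^\star \to \dot{V}_0} \le 2\|\mathcal{H}_0^{-1}\|_{\dot{V}_0^\star \to \dot{V}_0}$, a bound depending only on $M$ and $\nu$, as claimed.

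The hard part here is \emph{not} the perturbative step, which is essentially algebraic once the norm estimates are in place, but rather the invertibility of $\mathcal{H}_0$ itself. The form $B_{\dot{V}_0}$ is not coercive on $\dot{V}_0$: since $H_t$ is skew-adjoint on $L^2(\mathbb{R}; L^2_\omega)$, one has $\mathrm{Re}\int_{\mathbb{R}} \langle H_t D_t^{1/2}u, D_t^{1/2}u\rangle_{2,\omega}\,\mathrm{d}t = 0$, so Gårding ellipticity only recovers $\nu\|\nabla_x u\|_{L^2(\mathbb{R}; L^2_\omega)}^2 \le \mathrm{Re}\,B_{\dot{V}_0}(u,u)$, which is strictly weaker than control of $\|u\|_{\dot{V}_0}^2$. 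The standard Lax-Milgram theorem is therefore unavailable for $\mathcal{H}_0$, and the invertibility relies on the Fourier-analytic hidden-coercivity arguments developed in the cited works, which I take as given here.
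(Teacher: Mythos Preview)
Your proof is correct. The extension argument is identical to the paper's. For invertibility, you take a genuinely different route. The paper works directly with the full operator $\widetilde{\mathcal{H}}$: it applies the Kaplan twist, testing $\widetilde{\mathcal{H}}u$ against $(1+\delta H_t)u$, and shows that for a suitable $\delta = \delta(M,\nu)$ and small enough $P_{r,q}$ the resulting form is coercive on $\dot{V}_0$, so Lax--Milgram applies to the twisted form. You instead black-box the invertibility of the pure second-order operator $\mathcal{H}_0$ from \cite{auscherbaadi2024fundamental, baadi2025degenerate} (which is precisely this Kaplan argument with $\beta \equiv 0$) and then perturb via a Neumann series using $\|\beta\|_{\dot{V}_0 \to \dot{V}_0^\star} \le C P_{r,q}$. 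Your approach is more modular and yields the same dependence of constants; the paper's is self-contained and produces the smallness threshold and the inverse bound in one stroke. Your diagnosis that the substantive content lies in the invertibility of $\mathcal{H}_0$ rather than in the perturbative step is exactly right: that is the hidden-coercivity trick the paper spells out in full.
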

\begin{proof}
    We start with the extension result. We first recall that $\dot{V}_0^{\mathrm{loc}}$ is dense in $\dot{V}_0$ for this latter space topology. In particular, up to unique abstract extensions, we have $( \dot{V}_0^{\mathrm{loc}})^\star= \dot{V}_0^\star$ and the uniqueness of the extension of $\mathcal{H}$ is obvious. Hence, we only need to prove that $B_{\dot{V}_0}$ extends to a sesquilinear form on $\dot{V}_0 \times \dot{V}_0$. To do so, we notice that the sesquilinear pairing in $B_{\dot{V}_0}$ involving the half-order time derivative $D_t^{1/2}$ and the Hilbert transform $H_t$ is well-defined on $\dot{V}_0 \times \dot{V}_0$, so no need to extend it. Using the bound \eqref{eq: borne A} and Corollary \ref{cor: borne Beta}, we see that $\mathcal{A}$ and $\beta$ extend respectively to sesquilinear forms $\Tilde{\mathcal{A}}$ and $\Tilde{\beta}$ on $\dot{V}_0 \times \dot{V}_0$ with bounds $M$ and $C P_{r, q}$, respectively, where $C = C([\omega]_{A_2}, [\omega]_{RH_{\frac{q}{2}}}, n, q) $ is a constant. Thus, $\mathcal{H}$ extends to $\widetilde{\mathcal{H}} \colon \dot{V}_0 \rightarrow \dot{V}_0^\star$, defined by 
    \begin{equation*}
        \llangle \widetilde{\mathcal{H}} u, v \rrangle_{\dot{V}_0^\star, \dot{V}_0}:= \Tilde{B}_{\dot{V}_0}(u,v):=\int_\mathbb{R}\langle H_t D_t^{1/2}u(t), D_t^{1/2}v(t) \rangle_{2,\omega} \, \mathrm{d}t + \Tilde{\mathcal{A}}(u, v) + \Tilde{\beta}(u, v), \ u,v \in \dot{V}_0.
    \end{equation*}
    Now, the invertibility result under a smallness assumption on the lower-order terms follows by a perturbation of the case of pure second-order case. The invertibility in this latter case is essentially due to Kaplan \cite{kaplan1966abstract}. For completeness, we provide the full details of this perturbative result.
    
    By the Plancherel theorem and the fact that the Hilbert transform $H_t$ commutes with $D_t^{1/2}$ and $S$,  it is a bijective isometry on $\dot{V}_0$. As  it is skew-adjoint, for all $\delta \in \mathbb{R}$,  $1+\delta H_t$ is an isomorphism on $\dot{V}_0$ and $\|(1+\delta H_t)u\|_{\dot{V}_0}^2= (1+\delta^2)\|u\|^2_{\dot{V}_0}$. The same equality holds on $\dot{V}_0^\star$.
    
    Let $\delta>0$ to be chosen later. The modified sesquilinear form $\Tilde{B}_{\dot{V}_0}(\cdot,(1+\delta H_t)\cdot)$ is bounded on $\dot{V}_0\times \dot{V}_0$ and, for all $u \in \dot{V}_0$, 
    \begin{align*}
    \mathrm{Re} \, \Tilde{B}_{\dot{V}_0}(u, (1+\delta H_t)u) &=  \mathrm{Re} \left ( \int_\mathbb{R}\langle H_t D_t^{1/2}u, D_t^{1/2}(1+\delta H_t)u \rangle_{2,\omega} \, \mathrm dt  \right )+ \mathrm{Re} \, \Tilde{\mathcal{A}}(u,(1+\delta H_t)u)
    \\& \hspace{4cm}+ \mathrm{Re} \,\Tilde{\beta}(u,(1+\delta H_t)u)
    \\
    & = \delta  \| D_t^{1/2}u  \|^2_{L^2(\mathbb{R};L^2_\omega(\mathbb{R}^n))} +\mathrm{Re} \, \Tilde{\mathcal{A}}(u,u) + \delta \, \mathrm{Re} \, \Tilde{\mathcal{A}}(u,H_t u)+  \mathrm{Re} \,\Tilde{\beta}(u,(1+\delta H_t)u),
    \end{align*}
    where we have used that $H_t$ is skew-adjoint, hence 
    \begin{align*}
    \mathrm{Re}\int_\mathbb{R}\langle H_t D_t^{1/2}u(t), D_t^{1/2}u(t) \rangle_{2,\omega} \ \mathrm dt =0.
    \end{align*}
    Using \eqref{eq: ellipticité}, \eqref{eq: borne ponctuelle A} (more precisely, their extensions to $D_{S,1}$) together with Corollary \ref{cor: borne Beta}, we obtain
    \begin{align*}
    \mathrm{Re} \, \Tilde{B}_{\dot{V}_0}(u, (1+\delta H_t)u) \geq \delta  \| D_t^{1/2}u  \|^2_{L^2(\mathbb{R};L^2_\omega(\mathbb{R}^n))}+(\nu -\delta M)\left \| u \right \|_{L^2(\mathbb{R};D_{S,1})}^2 - C P_{r,q} \sqrt{1+\delta^2} \|u \|_{\dot{V}_0}^2 .
    \end{align*}
    Choosing first $\delta = \frac{\nu}{1+M}$ and then $\varepsilon_0 > 0$ such that $C \varepsilon_0 \sqrt{1 + \delta^2} = \delta / 2$, then $P_{r,q} \le \varepsilon_0$ implies that
    \begin{align}\label{eq: LaxM H}
     \mathrm{Re}\, \llangle \widetilde{\mathcal{H}}u , (1+\delta H_t)u) \rrangle_{V_0^\star,V_0}=\mathrm{Re} (B_{V_0}(u, (1+\delta H_t)u)) \geq \frac{\delta}{2} \|u \|_{\dot{V}_0}^2 , \ \forall u \in \dot{V}_0.
    \end{align}
    Fix $f \in \dot{V}_0^\star$. The Lax-Milgram lemma implies that  there exists a unique $u \in \dot{V}_0$ such that $$\Tilde{B}_{\dot{V}_0}(u, (1+\delta H_t)\cdot)=  (1+\delta H_t)^\star \circ f.$$ Furthermore, we have the estimate 
    \begin{align*}
    \left \| u \right \|_{\dot{V}_0} \leq \frac{2}{\delta}\left \| (1+\delta H_t)^\star \circ f \right \|_{\dot{V}_0^\star}.
    \end{align*}
    Using the fact that $(1+\delta H_t)^\star$ is an isomorphism on $\dot V_0^\star$ with operator norm equal to $\sqrt{1+\delta^2}$, we have that for each $f \in \dot V_0^\star$ there exists a unique $u \in \dot V_0$ such that $\Tilde{B}_{\dot{V}_0}(u,\cdot)=f$, \textit{i.e.} $\widetilde{\mathcal{H}}u=f$ with 
    \begin{align*}
    \left \| u \right \|_{\dot{V}_0} \leq \frac{2}{\delta}\times \sqrt{ 1+\delta^2 } \left \| f \right \|_{\dot{V}_0^\star}. 
    \end{align*}
\end{proof}

\begin{rem}\label{rem: TildeB}
Note that the elliptic part $\mathcal{B}$, initially defined as a bounded operator $\mathcal{B}: \dot{\Sigma}^{r,q}(\mathbb{R}) \rightarrow \dot{\Sigma}^{r,q}(\mathbb{R})^\star = L^2(\mathbb{R};D_{S,-1}) + L^{r'}(\mathbb{R};L^{q'}_{\omega^{q'/2}}(\mathbb{R}^n))$, when restricted to $L^r(\mathbb{R};D_{S,\frac{2}{r}}) \cap L^2(\mathbb{R};D_{S,1})^{\mathrm{loc}}$ by Corollary \ref{cor:embeddings}, extends uniquely to a bounded operator
\begin{equation*}
\Tilde{\mathcal{B}}: L^r(\mathbb{R};D_{S,\frac{2}{r}}) \cap L^2(\mathbb{R};D_{S,1}) \rightarrow \left(L^r(\mathbb{R};D_{S,\frac{2}{r}}) \cap L^2(\mathbb{R};D_{S,1})\right)^\star = L^2(\mathbb{R};D_{S,-1}) + L^{r'}(\mathbb{R};D_{S,-\frac{2}{r}}).
\end{equation*}
In particular, we have $$\Tilde{\mathcal{B}}=\Tilde{\mathcal{A}}+\Tilde{\beta} \quad \text{and} \quad \left( \partial_t + \Tilde{\mathcal{B}} \right)_{\scriptscriptstyle{\vert \dot{V}_0}} = \widetilde{\mathcal{H}}.$$
\end{rem}

We now state the following causality result of perturbative nature.

\begin{thm}[Causality]\label{thm: Causality}
    Let $(r,q)$ be a compatible pair for lower-order terms. Assume that \eqref{eq: ellipticité} holds. There exists a constant $\varepsilon_0=\varepsilon_0(M,\nu,[\omega]_{A_2}, [\omega]_{RH_{\frac{q}{2}}}, n, q)$ such that $P_{r,q} \le \varepsilon_0$ implies that $\partial_t+\mathcal{B}$ is causal in the following sense: if $I$ is an open interval that contains a neighborhood of $-\infty$ and $u \in \dot{\Sigma}^{r,q}(I)$ satisfies $\partial_t u + \mathcal{B}u = 0$ in $\mathcal{D}'(I \times \mathbb{R}^n)$, then $u = 0$.

\end{thm}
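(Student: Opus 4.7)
The plan is to exploit the fact that $I$ contains a neighborhood of $-\infty$ to propagate vanishing forward via an energy inequality, after first establishing that $u$ is continuous in time with values in $L^2_\omega(\mathbb{R}^n)$ and vanishes as $t\to-\infty$.

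First, I establish time continuity. Writing $\partial_t u=-\mathcal{B}u=-\mathcal{A}u-\beta u$, the bound \eqref{eq: borne A} gives $\mathcal{A}u\in L^2(I;D_{S,-1})$, while Lemma~\ref{lem:conditions coeff} combined with Lemma~\ref{lem:xSobolev} places $\beta u\in L^2(I;D_{S,-1})+L^{r'}(I;D_{S,-2/r})$. Applying Proposition~\ref{prop: Lions non borné} with $\rho=r$ (so its exponent equals $2/r\in[0,1)$; when $r=2$ the second summand is absorbed into the first and one takes $g=0$) yields $u\in C_0(\bar I;L^2_\omega(\mathbb{R}^n))$, absolute continuity of $t\mapsto\|u(t)\|_{2,\omega}^2$, and the integral identity \eqref{eq:integralidentity}.

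Next, for $\sigma<\tau$ in $\bar I$ the energy identity reads
\begin{equation*}
\|u(\tau)\|_{2,\omega}^2-\|u(\sigma)\|_{2,\omega}^2=-2\,\mathrm{Re}\int_\sigma^\tau\bigl(\mathcal{A}(t)(u(t),u(t))+\beta(t)(u(t),u(t))\bigr)\,\mathrm{d}t.
\end{equation*}
Gårding's inequality \eqref{eq: ellipticité} controls the leading term by $2\nu\|\nabla_x u\|_{L^2(\sigma,\tau;L^2_\omega)}^2$. For the lower-order part, I combine Lemma~\ref{lem:conditions coeff} localized to $(\sigma,\tau)$ with the mixed-norm embedding of Proposition~\ref{prop:embeddings}(4) and Lemma~\ref{lem:xSobolev},
\begin{equation*}
\|u\|_{L^r(\sigma,\tau;L^q_{\omega^{q/2}})}\le C\,\|\nabla_x u\|_{L^2(\sigma,\tau;L^2_\omega)}^{2/r}\,\|u\|_{L^\infty(\sigma,\tau;L^2_\omega)}^{1-2/r},
\end{equation*}
and Young's inequality to get, for any $\epsilon>0$,
\begin{equation*}
\Bigl|\int_\sigma^\tau\beta(t)(u,u)\,\mathrm{d}t\Bigr|\le C\,P_{r,q}\bigl(\epsilon\|u\|_{L^\infty(\sigma,\tau;L^2_\omega)}^2+C_\epsilon\|\nabla_x u\|_{L^2(\sigma,\tau;L^2_\omega)}^2\bigr).
\end{equation*}
Choosing $\epsilon$ small first and then $\varepsilon_0$ small enough (depending only on $M,\nu,[\omega]_{A_2},[\omega]_{RH_{q/2}},n,q$) to absorb both contributions, I arrive at
\begin{equation*}
\|u(\tau)\|_{2,\omega}^2+\nu\,\|\nabla_x u\|_{L^2(\sigma,\tau;L^2_\omega)}^2\le\|u(\sigma)\|_{2,\omega}^2+\tfrac{1}{2}\|u\|_{L^\infty(\sigma,\tau;L^2_\omega)}^2.
\end{equation*}

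Finally, fixing any $T_0\in I$ and taking the supremum over $\tau\in(\sigma,T_0)$ in the previous inequality yields $\tfrac{1}{2}\|u\|_{L^\infty(\sigma,T_0;L^2_\omega)}^2\le\|u(\sigma)\|_{2,\omega}^2$. Sending $\sigma\to-\infty$ and using $u\in C_0(\bar I;L^2_\omega)$ gives $u\equiv0$ on $(-\infty,T_0)$, and since $T_0\in I$ is arbitrary, $u=0$ on $I$. The main obstacle is Step~1, because it requires interpreting $\partial_t u$ correctly as a distribution in the abstract dual space to which Proposition~\ref{prop: Lions non borné} applies; the bridge is Lemma~\ref{lem:xSobolev}, which converts the concrete mixed-Lebesgue bounds on $\beta u$ into bounds in $L^{r'}(I;D_{S,-2/r})$. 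The absorption in Step~2 is quantitative but routine once $\varepsilon_0$ is tuned, and may require reducing $\varepsilon_0$ from the value furnished by Theorem~\ref{thm: Invertibility}.
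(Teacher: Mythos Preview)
Your proof is correct and follows essentially the same strategy as the paper: establish $C_0$ time-continuity via Proposition~\ref{prop: Lions non borné}, write the energy identity, control the lower-order contribution using the mixed-norm interpolation $\|u\|_{L^rL^q}\lesssim\|\nabla_x u\|_{L^2}^{2/r}\|u\|_{L^\infty L^2}^{1-2/r}$ and Young's inequality, then exploit vanishing at $-\infty$. The only cosmetic difference is that the paper selects the maximizing time $\tau$ first and sends $\sigma\to-\infty$ before applying Young, whereas you introduce a free parameter $\epsilon$, take the supremum over $\tau$, and pass to the limit last; also note that the passage from $\mathcal{D}'(I\times\mathbb{R}^n)$ to the abstract setting of Proposition~\ref{prop: Lions non borné} is handled in the paper via Proposition~\ref{prop : passage au concret}, which you allude to but do not cite explicitly.
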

\begin{proof}
    If $u \in \dot{\Sigma}^{r,q}(I)$ satisfies $\partial_t u + \mathcal{B}u = 0$ in $\mathcal{D}'(I \times \mathbb{R}^n)$, then $\partial_t u = -\mathcal{B}u=-\mathcal{A}u-\beta u$ in $\mathcal{D}'(I \times \mathbb{R}^n)$. We have $u\in L^2(I;D_{S,1})^{\mathrm{loc}}$ and $\mathcal{A}u+\beta u \in L^2(I;D_{S,-1})+L^{r'}(I;L^{q'}_{\omega^{q'/2}}(\mathbb{R}^n)) \subset L^2(I;D_{S,-1}) + L^{r'}(I;D_{S,-\frac{2}{r}})$. By combining Propositions \ref{prop : passage au concret} and \ref{prop: Lions non borné}, we have $u \in C_0(\overline{I}; L^2_\omega(\mathbb{R}^n))$ and, for all $\sigma, \tau \in \overline{I}$ such that $\sigma < \tau$,
    \begin{align*}
        \left \| u(\tau) \right \|^2_{2,\omega}-\left \| u(\sigma) \right \|^2_{2,\omega} = 2\mathrm{Re}\int_{\sigma}^{\tau} \langle \omega^{-1}A(t)\nabla_x u (t), \nabla_x u (t) \rangle_{2,\omega}  + \beta(u,u)(t) \, \mathrm d t.
    \end{align*}
    We choose $\tau \in \overline{I}$ such that $\left \| u(\tau) \right \|^2_{2,\omega}=\sup_{t\in \overline{I}} \left \| u(t) \right \|^2_{2,\omega}=:\underline{S}$ and set $\underline{I} = \int_{-\infty}^{\tau} \left \| \nabla_x u(t) \right \|^2_{2,\omega} \, \mathrm d t$.  Letting $\sigma \to -\infty$ in the above equality and using \eqref{eq: ellipticité}, we deduce that 
    \begin{equation}\label{eq:Causality}
        \underline{S} \le -2\nu \, \underline{I} + 2 \int_{-\infty}^{\tau} |\beta(u,u)(t)| \, \mathrm d t.
    \end{equation}
    Now, by combining Lemma \ref{lem:xSobolev} with the first embedding in (4) of Proposition \ref{prop:embeddings}, we obtain the existence of a constant $C = C([\omega]_{A_2}, [\omega]_{RH_{\frac{q}{2}}}, n, q)$ such that
    \begin{align*}
        \| u \|_{L^r((-\infty,\tau);L^q_{\omega^{q/2}}(\mathbb{R}^n))} \leq C  \| u \|_{L^r((-\infty,\tau);D_{S,\frac{2}{r}})} \leq C  \, \underline{S}^{\frac{1}{2}-\frac{1}{r}} \underline{I}^{\frac{1}{r}}.
    \end{align*}
    Combining this inequality together with an estimate as in the proof of Lemma \ref{lem:conditions coeff}, we obtain 
    \begin{equation*}
        \int_{-\infty}^{\tau} |\beta(u,u)(t)| \, \mathrm d t \leq C P_{r,q}  \left ( \underline{S}^{\frac{1}{2}-\frac{1}{r}} \underline{I}^{\frac{1}{2}+\frac{1}{r}}+\underline{S}^{1-\frac{2}{r}} \underline{I}^{\frac{2}{r}} \right ).
    \end{equation*}
    Going back to \eqref{eq:Causality}, we have 
    \begin{equation*}
         \underline{S} \le -2\nu \, \underline{I} + 2 C P_{r,q}  \left ( \underline{S}^{\frac{1}{2}-\frac{1}{r}} \underline{I}^{\frac{1}{2}+\frac{1}{r}}+\underline{S}^{1-\frac{2}{r}} \underline{I}^{\frac{2}{r}} \right ).
    \end{equation*}
    Since $r \in [2,\infty)$, we make use of Young's convexity inequality to deduce the inequality
    \begin{equation*}
        \underline{S} \le -2\nu \, \underline{I} + 2 C P_{r,q} \left ( (\frac{3}{2}-\frac{3}{r})\underline{S} + (\frac{1}{2}+\frac{3}{r})\underline{I} \right ).
    \end{equation*}
    Now, if $P_{r,q} <\min \left(\frac{\nu}{C(\frac{1}{2}+\frac{3}{r})}, \frac{1}{2C(\frac{3}{2}-\frac{3}{r})}  \right)=:2\varepsilon_0$, we conclude that $\underline{S} = 0$, and hence $u = 0$.
\end{proof}

Finally, we state the following invertibility and causality result based on lower bounds.

\begin{thm}[Invertibility and causality through lower bounds]\label{thm : Causality and invertibility}
Let $(r,q)$ be a compatible pair for lower-order terms. 
\begin{enumerate}
    \item Assume that there exists a constant $c>0$ such that 
    \begin{equation*}
        \mathrm{Re} \, \llangle \mathcal{B}u, u \rrangle \ge c \| \nabla_x u \|^2_{L^2(\mathbb{R};L^2_\omega(\mathbb{R}^n))}\quad \text{for all} \ u \in \dot{V}_0^{\mathrm{loc}}.
    \end{equation*}
    Then $\widetilde{\mathcal{H}}$ is invertible.
    \item Assume that
    \begin{equation*}
       \mathrm{Re} \,( \mathcal{A}(t)(u,v)+ \beta(t)(u, u)) \ge 0 \quad \text{for all} \ t\in \mathbb{R} \ \ \text{and} \ \ u \in H^1_{\omega}(\mathbb{R}^n).
    \end{equation*}
    Then $\partial_t+\mathcal{B}$ is causal in the sense of Theorem \ref{thm: Causality}.
\end{enumerate}
    
\end{thm}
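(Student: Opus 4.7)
My plan is to prove (1) by adapting the Kaplan-type argument in the proof of Theorem~\ref{thm: Invertibility}, using the coercivity hypothesis on $\mathcal{B}$ as a substitute for smallness of $P_{r,q}$. By density of $\dot{V}_0^{\mathrm{loc}}$ in $\dot{V}_0$ and continuity of $\widetilde{\mathcal{B}}$ on $\dot{V}_0$, the hypothesis first extends to all $u \in \dot{V}_0$. I would then test the extended form $\Tilde{B}_{\dot{V}_0}$ against $(1+\delta H_t)u$ for some $\delta > 0$: since $H_t$ is skew-adjoint, isometric, and commutes with $D_t^{1/2}$ and $\nabla_x$, this produces
\begin{equation*}
\mathrm{Re}\,\Tilde{B}_{\dot{V}_0}(u,(1+\delta H_t)u) = \delta \|D_t^{1/2} u\|^2_{L^2(\mathbb{R};L^2_\omega(\mathbb{R}^n))} + \mathrm{Re}\,\llangle \mathcal{B}u, u \rrangle + \delta\, \mathrm{Re}\,\llangle \mathcal{B}u, H_t u \rrangle,
\end{equation*}
and the middle term is bounded below by $c\|\nabla_x u\|^2_{L^2(\mathbb{R};L^2_\omega(\mathbb{R}^n))}$. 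Splitting $\mathcal{B} = \mathcal{A} + \beta$, the $\mathcal{A}$ part of the off-diagonal contribution satisfies $|\Tilde{\mathcal{A}}(u,H_tu)| \le M \|\nabla_x u\|^2_{L^2(\mathbb{R};L^2_\omega(\mathbb{R}^n))}$ since $H_t$ acts isometrically and commutes with $\nabla_x$.

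The crux of (1) is to bound $\Tilde{\beta}(u, H_t u)$ without any smallness of $P_{r,q}$, so the coarse estimate of Corollary~\ref{cor: borne Beta} will not suffice. I would combine the pointwise-in-time estimate of Lemma~\ref{lem:conditions coeff} with the boundedness of $H_t$ on $L^r(\mathbb{R};L^q_{\omega^{q/2}}(\mathbb{R}^n))$ and the Gagliardo--Nirenberg-type interpolation
\begin{equation*}
\|u\|^2_{L^r(\mathbb{R};L^q_{\omega^{q/2}}(\mathbb{R}^n))} \lesssim \|\nabla_x u\|^{4/r}_{L^2(\mathbb{R};L^2_\omega(\mathbb{R}^n))}\,\|D_t^{1/2} u\|^{2-4/r}_{L^2(\mathbb{R};L^2_\omega(\mathbb{R}^n))},
\end{equation*}
obtained by chaining Lemma~\ref{lem:xSobolev} with Proposition~\ref{prop:embeddings}(4), and then apply Young's inequality with a tunable parameter to derive, for any $\varepsilon > 0$,
\begin{equation*}
|\Tilde{\beta}(u,H_tu)| \le \varepsilon \|D_t^{1/2} u\|^2_{L^2(\mathbb{R};L^2_\omega(\mathbb{R}^n))} + C_\varepsilon(P_{r,q})\,\|\nabla_x u\|^2_{L^2(\mathbb{R};L^2_\omega(\mathbb{R}^n))}.
\end{equation*}
Choosing first $\varepsilon = 1/2$, and then $\delta$ small enough that $\delta(M+C_\varepsilon(P_{r,q})) < c/2$, I obtain the coercivity $\mathrm{Re}\,\Tilde{B}_{\dot{V}_0}(u,(1+\delta H_t)u) \gtrsim \|u\|^2_{\dot{V}_0}$ on $\dot{V}_0$. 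Invertibility of $\widetilde{\mathcal{H}}$ then follows verbatim from the Lax--Milgram argument via the isomorphism $(1+\delta H_t)^\star$ on $\dot{V}_0^\star$ at the end of the proof of Theorem~\ref{thm: Invertibility}.

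For (2), I would argue via the energy identity. Given $u \in \dot{\Sigma}^{r,q}(I)$ with $\partial_t u + \mathcal{B}u = 0$ in $\mathcal{D}'(I \times \mathbb{R}^n)$ where $I$ contains a neighborhood of $-\infty$, I would write $\partial_t u = -\mathcal{A}u - \beta u$, with $\mathcal{A}u \in L^2(I;D_{S,-1})$ by~\eqref{eq: borne A} and $\beta u \in L^{r'}(I;L^{q'}_{\omega^{q'/2}}(\mathbb{R}^n)) \hookrightarrow L^{r'}(I;D_{S,-2/r})$ by Lemma~\ref{lem:xSobolev}; when $r=2$ one instead absorbs $\beta u$ directly into $L^2(I;D_{S,-1})$ to bypass the excluded value $2/r = 1$ in Proposition~\ref{prop: Lions non borné}. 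Combining Proposition~\ref{prop : passage au concret} with Proposition~\ref{prop: Lions non borné} (taking $\rho = r$ when $r > 2$, and $\rho = \infty$ with $g = 0$ when $r = 2$) yields $u \in C_0(\overline{I};L^2_\omega(\mathbb{R}^n))$ together with the energy identity
\begin{equation*}
\|u(\tau)\|^2_{2,\omega} - \|u(\sigma)\|^2_{2,\omega} = -2\,\mathrm{Re}\int_\sigma^\tau \bigl[\mathcal{A}(t)(u(t),u(t)) + \beta(t)(u(t),u(t))\bigr]\,\mathrm{d}t.
\end{equation*}
Reading the hypothesis of (2) as $\mathrm{Re}(\mathcal{A}(t)(u,u) + \beta(t)(u,u)) \ge 0$, the integrand is nonnegative, so $\|u(\tau)\|_{2,\omega}$ is nonincreasing in $\tau$; letting $\sigma \to -\infty$ and using that elements of $C_0(\overline{I};L^2_\omega(\mathbb{R}^n))$ vanish at $-\infty$ forces $u \equiv 0$.

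The main obstacle lies in (1): producing a strictly sub-unit coefficient for $\|D_t^{1/2}u\|^2$ in the bound on $\Tilde{\beta}(u, H_tu)$, uniformly in $P_{r,q}$, is exactly what rules out the direct use of Corollary~\ref{cor: borne Beta} and makes the tunable interpolation plus Young's inequality essential. Once that estimate is in hand, both parts reduce to bookkeeping around the Lax--Milgram lemma and the energy identity.
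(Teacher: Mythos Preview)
Your proof is correct and follows essentially the same approach as the paper: the Kaplan-type coercivity argument against $(1+\delta H_t)u$ for part (1), with the mixed-norm interpolation plus Young's inequality to absorb the $\beta$-contribution, and the energy identity for part (2). One minor point: your appeal to boundedness of $H_t$ on $L^r(\mathbb{R};L^q_{\omega^{q/2}}(\mathbb{R}^n))$ (which, for general $r,q$, requires vector-valued UMD theory and is not established in the paper) is unnecessary---the paper simply applies the interpolation inequality \eqref{eq: mixed} directly to $v=H_tu$ and uses that $H_t$ is an isometry on $\dot{V}_0$, so that $\|H_tu\|_{L^2(\mathbb{R};D_{S,1})}=\|u\|_{L^2(\mathbb{R};D_{S,1})}$ and $\|D_t^{1/2}H_tu\|_{L^2(\mathbb{R};L^2_\omega)}=\|D_t^{1/2}u\|_{L^2(\mathbb{R};L^2_\omega)}$.
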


\begin{proof}
    We follow \cite[Theorem 2.42]{auscheregert2023universal}. We start with the proof of (1). Using the assumption (more precisely, its extension to $\dot{V}_0$) and proceeding as in the proof of Theorem \ref{thm: Invertibility}, we have
    \begin{align*}
         \mathrm{Re} \, \llangle \widetilde{\mathcal{H}} u, (1+\delta H_t)u \rrangle_{\dot{V}_0^\star, \dot{V}_0} &\ge \delta  \| D_t^{1/2}u  \|^2_{L^2(\mathbb{R};L^2_\omega(\mathbb{R}^n))} + c  \| u \|^2_{L^2(\mathbb{R};D_{S,1})} -\delta M \|  u \|_{L^2(\mathbb{R};D_{S,1})} \|  H_t u \|_{L^2(\mathbb{R};D_{S,1})}\\& \hspace{4cm}-\delta P_{r,q} \|u \|_{\dot{\Sigma}^{r,q}} \|H_t u \|_{\dot{\Sigma}^{r,q}}
         \\& \ge \delta  \| D_t^{1/2}u  \|^2_{L^2(\mathbb{R};L^2_\omega(\mathbb{R}^n))} + (c-\delta \Tilde{M})  \| u \|^2_{L^2(\mathbb{R};D_{S,1})} \\& \hspace{0.5cm} - \delta P_{r,q} \left ( \|u \|_{L^r(\mathbb{R};L^q_{\omega^{q/2}}(\mathbb{R}^n))}+ \|H_t u \|_{L^r(\mathbb{R};L^q_{\omega^{q/2}}(\mathbb{R}^n))} \right )^2,
    \end{align*}
    with $\Tilde{M}:=M+2P_{r,q}$, where in the second inequality we used that $H_t$ is an isometry on $L^2(\mathbb{R};D_{S,1})$, and trivial lower bounds together with elementary estimates such as $2ab\le a^2+b^2$ for $a,b\ge0$. By combining Lemma \ref{lem:xSobolev} with the second embedding in (4) of Proposition \ref{prop:embeddings}, there are constants $C_1 = C_1([\omega]_{A_2}, [\omega]_{RH_{\frac{q}{2}}}, n, q)$ and $C_2 = C_2(r)$ such that
    \begin{equation}\label{eq: mixed}
        \| v \|_{L^r(\mathbb{R};L^q_{\omega^{q/2}}(\mathbb{R}^n))} \leq C_1  \| v \|_{L^r(\mathbb{R};D_{S,\frac{2}{r}})} \leq C_1  C_2 \|v \|_{L^2(\mathbb{R};D_{S,1})}^\alpha \|D_t^{{1}/{2}}v \|_{L^2(\mathbb{R};L^2_\omega(\mathbb{R}^n))}^{1-\alpha},
    \end{equation}
    for all $v \in \dot{V}_0$, with $\alpha=\frac{2}{r}$. We set $C:=C_1C_2$. 
    
    If $r=2$, then there is no contribution of $D_t^{{1}/{2}}$ in \eqref{eq: mixed} and by combining this inequality with the fact that $H_t$ is an isometry on $L^2(\mathbb{R};D_{S,1})$, we obtain 
    \begin{align*}
        \mathrm{Re} \, \llangle \widetilde{\mathcal{H}} u, (1+\delta H_t)u \rrangle_{\dot{V}_0^\star, \dot{V}_0} \ge \delta  \| D_t^{1/2}u  \|^2_{L^2(\mathbb{R};L^2_\omega(\mathbb{R}^n))} + (c-\delta( \Tilde{M}+4C^2 P_{r,q}))  \| u \|^2_{L^2(\mathbb{R};D_{S,1})},
    \end{align*}
    and we choose $\delta>0$ such that $c-\delta( \Tilde{M}+4C^2 P_{r,q})>0$ and we obtain the invertibility of $\widetilde{\mathcal{H}}$. 
    
    If $r>2$, \textit{i.e.} $\alpha \in (0,1)$, then for $\varepsilon>0$ to be fixed later, by convexity, from \eqref{eq: mixed} we obtain
    \begin{equation*}
        \| v \|_{L^r(\mathbb{R};L^q_{\omega^{q/2}}(\mathbb{R}^n))} \leq C   \left ( \alpha  \varepsilon^{-\frac{1}{\alpha}} \|v \|_{L^2(\mathbb{R};D_{S,1})}+ (1-\alpha) \varepsilon^{\frac{1}{1-\alpha}}   \|D_t^{{1}/{2}}v \|_{L^2(\mathbb{R};L^2_\omega(\mathbb{R}^n))} \right ), \quad \text{for all} \ v \in \dot{V}_0^{\mathrm{loc}}.
    \end{equation*}
    Using this inequality with the fact that $H_t$ is an isometry on $\dot{V}_0$, we obtain 
    \begin{align*}
       \mathrm{Re} \,  \llangle \widetilde{\mathcal{H}} u, (1+\delta H_t)u \rrangle_{\dot{V}_0^\star, \dot{V}_0} \ge &\delta \left ( 1- 8C^2 P_{r,q}(1-\alpha)^2 \varepsilon^{\frac{2}{1-\alpha}} \right )   \| D_t^{1/2}u  \|^2_{L^2(\mathbb{R};L^2_\omega(\mathbb{R}^n))} \\&+ \left ( c-\delta \Tilde{M}-8\delta C^2 P_{r,q} \alpha^2 \varepsilon^{-\frac{2}{\alpha}} \right )  \| u \|^2_{L^2(\mathbb{R};D_{S,1})}.
    \end{align*}
    We begin by choosing $\varepsilon > 0$ such that $1 - 8 C^2 P_{r,q}(1 - \alpha)^2 \varepsilon^{\frac{2}{1 - \alpha}} > 0$, and then choose $\delta > 0$ such that $c - \delta\left( \Tilde{M} +  8C^2 P_{r,q} \alpha^2 \varepsilon^{-\frac{2}{\alpha}} \right) > 0$. Under these conditions, we conclude the invertibility of $\widetilde{\mathcal{H}}$.

    For the proof of (2), we argue as in the proof of Theorem \ref{thm: Causality}. By the energy equality, we obtain
    \begin{align*}
    \left\| u(\tau) \right\|_{2,\omega}^2 
    = -2\,\mathrm{Re}\int_{-\infty}^{\tau} \langle \mathcal{B}u(t), u(t) \rangle \, \mathrm{d}t.
    \end{align*}
    We then conclude directly from the assumption of (2) that $u=0$.
\end{proof}

\subsection{Existence and uniqueness results} In this section, we establish the well-posedness of parabolic equations of the form $\partial_t u + \mathcal{B}u = f$ on $\mathbb{R}^{1+n} = \mathbb{R} \times \mathbb{R}^n$, where the source term $f$ belongs to various function spaces. We begin by stating the following assumptions, for which we have already provided sufficient conditions in the previous section.
\begin{align*}
    &\textbf{(I)} \quad \widetilde{\mathcal{H}} \ \text{is invertible;} \\
    &\textbf{(C)} \quad \partial_t + \mathcal{B} \ \text{is causal in the sense of Theorem \ref{thm: Causality}.}
\end{align*}

\textbf{Until the end of this Section \ref{section 3}, we assume that \textbf{(I)} holds and will not repeat this assumption in the statements.} We set $\mathfrak{I}:= \| (\widetilde{\mathcal{H}} )^{-1} \|_{\dot{V}_0^\star \rightarrow \dot{V}_0}$. The assumption \textbf{(C)} will be invoked only when it is used. \textbf{We also fix a pair $(r, q)$ that is compatible for lower-order terms.} Note that this includes the assumption $\omega \in RH_{\frac{q}{2}}(\mathbb{R}^n)$. 

\begin{rems}
\begin{enumerate}
\item \textbf{(I)} is equivalent to the statement that $\widetilde{\mathcal{H}^\star}$, the unique extension of $\mathcal{H}^\star$ to $\dot{V}_0$, is invertible, and we have $\mathfrak{I} = \| (\widetilde{\mathcal{H}^\star})^{-1} \|_{\dot{V}_0^\star \rightarrow \dot{V}_0}$.
\item \textbf{(C)} is equivalent to the statement that $-\partial_t + \mathcal{B}^\star$ is anti-causal (when $I$ is assumed to be a neighborhood of $+\infty$).
\end{enumerate}
Thus, all the results presented here also apply to backward equations.
\end{rems}

\subsubsection{Uniqueness} 

We establish the following uniqueness result, which is the only instance in which we move from the concrete setting to the abstract one.

\begin{prop}[Uniqueness]\label{thm: Uniqueness}
If $u\in \dot{\Sigma}^{r,q}(\mathbb{R})$ is a weak solution of $\partial_t u +\mathcal{B}u=0$, then $u=0$.
\end{prop}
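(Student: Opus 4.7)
The plan is to reduce the claim to the invertibility assumption \textbf{(I)}: I aim to show that any $u \in \dot{\Sigma}^{r,q}(\mathbb{R})$ satisfying $\partial_t u + \mathcal{B}u = 0$ automatically belongs to the variational space $\dot{V}_0$, at which point $\widetilde{\mathcal{H}}u = 0$ in $\dot{V}_0^\star$ and invertibility forces $u = 0$. First, I would lift the equation to the abstract setting. By Remark~\ref{rem: beta et A}, $\mathcal{B}u \in \dot{\Sigma}^{r,q}(\mathbb{R})^\star = L^2(\mathbb{R};D_{S,-1}) + L^{r'}(\mathbb{R};L^{q'}_{\omega^{q'/2}}(\mathbb{R}^n))$. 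Via Lemma~\ref{lem: div} and Lemma~\ref{lem:xSobolev} (the latter using $\omega \in RH_{q/2}(\mathbb{R}^n)$, built into admissibility), these two summands can be respectively represented as $-\omega^{-1}\mathrm{div}_x(\omega F)$ with $F \in L^2(\mathbb{R};L^2_\omega(\mathbb{R}^n)^n)$ and $(-\Delta_\omega)^{\beta/2}h$ with $\beta = 2/r$ and $h \in L^{r'}(\mathbb{R};L^2_\omega(\mathbb{R}^n))$, which are among the source-term types covered by Proposition~\ref{prop : passage au concret}. The PDE therefore holds equivalently in $\mathcal{D}'(\mathbb{R}; E_\infty)$, and by Corollary~\ref{cor:embeddings} we have $\partial_t u = -\mathcal{B}u \in \dot{V}_0^\star$.

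The crux is the regularity upgrade: I claim that $u \in L^2(\mathbb{R};D_{S,1})$ (which follows from $\nabla_x u \in L^2(\mathbb{R};L^2_\omega(\mathbb{R}^n)^n)$) together with $\partial_t u \in \dot{V}_0^\star$ forces $D_t^{1/2}u \in L^2(\mathbb{R};L^2_\omega(\mathbb{R}^n))$, hence $u \in \dot{V}_0$. Using the spectral theorem for the positive self-adjoint operator $S=(-\Delta_\omega)^{1/2}$, I would identify $L^2_\omega(\mathbb{R}^n)$ with an $L^2(\Sigma,\mathrm{d}\mu)$ on which $S$ acts as multiplication by a positive function $\lambda$. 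The Fourier transform in time and Plancherel then give
\begin{equation*}
\|u\|_{L^2(\mathbb{R};D_{S,1})}^2 = \int \lambda^2 |\hat u|^2\,\mathrm{d}\tau\,\mathrm{d}\mu, \qquad \|\partial_t u\|_{\dot{V}_0^\star}^2 = \int \frac{|\tau|^2 |\hat u|^2}{\lambda^2+|\tau|}\,\mathrm{d}\tau\,\mathrm{d}\mu,
\end{equation*}
since $\|v\|_{\dot V_0}^2 = \int (\lambda^2+|\tau|)|\hat v|^2$ and $\widehat{\partial_t u} = i\tau\hat u$. The elementary decomposition
\begin{equation*}
|\tau| = \frac{\lambda^2 |\tau|}{\lambda^2+|\tau|} + \frac{|\tau|^2}{\lambda^2+|\tau|} \le \lambda^2 + \frac{|\tau|^2}{\lambda^2+|\tau|},
\end{equation*}
multiplied by $|\hat u|^2$ and integrated, yields $\|D_t^{1/2}u\|_{L^2(\mathbb{R};L^2_\omega(\mathbb{R}^n))}^2 \le \|u\|_{L^2(\mathbb{R};D_{S,1})}^2 + \|\partial_t u\|_{\dot{V}_0^\star}^2 < \infty$.

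Once $u \in \dot{V}_0$, the PDE rewrites as $\widetilde{\mathcal{H}}u = 0$ in $\dot{V}_0^\star$: testing the equation (now read in $\mathcal{D}'(\mathbb{R};E_\infty)$) against $v \in \mathcal{D}(\mathbb{R}\times\mathbb{R}^n)$ and using the identity $\int_{\mathbb{R}} \langle H_t D_t^{1/2}u(t), D_t^{1/2}v(t)\rangle_{2,\omega}\,\mathrm{d}t = -\int_{\mathbb{R}} \langle u(t), \partial_t v(t)\rangle_{2,\omega}\,\mathrm{d}t$ (valid for $u \in \dot{V}_0^{\mathrm{loc}}$) gives $B_{\dot{V}_0}(u,v) = 0$; density of $\mathcal{D}(\mathbb{R}\times\mathbb{R}^n)$ in $\dot{V}_0$ extends this to $\widetilde{\mathcal{H}}u = 0$ in $\dot{V}_0^\star$, and assumption \textbf{(I)} yields $u = 0$. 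The principal obstacle is the regularity upgrade in the middle step — passing from concrete mixed-Lebesgue integrability to abstract $\dot{V}_0$-membership — which relies crucially on the spectral calculus for the degenerate Laplacian; the rest of the argument is then a purely abstract variational consequence.
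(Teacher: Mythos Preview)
Your proof is correct and reaches the same conclusion—$u \in \dot V_0^{\mathrm{loc}}$, hence $\mathcal{H}u=0$ and $u=0$ by \textbf{(I)}—but the regularity upgrade is carried out differently from the paper. The paper rewrites the equation as $(\partial_t - \Delta_\omega)u = -\Delta_\omega u - \mathcal{B}u$, observes that the right-hand side lies in $L^2(\mathbb{R};D_{S,-1}) + L^{r'}(\mathbb{R};D_{S,-2/r}) \hookrightarrow \dot W_{-1}+\dot W_{-2/r}$, and invokes the abstract regularity theorem for the heat operator (\cite[Thm.~4.16]{auscherbaadi2024fundamental}) to get $u \in \dot V_{-2/r} \subset \dot V_0$. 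Your route is more self-contained: you bypass the heat-operator machinery entirely and show $D_t^{1/2}u \in L^2(\mathbb{R};H)$ directly via the spectral picture and the pointwise inequality $|\tau| \le \lambda^2 + |\tau|^2/(\lambda^2+|\tau|)$. This is essentially a hands-on proof of the special case of that black-box theorem needed here. The paper's approach buys a slightly sharper conclusion ($u \in \dot V_{-2/r}$, not just $\dot V_0$) and cleanly delegates the Fourier-analytic subtleties to the existing framework; your approach buys transparency and avoids the external reference, at the cost of needing a word about why the identity $\widehat{\partial_t u}=i\tau\hat u$ and the dual-norm formula for $\dot V_0^\star$ are consistent in the common ambient space $E_\infty$ (they are, since $u \in L^2(\mathbb{R};D_{S,1}) \subset \mathcal{S}'(\mathbb{R};E_\infty)$ and $S$ is injective, so $\hat u$ is a genuine measurable function on $\mathbb{R}_\tau\times\Sigma$).
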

\begin{proof}
Using the equation, we have $\partial_t u - \Delta_\omega u = -\Delta_\omega u - \mathcal{B}u$ in $\mathcal{D}'(\mathbb{R}^{1+n})$, with $-\Delta_\omega u \in L^2(\mathbb{R};D_{S,-1})$ and $\mathcal{B}u \in \dot{\Sigma}^{r,q}(\mathbb{R})^\star = L^2(\mathbb{R};D_{S,-1}) + L^{r'}(\mathbb{R},L^{q'}_{\omega^{q'/2}}(\mathbb{R}^n)) \hookrightarrow L^2(\mathbb{R};D_{S,-1}) + L^{r'}(\mathbb{R};D_{S,-\frac{2}{r}})$ by Lemma \ref{lem:xSobolev}. Using Proposition \ref{prop : passage au concret}, we also have 
\begin{equation*}
    \partial_t u - \Delta_\omega u = -\Delta_\omega u - \mathcal{B}u \quad \text{in} \ \mathcal{D}'(\mathbb{R};E_\infty).
\end{equation*}
As $-\Delta_\omega u - \mathcal{B}u \in L^2(\mathbb{R};D_{S,-1}) + L^{r'}(\mathbb{R};D_{S,-\frac{2}{r}}) \hookrightarrow \dot{V}_{\frac{2}{r}}^{\star}=\dot{W}_{-1}+\dot{W}_{-\frac{2}{r}}$ by Corollary \ref{cor:embeddings}, we use \cite[Theorem 4.16]{auscherbaadi2024fundamental} to conclude that $u \in \dot{V}_{-1}+\dot{V}_{-\frac{2}{r}}=\dot{V}_{-\frac{2}{r}} \subset \dot{V}_0$ by Proposition \ref{prop:embeddings}, Point (1). Thus, we have $u \in \dot{V}_0^{\mathrm{loc}}$ and $\mathcal{H}u = 0$, hence $u = 0$ by the invertibility assumption \textbf{(I)}.
\end{proof}

\subsubsection{Source term in suitable subspaces of $\dot{V}_0^\star$} 

We recall that we have 
$$\left( \partial_t + \Tilde{\mathcal{B}} \right)_{\scriptscriptstyle{\vert \dot{V}_0}} = \widetilde{\mathcal{H}},$$
and $\Tilde{\mathcal{B}}$ defined in Remark \ref{rem: TildeB}. We also recall that, for any open interval $I\subset \mathbb{R}$,
\begin{equation*}
    L^{\rho'}(I;D_{S,-\beta})= \left\{ (-\Delta_\omega)^{\frac{\beta}{2}}h, \ h\in L^{\rho'}(I;L^2_\omega(\mathbb{R}^n) \right\}.
\end{equation*}
We now state the following result.
\begin{prop}[Source term in $L^{\rho'}(\mathbb{R};D_{S,-\beta})$, $2\le\rho<\infty$]\label{prop: existence OK}
Let $\rho \in [2,\infty)$ and set $\beta = \frac{2}{\rho}$. Let $h \in L^{\rho'}(\mathbb{R};L^2_\omega(\mathbb{R}^n))$. Then, there exists a unique $u \in \dot{\Sigma}^{r,q}(\mathbb{R})$ solution of
    \begin{equation*}
    \partial_t u + \mathcal{B}u =  (-\Delta_\omega)^{\frac{\beta}{2}} h \quad \text{in} \ \mathcal{D}'(\mathbb{R}^{1+n}).
    \end{equation*}
    Moreover, $u \in C_0(\mathbb{R};L^2_\omega(\mathbb{R}^n)) \cap \dot{V}_0$, and there exists a constant $C = C(M,\mathfrak{I}, P_{r,q}, [\omega]_{A_2}, [\omega]_{RH_{\frac{q}{2}}}, n, q,\rho)$ such that
    \begin{equation}\label{eq: estimates energy}
    \sup_{t \in \mathbb{R}} \| u(t) \|_{L^2_\omega(\mathbb{R}^n)} + \| \nabla_x u \|_{L^2(\mathbb{R};L^2_\omega(\mathbb{R}^n))} 
    \leq C\| h \|_{L^{\rho'}(\mathbb{R};L^2_\omega(\mathbb{R}^n))}.
    \end{equation}
\end{prop}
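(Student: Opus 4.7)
Uniqueness is settled immediately by Proposition \ref{thm: Uniqueness}: if $u_1,u_2\in\dot{\Sigma}^{r,q}(\mathbb{R})$ both solve the equation, then $u_1-u_2$ solves the homogeneous problem and so vanishes. For existence, the plan is to recast the right-hand side as an element of $\dot{V}_0^\star$ and then invert $\widetilde{\mathcal{H}}$ via assumption \textbf{(I)}. When $\rho=2$, $\beta=1$ and $(-\Delta_\omega)^{1/2}h\in L^2(\mathbb{R};D_{S,-1})\subset\dot{V}_0^\star$ directly. When $\rho\in(2,\infty)$, $\beta\in(0,1)$ and Point (3) of Proposition \ref{prop:embeddings} provides $L^{\rho'}(\mathbb{R};D_{S,-\beta})\hookrightarrow\dot{W}_{-\beta}\subset\dot{V}_\beta^\star\subset\dot{V}_0^\star$, the last inclusion by the hierarchy in Point (1) of the same proposition. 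In either case, $\|(-\Delta_\omega)^{\beta/2}h\|_{\dot{V}_0^\star}\lesssim\|h\|_{L^{\rho'}(\mathbb{R};L^2_\omega(\mathbb{R}^n))}$.

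I would then set $u:=\widetilde{\mathcal{H}}^{-1}[(-\Delta_\omega)^{\beta/2}h]\in\dot{V}_0$, which at once gives $\|u\|_{\dot{V}_0}\le\mathfrak{I}\,\|(-\Delta_\omega)^{\beta/2}h\|_{\dot{V}_0^\star}$ and hence the required control on $\|\nabla_x u\|_{L^2(\mathbb{R};L^2_\omega(\mathbb{R}^n))}$. To move from the abstract equality $\widetilde{\mathcal{H}}u=(-\Delta_\omega)^{\beta/2}h$ to the distributional equation on $\mathbb{R}^{1+n}$, I would invoke Proposition \ref{prop : passage au concret}, rewriting $(-\Delta_\omega)^{1/2}h$ in divergence form through Lemma \ref{lem: div} in the case $\rho=2$. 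Corollary \ref{cor:embeddings} then places $u\in\dot{\Sigma}^{r,q}(\mathbb{R})$ with the expected norm bound, once we know $u\in L^1_{\mathrm{loc}}(\mathbb{R};L^2_\omega(\mathbb{R}^n))$, which will be automatic from the next step.

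The sup estimate and continuity are to come from the integral identity of Proposition \ref{prop: Lions non borné} applied to $\partial_t u=-\mathcal{A}u-\beta u+(-\Delta_\omega)^{\beta/2}h$. The three summands lie respectively in $L^2(\mathbb{R};D_{S,-1})$, in $L^2(\mathbb{R};D_{S,-1})+L^{r'}(\mathbb{R};D_{S,-2/r})$ (via Remark \ref{rem: beta et A} together with Lemma \ref{lem:xSobolev}), and in $L^{\rho'}(\mathbb{R};D_{S,-\beta})$. The main technical obstacle I foresee is that, when $r\neq\rho$, the right-hand side contains two $g$-type summands with different exponents $(r',2/r)$ and $(\rho',\beta)$; this calls for a mild extension of Proposition \ref{prop: Lions non borné} to sums of such terms, which is routine since the proof pairs each summand with $u$ via the appropriate duality (alternatively one can absorb both into a common mixed-norm space via embeddings). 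Once the identity is applied on $(-\infty,\tau]$, letting $\sigma\to-\infty$ and using the $C_0$ conclusion, together with the pointwise bound on $\mathcal{A}$, Corollary \ref{cor: borne Beta} for $\beta$, and the pairing $|\langle h(t),(-\Delta_\omega)^{\beta/2}u(t)\rangle_{2,\omega}|$ (which is meaningful because $\dot{V}_0\hookrightarrow L^\rho(\mathbb{R};D_{S,\beta})$ by Point (3) of Proposition \ref{prop:embeddings}), one obtains $\sup_\tau\|u(\tau)\|_{2,\omega}^2\lesssim\|u\|_{\dot{V}_0}^2+\|h\|_{L^{\rho'}(\mathbb{R};L^2_\omega(\mathbb{R}^n))}\|u\|_{\dot{V}_0}$, which together with the $\dot{V}_0$-bound on $u$ yields the claimed estimate \eqref{eq: estimates energy}.
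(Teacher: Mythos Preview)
Your proposal is correct and follows essentially the same route as the paper: invoke Proposition \ref{thm: Uniqueness} for uniqueness, embed the source into $\dot V_0^\star$, invert $\widetilde{\mathcal H}$, extract continuity and the sup-bound from Proposition \ref{prop: Lions non borné}, and pass to $\dot\Sigma^{r,q}(\mathbb R)$ and to $\mathcal D'(\mathbb R^{1+n})$ via Corollary \ref{cor:embeddings} and Proposition \ref{prop : passage au concret}. The paper additionally cites an external regularity result to place $u$ in $\dot V_{-\min(2/r,\beta)}$, but this is a refinement rather than a necessary ingredient; conversely, the ``mild extension'' of Proposition \ref{prop: Lions non borné} to two $g$-type summands with distinct exponents that you flag is precisely what the paper uses without comment, so your caution there is well placed.
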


\begin{proof}
Uniqueness follows from Proposition \ref{thm: Uniqueness}. For existence, we observe that the source term $ (-\Delta_\omega)^{\frac{\beta}{2}} h$ lies in $\dot{V}_0^\star$, so defining $u := \widetilde{\mathcal{H}}^{-1}( (-\Delta_\omega)^{\frac{\beta}{2}} h)$ provides a function $u \in \dot{V}_0$ that solves $\partial_t u + \mathcal{B}u =  (-\Delta_\omega)^{\frac{\beta}{2}} h$ in $\mathcal{D}'(\mathbb{R}, E_\infty)$.

Moreover, using the equation, we have $\partial_t u = -\Tilde{\mathcal{B}}u + (-\Delta_\omega)^{\frac{\beta}{2}} h$ in $\mathcal{D}'(\mathbb{R}, E_\infty)$, which implies that $u \in C_0(\mathbb{R}; L^2_\omega(\mathbb{R}^n))$ by Proposition \ref{prop: Lions non borné}. Even better, applying \cite[Theorem~4.16]{auscherbaadi2024fundamental}, we obtain $u \in \dot{V}_{-\min(\frac{2}{r},\beta)}$, and by (2) of Proposition \ref{prop:embeddings}, we have the embedding $\dot{V}_{-\min(\frac{2}{r},\beta)} \subset C_0(\mathbb{R}; L^2_\omega(\mathbb{R}^n)) \cap \dot{V}_0$.

Now, using the first embedding in (4) of Proposition \ref{prop:embeddings}, we have $u \in L^{\tilde{r}}(\mathbb{R}; D_{S, \tilde{\alpha}})^{{\mathrm{loc}}}$ for all $\tilde{r} \in [2,\infty)$ with $\tilde{\alpha} = \frac{2}{\tilde{r}}$. By taking $\tilde{r} = r$ and applying Lemma \ref{lem:xSobolev}, we conclude that $u \in \dot{\Sigma}^{r,q}(\mathbb{R})$. Furthermore, since $u \in C_0(\mathbb{R}; L^2_\omega(\mathbb{R}^n)) \subset L^1_{\mathrm{loc}}(\mathbb{R}; L^2_\omega(\mathbb{R}^n))$, it then follows from Proposition \ref{prop : passage au concret} that $\partial_t u + \mathcal{B}u =  (-\Delta_\omega)^{\frac{\beta}{2}} h$ in $\mathcal{D}'(\mathbb{R}^{1+n})$.

Finally, to prove \eqref{eq: estimates energy}, since by definition $u=\widetilde{\mathcal{H}}^{-1}( (-\Delta_\omega)^{\frac{\beta}{2}} h)$, we obtain
\begin{equation}\label{eq: I}
\|u \|_{\dot{V_0}}=\|\widetilde{\mathcal{H}}^{-1}( (-\Delta_\omega)^{\frac{\beta}{2}} h) \|_{\dot{V_0}}\le \mathfrak{I} \|(-\Delta_\omega)^{\frac{\beta}{2}} h \|_{\dot{V}_0^\star} \le \mathfrak{I}\times C(\rho) \, \| h \|_{L^{\rho'}(\mathbb{R};L^2_\omega(\mathbb{R}^n))}.
\end{equation}
In this last inequality, we used that $(-\Delta_\omega)^{\frac{\beta}{2}} h \in L^{\rho'}(\mathbb{R};D_{S,-\beta}) \hookrightarrow \dot{W}_{-\beta} \hookrightarrow \dot{V}_{-\beta}^\star \hookrightarrow \dot{V}_{0}^\star$, which follows from Points (3) and (1) of Proposition \ref{prop:embeddings}. This yields the desired estimate on $\nabla_x u$ as
\begin{equation*}
\max(\|D_t^{1/2} u \|_{L^2(\mathbb{R};L^2_\omega(\mathbb{R}^n))},\| \nabla_x u \|_{L^2(\mathbb{R};L^2_\omega(\mathbb{R}^n))}) \le \|u \|_{\dot{V_0}}.
\end{equation*}
To prove the $L^\infty$-norm estimate, we use the energy identity \eqref{eq:integralidentity} in Proposition \ref{prop: Lions non borné} to deduce
\begin{align*}
\| u \|^2_{C_0(\mathbb{R};L^2_\omega(\mathbb{R}^n))} \le &2M \| \nabla_x u \|^2_{L^2(\mathbb{R};L^2_\omega(\mathbb{R}^n))}+2CP_{r,q} \|u \|^2_{\dot{V_0}}\\&+2 \| (-\Delta_\omega)^{\beta/2} u \|_{L^{\rho}(\mathbb{R};L^2_\omega(\mathbb{R}^n))}  \| h \|_{L^{\rho'}(\mathbb{R};L^2_\omega(\mathbb{R}^n))},
\end{align*}
where $C = C([\omega]_{A_2}, [\omega]_{RH_{\frac{q}{2}}}, n, q)$ is the constant from Corollary \ref{cor: borne Beta}. Combining \eqref{eq: I} with the second embedding in (4) of Proposition~\ref{prop:embeddings} for the norm $\| (-\Delta_\omega)^{\beta/2} u \|_{L^{\rho}(\mathbb{R};L^2_\omega(\mathbb{R}^n))}$, one then obtains the desired estimate.
\end{proof}

By the assumption \textbf{(I)}, $\mathcal{H}: \dot{V}_0^\mathrm{loc} \rightarrow \dot{V}_0^\star$ is injective. Moreover, by the above result, we have
\begin{equation*}
    \widetilde{\mathcal{H}}^{-1} : \bigcup_{2 \le \rho<\infty} L^{\rho'}(\mathbb{R};D_{S,-\beta}) \rightarrow C_0(\mathbb{R};L^2_\omega(\mathbb{R}^n)) \cap \dot{V}_0 \subset \dot{V}_0 ^\mathrm{loc}.
\end{equation*}
As $\widetilde{\mathcal{H}}$ is the extension of $\mathcal{H}$ to $\dot{V}_0$, then $\mathcal{H}^{-1} : \bigcup_{2 \le \rho<\infty} L^{\rho'}(\mathbb{R};D_{S,-\beta}) \rightarrow C_0(\mathbb{R};L^2_\omega(\mathbb{R}^n)) \cap \dot{V}_0 \subset \dot{V}_0 ^\mathrm{loc}$. We stress that whenever we write $u = \mathcal{H}^{-1} f$, for $f \in \bigcup_{2 \le \rho < \infty} L^{\rho'}(\mathbb{R}; D_{S,-\beta})$, this means that $u$ is the unique element of $\dot{V}_0^{\mathrm{loc}}$ satisfying $\mathcal{H}u = f$. We then know \textit{a posteriori} that 
$u \in C_0(\mathbb{R}; L^2_\omega(\mathbb{R}^n))$, etc. Providing such a candidate $u$ was the sole purpose of introducing $\widetilde{\mathcal{H}}$. From this point onward, we work directly with $\mathcal{H}$. We could do the same with sources in $\dot{W}_\alpha$ for $-1 \le \alpha < 0$, but we refrain from doing so: these spaces are defined on the whole real line via the Fourier transform, and we will later need to restrict to intervals.

\begin{cor}[Boundedness properties of $\mathcal{H}^{-1}  $]\label{cor: boudedness H}
Fix $\rho \in [2,\infty)$ and set $\beta={2}/{\rho}\in (0,1]$. Then, 
$$\mathcal{H}^{-1} : L^{\rho'}(\mathbb{R};D_{S,-\beta}) \rightarrow \dot{V}_{-\min(\frac{2}{r},\beta)} \cap C_0(\mathbb{R};L^2_\omega(\mathbb{R}^n)) \ (\subset \dot{V}_0^{\mathrm{loc}}) \quad \text{is  bounded.}$$  The same holds for $(\mathcal{H}^\star)^{-1}$.
\end{cor}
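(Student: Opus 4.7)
The plan is to repackage the existence proof of Proposition \ref{prop: existence OK} in a quantitative form, tracking how each norm of $u = \mathcal{H}^{-1}f$ is controlled by $\|f\|_{L^{\rho'}(\mathbb{R}; D_{S,-\beta})}$, and then to regain the regularity in $\dot{V}_{-\min(2/r, \beta)}$ via the abstract maximal regularity result cited there.

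First, given $f \in L^{\rho'}(\mathbb{R}; D_{S,-\beta})$, I would write $f = (-\Delta_\omega)^{\beta/2} h$ with $\|h\|_{L^{\rho'}(\mathbb{R}; L^2_\omega(\mathbb{R}^n))} = \|f\|_{L^{\rho'}(\mathbb{R}; D_{S,-\beta})}$. By Points (3) and (1) of Proposition \ref{prop:embeddings}, $f \in \dot{W}_{-\beta} \hookrightarrow \dot{V}_{-\beta}^\star \hookrightarrow \dot{V}_0^\star$ with embedding constants depending only on $\rho$. Then $u := \widetilde{\mathcal{H}}^{-1} f$ satisfies $\|u\|_{\dot{V}_0} \le \mathfrak{I}\, C(\rho) \|f\|_{L^{\rho'}(\mathbb{R}; D_{S,-\beta})}$, exactly as in \eqref{eq: I}. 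Proposition \ref{prop: existence OK} ensures that $u = \mathcal{H}^{-1} f$ in the sense discussed after it, so no ambiguity arises.

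Second, I would upgrade this to the target space. From the equation, $\partial_t u = -\widetilde{\mathcal{B}} u + f$ in $\mathcal{D}'(\mathbb{R}; E_\infty)$, and by Remark \ref{rem: TildeB} together with the boundedness of $\widetilde{\mathcal{A}}$ and $\widetilde{\beta}$ on $\dot{V}_0 \times \dot{V}_0$ (bounds $M$ and $C P_{r,q}$ respectively, where $C = C([\omega]_{A_2}, [\omega]_{RH_{q/2}}, n, q)$), we control
\begin{equation*}
    \|\widetilde{\mathcal{B}} u\|_{L^2(\mathbb{R}; D_{S,-1}) + L^{r'}(\mathbb{R}; D_{S,-2/r})} \lesssim (M + C P_{r,q}) \|u\|_{\dot{V}_0},
\end{equation*}
via Lemma \ref{lem:xSobolev}. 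Hence $\partial_t u \in \dot{W}_{-1} + \dot{W}_{-\beta} \hookrightarrow \dot{V}_{-\min(2/r, \beta)}^\star$ with a quantitative bound by $\|f\|_{L^{\rho'}(\mathbb{R}; D_{S,-\beta})}$. Applying \cite[Theorem 4.16]{auscherbaadi2024fundamental} to the equation $\partial_t u - \Delta_\omega u = -\Delta_\omega u - \widetilde{\mathcal{B}} u + f$, as in the proof of Proposition \ref{thm: Uniqueness}, yields $u \in \dot{V}_{-\min(2/r, \beta)}$ with the corresponding norm bound; the embedding $\dot{V}_{-\min(2/r, \beta)} \hookrightarrow C_0(\mathbb{R}; L^2_\omega(\mathbb{R}^n))$ from Point (2) of Proposition \ref{prop:embeddings} finishes the boundedness into the intersection space. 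All constants depend only on the claimed parameters.

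For the adjoint statement, I would use that \textbf{(I)} for $\widetilde{\mathcal{H}}$ is equivalent to invertibility of $\widetilde{\mathcal{H}^\star}$ with the same operator norm for the inverse, as recalled in the remark following the assumptions. Since $\mathcal{H}^\star$ corresponds to the backward parabolic operator $-\partial_t + \mathcal{B}^\star$, and $\mathcal{B}^\star$ has the same structural bounds as $\mathcal{B}$ (same $M$, $\nu$, and $P_{r,q}$), the entire argument above applies verbatim after reversing the time orientation, which does not affect any of the function-space bounds used. The main potential obstacle is verifying that the abstract result \cite[Theorem 4.16]{auscherbaadi2024fundamental} is applicable in the mixed setting $\dot{W}_{-1} + \dot{W}_{-\beta}$ with possibly different exponents, but this is precisely the content of that theorem, so the argument should go through cleanly.
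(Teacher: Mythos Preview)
Your proposal is correct and follows the same route as the paper, whose proof consists of the single sentence ``Refer to Proposition~\ref{prop: existence OK} above and its proof.'' You have simply unpacked that reference, tracking explicitly the $\dot{V}_0$-bound \eqref{eq: I}, the use of \cite[Theorem~4.16]{auscherbaadi2024fundamental} to upgrade to $\dot{V}_{-\min(2/r,\beta)}$, and the extended Lions embedding into $C_0(\mathbb{R};L^2_\omega(\mathbb{R}^n))$---all of which are exactly the ingredients already laid out in that proposition's proof.
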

\begin{proof}
Refer to Proposition~\ref{prop: existence OK} above and its proof. We stress that this does not imply that $\mathcal{H}$ maps $\dot{V}_{-\min(\frac{2}{r},\beta)}$ into $L^{\rho'}(\mathbb{R}; D_{S,-\beta})$, since $\mathcal{H}^{-1}$, as mapped above, is not necessarily surjective.
\end{proof}

\begin{cor}
    Let $(\Tilde{r}, \Tilde{q})$ be a pair with $r\le \Tilde{r}<\infty$ and $\frac{1}{\Tilde{r}} + \frac{n}{2\Tilde{q}} = \frac{n}{4}$. Let $g \in L^{\Tilde{r}'}(\mathbb{R};L^{\Tilde{q}'}_{\omega^{\Tilde{q}'/2}}(\mathbb{R}^n))$. Then, there exists a unique $u \in \dot{\Sigma}^{r,q}(\mathbb{R})$ solution of
    \begin{equation*}
    \partial_t u + \mathcal{B}u = g \quad \text{in} \ \mathcal{D}'(\mathbb{R}^{1+n}).
    \end{equation*}
    Moreover, $u \in C_0(\mathbb{R};L^2_\omega(\mathbb{R}^n)) \cap \dot{V}_0$, and there exists a constant $C = C(M,\mathfrak{I}, P_{r,q}, [\omega]_{A_2}, [\omega]_{RH_{\frac{q}{2}}}, n, q,\Tilde{q})$ such that
    \begin{equation*}
    \sup_{t \in \mathbb{R}} \| u(t) \|_{L^2_\omega(\mathbb{R}^n)} + \| \nabla_x u \|_{L^2(\mathbb{R};L^2_\omega(\mathbb{R}^n))} 
    \leq C \| g \|_{L^{\Tilde{r}'}(\mathbb{R};L^{\Tilde{q}'}_{\omega^{\Tilde{q}'/2}}(\mathbb{R}^n))}.
    \end{equation*}
\end{cor}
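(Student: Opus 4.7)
The plan is to reduce the corollary to an application of Proposition \ref{prop: existence OK} via the dual embedding of Lemma \ref{lem:xSobolev} applied with the exponent pair $(\Tilde{r},\Tilde{q})$ in place of $(r,q)$.

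First, I would justify that $(\Tilde{r},\Tilde{q})$ is itself an admissible pair. Indeed, the assumptions $r \le \Tilde{r} < \infty$ and $\frac{1}{\Tilde{r}} + \frac{n}{2\Tilde{q}} = \frac{n}{4}$ force $\Tilde{q} \in (2,q]$, and since $\omega \in RH_{q/2}(\mathbb{R}^n)$ implies $\omega \in RH_{\Tilde{q}/2}(\mathbb{R}^n)$ by Point (1) of Proposition \ref{prop:weights}, admissibility is automatic (this is exactly Point (2) of Remarks \ref{rems: RH}). In particular, $[\omega]_{RH_{\Tilde{q}/2}} \le [\omega]_{RH_{q/2}}$, so all constants below will depend only on the structural data listed in the statement.

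Second, setting $\beta := \frac{2}{\Tilde{r}} \in (0,1]$, I would apply the dual embedding part of Lemma \ref{lem:xSobolev} with $(\Tilde{r},\Tilde{q})$ and $I=\mathbb{R}$: there exists a unique $h \in L^{\Tilde{r}'}(\mathbb{R};L^2_\omega(\mathbb{R}^n))$ satisfying
\begin{equation*}
    \|h\|_{L^{\Tilde{r}'}(\mathbb{R};L^2_\omega(\mathbb{R}^n))} \le C \,\|g\|_{L^{\Tilde{r}'}(\mathbb{R};L^{\Tilde{q}'}_{\omega^{\Tilde{q}'/2}}(\mathbb{R}^n))},
\end{equation*}
with $C = C([\omega]_{A_2},[\omega]_{RH_{\Tilde{q}/2}},n,\Tilde{q})$, and such that $\llangle g,\varphi\rrangle = \llangle h, S^\beta \varphi\rrangle$ for every $\varphi \in \mathcal{D}(\mathbb{R}\times\mathbb{R}^n)$. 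By Definition \ref{def: distributions}, the right-hand side is exactly $\llangle (-\Delta_\omega)^{\beta/2}h,\varphi\rrangle$, so $g = (-\Delta_\omega)^{\beta/2}h$ in $\mathcal{D}'(\mathbb{R}\times\mathbb{R}^n)$.

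Third, since $\rho := \Tilde{r} \in [r,\infty) \subset [2,\infty)$ and $\beta = 2/\rho$, Proposition \ref{prop: existence OK} applies verbatim to the source $(-\Delta_\omega)^{\beta/2}h$: it produces a unique $u \in \dot{\Sigma}^{r,q}(\mathbb{R})$ with $u \in C_0(\mathbb{R};L^2_\omega(\mathbb{R}^n)) \cap \dot{V}_0$ solving $\partial_t u + \mathcal{B}u = (-\Delta_\omega)^{\beta/2}h = g$ in $\mathcal{D}'(\mathbb{R}^{1+n})$, together with
\begin{equation*}
    \sup_{t\in\mathbb{R}}\|u(t)\|_{2,\omega} + \|\nabla_x u\|_{L^2(\mathbb{R};L^2_\omega(\mathbb{R}^n))} \le C'\,\|h\|_{L^{\Tilde{r}'}(\mathbb{R};L^2_\omega(\mathbb{R}^n))}
\end{equation*}
for some $C' = C'(M,\mathfrak{I},P_{r,q},[\omega]_{A_2},[\omega]_{RH_{q/2}},n,q,\Tilde{r})$. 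Combining this with the previous estimate on $h$ gives the desired bound in terms of $\|g\|_{L^{\Tilde{r}'}(\mathbb{R};L^{\Tilde{q}'}_{\omega^{\Tilde{q}'/2}}(\mathbb{R}^n))}$ with the stated dependencies (using also $\Tilde{r} = \Tilde{r}(n,\Tilde{q})$, which is why only $\Tilde{q}$ appears). Uniqueness in the class $\dot{\Sigma}^{r,q}(\mathbb{R})$ is given directly by Proposition \ref{thm: Uniqueness}, which does not depend on the structure of the source term.

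There is no real obstacle here beyond bookkeeping: the only subtle point is the distributional identification $g = (-\Delta_\omega)^{\beta/2}h$, which is immediate once one reads Lemma \ref{lem:xSobolev} against the definition of the distributional bracket for fractional powers of $-\Delta_\omega$ in Definition \ref{def: distributions}.
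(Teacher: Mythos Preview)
Your proof is correct and follows exactly the paper's approach: first observe that $(\Tilde{r},\Tilde{q})$ is admissible via Point (2) of Remarks \ref{rems: RH}, then combine the dual embedding of Lemma \ref{lem:xSobolev} (which rewrites $g$ as $(-\Delta_\omega)^{\beta/2}h$ with $\beta=2/\Tilde{r}$) with Proposition \ref{prop: existence OK} applied at $\rho=\Tilde{r}$. You have simply spelled out the details that the paper leaves implicit, including the distributional identification and the tracking of constants.
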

\begin{proof}
The pair $(\Tilde{r}, \Tilde{q})$ is admissible; see Point (2) of Remarks \ref{rems: RH}. The result then follows directly by combining Lemma \ref{lem:xSobolev} (the duality result case $I = \mathbb{R}$) with Proposition \ref{prop: existence OK} applied with $\rho = \Tilde{r}$.
\end{proof}

\begin{rem}
Since $\omega \in \bigcup_{q_0>q} RH_{\frac{q_0}{2}}(\mathbb{R}^n)$ by Point (2) of Proposition \ref{prop:weights}, when $r>2$ we may take $g \in L^{\tilde{r}'}(\mathbb{R}; L^{\tilde{q}'}_{\omega^{\tilde{q}'/2}}(\mathbb{R}^n))$ in the above corollary for any admissible pair $(\Tilde{r}, \Tilde{q})$ with $\tilde{r} \in [r_\omega, r)$, for some $r_\omega \in [2,r)$.
\end{rem}

\subsubsection{Source term in $L^1(\mathbb{R};L^2_\omega(\mathbb{R}^n))$} The assumption \textbf{(I)} cannot be straightforwardly applied to prove existence for a source term in $L^1(\mathbb{R}; L^2_\omega(\mathbb{R}^n))$, since $L^1(\mathbb{R}; L^2_\omega(\mathbb{R}^n)) \nsubseteq \dot{V}_0^\star$. However, it is still possible to handle such source terms by employing a duality scheme.

\begin{prop}[Source term in $L^1(\mathbb{R};L^2_\omega(\mathbb{R}^n))$]\label{thm: existence L1}
    Let $f\in L^1(\mathbb{R};L^2_\omega(\mathbb{R}^n))$. Then, there exists a unique $u \in \dot{\Sigma}^{r,q}(\mathbb{R})$ solution of 
    \begin{equation*}
    \partial_t u + \mathcal{B}u = f \quad \text{in} \ \mathcal{D}'(\mathbb{R}^{1+n}).
    \end{equation*}
    Moreover, $u \in C_0(\mathbb{R};L^2_\omega(\mathbb{R}^n))$, and there exists a constant $C = C(M,\mathfrak{I}, P_{r,q}, [\omega]_{A_2}, [\omega]_{RH_{\frac{q}{2}}}, n, q) $ such that
    \begin{equation*}
    \sup_{t \in \mathbb{R}} \| u(t) \|_{L^2_\omega(\mathbb{R}^n)} + \| \nabla_x u \|_{L^2(\mathbb{R};L^2_\omega(\mathbb{R}^n))} 
    \leq C \| f \|_{L^1(\mathbb{R};L^2_\omega(\mathbb{R}^n))}.
    \end{equation*}
\end{prop}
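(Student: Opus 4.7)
Uniqueness follows directly from Proposition~\ref{thm: Uniqueness} applied to the difference of two solutions.

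For existence, the plan is a duality-and-approximation scheme. I would begin by approximating $f$ by a sequence $f_k \in \mathcal{D}(\mathbb{R} \times \mathbb{R}^n)$ with $f_k \to f$ in $L^1(\mathbb{R}; L^2_\omega(\mathbb{R}^n))$, which is possible by density. For each $k$, since $f_k \in L^{\rho'}(\mathbb{R}; L^2_\omega(\mathbb{R}^n))$ for every $\rho \in [2, \infty)$ and hence $f_k \in \dot{V}_0^\star$, Proposition~\ref{prop: existence OK} furnishes a unique $u_k = \mathcal{H}^{-1} f_k$ in $\dot{V}_0 \cap C_0(\mathbb{R}; L^2_\omega(\mathbb{R}^n)) \cap \dot{\Sigma}^{r,q}(\mathbb{R})$. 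The main task---and main obstacle---is to establish a uniform a priori estimate
\begin{equation*}
  \sup_t \|u_k(t)\|_{2,\omega} + \|\nabla_x u_k\|_{L^2(\mathbb{R};L^2_\omega(\mathbb{R}^n))} \leq C \|f_k\|_{L^1(\mathbb{R};L^2_\omega(\mathbb{R}^n))}
\end{equation*}
with $C$ independent of $k$, since $L^1(\mathbb{R}; L^2_\omega(\mathbb{R}^n)) \not\subset \dot V_0^\star$ and the invertibility of $\widetilde{\mathcal{H}}$ cannot be invoked directly in this endpoint.

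The heart of the argument is a duality against the adjoint problem. Fix $\rho \in [2, \infty)$ and set $\beta = 2/\rho$. Given any $h \in L^{\rho'}(\mathbb{R}; L^2_\omega(\mathbb{R}^n))$, the element $(-\Delta_\omega)^{\beta/2} h$ lies in $L^{\rho'}(\mathbb{R}; D_{S,-\beta}) \subset \dot{V}_0^\star$. Since $\mathcal{H}^\star$ is also invertible under \textbf{(I)}, Corollary~\ref{cor: boudedness H} applied to the adjoint produces
\begin{equation*}
  v := (\mathcal{H}^\star)^{-1}\bigl((-\Delta_\omega)^{\beta/2} h\bigr) \in C_0(\mathbb{R}; L^2_\omega(\mathbb{R}^n)), \qquad \|v\|_{L^\infty(\mathbb{R};L^2_\omega(\mathbb{R}^n))} \leq C \|h\|_{L^{\rho'}(\mathbb{R};L^2_\omega(\mathbb{R}^n))}.
\end{equation*}
The adjointness identity $\llangle u_k, \mathcal{H}^\star v \rrangle = \llangle \mathcal{H} u_k, v \rrangle$, combined with $\mathcal{H} u_k = f_k$, then gives
\begin{equation*}
  \bigl|\llangle u_k, (-\Delta_\omega)^{\beta/2} h \rrangle\bigr| = \bigl|\llangle f_k, v \rrangle\bigr| \leq \|f_k\|_{L^1(L^2_\omega)} \, \|v\|_{L^\infty(L^2_\omega)} \leq C \|f_k\|_{L^1(L^2_\omega)} \|h\|_{L^{\rho'}(L^2_\omega)}.
\end{equation*}
Taking the supremum over $\|h\|_{L^{\rho'}(L^2_\omega)} \leq 1$ and invoking the self-adjointness of the fractional Laplacian together with $L^\rho$--$L^{\rho'}$ duality (valid for $\rho \in [2, \infty)$), I would identify the resulting functional with $(-\Delta_\omega)^{\beta/2} u_k$ viewed as an element of $L^\rho(\mathbb{R}; L^2_\omega(\mathbb{R}^n))$, and conclude
\begin{equation*}
  \|(-\Delta_\omega)^{\beta/2} u_k\|_{L^\rho(\mathbb{R}; L^2_\omega(\mathbb{R}^n))} \leq C \|f_k\|_{L^1(\mathbb{R}; L^2_\omega(\mathbb{R}^n))}.
\end{equation*}
Specializing to $\rho = 2$ yields $\|\nabla_x u_k\|_{L^2(L^2_\omega)} \leq C \|f_k\|_{L^1(L^2_\omega)}$, while $\rho = r$ together with Lemma~\ref{lem:xSobolev} yields $\|u_k\|_{L^r(L^q_{\omega^{q/2}})} \leq C \|f_k\|_{L^1(L^2_\omega)}$, so that $u_k$ is bounded uniformly in $\dot{\Sigma}^{r,q}(\mathbb{R})$.

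To complete the a priori bound I would apply Proposition~\ref{prop: Lions non borné} to the identity $\partial_t u_k = f_k - \mathcal{A} u_k - \beta u_k$, viewed as a sum of sources in $L^2(\mathbb{R}; D_{S,-1})$, $L^{r'}(\mathbb{R}; D_{S,-2/r})$, and $L^1(\mathbb{R}; L^2_\omega(\mathbb{R}^n))$; its (straightforward) extension to finitely many source terms yields $u_k \in C_0(\mathbb{R}; L^2_\omega(\mathbb{R}^n))$ and the associated energy identity. Letting $\sigma \to -\infty$ and using H\"older's inequality with the bounds already obtained leads to
\begin{equation*}
  \sup_\tau \|u_k(\tau)\|_{2,\omega}^2 \leq C \|f_k\|_{L^1(L^2_\omega)}^2 + 2 \|f_k\|_{L^1(L^2_\omega)} \sup_\tau \|u_k(\tau)\|_{2,\omega},
\end{equation*}
after which Young's inequality closes the bound. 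Finally, by linearity of $f_k \mapsto u_k$ and the uniform estimate just established, $(u_k)$ is a Cauchy sequence in $\dot{\Sigma}^{r,q}(\mathbb{R}) \cap C_0(\mathbb{R}; L^2_\omega(\mathbb{R}^n))$; its limit $u$ satisfies the desired equation in $\mathcal{D}'(\mathbb{R}^{1+n})$ by distributional continuity of each term, and inherits the claimed estimates. The delicate point to verify in full rigor is the identification of the dual functional with $(-\Delta_\omega)^{\beta/2} u_k$, which rests on the self-adjointness of $-\Delta_\omega$ and the density of $\mathcal{D}(\mathbb{R}^n)$ in $D(S^\beta)$.
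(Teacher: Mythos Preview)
Your proposal is correct and follows essentially the same duality-and-approximation scheme as the paper: both exploit that $(\mathcal{H}^\star)^{-1}$ maps $L^{\rho'}(\mathbb{R};D_{S,-\beta})$ into $C_0(\mathbb{R};L^2_\omega(\mathbb{R}^n))$ (Corollary~\ref{cor: boudedness H} for the adjoint) and dualize this against $f_k \in L^1(\mathbb{R};L^2_\omega(\mathbb{R}^n))$ to obtain a priori bounds, then close with the energy identity and pass to the limit. The only cosmetic differences are that the paper defines the solution operator $\mathcal{T}f := \text{``}(\mathcal{H}^\star)^{-1\,\star}f\text{''}$ directly on all of $L^1$ before identifying it with $\mathcal{H}^{-1}$ on a dense subclass, and that the paper invokes only the $\rho=2$ instance of the adjoint estimate (recovering the $L^r(L^q_{\omega^{q/2}})$ bound afterwards via the mixed embedding of Proposition~\ref{prop:embeddings}(4)), whereas you run the duality at both $\rho=2$ and $\rho=r$ to get the full $\dot{\Sigma}^{r,q}$ bound before the energy step---which makes your absorption slightly cleaner.
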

\begin{proof}
    Uniqueness is provided by Proposition \ref{thm: Uniqueness}. To prove the existence, we remark that  Corollary \ref{cor: boudedness H} for the backward operator $\mathcal{H}^\star$ in the case  $\rho=2$ implies that $(\mathcal{H}^\star)^{-1}$ is bounded from $L^2(\mathbb{R};D_{S,-1})$ into $C_0(\mathbb{R}; L^2_\omega(\mathbb{R}^n))$. We define 
\begin{align*}
    \mathcal{T}: L^1(\mathbb{R};L^2_\omega(\mathbb{R}^n)) \rightarrow \mathcal{D'}(\mathbb{R};E_\infty), \   \llangle \mathcal{T}f, \varphi  \rrangle:= \llangle f, (\mathcal{H}^\star)^{-1}\varphi  \rrangle_{L^1(\mathbb{R};L^2_\omega(\mathbb{R}^n)),L^\infty(\mathbb{R};L^2_\omega(\mathbb{R}^n))}.
\end{align*}
We have, for $C=C(M,\mathfrak{I}, P_{r,q}, [\omega]_{A_2}, [\omega]_{RH_{\frac{q}{2}}}, n, q)$ a constant,
\begin{equation}\label{1}
   \left \| \mathcal{T}f \right \|_{L^2(\mathbb{R};D_{S,1})}
   \leq C \left \| f \right \|_{L^1(\mathbb{R};L^2_\omega(\mathbb{R}^n))}.
\end{equation}
 Next, let $f$ in $\mathcal{D}(\mathbb{R};E_{-\infty})$. We write for all $\varphi \in \mathcal{D}(\mathbb{R};E_{-\infty})$, observing that $\mathcal{D}(\mathbb{R};E_{-\infty})\subset \dot{V}_0^\star$,
\begin{align*}
   \llangle \mathcal{T}f, \varphi  \rrangle = \llangle \mathcal{H}\mathcal{H}^{-1}f, (\mathcal{H}^\star)^{-1}\varphi  \rrangle_{\dot{V}_0^\star, \dot{V}_0}= \llangle \mathcal{H}^{-1}f, \varphi  \rrangle.
\end{align*}
Hence, $\mathcal{T}f=\mathcal{H}^{-1}f \in C_0(\mathbb{R}; L^2_\omega(\mathbb{R}^n))$ and
\begin{equation}\label{Eq}
    \forall \varphi \in \mathcal{D}(\mathbb{R};E_{-\infty}), \quad -\llangle \mathcal{T}f, \partial_t \varphi \rrangle + \llangle \mathcal{B}(\mathcal{T}f) , \varphi\rrangle = \int_{\mathbb{R}} \langle f(t), \varphi(t) \rangle_{2,\omega} \, \mathrm d t.
\end{equation} 
Since $\partial_t \mathcal{T}f = -\mathcal{B}(\mathcal{T}f) + f$ in $\mathcal{D}'(\mathbb{R};E_\infty)$, the energy equality \eqref{eq:integralidentity} in Proposition \ref{prop: Lions non borné} implies that
\begin{align*}
    \| \mathcal{T}f \|^2_{L^\infty(\mathbb{R};L^2_\omega(\mathbb{R}^n))} \leq 2M \| \mathcal{T}f \|^2_{L^2(\mathbb{R};D_{S,1})}+2P_{r,q} \| \mathcal{T}f \|^2_{\dot{\Sigma}^{r,q}(\mathbb{R})}+2  \| \mathcal{T}f \|_{L^\infty(\mathbb{R};L^2_\omega(\mathbb{R}^n))}  \| f \|_{L^1(\mathbb{R};L^2_\omega(\mathbb{R}^n))}.
\end{align*}
Next, using the inequality $ \| \mathcal{T}f \|^2_{\dot{\Sigma}^{r,q}(\mathbb{R})} \le 2 \big( \| \mathcal{T}f \|^2_{L^2(\mathbb{R};D_{S,1})} + \| \mathcal{T}f \|^2_{L^r(\mathbb{R};L^q_{\omega^{q/2}}(\mathbb{R}^n))} \big) $, and combining Lemma \ref{lem:xSobolev} with the first embedding in (4) of Proposition \ref{prop:embeddings} for this second term, together with basic inequalities of the type $ab \le \varepsilon \frac{a^2}{2} + \frac{1}{\varepsilon} \frac{b^2}{2}$, we obtain, via this absorption argument, the following estimate holds, with $C=C(M,\mathfrak{I}, P_{r,q}, [\omega]_{A_2}, [\omega]_{RH_{\frac{q}{2}}}, n, q)$ a constant:
\begin{equation}\label{2}
    \| \mathcal{T}f \|_{C_0(\mathbb{R};L^2_\omega(\mathbb{R}^n))} \le C \| f \|_{L^1(\mathbb{R};L^2_\omega(\mathbb{R}^n))}.
\end{equation}

Now, let us pick $f \in L^1(\mathbb{R};L^2_\omega(\mathbb{R}^n)).$ Let $(f_k)_{k \in \mathbb{N}} \in \mathcal{D}(\mathbb{R};E_{-\infty})^{\mathbb{N}}$ such that $f_k \to f$ in $L^1(\mathbb{R};L^2_\omega(\mathbb{R}^n))$. By \eqref{1} and \eqref{2}, we have $\mathcal{T} f_k \to \mathcal{T} f$ in $L^2(\mathbb{R}; D_{S,1}) \cap C_0(\mathbb{R}; L^2_\omega(\mathbb{R}^n))$, and consequently also in $L^r(\mathbb{R}; L^q_{\omega^{q/2}}(\mathbb{R}^n))$ and in $\dot{\Sigma}^{r,q}(\mathbb{R})$ by combining Lemma \ref{lem:xSobolev} with the first embedding in (4) of Proposition \ref{prop:embeddings}. Using \eqref{Eq} with $\mathcal{T}f_k$ for a fixed $\varphi \in \mathcal{D}(\mathbb{R};E_{-\infty})$ and letting $k \to \infty$ imply that $\partial_t (\mathcal{T}f) + \mathcal{B}(\mathcal{T}f) = f$ in $\mathcal{D}'(\mathbb{R};E_{\infty})$. Furthermore, since $\mathcal{T}f \in C_0(\mathbb{R}; L^2_\omega(\mathbb{R}^n)) \subset L^1_{\mathrm{loc}}(\mathbb{R}; L^2_\omega(\mathbb{R}^n))$, it then follows from Proposition \ref{prop : passage au concret} that $\mathcal{T}f$ is asolution of $\partial_t (\mathcal{T}f) + \mathcal{B}(\mathcal{T}f) = f$ in $\mathcal{D}'(\mathbb{R}^{1+n})$. 
    
\end{proof}

\subsubsection{Source term is a Dirac measure on $L^2_\omega(\mathbb{R}^n)$}

For any $s \in \mathbb{R}$ and $\psi \in L^2_\omega(\mathbb{R}^n)$, we denote by $\delta_s \otimes \psi$ the Dirac measure on $s$ carried by $\psi$ which is defined by 
\begin{equation*}
    \llangle \delta_s \otimes \psi,  \phi \rrangle =  \langle \psi,  \phi(s) \rangle_{2,\omega}, \quad \text{for all} \ \phi \in C_0(\mathbb{R};L^2_\omega(\mathbb{R}^n)).
\end{equation*}

\begin{prop}[Source term is a Dirac measure]\label{cor: CorRadon}
Assume that \textbf{(C)} holds. Let $s \in \mathbb{R}$ and $\psi \in L^2_\omega(\mathbb{R}^n)$. Then there exists a unique $u \in \dot{\Sigma}^{r,q}(\mathbb{R})$ solution of
\begin{align*}
\partial_t u + \mathcal{B}u = \delta_s \otimes \psi \quad \text{in} \ \mathcal{D}'(\mathbb{R}^{1+n}).
\end{align*}
Moreover, $u \in C_0(\mathbb{R} \setminus \left \{ s \right \}, L^2_\omega(\mathbb{R}^n))$, equals  $0$ on $(-\infty,s)$ and $\lim_{t \to s^+} u(t)=\psi$ in $L^2_\omega(\mathbb{R}^n))$ and the map $t \mapsto \|u(t)\|_{2,\omega}^2$ admits an absolutely continuous extension to $[s, +\infty[$. Furthermore, there exists a constant $C = C(M,\mathfrak{I}, P_{r,q}, [\omega]_{A_2}, [\omega]_{RH_{\frac{q}{2}}}, n, q) $ such that 
\begin{equation*}
\sup_{s\le t}\left \|u(t)  \right \|_{L^2_\omega(\mathbb{R}^n)} + \left \| \nabla_x u \right \|_{L^2((s,\infty);L^2_\omega(\mathbb{R}^n))} \leq C \left \| \psi \right \|_{L^2_\omega(\mathbb{R}^n)}.
\end{equation*}
\end{prop}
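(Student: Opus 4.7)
The plan is to deduce existence from the $L^1$-source result (Proposition \ref{thm: existence L1}) by regularising the Dirac mass in time, using the causality assumption \textbf{(C)} to locate the support and weak compactness to pass to the limit. Uniqueness is immediate: two such solutions in $\dot{\Sigma}^{r,q}(\mathbb{R})$ would differ by an element of $\dot{\Sigma}^{r,q}(\mathbb{R})$ solving the homogeneous equation, which vanishes by Proposition \ref{thm: Uniqueness}.

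For existence, I set $f_\varepsilon(t,x) := \varepsilon^{-1} \mathbf{1}_{(s, s+\varepsilon)}(t)\,\psi(x)$ for $\varepsilon \in (0,1)$, so that $f_\varepsilon \in L^1(\mathbb{R}; L^2_\omega(\mathbb{R}^n))$ with $\|f_\varepsilon\|_{L^1(\mathbb{R}; L^2_\omega)} = \|\psi\|_{2,\omega}$. Proposition \ref{thm: existence L1} produces $u_\varepsilon \in \dot{\Sigma}^{r,q}(\mathbb{R}) \cap C_0(\mathbb{R}; L^2_\omega(\mathbb{R}^n))$ solving $\partial_t u_\varepsilon + \mathcal{B} u_\varepsilon = f_\varepsilon$ with
\begin{equation*}
\sup_{t \in \mathbb{R}} \|u_\varepsilon(t)\|_{2,\omega} + \|\nabla_x u_\varepsilon\|_{L^2(\mathbb{R}; L^2_\omega)} \le C \|\psi\|_{2,\omega},
\end{equation*}
and combining this with Lemma \ref{lem:xSobolev} and the first embedding in (4) of Proposition \ref{prop:embeddings} gives a uniform bound on $\|u_\varepsilon\|_{L^r(\mathbb{R}; L^q_{\omega^{q/2}})}$, hence on $\|u_\varepsilon\|_{\dot{\Sigma}^{r,q}(\mathbb{R})}$. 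Since $f_\varepsilon$ vanishes on $(-\infty, s)$, the restriction $u_\varepsilon|_{(-\infty, s)} \in \dot{\Sigma}^{r,q}((-\infty, s))$ solves the homogeneous equation there, and \textbf{(C)} forces $u_\varepsilon \equiv 0$ on $(-\infty, s)$.

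By weak and weak-$\ast$ compactness I extract a subsequence with $u_\varepsilon \rightharpoonup u$ in $L^r(\mathbb{R}; L^q_{\omega^{q/2}})$, $u_\varepsilon \overset{\ast}{\rightharpoonup} u$ in $L^\infty(\mathbb{R}; L^2_\omega(\mathbb{R}^n))$, and $\nabla_x u_\varepsilon \rightharpoonup \nabla_x u$ in $L^2(\mathbb{R}; L^2_\omega)$. The limit $u \in \dot{\Sigma}^{r,q}(\mathbb{R})$ vanishes on $(-\infty, s)$ and inherits the uniform bounds by lower semi-continuity. Since $\mathcal{B} : \dot{\Sigma}^{r,q}(\mathbb{R}) \to \dot{\Sigma}^{r,q}(\mathbb{R})^\star$ is bounded linear (Remark \ref{rem: beta et A}), it is weakly continuous, so for every $\varphi \in \mathcal{D}(\mathbb{R}^{1+n})$,
\begin{equation*}
\llangle u_\varepsilon, -\partial_t \varphi \rrangle + \llangle \mathcal{B} u_\varepsilon, \varphi \rrangle \longrightarrow \llangle u, -\partial_t \varphi \rrangle + \llangle \mathcal{B} u, \varphi \rrangle,
\end{equation*}
while $\llangle f_\varepsilon, \varphi \rrangle = \varepsilon^{-1} \int_s^{s+\varepsilon} \langle \psi, \varphi(t)\rangle_{2,\omega}\, \mathrm{d}t \to \langle \psi, \varphi(s) \rangle_{2,\omega}$ by continuity of $t \mapsto \varphi(t)$. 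Thus $\partial_t u + \mathcal{B} u = \delta_s \otimes \psi$ in $\mathcal{D}'(\mathbb{R}^{1+n})$.

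Restricted to $I = (s, \infty)$, the equation reads $\partial_t u + \mathcal{B} u = 0$, and $\mathcal{B} u \in L^2(I; D_{S,-1}) + L^{r'}(I; D_{S,-2/r})$ by Corollary \ref{cor:embeddings}. Propositions \ref{prop : passage au concret} and \ref{prop: Lions non borné} on $I$ then yield $u \in C_0([s, \infty); L^2_\omega(\mathbb{R}^n))$ with $t \mapsto \|u(t)\|^2_{2,\omega}$ absolutely continuous on $[s, \infty)$, together with the announced estimate on $\sup_{s \le t}\|u(t)\|_{2,\omega}$. The main obstacle, and the last step, is to match the one-sided limit $u(s^+)$ with $\psi$. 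The plan is to test the distributional equation against separated test functions $\varphi(t,x) = \phi(t)\chi(x)$, $\phi \in \mathcal{D}(\mathbb{R})$, $\chi \in \mathcal{D}(\mathbb{R}^n)$: the function $g(t) := \langle u(t), \chi\rangle_{2,\omega}$ is zero on $(-\infty, s)$ and absolutely continuous on $[s, \infty)$ with derivative $-\langle \mathcal{B} u(t), \chi\rangle$ a.e., so its distributional derivative on $\mathbb{R}$ carries the atomic contribution $\langle u(s^+), \chi \rangle_{2,\omega} \delta_s$ at $t=s$; matching this with the $\langle \psi, \chi\rangle_{2,\omega}\delta_s$ produced by the source, choosing $\phi$ with $\phi(s) = 1$, and invoking the density of $\mathcal{D}(\mathbb{R}^n)$ in $L^2_\omega(\mathbb{R}^n)$ yield $u(s^+) = \psi$.
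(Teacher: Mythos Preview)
Your proof is correct and follows essentially the same route that the paper outsources to \cite[Corollary 6.19]{auscherbaadi2024fundamental}: both take the $L^1$-source result of Proposition \ref{thm: existence L1} as input and pass from mollified sources $f_\varepsilon \to \delta_s\otimes\psi$ to obtain the Dirac-source solution. The paper's proof is a one-line citation, while you have written out the regularisation, the use of \textbf{(C)} to kill $u_\varepsilon$ on $(-\infty,s)$, the weak compactness, and the identification $u(s^+)=\psi$ via the jump of $t\mapsto\langle u(t),\chi\rangle_{2,\omega}$ --- which is exactly the content that the abstract reference packages. One small cosmetic point: the $\sup_{s\le t}\|u(t)\|_{2,\omega}$ bound is most cleanly obtained directly from lower semicontinuity of the $L^\infty(\mathbb{R};L^2_\omega)$ norm under the weak-$*$ limit, rather than from Proposition \ref{prop: Lions non borné}, since at that stage you have not yet identified $u(s^+)$.
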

\begin{proof}
This follows directly from \cite[Corollary 6.19]{auscherbaadi2024fundamental}, now that we can solve for source terms in $L^1(\mathbb{R};L^2_\omega(\mathbb{R}^n))$. 
\end{proof}

\begin{rems}
\begin{enumerate}
\item When formulating Proposition \ref{cor: CorRadon} with the backward equation
\begin{equation*}
    -\partial_s \Tilde{u} + \mathcal{B}^\star \Tilde{u} = \delta_t \otimes \psi, \quad t \in \mathbb{R},
\end{equation*}
the same result holds, with $\Tilde{u} \in C_0(\mathbb{R} \setminus \{t\}; L^2_\omega(\mathbb{R}^n))$, vanishing on $(t,+\infty)$ and satisfying $\lim_{s \to t^-} \Tilde{u}(s) = \psi$.
\item There is a statement for a source term that is any bounded $L^2_\omega(\mathbb{R}^n)$-valued measure on $\mathbb{R}$, defined as an element of the topological anti-dual of $C_0(\mathbb{R};L^2_\omega(\mathbb{R}^n))$ with respect to the sup norm. In this case, $u$ is only in $L^\infty(\mathbb{R};L^2_\omega(\mathbb{R}^n))$. See \cite[Proposition~6.16]{auscherbaadi2024fundamental}.
\item If we dont not assume that \textbf{(C)} holds, the result above can still be proved but $u$ is not necessarily equal to $0$ on $(-\infty,s)$ and has limits at $s^\pm$. See \cite{auscheregert2023universal}.

\end{enumerate}
\end{rems}

\begin{rem}[Dependency of constants]
When causality and invertibility are of the perturbative case (Theorems \ref{thm: Invertibility} and \ref{thm: Causality}), all the constants $C = C(\mathfrak{I}, P_{r,q}, \dots)$, as in Propositions \ref{prop: existence OK}, \ref{thm: existence L1}, and \ref{cor: CorRadon}, depend only on $(M,\nu, [\omega]_{A_2}, [\omega]_{RH_{\frac{q}{2}}}, n, q,\dots)$. When a lower bound $c$ is available, as in Theorem \ref{thm : Causality and invertibility}, all the constants $C = C(\mathfrak{I}, \dots)$ depend on $(c,M, [\omega]_{A_2}, [\omega]_{RH_{\frac{q}{2}}}, n, q,P_{r,q},\dots)$.
\end{rem}

\subsection{The Fundamental solution}\label{sub: FS}

The \emph{fundamental solution} is defined as the family of operators that represent the inverse of the degenerate parabolic operator $\partial_t + \mathcal{B}$ on $\mathbb{R} \times \mathbb{R}^n$.
\begin{defn}[Fundamental solution for $\partial_t +\mathcal{B}$]\label{def: FS}
A fundamental solution for $\partial_t +\mathcal{B}$ is a family $\Gamma=(\Gamma(t,s))_{t,s \in \mathbb{R}}$ of bounded operators on $L^2_\omega(\mathbb{R}^n)$ such that :
\begin{enumerate}
        \item (Boundedness) $\sup_{t,s \in \mathbb{R}} \left\|\Gamma(t,s) \right\|_{\mathcal{L}(L^2_\omega(\mathbb{R}^n))} < +\infty.$
        \item (Causality) $\Gamma(t,s)=0$ if $t<s$.
        \item (Weak measurability) For all $\psi,\Tilde{\psi}\in \mathcal{D}(\mathbb{R}^n)$, the function $(t,s) \mapsto \langle \Gamma(t,s)\psi, \Tilde{\psi} \rangle_{2,\omega}$ is Borel measurable on $\mathbb{R}^2$.
        \item (Representation) For all $\varphi \in \mathcal{D}(\mathbb{R})$ and $\psi \in \mathcal{D}(\mathbb{R}^n)$, any $u \in \dot{\Sigma}^{r,q}(\mathbb{R})$ solution of the equation $\partial_t u +\mathcal{B}u = \varphi \otimes \psi $ in $\mathcal{D}'(\mathbb{R}\times\mathbb{R}^{n})$ satisfies $\langle u(t), \Tilde{\psi} \rangle_{2,\omega} = \int_{\mathbb{R}} \varphi(s) \langle \Gamma (t,s)\psi, \Tilde{\psi} \rangle_{2,\omega} \, \mathrm{d}s$, for all $\Tilde{\psi} \in \mathcal{D}(\mathbb{R}^n)$ and for almost every $t\in \mathbb{R}$. We have set $(\varphi \otimes \psi)(t,x)=\varphi(t) \psi(x)$.
\end{enumerate}
In the same manner, one defines a fundamental solution $(\Tilde{\Gamma}(s,t))_{s,t \in \mathbb{R}}$ to the backward operator $-\partial_t +\mathcal{B}^\star$.
\end{defn}

\begin{thm}[Existence, uniqueness, and properties of the fundamental solution]\label{thm: representation} Assume that \textbf{(C)} holds. There exists a unique fundamental solution $\Gamma$ (up to equality almost everywhere) to $\partial_t + \mathcal{B}$ in the sense of the above definition, and it satisfies the following properties.
\begin{enumerate}
    \item (Estimates for the fundamental solution of $\partial_t + \mathcal{B}$)
    For all $s \in \mathbb{R}$ and $\psi \in L^2_\omega(\mathbb{R}^n)$, we have 
    $\Gamma(\cdot,s)\psi \in C_0([s,\infty);L^2_\omega(\mathbb{R}^n))$ with $ \Gamma(s,s)\psi =\psi $, and there exists a constant $C = C(M,\mathfrak{I}, P_{r,q}, [\omega]_{A_2}, [\omega]_{RH_{\frac{q}{2}}}, n, q) $ such that
    \begin{equation*}
        \sup_{s\le t } \left\| \Gamma(t,s) \right\|_{\mathcal{L}(L^2_\omega(\mathbb{R}^n))} \leq C 
        \quad \text{and} \quad 
        \left\| \nabla_x \Gamma(\cdot,s)\psi \right\|_{L^2((s,\infty); L^2_\omega(\mathbb{R}^n))} 
        \leq C \left\| \psi \right\|_{2,\omega}.
    \end{equation*}
    \item (Estimates for the fundamental solution of $-\partial_t + \mathcal{B}^\star$) 
    For all $t \in \mathbb{R}$ and $\Tilde{\psi} \in L^2_\omega(\mathbb{R}^n)$, we have 
    $\Tilde{\Gamma}(\cdot,t)\Tilde{\psi} \in C_0((-\infty,t];L^2_\omega(\mathbb{R}^n))$ with $\Tilde{\Gamma}(t,t)\psi =\psi$, and there exists a constant $C = C(M,\mathfrak{I}, P_{r,q}, [\omega]_{A_2}, [\omega]_{RH_{\frac{q}{2}}}, n, q)$ such that
    \begin{equation*}
        \sup_{s \le t} \| \Tilde{\Gamma}(s,t) \|_{\mathcal{L}(L^2_\omega(\mathbb{R}^n))} \leq C 
        \quad \text{and} \quad 
        \| \nabla_x \Tilde{\Gamma}(\cdot,t)\Tilde{\psi} \|_{L^2((-\infty,t); L^2_\omega(\mathbb{R}^n))} 
        \leq C\| \Tilde{\psi} \|_{2,\omega}.
    \end{equation*}
    \item (Full representation) Let $\rho \in [2,\infty]$ and set $\beta = \frac{2}{\rho}$. Let $s\in \mathbb{R}$ and $\psi \in L^2_\omega(\mathbb{R}^n)$. Let $h \in L^{\rho'}(\mathbb{R};L^2_\omega(\mathbb{R}^n))$. Then the unique solution $u \in \dot{\Sigma}^{r,q}(\mathbb{R})$ of the equation 
    \begin{equation*}
    \partial_t u +\mathcal{B}u= \delta_s \otimes \psi + (-\Delta_\omega)^{\frac{\beta}{2}} h  \ \ \mathrm{in} \ \mathcal{D}'(\mathbb{R}^{1+n})
    \end{equation*}
    obtained by  combining Proposition \ref{prop: existence OK}, Proposition \ref{thm: existence L1} and Proposition \ref{cor: CorRadon} can be represented pointwisely by the equation 
    \begin{equation*}
    u(t) = \Gamma(t,s)\psi +\int_{-\infty}^{t} \Gamma(t,\tau) ((-\Delta_\omega)^{\frac{\beta}{2}} h(\tau))\, \mathrm d \tau,
    \end{equation*}
    where the integral is weakly defined in $L^2_\omega(\mathbb{R}^n)$ and also strongly defined in the Bochner sense when $\rho = \infty$. More precisely, for all $t\in \mathbb{R}$ and $\Tilde{\psi}\in L^2_\omega(\mathbb{R}^n)$, we have the equality with absolutely converging integral
    \begin{align*}
    \langle u(t), \Tilde{\psi} \rangle_{2,\omega} = \langle \Gamma(t,s)\psi, \Tilde{\psi} \rangle_{2,\omega}&+ \int_{-\infty}^{t} \langle h(\tau), (-\Delta_\omega)^{\frac{\beta}{2}}\Tilde{\Gamma}(\tau,t)\Tilde{\psi} \rangle_{2,\omega} \, \mathrm d \tau .
    \end{align*}
    \item (Adjointness property) For all $s < t$, $\Gamma(t,s)$ and $\Tilde{\Gamma}(s,t)$ are adjoint operators.
    \item (Chapman–Kolmogorov identities) For all $s <r< t$, we have $\Gamma(t,s) = \Gamma(t,r) \Gamma(r,s)$.
\end{enumerate}
\end{thm}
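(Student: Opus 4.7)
The plan is to construct $\Gamma$ directly from the Dirac-source existence theory already in place. For each pair $(s,\psi) \in \mathbb{R} \times L^2_\omega(\mathbb{R}^n)$, Proposition \ref{cor: CorRadon} (which uses \textbf{(C)}) supplies a unique $u_{s,\psi} \in \dot{\Sigma}^{r,q}(\mathbb{R})$ solving $\partial_t u + \mathcal{B}u = \delta_s \otimes \psi$ with $u_{s,\psi} \equiv 0$ on $(-\infty,s)$ and $u_{s,\psi} \in C_0([s,\infty);L^2_\omega(\mathbb{R}^n))$, $u_{s,\psi}(s) = \psi$. I would set $\Gamma(t,s)\psi := u_{s,\psi}(t)$ for $t \ge s$ and $\Gamma(t,s)\psi := 0$ otherwise. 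Linearity in $\psi$ follows by uniqueness, the uniform $L^\infty_t L^2_\omega$ bound in Proposition \ref{cor: CorRadon} yields both the boundedness in Definition \ref{def: FS}(1) and items (1)-(2) of the theorem (the backward case $\tilde{\Gamma}$ is identical, using the anti-causal form of Proposition \ref{cor: CorRadon}). For the weak measurability in Definition \ref{def: FS}(3), I would approximate $\delta_s \otimes \psi$ by $\varphi_\varepsilon(\cdot - s) \otimes \psi$ with a standard mollifier $\varphi_\varepsilon$, invoke the continuous linear map from $L^1(\mathbb{R};L^2_\omega(\mathbb{R}^n))$ to $C_0(\mathbb{R};L^2_\omega(\mathbb{R}^n))$ from Proposition \ref{thm: existence L1} to get joint measurability in $(t,s)$ of the smooth approximants, and pass to the limit.

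For Definition \ref{def: FS}(4), given $\varphi \in \mathcal{D}(\mathbb{R})$ and $\psi \in \mathcal{D}(\mathbb{R}^n)$, the source $f := \varphi \otimes \psi$ lies in $L^1(\mathbb{R};L^2_\omega(\mathbb{R}^n))$, so by Proposition \ref{thm: existence L1} there is a unique $u \in \dot{\Sigma}^{r,q}(\mathbb{R})$ with $\partial_t u + \mathcal{B}u = f$. The candidate $v(t) := \int_{\mathbb{R}} \varphi(s)\, \Gamma(t,s)\psi \, \mathrm{d}s$ is a well-defined Bochner integral in $L^2_\omega(\mathbb{R}^n)$ thanks to the compact support of $\varphi$ and the uniform boundedness of $\Gamma$; a Fubini argument against test functions shows that $v$ solves the same equation and belongs to $\dot{\Sigma}^{r,q}(\mathbb{R})$ by linearity of the estimates, so uniqueness forces $v = u$, which is exactly the representation.

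For item (3) of the theorem, the formula follows by superposition: the $\Gamma(t,s)\psi$ contribution is the Dirac part, while the convolution term is produced exactly as in the previous paragraph, with $\varphi \otimes \psi$ replaced by $(-\Delta_\omega)^{\beta/2}h$ and the integral interpreted weakly through the adjointness identity to $\tilde{\Gamma}$ in item (4). To prove item (4), for $s < t$ and $\psi,\tilde{\psi} \in L^2_\omega(\mathbb{R}^n)$, I would pair the forward solution $u_{s,\psi}$ with the backward solution $\tilde{u}_{t,\tilde{\psi}}$ using the energy identity of Proposition \ref{prop: Lions non borné} on any bounded interval containing $[s,t]$, with the two Dirac singularities contributing precisely $\langle \Gamma(t,s)\psi, \tilde{\psi} \rangle_{2,\omega} - \langle \psi, \tilde{\Gamma}(s,t)\tilde{\psi} \rangle_{2,\omega}$ and the bulk terms cancelling (since the $\mathcal{B}$-pairings are adjoint). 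For item (5), fix $s < r < t$, define $w(\tau) := u_{s,\psi}(\tau)$ for $\tau < r$ and $w(\tau) := \Gamma(\tau, r)\, u_{s,\psi}(r)$ for $\tau \ge r$; the continuity of $u_{s,\psi}$ at $r$ ensures no jump, and a test-function computation shows that the would-be Dirac at $r$ cancels, so $w$ still solves $\partial_\tau w + \mathcal{B}w = \delta_s \otimes \psi$ in $\dot{\Sigma}^{r,q}(\mathbb{R})$. Uniqueness in Proposition \ref{cor: CorRadon} yields $w = u_{s,\psi}$, which is the identity $\Gamma(t,r)\Gamma(r,s)\psi = \Gamma(t,s)\psi$.

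Uniqueness of $\Gamma$ as a whole follows from Definition \ref{def: FS}(4) applied to all $(\varphi,\psi) \in \mathcal{D}(\mathbb{R}) \times \mathcal{D}(\mathbb{R}^n)$ together with density. The main technical obstacle I anticipate is the rigorous splicing argument in the Chapman-Kolmogorov step and the dual pairing in the adjointness step: in both cases one must verify that concatenating two pieces of $\dot{\Sigma}^{r,q}$-solutions across an interior time, or pairing two solutions that each carry a Dirac source, produces no spurious distributional contribution at the junction time, which relies crucially on the $C_0$-continuity provided by Proposition \ref{prop: Lions non borné} and the matching of traces in $L^2_\omega(\mathbb{R}^n)$.
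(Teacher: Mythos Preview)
Your proposal is correct and follows exactly the approach the paper indicates: define $\Gamma(t,s)\psi$ as the Dirac-source solution from Proposition \ref{cor: CorRadon} (this is the ``Green operator'' construction attributed to Lions), and then verify the axioms and the listed properties by uniqueness, superposition, and the energy identity. The paper's own proof is a pointer to \cite{auscheregert2023universal,auscherbaadi2024fundamental} for these details, and what you have written is precisely a sketch of those arguments; in particular your splicing argument for Chapman--Kolmogorov and your forward/backward pairing for adjointness are the standard proofs found there, and your anticipated obstacle (no spurious jump at the junction time) is indeed resolved by the $C_0$-continuity you cite.
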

\begin{proof}
The proof of uniqueness proceeds exactly as in \cite[Lemma~6.24]{auscherbaadi2024fundamental}, with $E_{-\infty}$ replaced by $\mathcal{D}(\mathbb{R}^n)$. For existence, one defines the Green operator using Proposition \ref{cor: CorRadon}, as done by J.-L. Lions \cite{lions2013equations}. One then proves that it is the fundamental solution and that it satisfies all the listed properties. See \cite{auscheregert2023universal,auscherbaadi2024fundamental} for details. 
\end{proof}
\begin{rem}
Fundamental solutions can be defined without the causality assumption (2) in Definition \ref{def: FS}, and the result above remain valid with appropriate modifications (e.g., replacing $(-\infty,t)$ by $\mathbb{R}$, etc.). We do not pursue this here, as the results are identical to those in the unweighted case \cite{auscheregert2023universal}, and causality is essential for Cauchy problems.
\end{rem}

\subsection{The Cauchy problem on \texorpdfstring{$(0,\infty)$}{tempsinfini}}
In this section, we consider the Cauchy problem on the half-line, namely $(0, \infty)$. The main result of this section is the following.

\begin{thm}[The Cauchy problem on $(0, \infty)$]\label{thm: Pb Cauchy homogène}
    Assume that, in addition to the invertibility assumption \textbf{(I)}, the causality assumption \textbf{(C)} also holds. Let $\rho \in [2, \infty]$, and define $\beta = \frac{2}{\rho} \in [0, 1]$. Let $h \in L^{\rho'}((0, \infty);L^2_\omega(\mathbb{R}^n))$ and $\psi \in L^2_\omega(\mathbb{R}^n)$. Then there exists a unique $u \in \dot{\Sigma}^{r,q}((0,\infty))$ solution to the Cauchy problem 
    \begin{align}\label{eq: Pb Cauchy homogène}
    \left\{
    \begin{array}{ll}
        \partial_t u + \mathcal{B}u  =  (-\Delta_\omega)^{\beta/2}h \quad \mathrm{in} \  \mathcal{D}'((0,\infty)\times \mathbb{R}^n), \\
        u(t) \rightarrow \psi \  \mathrm{ in } \ \mathcal{D'}(\mathbb{R}^n) \ \mathrm{as} \ t \rightarrow 0^+.
    \end{array}\right.
    \end{align} 
    Moreover, $u\in C_0([0,\infty);L^2_\omega(\mathbb{R}^n))$, with $u(0)=\psi$, $t \mapsto \| u(t)  \|^2_{2,\omega}$ is absolutely continuous on $[0,\infty)$ and energy equalities hold. Furthermore, there exists a constant $C = C(M,\mathfrak{I}, P_{r,q}, [\omega]_{A_2}, [\omega]_{RH_{\frac{q}{2}}}, n, q)$ such that
    \begin{align*}
            \sup_{t \ge 0} \| u(t) \|_{2,\omega}+ \| \nabla_x u\|_{L^2((0,\infty);L^2_\omega(\mathbb{R}^n))} 
            \leq C  ( \left \| h \right \|_{L^{\rho'}((0,\infty);L^2_\omega(\mathbb{R}^n))}+  \| \psi  \|_{2,\omega}  ).
    \end{align*} 
    Lastly,  for all $t \ge 0$, we have the following representation of $u$ : 
    \begin{align*}
    u(t) = \Gamma(t,0)\psi  + \int_{0}^{t} \Gamma(t,\tau) (-\Delta_\omega)^{\beta/2}h(\tau)\ \mathrm d \tau,
    \end{align*}
    where the integral is weakly defined in $L^2_\omega(\mathbb{R}^n)$ and strongly defined in the Bochner sense when $\rho = \infty$, as described in Theorem \ref{thm: representation}, Point (3).
\end{thm}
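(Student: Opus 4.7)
The idea is to reduce the Cauchy problem on $(0,\infty)$ to a problem on all of $\mathbb{R}$ with source concentrated on $[0,\infty)$, and then exploit the causality assumption \textbf{(C)}.

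\textbf{Existence.} Extend $h$ by zero on $(-\infty,0)$ to produce $\tilde h \in L^{\rho'}(\mathbb{R}; L^2_\omega(\mathbb{R}^n))$, and solve on $\mathbb{R}$ the equation
\[
\partial_t v + \mathcal{B} v = \delta_0 \otimes \psi + (-\Delta_\omega)^{\beta/2}\tilde h \quad \text{in } \mathcal{D}'(\mathbb{R}^{1+n}),
\]
by adding the solutions supplied by Proposition \ref{cor: CorRadon} (for the Dirac part) and by Proposition \ref{prop: existence OK} (for the $(-\Delta_\omega)^{\beta/2}\tilde h$ part when $\rho < \infty$), or, when $\rho = \infty$ so that $\beta = 0$ and $(-\Delta_\omega)^{\beta/2}\tilde h = \tilde h \in L^1(\mathbb{R}; L^2_\omega(\mathbb{R}^n))$, by Proposition \ref{thm: existence L1}. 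Both pieces of the source vanish on $(-\infty, 0)$, so the causality built into Proposition \ref{cor: CorRadon} together with Theorem \ref{thm: Causality} applied on $(-\infty, 0)$ gives $v \equiv 0$ there. Consequently $u := v|_{(0,\infty)}$ solves \eqref{eq: Pb Cauchy homogène}; continuity up to $t = 0$ with value $\psi$ follows from the endpoint behavior in Proposition \ref{cor: CorRadon} combined with $v \in C_0(\mathbb{R}; L^2_\omega(\mathbb{R}^n))$ for the $\tilde h$-part. The global energy estimate is the sum of those provided by the three cited propositions, and the representation formula is exactly Theorem \ref{thm: representation}(3), where the integration domain collapses to $(0,t)$ since $\tilde h \equiv 0$ on $(-\infty, 0)$. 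The absolute continuity of $t \mapsto \|u(t)\|_{2,\omega}^2$ and the energy equality on $[0,\infty)$ come from Proposition \ref{prop: Lions non borné} via the identity \eqref{eq:integralidentity}.

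\textbf{Uniqueness.} Let $u_1, u_2 \in \dot{\Sigma}^{r,q}((0,\infty))$ be two solutions and set $w := u_1 - u_2$. Then $w$ satisfies $\partial_t w + \mathcal{B}w = 0$ in $\mathcal{D}'((0,\infty) \times \mathbb{R}^n)$ and $w(t) \to 0$ in $\mathcal{D}'(\mathbb{R}^n)$ as $t \to 0^+$. Writing $\partial_t w = -\mathcal{A}w - \beta w$ and using Remark \ref{rem: beta et A}, Corollary \ref{cor:embeddings}, and Lemma \ref{lem:xSobolev}, the right-hand side splits on each bounded interval $(0,T)$ into an $L^2((0,T); D_{S,-1})$ piece and an $L^{r'}((0,T); D_{S,-2/r})$ piece. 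Proposition \ref{prop: Lions non borné}, applied with $\rho = r$ and $\beta = 2/r$, yields $w \in C([0,T]; L^2_\omega(\mathbb{R}^n))$, and letting $T$ vary gives a continuous representative on $[0, \infty)$; the distributional trace then identifies $w(0) = 0$ in $L^2_\omega(\mathbb{R}^n)$. Extending $w$ by zero across $t = 0$ produces $\tilde w \in \dot{\Sigma}^{r,q}(\mathbb{R})$ satisfying $\partial_t \tilde w + \mathcal{B} \tilde w = 0$ in $\mathcal{D}'(\mathbb{R}^{1+n})$, with no Dirac term at $0$ precisely because $\tilde w$ is continuous there with vanishing trace. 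Proposition \ref{thm: Uniqueness} then gives $\tilde w = 0$, hence $u_1 = u_2$.

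\textbf{Main obstacle.} The delicate step is upgrading the distributional trace $w(t) \to 0$ to a strong $L^2_\omega$-limit; without $w(0^+) = 0$ in $L^2_\omega(\mathbb{R}^n)$, the zero extension across $t = 0$ would inject a spurious $\delta_0 \otimes w(0^+)$ into the equation and Proposition \ref{thm: Uniqueness} could not be invoked. This promotion rests on a careful application of Proposition \ref{prop: Lions non borné} on bounded subintervals: the local $L^1$-integrability in time with values in $L^2_\omega(\mathbb{R}^n)$ needed as a hypothesis is built into the definition of $\dot{\Sigma}^{r,q}((0,\infty))$, and the required decomposition of $\mathcal{B}w$ into an $L^2 D_{S,-1}$ part and an $L^{r'} D_{S,-2/r}$ part is exactly the one already used in the proof of Theorem \ref{thm: Causality}. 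Once this is in hand, everything else reduces to routine bookkeeping of the well-posedness results already established on $\mathbb{R}$.
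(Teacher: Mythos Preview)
Your proof is correct and follows essentially the same route as the paper's: extend $h$ by zero, solve on $\mathbb{R}$ with source $\delta_0\otimes\psi + (-\Delta_\omega)^{\beta/2}\tilde h$ via Propositions~\ref{prop: existence OK}, \ref{thm: existence L1}, \ref{cor: CorRadon}, restrict using causality, and for uniqueness upgrade the distributional trace via Proposition~\ref{prop: Lions non borné}, extend by zero, and apply Proposition~\ref{thm: Uniqueness}. The only cosmetic slip is that you invoke ``Theorem~\ref{thm: Causality}'' where you mean the standing assumption~\textbf{(C)} (Theorem~\ref{thm: Causality} merely gives a sufficient condition for \textbf{(C)}); also, the paper applies Proposition~\ref{prop: Lions non borné} directly on the unbounded interval $(0,\infty)$ rather than on bounded $(0,T)$, but your variant is equally valid.
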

\begin{proof}
We start with the existence of such a solution. We first extend $h$ by $0$ on $(-\infty,0]$ and keep the same notation for the extension. By  combining Proposition \ref{prop: existence OK}, Proposition \ref{thm: existence L1} and Proposition \ref{cor: CorRadon}, there exists a unique $\Tilde{u}\in \dot{\Sigma}^{r,q}(\mathbb{R})$ solution of the equation 
\begin{equation*}
\partial_t \Tilde{u} +\mathcal{B} \Tilde{u}= \delta_0 \otimes \psi + (-\Delta_\omega)^{\beta/2} h  \ \ \mathrm{in} \ \mathcal{D}'(\mathbb{R}^{1+n}).
\end{equation*}
Moreover, by the causality assumption \textbf{(C)}, $\Tilde{u} \in C_0([0,\infty);L^2_\omega(\mathbb{R}^n))$, $\Tilde{u}=0$ on $(-\infty,0)$ and $\Tilde{u}(0)=\psi$. The candidate $u := \Tilde{u}_{\scriptscriptstyle{\vert (0,\infty)}}$ satisfies all the required properties of the theorem, thereby proving existence. As for the representation, it follows from that of $\Tilde{u}(t)$ for all $t \in \mathbb{R}$, using the fundamental solution defined on $\mathbb{R}$. See Theorem \ref{thm: representation}, Point (3).

Next, we check uniqueness in the space $\dot{\Sigma}^{r,q}((0,\infty))$. If $u \in \dot{\Sigma}^{r,q}((0,\infty))$ is a solution to \eqref{eq: Pb Cauchy homogène} with $h=0$ and $\psi=0$, then, by Proposition \ref{prop: Lions non borné}, we have $u \in C_0([0, \infty); L^2_\omega(\mathbb{R}^n))$ with $u(0) = 0$. We extend $u$ by zero on $(-\infty, 0) \times \mathbb{R}^n$ to obtain $u \in \dot{\Sigma}^{r,q}(\mathbb{R})$, and use its $L^2_\omega(\mathbb{R}^n)$-valued continuity on $[0, \infty)$ to deduce easily that $u$ is a solution to $\partial_t u + \mathcal{B}u = 0$ in $\mathcal{D}'(\mathbb{R}^{1+n})$. Therefore, by Proposition \ref{thm: Uniqueness}, we have $u = 0$.

\end{proof}

\begin{rem}
Theorem \ref{thm: intro} is a particular case of Theorem \ref{thm: Pb Cauchy homogène}, when the invertibility and causality assumptions are those of the perturbative case, as given by Theorems \ref{thm: Invertibility} and \ref{thm: Causality}, respectively.
\end{rem}
\begin{rem}
In this section on Cauchy problems, one may define the coefficients of the elliptic part $\mathcal{B}$, namely $A$, $a$, $b$, and $c$, on $(0, \infty) \times \mathbb{R}^n$. We then extend $A$ by $I_n$ and $a$, $b$, and $c$ by $0$ on $(-\infty, 0] \times \mathbb{R}^n$; that is, $B_t(u,v) = \langle \nabla_x u, \nabla_x v \rangle_{2, \omega}$ for all $t \le 0$. In the same manner, the definition, existence, and uniqueness of the fundamental solution on $(0, \infty)$ follow \textit{verbatim} as in Section \ref{sub: FS}.
\end{rem}
\begin{rem}[The Cauchy problem on segments]
By restricting from the case $(0, \infty)$, one obtains the well-posedness of Cauchy problems on bounded intervals, namely $(0, \mathfrak{T})$, for any final time $\mathfrak{T} > 0$. The only difference in this case is that the uniqueness class becomes $\dot{\Sigma}^{r,q}((0, \mathfrak{T})) \cap L^1((0, \mathfrak{T}); L^2_\omega(\mathbb{R}^n))$, which is not clear to obtain in the presence of bounded first-order terms, as we will see.
\end{rem}
\begin{rem}[Case of purely second-order degenerate elliptic part]
In the absence of lower-order terms, this variational framework is used in \cite{baadi2025degenerate} to prove well-posedness of the Cauchy problem. Assuming moreover that $\omega \in RH_{\frac{q}{2}}(\mathbb{R}^n)$ for some $q>2$ (which holds for any $q \in [2,q_\omega]$ for some $q_\omega>2$ by Point (2) of Proposition \ref{prop:weights}), the notion of $(r,q)$-admissibility allows to treat source terms in mixed Lebesgue spaces and to derive corresponding mixed-norm estimates.
\end{rem}

\section{Lower-order coefficients in mixed Lorentz spaces}\label{section 4}

Let $\mu$ be a measure on $\mathbb{R}^n$. The Lorentz space $L^{p,r}_{\mu}(\mathbb{R}^n)$ is defined as the set of all measurable functions $f$ on $\mathbb{R}^n$ such that $\left\| f\right\|_{L^{p,r}_{\mu}(\mathbb{R}^n)} < \infty$, with
\begin{align*}
\left\| f\right\|_{L^{p,r}_{\mu}(\mathbb{R}^n)}  =\left\{\begin{matrix}
\left ( \frac{r}{p} \int_{0}^{\infty} \left ( t^{1/p} f^\star (t) \right )^r \, \frac{\mathrm{d}t}{t} \right )^{1/r}, \quad \ 1\leq p,r <\infty , \\ 
\sup_{t>0} t^{1/p} f^\star(t), \quad \ 1\le p \le \infty, \ r=\infty,
\\ 
\end{matrix}\right. 
\end{align*}
or equivalently (see \cite[Proposition 1.4.9]{grafakos2008classical})
\begin{align*}
\left\| f\right\|_{L^{p,r}_{\mu}(\mathbb{R}^n)} =\left\{\begin{matrix}
\left (r \int_{0}^{\infty} s^{r-1} \lambda_f(s)^{r/p} \, \mathrm{d}s \right )^{1/r}, \quad \ 1\leq p,r <\infty , \\ 
 \sup_{s>0} s \, \lambda_f(s)^{1/p}, \quad \ 1\le p < \infty, \ r=\infty,
 \\ 
 \end{matrix}\right. 
 \end{align*}
 where 
$$ \lambda_f(s)=\mu\left ( \left\{ x\in \mathbb{R}^n : |f(x)|>s\right\} \right ) \ \ \text{and} \ \ f^\star(t)=\inf \left\{s>0 : \lambda _f(s)\le t\right\} 
. 
$$
It is known that $L^{p,p}_{\mu}(\mathbb{R}^n)=L^{p}_{\mu}(\mathbb{R}^n)$ with $\left\| f\right\|_{L^{p,p}_{\mu}(\mathbb{R}^n)}=\left\| f\right\|_{L^{p}_{\mu}(\mathbb{R}^n)}$ and $\left\| f\right\|_{L^{p,r}_{\mu}(\mathbb{R}^n)}$ is non-increasing as a function of $r$, so that $L^{p,r_1}_{\mu}(\mathbb{R}^n) \subset L^{p,r_2}_{\mu}(\mathbb{R}^n)$ if $r_1\leq r_2$. We recall that H\"older's inequality holds in Lorentz spaces: the product of two functions in $L_{\mu}^{p_i,r_i}$ belongs to $L^1_{\mu}$ if $\frac{1}{p_1} + \frac{1}{p_2} = 1$ and $\frac{1}{r_1} + \frac{1}{r_2} = 1$. We refer to \cite[Ch. V, \S3 ]{stein1971introduction} for more details.

\subsection{Embeddings}

We begin by stating the following result, which refines certain abstract embeddings from Proposition \ref{prop:embeddings}.

\begin{prop}\label{prop: embeddings Lorentz} We have the following embeddings.
    \begin{enumerate}
        \item \textbf{(Hardy-Littlewood-Sobolev embedding)} Let $\alpha \in (0,1]$ and let $r=\tfrac{2}{\alpha} \in [2,\infty)$. Then,  we have $\dot{V}_{\alpha} \hookrightarrow L^{r,2}(\mathbb{R};D_{S,\alpha}) $ and there is a constant $C=C(r)$ such that for all $u \in \dot{V}_{\alpha}$,  
        \begin{equation*} 
        \left\| u \right\|_{L^{r,2}(\mathbb{R};D_{S,\alpha})} \leq C \|D_t^{\frac{1-\alpha}{2}}u \|_{L^2(\mathbb{R};D_{S,\alpha})}.
        \end{equation*}
        Consequently, we have $ L^{r',2}(\mathbb{R};D_{S,-\alpha}) \hookrightarrow \dot{W}_{-\alpha}$, where  $r'$ is the H\"older conjugate of $r$.
    \item \textbf{(Mixed norm embedding)} For $r\in (2,\infty)$ and $\alpha= 2/ r \in (0,1)$, we have the embedding $\dot{V}_{0} \hookrightarrow L^{r,2}(\mathbb{R};D_{S,\alpha}) $,  
    with 
    \begin{align*}
        \left\| u \right\|_{L^{r,2}(\mathbb{R};D_{S,\alpha})} &\leq C(r)  \|u \|_{L^2(\mathbb{R};D_{S,1})}^\alpha \|D_t^{{1}/{2}}u \|_{L^2(\mathbb{R};H)}^{1-\alpha}.
    \end{align*}
    Consequently, $L^{r',2}(\mathbb{R};D_{S,-\alpha}) \hookrightarrow L^2(\mathbb{R}; D_{S,-1}) + \dot{W}_0=\dot{V}^\star_0$.
    \end{enumerate}
\end{prop}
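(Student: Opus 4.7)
The plan is to prove (1) via the sharp Lorentz refinement of the Hardy-Littlewood-Sobolev inequality in the time variable, and to deduce (2) from (1) together with a product-type control that refines the embedding $\dot{V}_0 \hookrightarrow \dot{V}_\alpha$ of Proposition \ref{prop:embeddings} (1).

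For part (1), I would set $v := S^\alpha u$, so that $v = I_s w$ with $s = (1-\alpha)/2 \in (0, 1/2]$, $I_s = D_t^{-s}$ the Riesz potential in time, and $w = D_t^s v \in L^2(\mathbb{R}; H)$ by the definition of $\dot{V}_\alpha$. The desired embedding then reduces to the classical sharp mapping property
$$I_s : L^2(\mathbb{R}) \longrightarrow L^{r,2}(\mathbb{R}), \qquad \frac{1}{r} = \frac{1}{2} - s = \frac{\alpha}{2},$$
which is the Lorentz-valued refinement of the Sobolev embedding $\dot{H}^s(\mathbb{R}) \hookrightarrow L^{2/(1-2s)}(\mathbb{R})$ and extends to $H$-valued functions since the Riesz kernel is scalar; the endpoint $\alpha = 1$ is trivial as $L^{2,2} = L^2$.

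For part (2), I would first establish the sharper version of $\dot{V}_0 \hookrightarrow \dot{V}_\alpha$ in product form, namely
$$\|D_t^{(1-\alpha)/2} u\|_{L^2(\mathbb{R}; D_{S,\alpha})} \leq C \|u\|_{L^2(\mathbb{R}; D_{S,1})}^\alpha \|D_t^{1/2} u\|_{L^2(\mathbb{R}; H)}^{1-\alpha}.$$
Via Plancherel in time and the complex interpolation identity $D_{S,\alpha} = [H, D_{S,1}]_\alpha$, this reduces to the pointwise-in-$\tau$ bound $\|\hat u(\tau)\|_{D_{S,\alpha}} \leq C \|\hat u(\tau)\|_H^{1-\alpha} \|\hat u(\tau)\|_{D_{S,1}}^\alpha$; writing $|\tau|^{1-\alpha} = (|\tau|^{1/2})^{2(1-\alpha)}$ and applying H\"older's inequality with exponents $(1/(1-\alpha), 1/\alpha)$ in $\tau$ then yields the product form. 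Chaining with part (1) gives the desired estimate in $L^{r,2}(\mathbb{R}; D_{S,\alpha})$. The dual embeddings then follow by duality, using that the (anti-)dual of $L^{r,2}(\mathbb{R}; D_{S,\alpha})$ is $L^{r',2}(\mathbb{R}; D_{S,-\alpha})$, together with the inclusion $\dot{W}_{-\alpha} \hookrightarrow \dot{V}_\alpha^\star$ from Proposition \ref{prop:embeddings} (1) for the dual of (1).

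The principal technical point is the Lorentz refinement used in part (1): proving $I_s : L^2 \to L^{r,2}$, rather than only $L^r$ as was required in Proposition \ref{prop:embeddings} (3). In the scalar case this is a standard consequence of the weak-type endpoint estimates for $I_s$ combined with the real method of interpolation (via O'Neil-type inequalities for Lorentz spaces); the transfer to $H$-valued functions is then straightforward since the convolution kernel is scalar and Minkowski's integral inequality applies without loss.
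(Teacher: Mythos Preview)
Your proposal is correct and follows essentially the same route as the paper: part (1) is reduced to O'Neil's $L^2\to L^{r,2}$ bound for the one-dimensional Riesz potential, applied to $D_{S,\alpha}$-valued functions, and part (2) is obtained by chaining (1) with the product-form refinement of $\dot V_0\hookrightarrow\dot V_\alpha$ (which the paper defers to \cite[Proposition~5.4]{auscherbaadi2024fundamental}, and which you spell out via Plancherel, the moment inequality $\|S^\alpha x\|\le\|x\|^{1-\alpha}\|Sx\|^{\alpha}$, and H\"older in $\tau$). The dual embeddings are handled the same way.
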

\begin{proof}
The first result is obtained, as in the proof of \cite[Lemma~5.3]{auscherbaadi2024fundamental}, by using the fact that the $D_{S,\alpha}$-valued Riesz potential with exponent $\frac{1-\alpha}{2}$ is $L^2$–$L^{r,2}$ bounded, a result established by O'Neil \cite{O'Neil1963convolution} (see also Tartar \cite{MR1662313}). The second result follows directly from the first, proceeding as in the proof of \cite[Proposition~5.4]{auscherbaadi2024fundamental}. Consequences by duality are immediate, so we omit the details.
\end{proof}

We now present the following embedding in the space variable, which extends Theorem \ref{thm:fractionalHLS} to boundedness into Lorentz spaces. For the proof, see the case $p = 2$ in \cite[Theorem~3.17]{auscherbaadi2025hardy}.

\begin{thm}\label{thm: HLS Lorentz}
Let $q \in [2,\infty)$. Assume that $\omega \in RH_{\frac{q}{2}}(\mathbb{R}^n)$ and set $\alpha=n\left ( \frac{1}{2}-\frac{1}{q} \right )$. Then, there exists a constant $C = C([ \omega  ]_{A_2},[ \omega  ]_{RH_{\frac{q}{2}}},n,q)$ such that
\begin{equation*}
\|\sqrt{\omega} \,  f \|_{L^{q,2}(\mathbb{R}^n)} \leq C \| (-\Delta_\omega)^{\frac{\alpha}{2}} f \|_{L^2_{\omega}(\mathbb{R}^n)}, \quad \forall f \in D((-\Delta_\omega)^{\frac{\alpha}{2}}).
\end{equation*}
\end{thm}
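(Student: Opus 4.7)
The plan is to upgrade the strong $L^q$ bound of Theorem \ref{thm:fractionalHLS} to the Lorentz-space refinement by combining the self-improvement of reverse Hölder classes with a real interpolation argument. The second index $2$ in the target $L^{q,2}$ is expected to arise naturally from the second index $2$ in the source space $L^2_\omega$ through the $K$-method.

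First, I would exploit Proposition \ref{prop:weights}(2): since $\omega \in RH_{q/2}(\mathbb{R}^n)$, there exists $q_1 > q$ with $\omega \in RH_{q_1/2}(\mathbb{R}^n)$, and any choice of $q_0 \in [2, q)$ automatically gives $\omega \in RH_{q_0/2}(\mathbb{R}^n)$ by item (1). Set $\alpha_i := n(\tfrac{1}{2} - \tfrac{1}{q_i})$ for $i=0,1$, so that $\alpha_0 < \alpha < \alpha_1$. Applying Theorem \ref{thm:fractionalHLS} at each $q_i$ yields, for every $g \in L^2_\omega(\mathbb{R}^n)$,
\begin{equation*}
    \|\sqrt{\omega}\, (-\Delta_\omega)^{-\alpha_i/2} g\|_{L^{q_i}(\mathbb{R}^n)} \leq C_i \|g\|_{L^2_\omega(\mathbb{R}^n)}, \quad i=0,1,
\end{equation*}
with constants $C_i$ depending only on $([\omega]_{A_2}, [\omega]_{RH_{q_i/2}}, n, q_i)$.

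Next, I would introduce the analytic family $T_z g := \sqrt{\omega}\, (-\Delta_\omega)^{-z/2} g$ on the closed strip $\{z \in \mathbb{C} : \alpha_0 \leq \mathrm{Re}\,z \leq \alpha_1\}$, defined via the Borel functional calculus of the self-adjoint operator $-\Delta_\omega$. A vertical translation $z = \alpha_i + i\tau$ produces an additional factor $\lambda \mapsto \lambda^{-i\tau/2}$, whose operator norm on $L^2_\omega(\mathbb{R}^n)$ equals $1$ uniformly in $\tau \in \mathbb{R}$; hence $\|T_z\|_{L^2_\omega \to L^{q_i}} \leq C_i$ uniformly along the boundary lines, with admissible growth in $|\mathrm{Im}\,z|$. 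Applying a Lorentz-target version of Stein's analytic interpolation theorem and using the real-method identification
\begin{equation*}
    (L^{q_0}(\mathbb{R}^n), L^{q_1}(\mathbb{R}^n))_{\theta, 2} = L^{q,2}(\mathbb{R}^n), \qquad \theta := \frac{\alpha - \alpha_0}{\alpha_1 - \alpha_0},
\end{equation*}
together with the triviality $(L^2_\omega, L^2_\omega)_{\theta, 2} = L^2_\omega$, gives $T_\alpha : L^2_\omega(\mathbb{R}^n) \to L^{q,2}(\mathbb{R}^n)$ with the claimed quantitative bound.

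The main obstacle I anticipate is a careful combination of the analytic-family setup with a real-method Lorentz target, which is standard but not textbook. A more hands-on alternative avoiding the complex method is to use the subordination identity
\begin{equation*}
    (-\Delta_\omega)^{-\alpha/2} g = c_\alpha \int_0^{\infty} t^{\alpha/2 - 1} e^{t \Delta_\omega} g \, \mathrm{d}t,
\end{equation*}
split the integral at an optimised height $R > 0$ (the Hedberg trick), and combine with upper Gaussian bounds on the weighted heat kernel to obtain a pointwise inequality of the form $|T_\alpha g(x)| \lesssim (M_\omega^\star g(x))^{\theta_0}\, \|g\|_{L^2_\omega}^{1-\theta_0}$ for a suitable maximal-type function $M_\omega^\star$ that is bounded on $L^2_\omega$. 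This directly yields the weak-type $(L^2_\omega, L^{q,\infty})$ bound at the endpoint, which upgrades to the strong-type $L^{q,2}$ estimate by Marcinkiewicz interpolation with the strong bound at $q_1$.
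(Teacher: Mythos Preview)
The paper does not prove this statement itself; it defers to \cite[Theorem~3.17]{auscherbaadi2025hardy}. Your overall strategy --- exploit the self-improvement $\omega \in RH_{q_1/2}$ for some $q_1>q$, obtain endpoint bounds at $q_0<q<q_1$ from Theorem~\ref{thm:fractionalHLS}, and interpolate to pick up the Lorentz index $2$ from the $L^2$ source --- is the natural one and presumably matches the cited argument in spirit. However, both of your proposed implementations contain a genuine gap at the interpolation step.

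In your first approach, Stein interpolation for analytic families is a \emph{complex}-method tool: it produces $T_\alpha : L^2_\omega \to [L^{q_0},L^{q_1}]_\theta = L^q$, not the real-method space $(L^{q_0},L^{q_1})_{\theta,2} = L^{q,2}$. The fact that the imaginary powers $(-\Delta_\omega)^{i\tau/2}$ are unitaries on $L^2_\omega$ only confirms uniform bounds along the edges; it does not convert complex interpolation into real, and there is no standard ``Lorentz-target Stein theorem'' of the form you invoke. In your second approach, the final Marcinkiewicz step appeals to ``the strong bound at $q_1$'' for the single operator $T_\alpha$, but Theorem~\ref{thm:fractionalHLS} at $q_1$ bounds $T_{\alpha_1}=\sqrt{\omega}(-\Delta_\omega)^{-\alpha_1/2}$, not $T_\alpha$; the Hedberg pointwise inequality alone delivers only weak-$(2,q)$ for $T_\alpha$, which cannot be upgraded to $L^{q,2}$ without a second endpoint for the \emph{same} operator.

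The clean fix is to abandon the analytic family and apply real interpolation to the \emph{single} multiplication operator $M:f\mapsto \sqrt{\omega}\,f$, which by Theorem~\ref{thm:fractionalHLS} is bounded $D_{S,\alpha_i}\to L^{q_i}$ for $i=0,1$. Real interpolation with second index $2$ then gives
\[
M:\ (D_{S,\alpha_0},D_{S,\alpha_1})_{\theta,2}\ \longrightarrow\ (L^{q_0},L^{q_1})_{\theta,2}=L^{q,2}.
\]
The source identification $(D_{S,\alpha_0},D_{S,\alpha_1})_{\theta,2}=D_{S,\alpha}$ is elementary: the spectral theorem for the self-adjoint injective $S=(-\Delta_\omega)^{1/2}$ reduces it to $(L^2(\lambda^{2\alpha_0}\mathrm{d}E),L^2(\lambda^{2\alpha_1}\mathrm{d}E))_{\theta,2}=L^2(\lambda^{2\alpha}\mathrm{d}E)$, a classical weighted-$L^2$ interpolation identity. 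Since $q_1$ and $[\omega]_{RH_{q_1/2}}$ are controlled quantitatively by $[\omega]_{RH_{q/2}}$ through the self-improvement, the resulting constant has the stated dependence.
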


Motivated by the above results, we introduce the following space for any open interval $I \subset \mathbb{R}$:
\begin{equation*}
     \dot{\mathcal{L}}^{r,q}(I):= \left\{ u \in L^1_{\mathrm{loc}}(\mathbb{R};L^2_\omega(\mathbb{R}^n)) \ : \ \sqrt{\omega} \, u \in L^{r,2}(I;L^{q,2}(\mathbb{R}^n)) \ \ \text{and} \ \ \nabla_x u \in L^2(I;L^2_\omega(\mathbb{R}^n)) \right\},
\end{equation*}
where the gradient is taken in the sense of distributions, with norm
\begin{equation*}
    \|u \|_{\dot{\mathcal{L}}^{r,q}(I)}:= \| \sqrt{\omega} \, u \|_{L^{r,2}(I;L^{q,2}(\mathbb{R}^n))}+ \| \nabla_x u \|_{L^2(I;L^2_\omega(\mathbb{R}^n))}.
\end{equation*}
Proceeding as in the previous section for $\dot{\Sigma}^{r,q}(I)$, we have 
\begin{equation*}
     \dot{\mathcal{L}}^{r,q}(I)^\star = L^2(I;D_{S,-1})+\sqrt{\omega} \,L^{r',2}(I;L^{q',2}(\mathbb{R}^n)).
\end{equation*}
More precisely, for $\varphi \in \dot{\mathcal{L}}^{r,q}(I)^\star$, there exists $(F,g) \in L^2(I;L^2_\omega(\mathbb{R}^n)^n) \times L^{r',2}(I;L^{q',2}(\mathbb{R}^n)) $ such that 
\begin{align*}
    \varphi=-\omega^{-1} \mathrm{div}_x(\omega \, F)+\sqrt{\omega} \,g,
\end{align*}
that is
\begin{align*}
\ \varphi(u)=\int_I \left ( \langle F(t), \nabla_x u(t) \rangle_{2,\omega}+ \langle g(t), u(t)\rangle_{2,\omega} \right ) \, \mathrm{dt}, \quad \forall u \in \dot{\mathcal{L}}^{r,q}(I),
\end{align*}
with 
\begin{align*}
    \|  \varphi \|_{\dot{\mathcal{L}}^{r,q}(I)^\star} \simeq \inf_{\varphi =-\omega^{-1} \mathrm{div}_x(\omega F)+g} \| F \|_{L^2(I;L^2_\omega(\mathbb{R}^n))} +\| \sqrt{\omega} \,g \|_{L^{r',2}(I;L^{q',2}(\mathbb{R}^n))} .
\end{align*}

We record the following lemma, which states the embedding results obtained by combining both time and space embeddings in this new setting.

\begin{lem}\label{lem: coeff Lorentz}
    If $(r,q)$ is an admissible pair and $I \subset \mathbb{R}$ is an interval, then we have a continuous inclusion
    \begin{equation*}
        L^{r,2}(I;D_{S,\frac{2}{r}})^{\mathrm{loc}} \hookrightarrow \sqrt{\omega} \,L^{r,2}(I;L^{q,2}(\mathbb{R}^n))^{\mathrm{loc}}.
    \end{equation*}
    By duality, this yields the continuous embedding
    \begin{equation*}
        \sqrt{\omega} \, L^{r',2}(I;L^{q',2}(\mathbb{R}^n)) \hookrightarrow L^{r',2}(I;D_{S,-\frac{2}{r}})\footnote{This is not a genuine set-theoretic inclusion.}.
    \end{equation*}
    Consequently, we have
    \begin{equation*}
       \dot{V}_0^{\mathrm{loc}} \hookrightarrow \dot{V}_{\frac{2}{r}}^{\mathrm{loc}}  \hookrightarrow L^{r,2}(\mathbb{R};D_{S,\frac{2}{r}})\cap L^2(\mathbb{R};D_{S,1})^{\mathrm{loc}} \hookrightarrow \dot{\mathcal{L}}^{r,q}(\mathbb{R}).
    \end{equation*}
    By duality, this yields the continuous embeddings
    \begin{equation*}
    \dot{\mathcal{L}}^{r,q}(\mathbb{R})^{\star}  \hookrightarrow L^{r',2}(\mathbb{R};D_{S,-\frac{2}{r}})+ L^2(\mathbb{R};D_{S,-1}) \hookrightarrow \dot{V}_{\frac{2}{r}}^{\star} \hookrightarrow \dot{V}_0 ^{\star}.
    \end{equation*}
\end{lem}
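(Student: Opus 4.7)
The plan is to import Theorem \ref{thm: HLS Lorentz} into the Banach-valued setting and then let duality and composition with the time embeddings from Proposition \ref{prop: embeddings Lorentz} do the work, in direct analogy with Lemma \ref{lem:xSobolev} and Corollary \ref{cor:embeddings}.

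For the first claimed inclusion, I would fix $u \in L^{r,2}(I;D_{S,2/r})^{\mathrm{loc}}$, so that $u(t) \in D(S^{2/r}) = D((-\Delta_\omega)^{1/r})$ for a.e.\ $t$. Admissibility of $(r,q)$ gives $\omega \in RH_{q/2}(\mathbb{R}^n)$ and $\frac{2}{r} = n\bigl(\frac{1}{2} - \frac{1}{q}\bigr)$, so Theorem \ref{thm: HLS Lorentz} yields the pointwise-in-time estimate
\[
\|\sqrt{\omega}\,u(t)\|_{L^{q,2}(\mathbb{R}^n)} \le C\,\|S^{2/r} u(t)\|_{L^2_\omega(\mathbb{R}^n)}, \quad \text{for a.e.}\ t \in I,
\]
with $C = C([\omega]_{A_2},[\omega]_{RH_{q/2}},n,q)$. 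Since pointwise domination of nonnegative scalar functions of $t$ is preserved by the decreasing rearrangement, it transfers to the $L^{r,2}(I)$ quasinorm, giving
\[
\|\sqrt{\omega}\,u\|_{L^{r,2}(I;L^{q,2}(\mathbb{R}^n))} \le C\,\|u\|_{L^{r,2}(I;D_{S,2/r})}.
\]
The dual inclusion $\sqrt{\omega}\,L^{r',2}(I;L^{q',2}(\mathbb{R}^n)) \hookrightarrow L^{r',2}(I;D_{S,-2/r})$ then follows by the same Hahn--Banach argument used in Lemma \ref{lem:xSobolev}: H\"older's inequality in both time and space Lorentz spaces, combined with the primal estimate just proved, shows that any such element defines a bounded antilinear form on $L^{r,2}(I;D_{S,2/r})^{\mathrm{loc}}$, and density of $\mathcal{D}(I\times\mathbb{R}^n)$ identifies the associated $h \in L^{r',2}(I;L^2_\omega(\mathbb{R}^n))$ via $\llangle g,\varphi\rrangle = \llangle h, S^{2/r}\varphi\rrangle$ with the appropriate norm control.

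For the consequences, the first inclusion $\dot{V}_0^{\mathrm{loc}} \hookrightarrow \dot{V}_{2/r}^{\mathrm{loc}}$ is Point (1) of Proposition \ref{prop:embeddings}. The second is the time-variable Lorentz Hardy--Littlewood--Sobolev embedding from Proposition \ref{prop: embeddings Lorentz}, Points (1) and (2), applied with $\alpha = 2/r$; together with the built-in $L^2(\mathbb{R};D_{S,1})$ control coming from the definition of $\dot{V}_\alpha$, this lands in $L^{r,2}(\mathbb{R};D_{S,2/r}) \cap L^2(\mathbb{R};D_{S,1})^{\mathrm{loc}}$. The third inclusion combines the first inclusion of the present lemma (with $I=\mathbb{R}$) for the $\sqrt{\omega}\,L^{r,2}L^{q,2}$ part with the identification $\|\nabla_x u\|_{L^2(\mathbb{R};L^2_\omega)} = \|Su\|_{L^2(\mathbb{R};L^2_\omega)}$, which lands in $\dot{\mathcal{L}}^{r,q}(\mathbb{R})$. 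The dual chain for $\dot{\mathcal{L}}^{r,q}(\mathbb{R})^\star$ then follows formally by taking anti-duals and using the sum-space identification of $\dot{\mathcal{L}}^{r,q}(\mathbb{R})^\star$ recorded just above the statement.

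The only real obstacle is bookkeeping with the pairings so that H\"older for Lorentz factors produces exactly $\|\sqrt{\omega}\,g\|_{L^{q',2}}$ paired with $\|\sqrt{\omega}\,u\|_{L^{q,2}}$ coming from $\int g\,\overline{u}\,\omega\,\mathrm{d}x = \int (\sqrt{\omega}\,g)\cdot(\sqrt{\omega}\,\overline u)\,\mathrm{d}x$; once this is set up and the admissibility identity $\frac{2}{r} = n\bigl(\frac{1}{2}-\frac{1}{q}\bigr)$ is invoked, everything reduces to the already available embeddings, and no genuinely new ingredient beyond Theorem \ref{thm: HLS Lorentz} is required.
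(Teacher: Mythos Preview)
Your proposal is correct and follows exactly the approach indicated in the paper: the paper's proof simply says to proceed as in Lemma~\ref{lem:xSobolev} and Corollary~\ref{cor:embeddings}, replacing Theorem~\ref{thm:fractionalHLS} by Theorem~\ref{thm: HLS Lorentz} and Proposition~\ref{prop:embeddings} by Proposition~\ref{prop: embeddings Lorentz}. Your write-up actually supplies more detail than the paper does, including the observation that pointwise domination passes to decreasing rearrangements and hence to the $L^{r,2}$ quasinorm, and the explicit handling of the $\sqrt{\omega}$ factors in the pairing.
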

\begin{proof}
    This follows by proceeding exactly as in Lemma \ref{lem:xSobolev} and Corollary \ref{cor:embeddings}, using Theorem \ref{thm: HLS Lorentz} and Proposition \ref{prop: embeddings Lorentz} in place of Theorem \ref{thm:fractionalHLS} and Proposition \ref{prop:embeddings}, respectively.
\end{proof}

\subsection{The variational approach and the resulting theory} For any $(r,q)\in [2,\infty]$, we set 
\begin{equation*}
L_{r,q}:=  \| a \|_{L^{{\frac{2r}{r-2}},\infty}(\mathbb{R};{L^{\frac{2q}{q-2},\infty}(\mathbb{R}^n)})} + \| b \|_{L^{{\frac{2r}{r-2}},\infty}(\mathbb{R};{L^{\frac{2q}{q-2},\infty}(\mathbb{R}^n)})}+\| c \|_{L^{{\frac{r}{r-2}},\infty}(\mathbb{R};{L^{\frac{q}{q-2},\infty}(\mathbb{R}^n)})}. 
\end{equation*}
    Once again, this definition is motivated by the following lemma.
\begin{lem}
        Let $(r,q)$ be an admissible pair and assume that $L_{r,q}<\infty$. Then, for all $u,v\in \dot{\mathcal{L}}^{r,q}(\mathbb{R})$, $t \mapsto \beta(u,v)(t) \in L^1(\mathbb{R})$ and we have 
        \begin{equation*}
            |\beta(u,v)| \le L_{r,q} \|u \|_{\dot{\mathcal{L}}^{r,q}(\mathbb{R})} \|v \|_{\dot{\mathcal{L}}^{r,q}(\mathbb{R})}.
        \end{equation*}
        Moreover, there exists a constant $C = C([\omega]_{A_2}, [\omega]_{RH_{\frac{q}{2}}}, n, q) $ such that
    \begin{equation*}
        |\beta(u, v)| \le C L_{r,q} \|u\|_{\dot{V}_0} \|v\|_{\dot{V}_0},
    \end{equation*}
    for all $u, v \in \dot{V}_0^{\mathrm{loc}}$. In other words, $\beta :  \dot{V}_0^{\mathrm{loc}}  \times \dot{V}_0^{\mathrm{loc}} \rightarrow \mathbb{C}$ is a bounded sesquilinear form.
    \end{lem}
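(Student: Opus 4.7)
The approach mirrors the proof of Lemma~\ref{lem:conditions coeff}, but now we use the boundedness of the trilinear H\"older inequality on Lorentz spaces (with a universal constant depending only on the exponents), and the identification $\|u(t)\|_{L^q_{\omega^{q/2}}} = \|\sqrt{\omega}\,u(t)\|_{L^q}$, extended to Lorentz norms by $\|\sqrt{\omega}\,u(t)\|_{L^{q,2}}$ which is the quantity controlling $\|u\|_{\dot{\mathcal{L}}^{r,q}}$.

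The plan is as follows. First I would decompose $\beta(u,v)$ into its three constituent pieces involving $a$, $b$, and $c$, and treat each one independently. For the $a$-term, I rewrite the inner integrand as
\begin{equation*}
   \int_{\mathbb{R}^n} a(t,x) \cdot \bigl[\sqrt{\omega(x)}\,u(t,x)\bigr] \cdot \bigl[\sqrt{\omega(x)}\,\overline{\nabla_x v(t,x)}\bigr] \, \mathrm{d}x,
\end{equation*}
so that the weight is absorbed symmetrically into the two factors coming from $u$ and $\nabla_x v$. I then apply O'Neil's H\"older inequality in Lorentz spaces on $\mathbb{R}^n$ with primary exponents $\bigl(\tfrac{2q}{q-2},q,2\bigr)$ (whose reciprocals sum to $1$) and secondary exponents $(\infty,2,2)$ (whose reciprocals also sum to $1$). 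The same balancing holds for the $b$-term, while for the $c$-term I use the primary exponents $\bigl(\tfrac{q}{q-2},q,q\bigr)$ and the secondary exponents $(\infty,2,2)$. This produces pointwise-in-$t$ bounds of the form
\begin{equation*}
    |\beta(t)(u(t),v(t))| \lesssim \|a(t)\|_{L^{\frac{2q}{q-2},\infty}} \|\sqrt{\omega}\,u(t)\|_{L^{q,2}} \|\nabla_x v(t)\|_{L^2_\omega} + (\text{analogues for }b,c).
\end{equation*}

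Next, I would integrate in $t$ and apply O'Neil's H\"older inequality in Lorentz spaces on $\mathbb{R}$, with the primary exponents $\bigl(\tfrac{2r}{r-2},r,2\bigr)$ for the $a$- and $b$-contributions and $\bigl(\tfrac{r}{r-2},r,r\bigr)$ for the $c$-contribution, together with secondary exponents $(\infty,2,2)$. Admissibility of $(r,q)$ guarantees that both the space and time reciprocal sums equal $1$, which is precisely where the relation $\tfrac{1}{r}+\tfrac{n}{2q}=\tfrac{n}{4}$ is needed. Summing the three terms and using that $\|\nabla_x v\|_{L^2(\mathbb{R};L^2_\omega)} \le \|v\|_{\dot{\mathcal{L}}^{r,q}(\mathbb{R})}$ and $\|\sqrt{\omega}\,u\|_{L^{r,2}(\mathbb{R};L^{q,2})} \le \|u\|_{\dot{\mathcal{L}}^{r,q}(\mathbb{R})}$, we obtain the first displayed inequality, with $L_{r,q}$ naturally appearing from regrouping the three norms of $a$, $b$, $c$; any absolute O'Neil constants are absorbed into the generic constant. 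In particular, this shows $t \mapsto \beta(t)(u(t),v(t)) \in L^1(\mathbb{R})$.

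For the second inequality, I would simply invoke the chain of continuous embeddings $\dot{V}_0^{\mathrm{loc}} \hookrightarrow \dot{\mathcal{L}}^{r,q}(\mathbb{R})$ established in Lemma~\ref{lem: coeff Lorentz}, whose norm depends only on $[\omega]_{A_2}$, $[\omega]_{RH_{q/2}}$, $n$, and $q$, and apply it to both $u$ and $v$ in the first inequality.

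The main obstacle is verifying that the secondary H\"older indices in the Lorentz trilinear inequality can be taken as $(\infty,2,2)$; this is the key structural fact that makes the theory go through for $a,b,c$ lying only in weak-type spaces $L^{\cdot,\infty}$ rather than in $L^{\cdot,2}$. Once this is in place, everything reduces to exponent bookkeeping identical to that of Lemma~\ref{lem:conditions coeff}.
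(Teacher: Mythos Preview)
Your proposal is correct and follows essentially the same approach as the paper's proof: the paper also applies the trilinear H\"older inequality in Lorentz spaces with secondary exponents $(\infty,2,2)$ in the $x$-variable to obtain the pointwise-in-$t$ bound, then proceeds as in Lemma~\ref{lem:conditions coeff} for the $t$-integration, and finally invokes Lemma~\ref{lem: coeff Lorentz} for the $\dot{V}_0$ bound. One small remark: the relation $\tfrac{1}{r}+\tfrac{n}{2q}=\tfrac{n}{4}$ is not actually needed for the H\"older exponent bookkeeping (those reciprocal sums hold algebraically regardless), but only enters through the embedding in Lemma~\ref{lem: coeff Lorentz} for the second inequality.
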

    \begin{proof}
        For all $u,v \in \dot{\mathcal{L}}^{r,q}(\mathbb{R})$ and almost every $t\in \mathbb{R}$, we have 
        \begin{align*}
            |\beta(u,v)(t)| \le &\| a(t) \|_{L^{\frac{2q}{q-2},\infty}(\mathbb{R}^n)} \|\sqrt{\omega} \, u(t) \|_{L^{q,2}(\mathbb{R}^n)} \| \nabla_x v(t) \|_{L^2_{\omega}(\mathbb{R}^n)}\\&+ \| b(t) \|_{L^{\frac{2q}{q-2},\infty}(\mathbb{R}^n)} \|\sqrt{\omega} \, v(t) \|_{L^{q,2}(\mathbb{R}^n)} \| \nabla_x u(t) \|_{L^2_{\omega}(\mathbb{R}^n)}\\&+ \| c(t) \|_{L^{\frac{q}{q-2},\infty}(\mathbb{R}^n)} \|\sqrt{\omega} \, u(t) \|_{L^{q,2}(\mathbb{R}^n)} \|\sqrt{\omega} \, v(t) \|_{L^{q,2}(\mathbb{R}^n)} .
        \end{align*}
    This is true as we have H\"older's inequality in Lorentz spaces: the product of three functions in $L^{q_i,r_i}$ belongs to $L^1$ if $\frac{1}{q_1} + \frac{1}{q_2}+\frac{1}{q_3}= 1$ and $\frac{1}{r_1} + \frac{1}{r_2}+\frac{1}{r_2} = 1$. The remainder of the proof follows as in the proof of Lemma \ref{lem:conditions coeff}, and we make use of Lemma \ref{lem: coeff Lorentz} when it comes to involving $\dot{V}_0$.
    \end{proof}

Now, we can proceed as in the previous section, and all the results hold with $\dot{\mathcal{L}}^{r,q}$ replacing $\dot{\Sigma}^{r,q}$. The only exception concerns the causality assumption \textbf{(C)} as stated in Theorem \ref{thm: Causality}. It is based on the first embedding of (4) in Proposition \ref{prop:embeddings}, which does not extend to time-boundedness in Lorentz spaces. In this case, we consider the lower-order terms in the Lorentz space only with respect to the space variable. More precisely, we assume the stronger condition
\begin{equation*}
\ell_{r,q} := \| a \|_{L^{\frac{2r}{r-2}}(\mathbb{R}; L^{\frac{2q}{q-2},\infty}(\mathbb{R}^n))} 
+ \| b \|_{L^{\frac{2r}{r-2}}(\mathbb{R}; L^{\frac{2q}{q-2},\infty}(\mathbb{R}^n))} + \| c \|_{L^{\frac{r}{r-2}}(\mathbb{R}; L^{\frac{q}{q-2},\infty}(\mathbb{R}^n))} < \infty,
\end{equation*}
which is still weaker than the case of lower-order terms in mixed Lebesgue spaces.

\subsubsection*{\textbf{An example}} Let us consider the case $r = 2$ (hence $n \ge 3$ and $q = 2^\star$), $A = I_n$, $a = b = 0$, and $c(t,x) = \dfrac{c_\infty(t,x)}{|x|^2}$, where $c_\infty$ is a bounded, measurable, complex-valued function on $\mathbb{R}^{1+n}$. In this case, the degenerate parabolic operator reduces to the degenerate parabolic Schr\"odinger operator
\begin{equation*}
    \partial_t - \Delta_\omega + \frac{c_\infty(t,x)}{|x|^2}.
\end{equation*}
As the norm $\|c\|_{L^{\infty}(\mathbb{R}; L^{\frac{n}{2},\infty}(\mathbb{R}^n))}$ 
is controlled by $\| c_\infty \|_{L^\infty(\mathbb{R}^{1+n})}$, we state the following result, which is the version of Theorem \ref{thm: Pb Cauchy homogène} in this setting and when causality and invertibility are of perturbative nature (see Theorems \ref{thm: Invertibility} and \ref{thm: Causality}).
\begin{cor}\label{cor: example}
There exists a constant $\varepsilon_0 = \varepsilon_0\bigl(n, [\omega]_{A_2}, [\omega]_{RH_{\frac{n}{n-2}}}\bigr)$ such that if $\| c_\infty \|_{L^\infty(\mathbb{R}^{1+n})} \leq \varepsilon_0$, then the following holds: Let $\rho \in [2,\infty]$ and define $\beta = \dfrac{2}{\rho} \in [0,1]$. Let $h \in L^{\rho'}\bigl((0,\infty); L^2_\omega(\mathbb{R}^n)\bigr)$ 
and $\psi \in L^2_\omega(\mathbb{R}^n)$. Then there exists a unique solution $u \in \dot{\mathcal{L}}^{r,q}((0,\infty))$ to the Cauchy problem
\begin{align*}
\left\{
\begin{array}{ll}
\partial_tu-\Delta_\omega u+\frac{c_\infty(t,x)}{|x|^2}u  =  (-\Delta_\omega)^{\beta/2}h \quad \mathrm{in} \  \mathcal{D}'((0,\infty)\times \mathbb{R}^n), \\
u(t) \rightarrow \psi \  \mathrm{ in } \ \mathcal{D'}(\mathbb{R}^n) \ \mathrm{as} \ t \rightarrow 0^+.
\end{array}\right.
\end{align*} 
Moreover, $u \in C_0([0,\infty); L^2_\omega(\mathbb{R}^n))$, and the map $t \mapsto \|u(t)\|^2_{2,\omega}$ is absolutely continuous. The energy equalities, estimates, and the representation (by the fundamental solution of the degenerate parabolic Schr\"odinger operator) as in Theorem \ref{thm: Pb Cauchy homogène} also hold.
\end{cor}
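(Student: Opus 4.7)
The plan is to recognize this corollary as a direct instance of the Lorentz-space version of Theorem~\ref{thm: Pb Cauchy homogène} applied to the admissible pair $(r,q) = (2, 2^\star)$ with $2^\star = \frac{2n}{n-2}$. First, admissibility holds since by hypothesis $\omega \in A_2(\mathbb{R}^n) \cap RH_{\frac{n}{n-2}}(\mathbb{R}^n) = A_2(\mathbb{R}^n) \cap RH_{q/2}(\mathbb{R}^n)$, and one checks $\frac{1}{2} + \frac{n}{2q} = \frac{1}{2} + \frac{n-2}{4} = \frac{n}{4}$. Casting the equation in the form of \eqref{eq: B} forces the choice $A = \omega\, I_n$, so that $-\omega^{-1}\mathrm{div}_x(A\nabla_x) = -\Delta_\omega$ and $\omega^{-1}A = I_n$, giving $M = 1$ and the G\aa rding inequality \eqref{eq: ellipticité} with $\nu = 1$. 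The lower-order coefficients reduce to $a = b = 0$ and $c(t,x) = c_\infty(t,x)\, |x|^{-2}$.

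The key quantitative step is to control $\ell_{r,q}$ by $\|c_\infty\|_{L^\infty(\mathbb{R}^{1+n})}$. When $r = 2$, the contributions from $a$ and $b$ vanish; moreover $\tfrac{r}{r-2} = \infty$ and $\tfrac{q}{q-2} = \tfrac{n}{2}$, so
\begin{equation*}
\ell_{2, 2^\star} = \|c\|_{L^\infty(\mathbb{R};\, L^{n/2,\infty}(\mathbb{R}^n))}.
\end{equation*}
Since $|x|^{-2}$ belongs to $L^{n/2,\infty}(\mathbb{R}^n)$ with explicit norm $\kappa_n = \kappa_n(n) > 0$, one obtains $\ell_{2,2^\star} \le \kappa_n\, \|c_\infty\|_{L^\infty(\mathbb{R}^{1+n})}$. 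Observe that $|x|^{-2} \notin L^{n/2}(\mathbb{R}^n)$, so the Lebesgue-space framework of Section~\ref{section 3} would not reach this Hardy-type potential, and the Lorentz-space framework of Section~\ref{section 4} is indispensable here.

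Finally, I invoke the Lorentz analogues of the invertibility and causality results, which the paper indicates hold upon replacing $\dot{\Sigma}^{r,q}$ by $\dot{\mathcal{L}}^{r,q}$ and using $\ell_{r,q}$ in the statement of causality. These produce constants $\varepsilon_1, \varepsilon_2 > 0$ depending only on $n$, $[\omega]_{A_2}$, and $[\omega]_{RH_{n/(n-2)}}$ (since $M = \nu = 1$ and $q$ is determined by $n$) such that $\ell_{2, 2^\star} \le \min(\varepsilon_1, \varepsilon_2)$ ensures assumptions \textbf{(I)} and \textbf{(C)}. Setting $\varepsilon_0 := \min(\varepsilon_1, \varepsilon_2)/\kappa_n$, the smallness $\|c_\infty\|_\infty \le \varepsilon_0$ brings the problem into the scope of Theorem~\ref{thm: Pb Cauchy homogène} (Lorentz version), yielding existence, uniqueness in $\dot{\mathcal{L}}^{2, 2^\star}((0, \infty))$, continuity in $L^2_\omega$, energy equalities, the stated estimates, and the representation by the fundamental solution of the degenerate parabolic Schr\"odinger operator. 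The main technical point to verify is the causality argument in the endpoint case $r = 2$: in the proof of Theorem~\ref{thm: Causality}, the Young-convexity mixed-norm step no longer applies, and is replaced by the direct use of Theorem~\ref{thm: HLS Lorentz} with $\alpha = 1$, namely
\begin{equation*}
\|\sqrt{\omega}\, u(t)\|_{L^{2^\star, 2}(\mathbb{R}^n)} \le C \|\nabla_x u(t)\|_{L^2_\omega(\mathbb{R}^n)},
\end{equation*}
which, after a H\"older inequality in Lorentz spaces (using $\frac{2}{n} + \frac{2}{2^\star} = 1$ and $\frac{1}{\infty} + \frac{1}{1} = 1$), yields $\int_{-\infty}^{\tau} |\beta(u,u)(t)|\, \mathrm{d}t \le C^2 \ell_{2, 2^\star}\, \underline{I}$, enabling absorption into the $-2\nu\,\underline{I}$ term of \eqref{eq:Causality} under the smallness of $\ell_{2, 2^\star}$.
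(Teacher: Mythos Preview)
Your proof is correct and follows exactly the approach the paper intends: the corollary is stated as a direct instance of the Lorentz-space version of Theorem~\ref{thm: Pb Cauchy homogène} with $(r,q)=(2,2^\star)$, $A=\omega I_n$, $a=b=0$, and perturbative invertibility and causality (the paper's own justification is the single sentence preceding the corollary). Your extra paragraph on the $r=2$ endpoint of the causality argument is a helpful elaboration the paper leaves implicit; since $L^{2,2}=L^2$ in time there is no Lorentz obstruction, and Theorem~\ref{thm: HLS Lorentz} with $\alpha=1$ handles the space variable exactly as you describe.
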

Here we anticipate a bit, but note that, since $a = b = 0$ and in the setting of the above result, the fundamental solution of the degenerate parabolic Schr\"odinger operator satisfies 
$L^2$ off-diagonal estimates (see Theorem \ref{thm :L2 off diagonal}). In the unweighted setting ($\omega = 1$), with $c_\infty = c$ a negative constant, and for non-negative $L^1$ initial data, the connected yet subtly different question of the existence of a distributional non-negative solution to the above Cauchy problem is also answered positively in \cite{MR742415} for $c \in [-c_n, 0]$, where $c_n = c(n)$ is a constant. Always for $\omega = 1$, the Hardy inequality provides a lower bound, allowing the recovery of the constant $c_n$ via the (inhomogeneous) variational method (see \cite{auscheregert2023universal}). However, the weighted Hardy inequality requires different conditions on $\omega$ than those we imposed (see \cite[Theorem 4]{MR311856} and \cite[Theorem 4.1]{MR1007530}). This is why we have restricted ourselves to the perturbative case in Corollary \ref{cor: example}, which is, of course, still a new result.

\section{The inhomogeneous version}\label{section 5}

So far, the results have a homogeneous flavor. We now develop an inhomogeneous version of the theory, covering non-autonomous elliptic parts with bounded and unbounded lower-order terms.

\subsection{Setup} The key ingredient to obtain the inhomogeneous theory from the homogeneous one is to formulate the homogeneous theory for $\tilde{S} := (1 + S^2)^{1/2} = (1 - \Delta_\omega)^{1/2}$. This formulation naturally leads to the inhomogeneous theory for $S$. In fact, the "homogeneous" fractional domains of $\tilde{S}$ are precisely the "inhomogeneous" fractional domains of $S$. More precisely, for all $\alpha \ge 0$, we have $D_{\tilde{S}, \alpha} = D(S^\alpha) = D((-\Delta_\omega)^{\alpha/2})$ and $\| \cdot \|_{\tilde{S}, \alpha} \simeq \| \cdot \|_{S, \alpha} + \| \cdot \|_{2, \omega} = \| S^\alpha \cdot \|_{2, \omega} + \| \cdot \|_{2, \omega}$. For $\alpha < 0$, we have $D_{\tilde{S}, \alpha} = D_{S, \alpha} + L^2_\omega(\mathbb{R}^n)$, with a norm equivalent to the corresponding quotient norm.

We denote by $(V_\alpha)_{-1 \le \alpha \le 1}$ and $(W_\alpha)_{-1 \le \alpha \le 1}$ the universal solution and source spaces associated with $\tilde{S}$. Note that for all $\alpha \in [0,1]$, we have the continuous embedding $V_\alpha \hookrightarrow \dot{V}_\alpha$, since the operator $D_t^{\frac{1-\alpha}{2}}$ is the same in both spaces. By duality, this implies the dual embedding $\dot{V}_\alpha^\star \hookrightarrow V_\alpha^\star$, where the duality is understood with respect to the $L^2(\mathbb{R}; L^2_\omega(\mathbb{R}^n))$ pairing, as usual. As a consequence, the Hardy–Littlewood–Sobolev-type embeddings stated in Proposition~\ref{prop:embeddings}, both in the homogeneous and inhomogeneous settings, together with their duals, hold. Moreover, for the variational spaces $\dot{V}_0$ and $V_0$, we have
\begin{equation*}
    V_0 = \dot{V}_0 \cap L^2(\mathbb{R}; L^2_\omega(\mathbb{R}^n)) \ (\subset  \dot{V}_0^\mathrm{loc})
    \quad \text{and} \quad 
    \| \cdot \|^2_{V_0} \simeq \| \cdot \|^2_{\dot{V}_0} + \| \cdot \|^2_{L^2(\mathbb{R}; L^2_\omega(\mathbb{R}^n))}.
\end{equation*}

Formulating Proposition \ref{prop: Lions non borné} for $\tilde{S}$ yields the following proposition.

\begin{prop}\label{prop: Lions inh}
    Let $I \subset \mathbb{R}$ be an open interval. Let $u \in L^2(I; H^1_\omega(\mathbb{R}^n))$. Let $\rho \in [2, \infty]$ and set $\beta = \frac{2}{\rho} \in [0,1]$. Assume that $\partial_t u = (-\Delta_\omega)^{\beta/2} h_1 + h_2 $ in $\mathcal{D}'(I \times \mathbb{R}^n)$,
    with $ h_1, h_2 \in L^{\rho'}(I; L^2_\omega(\mathbb{R}^n))$. Then $u \in C_0(\overline{I}; L^2_\omega(\mathbb{R}^n))$, and the map $t \mapsto \|u(t)\|^2_{2,\omega}$ is absolutely continuous on $\overline{I}$. Moreover, the integral identities, as in \eqref{eq:integralidentity}, hold for all $\sigma, \tau \in \overline{I}$ with $\sigma < \tau$.
\end{prop}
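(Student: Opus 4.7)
My plan is to apply Proposition \ref{prop: Lions non borné} with $S$ replaced by $\tilde{S} := (1-\Delta_\omega)^{1/2}$, following the strategy announced in the setup of this section: the inhomogeneous theory for $S$ is the homogeneous theory for $\tilde{S}$. First, $D_{\tilde{S},1} = H^1_\omega(\mathbb{R}^n)$ with equivalent norms, so the assumption $u \in L^2(I; H^1_\omega(\mathbb{R}^n))$ reads precisely as $u \in L^2(I; D_{\tilde{S},1})$. Since $\tilde{S} \geq 1$ in the Borel functional calculus, $D_{\tilde{S},1} \hookrightarrow L^2_\omega(\mathbb{R}^n)$; hence, when $I$ is bounded, the auxiliary hypothesis $u \in L^1(I; L^2_\omega(\mathbb{R}^n))$ required in Proposition \ref{prop: Lions non borné} is automatic by Cauchy--Schwarz.

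The main technical step is to recast the source $(-\Delta_\omega)^{\beta/2} h_1 + h_2$ into the form prescribed by Proposition \ref{prop: Lions non borné}. Since $\tilde{S}$ is invertible, we factor it as $\tilde{S}^\beta g$ with
\begin{equation*}
    g := \tilde{S}^{-\beta} S^\beta h_1 + \tilde{S}^{-\beta} h_2.
\end{equation*}
By the Borel functional calculus applied to the self-adjoint operator $S$, the operators $\tilde{S}^{-\beta} S^\beta$ and $\tilde{S}^{-\beta}$ are bounded on $L^2_\omega(\mathbb{R}^n)$ with operator norm at most $1$, because the scalar symbols $\lambda^\beta (1+\lambda^2)^{-\beta/2}$ and $(1+\lambda^2)^{-\beta/2}$ are bounded by $1$ on the spectrum $[0,\infty)$ of $S$. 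Hence $g \in L^{\rho'}(I; L^2_\omega(\mathbb{R}^n))$, so $\partial_t u = \tilde{S}^\beta g \in L^{\rho'}(I; D_{\tilde{S},-\beta})$. For $\rho > 2$ (so $\beta \in [0,1)$), we use this as the $g$-source in Proposition \ref{prop: Lions non borné} with $f = 0$; for $\rho = 2$ (so $\beta = 1$), the same factorization places the source in $L^2(I; D_{\tilde{S},-1})$ and we use it as the $f$-source with $g = 0$.

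Applying Proposition \ref{prop: Lions non borné} (with $\tilde{S}$ in place of $S$) then yields $u \in C_0(\overline{I}; L^2_\omega(\mathbb{R}^n))$, the absolute continuity of $t \mapsto \|u(t)\|^2_{2,\omega}$ on $\overline{I}$, and the integral identity. Translating the abstract pairing back to the concrete form, using that $u(t) \in D(\tilde{S}) \supset D(S^\beta)$ for a.e.\ $t \in I$, gives for all $\sigma, \tau \in \overline{I}$ with $\sigma < \tau$,
\begin{equation*}
    \|u(\tau)\|^2_{2,\omega} - \|u(\sigma)\|^2_{2,\omega} = 2\,\mathrm{Re}\int_\sigma^\tau \bigl( \langle h_1(t), (-\Delta_\omega)^{\beta/2} u(t)\rangle_{2,\omega} + \langle h_2(t), u(t)\rangle_{2,\omega} \bigr)\, \mathrm{d}t,
\end{equation*}
which is the desired analogue of \eqref{eq:integralidentity}.

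The main obstacle to watch is the equivalence between the equation $\partial_t u = (-\Delta_\omega)^{\beta/2} h_1 + h_2$ holding in $\mathcal{D}'(I \times \mathbb{R}^n)$ and its abstract counterpart in $\mathcal{D}'(I; \tilde{E}_\infty)$ with test functions valued in the Fr\'echet space $\tilde{E}_{-\infty} = \bigcap_{\alpha \in \mathbb{R}} D(\tilde{S}^\alpha)$ attached to $\tilde{S}$, which is what Proposition \ref{prop: Lions non borné} consumes. This is the analogue of Proposition \ref{prop : passage au concret} for $\tilde{S}$ and ultimately reduces to the density of $\mathcal{D}(\mathbb{R}^n)$ in $D(\tilde{S}^k)$ for every $k \in \mathbb{N}$ under the graph norm, which follows from the standard truncation and mollification argument already used in \cite[Thm.~2.5]{kilpelainen1994weighted} for the case $k = 1$, iterated together with the regularity of the powers of the degenerate Laplacian.
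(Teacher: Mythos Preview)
Your proof is correct and follows essentially the same approach as the paper, which derives the proposition simply by formulating Proposition~\ref{prop: Lions non borné} for $\tilde{S}=(1-\Delta_\omega)^{1/2}$ in place of $S$. You have filled in the details the paper leaves implicit, in particular the factorization $(-\Delta_\omega)^{\beta/2}h_1+h_2=\tilde{S}^\beta g$ via bounded functional calculus and the correct splitting into the $f$-slot versus the $g$-slot of Proposition~\ref{prop: Lions non borné} according to whether $\rho=2$ or $\rho>2$; the only minor overreach is that the concrete-to-abstract passage (the analogue of Proposition~\ref{prop : passage au concret}) requires density of $\mathcal{D}(\mathbb{R}^n)$ only in $D(\tilde{S})=H^1_\omega(\mathbb{R}^n)$, not in every $D(\tilde{S}^k)$.
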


\begin{rems}\label{rem: Lions inh}
    \begin{enumerate}
        \item In the case where $I$ is a bounded open interval, the conclusion of the above proposition remains valid by viewing $h_2$ as an element of $L^1(I; L^2_\omega(\mathbb{R}^n))$, and by applying Proposition \ref{prop: Lions non borné} (\textit{i.e.}, the homogeneous theory for $S$, rather than for $\widetilde{S}$). Moreover, in this setting, it suffices to assume that $u \in L^1(I; H^1_\omega(\mathbb{R}^n))$ with $\nabla_x u \in L^2(I; L^2_\omega(\mathbb{R}^n))$, rather than requiring $u\in L^2(I; H^1_\omega(\mathbb{R}^n))$.
        \item Note that, since the embedding $H^1_\omega(\mathbb{R}^n) \hookrightarrow L^2_\omega(\mathbb{R}^n)$ is continuous and dense, the Lions embedding theorem \cite{lions1957problemes} applies only in the case where $I$ is a bounded interval and $\rho=2$, and one must \textit{a priori} assume that $u \in L^2(I; H^1_\omega(\mathbb{R}^n))$. Point (1) above is a better result.
    \end{enumerate}
\end{rems}

\subsection{The degenerate parabolic operator and the variational approach} We consider degenerate parabolic operators of type $\partial_t  + \mathcal{B}_{\mathrm{inh}}$ with a time-dependent degenerate elliptic part $\mathcal{B}_{\mathrm{inh}}$ in divergence form perturbed with both bounded and unbounded lower-order terms. We shall only consider the case of unbounded lower-order terms in mixed Lebesgue spaces, since the case of mixed Lorentz spaces can be treated in exactly the same way, with the modifications already mentioned in Section \ref{section 4} when proving causality results. Thus, we consider
\begin{equation}\label{eq: Binh}
    \mathcal{B}_{\mathrm{inh}}u=-\omega^{-1} \mathrm{div}_x(A\nabla_x u) -\omega^{-1} \mathrm{div}_x(\omega \, a u)+b \cdot \nabla_x u + c u.
\end{equation}
Let $(r,q)$ be an admissible pair. We assume that $a$, $b$, $c$ can be decomposed as $a=a_{\mathrm{h}}+a_\infty$, $b=b_{\mathrm{h}}+b_\infty$, $c=c_{\mathrm{h}}+c_\infty$ with
\begin{equation}\label{eq: quantité Prq}
    P_{r,q}:=  \| a_{\mathrm{h}} \|_{L^{{\frac{2r}{r-2}}}(\mathbb{R};{L^{\frac{2q}{q-2}}(\mathbb{R}^n)})} + \| b_{\mathrm{h}} \|_{L^{{\frac{2r}{r-2}}}(\mathbb{R};{L^{\frac{2q}{q-2}}(\mathbb{R}^n)})}+\| c_{\mathrm{h}} \|_{L^{{\frac{r}{r-2}}}(\mathbb{R};{L^{\frac{q}{q-2}}(\mathbb{R}^n)})} <\infty,
\end{equation}
\begin{equation}\label{eq: quantité Pinfini}
    P_{\infty,\infty}:= \| a_\infty \|_{L^\infty( \mathbb{R}^{1+n})} + \| b_\infty \|_{L^\infty( \mathbb{R}^{1+n})}+\| c_\infty \|^{\frac{1}{2}}_{L^\infty( \mathbb{R}^{1+n})} <\infty.
\end{equation}
The power $1/2$ on $\| c_\infty \|^{1/2}_{L^\infty(\mathbb{R}^{1+n})}$ is relevant for scaling and for a cleaner presentation, as we will see.
\begin{rem}[Example: subcritical exponents]\label{rem: Subcritical exponents}
Let $(\Tilde{r},\Tilde{q})$ be a pair with $r\le \Tilde{r}$, $2<\Tilde{q}$ and $\frac{1}{\Tilde{r}} + \frac{n}{2 \Tilde{q}} > \frac{n}{4}$, with the additional condition $\frac{1}{\Tilde{r}} - \frac{1}{2\Tilde{q}} < \frac{1}{4}$ when $n=1$. Any coefficients
\begin{equation*}
    a, b \in L^{\frac{2\Tilde{r}}{\Tilde{r}-2}}(\mathbb{R}; L^{\frac{2 \Tilde{q}}{\Tilde{q}-2}}(\mathbb{R}^n)^n)\quad \text{and}  \quad c \in L^{\frac{\Tilde{r}}{\Tilde{r}-2}}(\mathbb{R}; L^{\frac{\Tilde{q}}{\Tilde{q}-2}}(\mathbb{R}^n)),
\end{equation*}
can be decomposed as above, that is, $a = a_{\mathrm{h}} + a_\infty$, $b = b_{\mathrm{h}} + b_\infty$, and $c = c_{\mathrm{h}} + c_\infty$, with 
\begin{equation*}
    a_{\mathrm{h}}, b_{\mathrm{h}} \in L^{\frac{2r}{r-2}}(\mathbb{R}; L^{\frac{2q}{q-2}}(\mathbb{R}^n)^n), \  
c_{\mathrm{h}} \in L^{\frac{r}{r-2}}(\mathbb{R}; L^{\frac{q}{q-2}}(\mathbb{R}^n)), \ \ a_\infty, b_\infty, \in L^\infty(\mathbb{R}^{1+n})^n \ \ \text{and} \ c_\infty \in L^\infty(\mathbb{R}^{1+n}),
\end{equation*}
by visualizing exponents in a ($\frac{\Tilde{r}-2}{\Tilde{r}}$,$\frac{\Tilde{q}-2}{\Tilde{q}}$)-plane and truncating each coefficient at a fixed level $\ell > 0$ (namely, $y_{\mathrm{h}} = y\,\mathbb{1}_{\{|y| > \ell\}}$ and $y_\infty = y\,\mathbb{1}_{\{|y| \le \ell\}}$). In fact, setting $r^\circ = \frac{\Tilde{r}}{\Tilde{r}-2} \in (1,\infty]$ and $q^\circ = \frac{\Tilde{q}}{\Tilde{q}-2} \in (1,\infty]$, then the condition $\frac{1}{\Tilde{r}} + \frac{n}{2 \Tilde{q}} > \frac{n}{4}$ is equivalent to $$\frac{1}{r^\circ} + \frac{n}{2 q^\circ} < 1,$$
which are the subcritical exponents appearing in \cite{aronson1968non, ladyzhenskaia1968linear, AMP2019, auscheregert2023universal}. For the additional assumption when $n=1$, it is equivalent to $\frac{1}{r^\circ} > \frac{1}{2q^\circ}$.
\end{rem}

Next, we introduce the sesquilinear forms associated with the lower-order terms. For all $t\in \mathbb{R}$ and $u,v\in H^1_\omega(\mathbb{R}^n)$, we set 
    \begin{align*}
        \beta_{\mathrm{h}}(t)(u,v)&:= \langle a_{\mathrm{h}}(t) u , \nabla_x v \rangle_{2,\omega}+ \langle b_{\mathrm{h}}(t) \cdot \nabla_x u , v \rangle_{2,\omega}+\langle c_{\mathrm{h}}(t) u, v \rangle_{2,\omega},\\
        \beta_\infty(t)(u,v)&:= \langle a_\infty(t) u , \nabla_x v \rangle_{2,\omega}+ \langle b_\infty(t) \cdot \nabla_x u , v \rangle_{2,\omega}+\langle c_\infty(t) u , v \rangle_{2,\omega},\\
        \beta_{\mathrm{inh}}(t)(u,v)&:= \beta_{\mathrm{h}}(t)(u,v)+ \beta_\infty(t)(u,v).
    \end{align*}
    Now, for all $u,v \in \dot{\Sigma}^{r,q}(\mathbb{R}) \cap L^2(\mathbb{R};L^2_\omega(\mathbb{R}^n))$, we set
    \begin{equation*}
        \beta_{\mathrm{inh}}(u,v):= \beta_{\mathrm{h}}(u,v)+\beta_\infty(u,v):= \int_{\mathbb{R}} \beta_{\mathrm{h}}(t)(u(t),v(t))\, \mathrm{d}t+ \int_{\mathbb{R}} \beta_\infty(t)(u(t),v(t)) \, \mathrm{d}t.
    \end{equation*}
    We record the following lemma.
    \begin{lem}\label{lem: conditions coeff inh}
        For all $u,v \in \dot{\Sigma}^{r,q}(\mathbb{R}) \cap L^2(\mathbb{R};L^2_\omega(\mathbb{R}^n))$, we have 
        \begin{align*}
            &\star \ |\beta_\infty(u,v)| \le P_{\infty,\infty}(\|u \|_{L^2(\mathbb{R};L^2_\omega(\mathbb{R}^n))} \| \nabla_x v \|_{L^2(\mathbb{R};L^2_\omega(\mathbb{R}^n))}+\|\nabla_x u \|_{L^2(\mathbb{R};L^2_\omega(\mathbb{R}^n))} \| v \|_{L^2(\mathbb{R};L^2_\omega(\mathbb{R}^n))})\\& \hspace{4cm}+ P_{\infty,\infty}^{2}\|u \|_{L^2(\mathbb{R};L^2_\omega(\mathbb{R}^n))} \| v \|_{L^2(\mathbb{R};L^2_\omega(\mathbb{R}^n))},
            \\& \star \ |\beta_{\mathrm{h}}(u,v)| \le P_{r,q} \|u \|_{\dot{\Sigma}^{r,q}(\mathbb{R})} \|v \|_{\dot{\Sigma}^{r,q}(\mathbb{R})}.
        \end{align*}
        In particular, $\beta_{\mathrm{inh}} :  V_0 \times V_0 \rightarrow \mathbb{C}$ is a bounded sesquilinear form.
    \end{lem}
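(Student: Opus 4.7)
My plan is to prove the two pointwise estimates by brute force via Hölder's inequality and then combine them to deduce the boundedness on $V_0\times V_0$.

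For the bound on $|\beta_\infty(u,v)|$, I would expand the definition term by term. For the drift term $\langle a_\infty(t) u, \nabla_x v\rangle_{2,\omega}$ I pull out $\|a_\infty\|_{L^\infty(\mathbb{R}^{1+n})} \le P_{\infty,\infty}$ pointwise in $(t,x)$ and apply Cauchy-Schwarz in the remaining $L^2_\omega$ integrals, then integrate in $t$ using Cauchy-Schwarz again to produce $\|u\|_{L^2(\mathbb{R};L^2_\omega)}\|\nabla_x v\|_{L^2(\mathbb{R};L^2_\omega)}$. The $b_\infty$ term is handled symmetrically. For the zero-order term I extract $\|c_\infty\|_{L^\infty(\mathbb{R}^{1+n})}$; by the very definition of $P_{\infty,\infty}$ in \eqref{eq: quantité Pinfini}, this equals $P_{\infty,\infty}^2$, which explains the square root in the definition and yields exactly the last summand of the stated estimate.

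For the bound on $|\beta_{\mathrm{h}}(u,v)|$, I would simply invoke Lemma~\ref{lem:conditions coeff}, as this part is already covered: the quantity $P_{r,q}$ in \eqref{eq: quantité Prq} matches that of \eqref{eq: P_rq} when applied to $a_{\mathrm{h}}, b_{\mathrm{h}}, c_{\mathrm{h}}$, and the hypotheses on $\dot{\Sigma}^{r,q}(\mathbb{R})$ are identical.

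For the final sesquilinearity-and-boundedness statement on $V_0\times V_0$, I would combine the two previous estimates with the two embeddings discussed just before the lemma. The definition $V_0 = L^2(\mathbb{R};D_{\tilde S,1}) \cap \dot H^{1/2}(\mathbb{R};L^2_\omega(\mathbb{R}^n))$ with $D_{\tilde S,1} = H^1_\omega(\mathbb{R}^n)$ gives the continuous inclusion $V_0 \hookrightarrow L^2(\mathbb{R};H^1_\omega(\mathbb{R}^n))$, which controls the four norms $\|u\|_{L^2(\mathbb{R};L^2_\omega)}$, $\|\nabla_x u\|_{L^2(\mathbb{R};L^2_\omega)}$, and their $v$ counterparts, by $\|u\|_{V_0}$ and $\|v\|_{V_0}$ respectively. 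Moreover $V_0 \hookrightarrow \dot V_0 \hookrightarrow \dot\Sigma^{r,q}(\mathbb{R})$ by the observation preceding the lemma together with Corollary~\ref{cor:embeddings}, so $\|u\|_{\dot\Sigma^{r,q}(\mathbb{R})} \lesssim \|u\|_{V_0}$. Adding the two resulting estimates gives a bound of the form $C(P_{r,q}+P_{\infty,\infty}+P_{\infty,\infty}^2)\|u\|_{V_0}\|v\|_{V_0}$, where the constant depends only on $[\omega]_{A_2}$, $[\omega]_{RH_{q/2}}$, $n$, and $q$. Sesquilinearity is transparent from the definition.

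There is no real obstacle here; the only point worth being careful about is ensuring that each integral in $\beta_\infty$ is absolutely convergent so that Fubini applies and the $L^\infty$ bounds can be factored out pointwise before the two Cauchy–Schwarz inequalities — this is immediate from $u,v \in L^2(\mathbb{R};L^2_\omega(\mathbb{R}^n))$ and $\nabla_x u, \nabla_x v \in L^2(\mathbb{R};L^2_\omega(\mathbb{R}^n))$.
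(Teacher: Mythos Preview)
Your proposal is correct and follows essentially the same approach as the paper: Cauchy--Schwarz for $\beta_\infty$, a direct appeal to Lemma~\ref{lem:conditions coeff} for $\beta_{\mathrm{h}}$, and the embeddings $V_0 \hookrightarrow \dot{V}_0 \hookrightarrow \dot{\Sigma}^{r,q}(\mathbb{R})$ from Corollary~\ref{cor:embeddings} together with $V_0 \hookrightarrow L^2(\mathbb{R};H^1_\omega(\mathbb{R}^n))$ for the final boundedness on $V_0\times V_0$. Your treatment is in fact slightly more detailed than the paper's, which dispatches all three points in one sentence each.
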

    \begin{proof}
    The bound on $\beta_\infty$ follows directly from the Cauchy–Schwarz inequalities. For $\beta_{\mathrm{h}}$, this is exactly Lemma \ref{lem:conditions coeff}. The statement on $\beta_{\mathrm{inh}}$ follows directly by combining Corollary \ref{cor:embeddings} with the embedding $V_0 \hookrightarrow \dot{V}_0$.
    \end{proof}
    \begin{rem}
        We may define the bounded operator, with bound $P_{\infty,\infty}+P_{\infty,\infty}^2$,
        $$\beta_\infty: L^2(\mathbb{R}; H^1_\omega(\mathbb{R}^n)) \rightarrow L^2(\mathbb{R}; H^1_\omega(\mathbb{R}^n))^\star=L^2(\mathbb{R};L^2_\omega(\mathbb{R}^n))+L^2(\mathbb{R};D_{S,-1}).$$
        We also define the operators $\beta_{\mathrm{h}}: \dot{\Sigma}^{r,q}(\mathbb{R}) \rightarrow \dot{\Sigma}^{r,q}(\mathbb{R})^\star= L^2(\mathbb{R};D_{S,-1})+L^{r'}(\mathbb{R},L^{q'}_{\omega^{q'/2}}(\mathbb{R}^n))$ and $\mathcal{A}: L^2(\mathbb{R};D_{S,1}) \rightarrow L^2(\mathbb{R};D_{S,-1})$ as in Remark \ref{rem: beta et A}.
    \end{rem}
    
    In order to employ the variational approach, we introduce the following bounded sesquilinear form $B_{V_0} : V_0 \times V_0 \to \mathbb{C}$, defined for all $u, v \in V_0$ by
\begin{align*}
      B_{V_0}(u,v):&=\int_\mathbb{R}\langle H_t D_t^{1/2}u(t), D_t^{1/2}v(t) \rangle_{2,\omega} +  \langle A(t) \nabla_x u(t), \nabla_x \varphi(t) \rangle_{2,\omega} 
        + \beta_{\mathrm{inh}}(t)(u(t), v(t))\, \mathrm{d}t
        \\&= \int_\mathbb{R}\langle H_t D_t^{1/2}u(t), D_t^{1/2}v(t) \rangle_{2,\omega} \, \mathrm{d}t + \mathcal{A}(u, v) + \beta_{\mathrm{h}}(u, v)+\beta_\infty(u,v)
        \\&=: B_{\dot{V}_0}(u,v)+ \beta_\infty(u,v).
\end{align*}
    Next, we define $\mathcal{H}_{\mathrm{inh}} \colon V_0 \to V_0^\star$ by $\llangle \mathcal{H}_{\mathrm{inh}} u, v \rrangle_{V_0^\star, V_0} := B_{V_0}(u, v)$ for all $u, v \in V_0$. Note that, as before, we have $\left ( \partial_t+\mathcal{B}_{\mathrm{inh}} \right )_{\scriptscriptstyle{\vert V_0}}  = \mathcal{H}_{\mathrm{inh}}$ and $\left ( -\partial_t+\mathcal{B}_{\mathrm{inh}}^\star \right )_{\scriptscriptstyle{\vert V_0 }} = \mathcal{H}_{\mathrm{inh}}^\star$. Note also that we have the decomposition
    \begin{equation*}
    \mathcal{H}_{\mathrm{inh}} = \mathcal{H}_{\mathrm{h}} + \beta_\infty,
    \end{equation*}
    where $\mathcal{H}_{\mathrm{h}}$ denotes the operator constructed in the absence of bounded lower-order terms, as considered in Section \ref{section 3}.

    Now, as in the homogeneous theory, we provide sufficient conditions for invertibility and causality results. We begin with perturbation-type results.

\begin{thm}[Invertibility, inhomogeneous version]\label{thm: Invertibility inh}
    Assume that \eqref{eq: ellipticité} holds. There are two constants $\varepsilon_0 = \varepsilon_0(M, \nu, [\omega]_{A_2}, [\omega]_{RH_{\frac{q}{2}}}, n, q)$ and $\kappa_0=\kappa_0(M, \nu, [\omega]_{A_2}, [\omega]_{RH_{\frac{q}{2}}}, n, q,P_{\infty,\infty})$ such that if $P_{r, q} \leq \varepsilon_0$ and $\kappa \ge \kappa_0$, then the operator $\mathcal{H}_{\text{inh}}+\kappa: V_0 \rightarrow V_0^\star$ is invertible, with a bound of its inverse depending only on $M$ and $\nu$.
\end{thm}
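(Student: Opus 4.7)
The plan is to adapt the Lax--Milgram scheme of Theorem~\ref{thm: Invertibility} to the inhomogeneous variational space $V_0$, using the mass-type term $\kappa$ to absorb the contribution of the bounded lower-order coefficients. Since $H_t$ commutes with $D_t^{1/2}$, $S$ and the identity on $L^2_\omega(\mathbb{R}^n)$ and is skew-adjoint on each, the multiplier $1+\delta H_t$ is an isomorphism of both $V_0$ and $V_0^\star$ with $\|(1+\delta H_t)u\|_{V_0}^2 = (1+\delta^2)\|u\|_{V_0}^2$. I would therefore study the auxiliary sesquilinear form $b_\delta(u,v) := \llangle (\mathcal{H}_{\mathrm{inh}}+\kappa)u,(1+\delta H_t)v\rrangle_{V_0^\star, V_0}$, which is bounded on $V_0 \times V_0$, and invert $\mathcal{H}_{\mathrm{inh}}+\kappa$ by composing the Lax--Milgram solution for $b_\delta$ with $(1+\delta H_t)^{-\star}$.

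To get coercivity of $b_\delta$, I would reproduce the computation of Theorem~\ref{thm: Invertibility}. The skew-adjointness of $H_t$ kills the diagonal contribution of the $D_t^{1/2}$ term, so that the $H_t$-multiplier only adds $\delta\|D_t^{1/2}u\|_{L^2(L^2_\omega)}^2$; analogously $\kappa\mathrm{Re}\langle u,(1+\delta H_t)u\rangle_{L^2(L^2_\omega)} = \kappa\|u\|_{L^2(L^2_\omega)}^2$. Invoking \eqref{eq: ellipticité} for the leading part, Corollary~\ref{cor: borne Beta} for $\beta_{\mathrm{h}}$, and the two pointwise bounds in Lemma~\ref{lem: conditions coeff inh} for $\beta_\infty$, with $C=C([\omega]_{A_2},[\omega]_{RH_{q/2}},n,q)$ I get
\begin{align*}
\mathrm{Re}\,b_\delta(u,u) \;\geq\; &\,\delta\,\|D_t^{1/2}u\|_{L^2(L^2_\omega)}^2 + (\nu-\delta M)\,\|\nabla_x u\|_{L^2(L^2_\omega)}^2 + \kappa\,\|u\|_{L^2(L^2_\omega)}^2 \\
&- C P_{r,q}\sqrt{1+\delta^2}\,\|u\|_{\dot{V}_0}^2 - 2P_{\infty,\infty}\sqrt{1+\delta^2}\,\|u\|_{L^2(L^2_\omega)}\|\nabla_x u\|_{L^2(L^2_\omega)} \\
&- P_{\infty,\infty}^2\sqrt{1+\delta^2}\,\|u\|_{L^2(L^2_\omega)}^2.
\end{align*}

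Now I would apply Young's inequality $2ab \le \eta a^2 + \eta^{-1}b^2$ to the cross term, splitting its cost between $\|\nabla_x u\|_{L^2(L^2_\omega)}^2$ and $\|u\|_{L^2(L^2_\omega)}^2$. The order of parameter choices is crucial and runs as follows: first fix $\delta=\nu/(2(M+1))$ and $\eta=\nu/4$, both depending only on $M$ and $\nu$; then select $\varepsilon_0$ so that $C P_{r,q}\sqrt{1+\delta^2}\le \delta/2$ holds for $P_{r,q}\le\varepsilon_0$, which absorbs the $\beta_{\mathrm{h}}$-term into the coercivity (and contributes dependence on $[\omega]_{A_2}, [\omega]_{RH_{q/2}},n,q$); finally select $\kappa_0$ large enough, depending on $P_{\infty,\infty}$, so that $\kappa - \eta^{-1}P_{\infty,\infty}^2(1+\delta^2) - P_{\infty,\infty}^2\sqrt{1+\delta^2}\ge 1$ for $\kappa\ge\kappa_0$. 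This yields $\mathrm{Re}\,b_\delta(u,u)\ge c\,\|u\|_{V_0}^2$ with $c=c(M,\nu)>0$. Lax--Milgram then gives a unique $u\in V_0$ with $b_\delta(u,\cdot)=(1+\delta H_t)^\star f$ for any $f\in V_0^\star$, and the identity $(1+\delta H_t)^\star$-isomorphism on $V_0^\star$ of norm $\sqrt{1+\delta^2}$ delivers the desired inverse, whose norm is $\le c^{-1}\sqrt{1+\delta^2}$ and thus depends only on $M$ and $\nu$.

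The main delicate point is the order of the quantifiers: $P_{\infty,\infty}$ unavoidably enters $\kappa_0$ through the Cauchy--Schwarz bound on $\beta_\infty$ (notably the $c_\infty$-term, whose $\|c_\infty\|_{L^\infty}$ size matches $P_{\infty,\infty}^2$ by the chosen scaling of \eqref{eq: quantité Pinfini}), yet $\delta$ and $\eta$ must be fixed \emph{before} $P_{\infty,\infty}$ enters the picture, otherwise the final coercivity constant---and therefore the inverse bound---would pick up spurious dependence on $P_{\infty,\infty}$. Everything else is a routine repetition of the homogeneous scheme once $V_0$ is substituted for $\dot{V}_0$.
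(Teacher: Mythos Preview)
Your proposal is correct and follows essentially the same route as the paper: both use the $(1+\delta H_t)$-twisted Lax--Milgram argument on $V_0$, invoke the homogeneous coercivity from Theorem~\ref{thm: Invertibility} for the $\mathcal{H}_{\mathrm{h}}$-part, and absorb the $\beta_\infty$ cross term via Young's inequality into the $\kappa$-mass. The only cosmetic difference is that the paper chooses the Young parameter $\lambda$ so that $P_{\infty,\infty}\sqrt{1+\delta^2}\,\lambda=\delta/4$ (hence $\lambda$ depends on $P_{\infty,\infty}$, with the resulting $1/\lambda$-term pushed into $\kappa_0$), whereas you fix $\eta=\nu/4$ independently of $P_{\infty,\infty}$ and let $\kappa_0$ absorb $\eta^{-1}P_{\infty,\infty}^2(1+\delta^2)$ directly; both choices yield a coercivity constant depending only on $M,\nu$ and a $\kappa_0$ depending additionally on $P_{\infty,\infty}$, exactly as required.
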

\begin{proof}
    Since $H_t$ is skew-adjoint on $L^2(\mathbb{R};L^2_\omega(\mathbb{R}^n))$, for all $u \in V_0$ and $\kappa>0$, we have 
    \begin{align*}
        \mathrm{Re} \, \llangle (\mathcal{H}_{\text{inh}}+\kappa) u, (1+\delta H_t)u \rrangle_{V_0^\star, V_0} =  \mathrm{Re} \, \llangle \mathcal{H}_{\text{h}} u, (1+\delta H_t)u \rrangle_{\dot{V}_0^\star, \dot{V}_0} &+  \mathrm{Re} \, \beta_\infty(u,(1+\delta H_tu) \\&+ \kappa \| u \|^2_{L^2(\mathbb{R};L^2_\omega(\mathbb{R}^n))}.
    \end{align*}
    Now, proceeding as in the proof of Theorem \ref{thm: Invertibility} for the homogeneous part $\mathcal{H}_{\text{h}}$, and choosing $\delta > 0$ and $\varepsilon_0$ as done there, if $P_{r,q} \le \varepsilon_0$, then by Lemma \ref{lem: conditions coeff inh}, we have
    \begin{align*}
        \mathrm{Re} \, \llangle (\mathcal{H}_{\text{inh}}+\kappa) u, (1+\delta &H_t)u \rrangle_{V_0^\star, V_0} \ge \frac{\delta}{2} \|u \|^2_{\dot{V}_0}\\&- 2P_{\infty,\infty}\sqrt{1+\delta^2} \|u \|_{L^2(\mathbb{R};L^2_\omega(\mathbb{R}^n))} \| \nabla_x u \|_{L^2(\mathbb{R};L^2_\omega(\mathbb{R}^n))} \\& - P_{\infty,\infty}^2\sqrt{1+\delta^2} \|u \|_{L^2(\mathbb{R};L^2_\omega(\mathbb{R}^n))}^2 + \kappa \| u \|^2_{L^2(\mathbb{R};L^2_\omega(\mathbb{R}^n))}.
    \end{align*}
    If $P_{\infty,\infty} = 0$, then we are done. Otherwise, using the inequality $2ab \le \frac{1}{\lambda} a^2 + \lambda b^2$ with $\lambda > 0$ such that $P_{\infty,\infty} \sqrt{1 + \delta^2} \lambda = \frac{\delta}{4}$, and using the fact that $\| \nabla_x \cdot \|_{L^2(\mathbb{R}; L^2_\omega(\mathbb{R}^n))} \le \| \cdot \|_{\dot{V}_0}$, we see that
    \begin{align*}
        \mathrm{Re} \, \llangle (\mathcal{H}_{\text{inh}}+\kappa) u, \, &(1+\delta H_t)u \rrangle_{V_0^\star, V_0} \ge \frac{\delta}{4} \|u \|^2_{\dot{V}_0}\\&+\left(\kappa-P^2_{\infty,\infty}\sqrt{1+\delta^2}-\frac{P_{\infty,\infty}\sqrt{1+\delta^2}}{\lambda} \right) \| u \|^2_{L^2(\mathbb{R};L^2_\omega(\mathbb{R}^n))}.
    \end{align*}
    Now, if 
    \begin{equation*}
        \kappa \ge \kappa_0 := \frac{\delta}{4} + P^2_{\infty,\infty} \sqrt{1 + \delta^2} + \frac{P_{\infty,\infty}\sqrt{1+\delta^2}}{\lambda} = \frac{\delta}{4}+P^2_{\infty,\infty}\left(\sqrt{1 + \delta^2}+\frac{4}{\delta}(1+\delta^2) \right),
    \end{equation*}
    then we have
    \begin{align*}
        \mathrm{Re} \, \llangle (\mathcal{H}_{\text{inh}}+\kappa) &u, (1+\delta H_t)u \rrangle_{V_0^\star, V_0} \ge \frac{\delta}{4} \|u \|^2_{\dot{V}_0}+\frac{\delta}{4} \| u \|^2_{L^2(\mathbb{R};L^2_\omega(\mathbb{R}^n))} \simeq \frac{\delta}{4} \|u \|^2_{V_0},
    \end{align*}
    and the invertibility follows.
\end{proof}
\begin{rem}
As already noted in the proof, if $P_{\infty,\infty} = 0$ and $P_{r,q} \leq \varepsilon_0$, then the operator $\mathcal{H}_\mathrm{h}+\kappa : V_0 \to V_0^\star$ is invertible for all $\kappa > 0$.
\end{rem}

\begin{thm}[Causality, inhomogeneous version]\label{thm: Causality inh}
    Assume that \eqref{eq: ellipticité} holds. There are two constants $\varepsilon_0 = \varepsilon_0(M, \nu, [\omega]_{A_2}, [\omega]_{RH_{\frac{q}{2}}}, n, q)$ and $\kappa_0=\kappa_0(M, \nu, [\omega]_{A_2}, [\omega]_{RH_{\frac{q}{2}}}, n, q,P_{\infty,\infty})$ such that if $P_{r, q} \leq \varepsilon_0$ and $\kappa \ge \kappa_0$, then $\partial_t+\mathcal{B}_{\mathrm{inh}}+\kappa$ is causal in the following sense: if $I$ is an open interval that contains a neighborhood of $-\infty$ and $u \in \dot{\Sigma}^{r,q}(I) \cap L^2(I; L^2_\omega(\mathbb{R}^n))$ satisfies $\partial_t u + \mathcal{B}_{\mathrm{inh}} u+\kappa u = 0$ in $\mathcal{D}'(I \times \mathbb{R}^n)$, then $u = 0$.
\end{thm}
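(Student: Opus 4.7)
The plan is to adapt the argument of Theorem~\ref{thm: Causality} to the inhomogeneous situation. Two new features must be accommodated: the bounded lower-order form $\beta_\infty$, and the zero-order shift $+\kappa u$. The shift will play a crucial role: after the energy identity, it produces a negative contribution $-2\kappa \|u\|^2_{L^2((-\infty,\tau); L^2_\omega(\mathbb{R}^n))}$ that will absorb the $L^2_\omega$-part of the $\beta_\infty$-contribution, provided $\kappa$ is large enough in terms of $P_{\infty,\infty}$. The hypothesis that $u \in L^2(I; L^2_\omega(\mathbb{R}^n))$, which is not needed in the homogeneous case, is the natural ambient class here and gives us $u \in L^2(I; H^1_\omega(\mathbb{R}^n))$ so that Proposition~\ref{prop: Lions inh} applies.

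First, I would rewrite the equation as
\begin{equation*}
\partial_t u = -\mathcal{A}u - \beta_{\mathrm{h}}u - \beta_\infty u - \kappa u \quad \text{in } \mathcal{D}'(I \times \mathbb{R}^n),
\end{equation*}
and observe that each right-hand side piece lies in one of the admissible source classes: Lemma~\ref{lem: div} handles $-\mathcal{A}u$ and the $a$-part of the lower-order terms, Lemma~\ref{lem:xSobolev} handles the $\beta_{\mathrm{h}}$-part via the duality $L^{r'}L^{q'}_{\omega^{q'/2}} \hookrightarrow L^{r'}(I; D_{S,-2/r})$, and the bounded part $\beta_\infty u + \kappa u$ lies in $L^2(I; D_{S,-1}) + L^2(I; L^2_\omega(\mathbb{R}^n))$. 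Combining Proposition~\ref{prop : passage au concret} with Proposition~\ref{prop: Lions inh} yields $u \in C_0(\overline{I}; L^2_\omega(\mathbb{R}^n))$, the absolute continuity of $t \mapsto \|u(t)\|^2_{2,\omega}$ on $\overline{I}$, and the integral identity
\begin{equation*}
\|u(\tau)\|^2_{2,\omega} - \|u(\sigma)\|^2_{2,\omega} = -2\mathrm{Re}\int_\sigma^\tau \mathcal{A}(t)(u(t),u(t)) + \beta_{\mathrm{inh}}(t)(u(t),u(t)) + \kappa \|u(t)\|^2_{2,\omega} \, \mathrm{d}t,
\end{equation*}
for all $\sigma,\tau \in \overline{I}$ with $\sigma < \tau$.

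Next, following verbatim the argument in Theorem~\ref{thm: Causality}, I would pick $\tau \in \overline{I}$ with $\|u(\tau)\|^2_{2,\omega} = \underline{S} := \sup_{t \in \overline{I}} \|u(t)\|^2_{2,\omega}$, let $\sigma \to -\infty$, and introduce
\begin{equation*}
\underline{I} := \int_{-\infty}^{\tau} \|\nabla_x u(t)\|^2_{2,\omega} \, \mathrm{d}t, \qquad \underline{L} := \int_{-\infty}^{\tau} \|u(t)\|^2_{2,\omega} \, \mathrm{d}t.
\end{equation*}
The Gårding bound \eqref{eq: ellipticité} gives
\begin{equation*}
\underline{S} \le -2\nu \underline{I} - 2\kappa \underline{L} + 2\int_{-\infty}^\tau |\beta_{\mathrm{h}}(u,u)(t)| \, \mathrm{d}t + 2\int_{-\infty}^\tau |\beta_\infty(u,u)(t)| \, \mathrm{d}t.
\end{equation*}
The $\beta_{\mathrm{h}}$-integral is controlled exactly as in Theorem~\ref{thm: Causality}, using Lemma~\ref{lem:xSobolev}, Point~(4) of Proposition~\ref{prop:embeddings}, and Young's inequality, yielding
\begin{equation*}
\int_{-\infty}^{\tau} |\beta_{\mathrm{h}}(u,u)(t)| \, \mathrm{d}t \le CP_{r,q}\bigl(\tfrac{3}{2}-\tfrac{3}{r}\bigr)\underline{S} + CP_{r,q}\bigl(\tfrac{1}{2}+\tfrac{3}{r}\bigr)\underline{I}.
\end{equation*}
For $\beta_\infty$, the pointwise estimate from Lemma~\ref{lem: conditions coeff inh}, Cauchy--Schwarz in $t$, and Young's inequality with a free parameter $\varepsilon > 0$ give
\begin{equation*}
\int_{-\infty}^{\tau} |\beta_\infty(u,u)(t)| \, \mathrm{d}t \le \varepsilon P_{\infty,\infty} \underline{I} + \bigl(P_{\infty,\infty}/\varepsilon + P_{\infty,\infty}^2\bigr)\underline{L}.
\end{equation*}

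Plugging these bounds into the previous display, I would then choose the three parameters in order: first, $\varepsilon_0 = \varepsilon_0(M,\nu,[\omega]_{A_2},[\omega]_{RH_{q/2}},n,q)$ so small that $P_{r,q} \le \varepsilon_0$ forces both $2CP_{r,q}(\tfrac{3}{2}-\tfrac{3}{r}) \le \tfrac{1}{2}$ and $2CP_{r,q}(\tfrac{1}{2}+\tfrac{3}{r}) \le \nu/2$; second, $\varepsilon > 0$ so small (depending on $\nu$ and $P_{\infty,\infty}$) that $2\varepsilon P_{\infty,\infty} \le \nu/2$; and finally $\kappa_0 = \kappa_0(M,\nu,[\omega]_{A_2},[\omega]_{RH_{q/2}},n,q,P_{\infty,\infty})$ so large that $2\kappa \ge 2(P_{\infty,\infty}/\varepsilon + P_{\infty,\infty}^2)$ whenever $\kappa \ge \kappa_0$. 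Under these choices the coefficients of $\underline{I}$ and $\underline{L}$ become nonpositive, leaving $\underline{S} \le 0$ and hence $u = 0$. The main obstacle is not any single estimate but the bookkeeping: making sure the three free constants can be chosen in the indicated order so that $\varepsilon_0$ retains its $P_{\infty,\infty}$-independent dependencies while $\kappa_0$ absorbs the $P_{\infty,\infty}$-dependence, matching the statement of the theorem.
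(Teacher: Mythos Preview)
Your proof is correct and follows essentially the same route as the paper: derive the energy identity via Proposition~\ref{prop: Lions inh}, select $\tau$ realizing $\underline{S}$, bound the $\beta_{\mathrm h}$-contribution exactly as in Theorem~\ref{thm: Causality}, split the $\beta_\infty$-cross term by Young's inequality with a free parameter, and then choose $\varepsilon_0$, the Young parameter, and $\kappa_0$ in that order so that $\varepsilon_0$ stays independent of $P_{\infty,\infty}$ while $\kappa_0$ absorbs it. The only cosmetic differences are that the paper coarsens the $\beta_{\mathrm h}$-bound to $4CP_{r,q}(\underline S+\underline I)$ and writes $\|u\|^2_{L^2((-\infty,\tau);L^2_\omega)}$ where you write $\underline{L}$.
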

\begin{proof}
    If $u \in \dot{\Sigma}^{r,q}(I) \cap L^2(I; L^2_\omega(\mathbb{R}^n))$ satisfies $\partial_t u + \mathcal{B}_{\mathrm{inh}} u+\kappa u = 0$ in $\mathcal{D}'(I \times \mathbb{R}^n)$, then, by Proposition \ref{prop: Lions inh}, we have $u \in C_0(\overline{I}; L^2_\omega(\mathbb{R}^n))$. We choose $\tau \in \overline{I}$ such that $\|u(\tau)\|_{2,\omega}^2 = \sup_{t \in \overline{I}} \|u(t)\|_{2,\omega}^2 =: \underline{S}$, and set $\underline{I} = \int_{-\infty}^{\tau} \|\nabla_x u(t)\|_{2,\omega}^2 \, \mathrm{d}t$. Writing the energy equality between $\sigma$ and $\tau$, and letting $\sigma \to -\infty$, we deduce, using \eqref{eq: ellipticité}, that
    \begin{align*}
        \underline{S} \le -2\nu \, \underline{I} + 2 \int_{-\infty}^{\tau} |\beta_{\mathrm{h}}(t)(u(t),u(t))| \, \mathrm d t+2 \int_{-\infty}^{\tau} |\beta_\infty(t)(u(t),u(t))| \, \mathrm d t - 2\kappa \|u \|^2_{L^2((-\infty,\tau); L^2_\omega(\mathbb{R}^n))}.
    \end{align*}
    Proceeding as in the proof of Theorem \ref{thm: Causality} for $\beta_{\mathrm{h}}$, and by using Lemma \ref{lem: conditions coeff inh} for $\beta_\infty$, we have 
    \begin{align*}
         \underline{S} &\le -2\nu \, \underline{I} + 2 C P_{r,q} \left ( (\frac{3}{2}-\frac{3}{r})\underline{S} + (\frac{1}{2}+\frac{3}{r})\underline{I} \right )+4P_{\infty,\infty}\underline{I}^{\frac{1}{2}} \|u \|_{L^2((-\infty,\tau); L^2_\omega(\mathbb{R}^n))}\\& \hspace{4cm}+2P^2_{\infty,\infty}\|u \|^2_{L^2((-\infty,\tau); L^2_\omega(\mathbb{R}^n))} - 2\kappa \|u \|^2_{L^2((-\infty,\tau); L^2_\omega(\mathbb{R}^n))}
         \\& \le -2\nu \, \underline{I} + 4 C P_{r,q} \left ( \underline{S} +\underline{I} \right )+ 4P_{\infty,\infty} \underline{I}^{\frac{1}{2}} \|u \|_{L^2((-\infty,\tau); L^2_\omega(\mathbb{R}^n))}+ (2P^2_{\infty,\infty}-2\kappa) \|u \|^2_{L^2((-\infty,\tau); L^2_\omega(\mathbb{R}^n))}
         \\& \le (-2\nu+ 4C P_{r,q}+2P_{\infty,\infty} \lambda )\underline{I} + 4C P_{r,q} \underline{S} + \left(\frac{2P_{\infty,\infty}}{\lambda}+2P^2_{\infty,\infty}-2\kappa\right) \|u \|^2_{L^2((-\infty,\tau); L^2_\omega(\mathbb{R}^n))},
    \end{align*}
    for all $\lambda > 0$, using the inequality $2ab \le \frac{1}{\lambda} a^2 + \lambda b^2$. We choose $\lambda > 0$ such that $-2\nu + 2P_{\infty,\infty} \lambda = -\nu$, {from which the conclusion easily follows by taking $P_{r,q}$ sufficiently small and $\kappa$ sufficiently large, with the desired dependencies.}
\end{proof}

We conclude this section with the following non-perturbative result.

\begin{thm}[Invertibility and causality through lower bounds, inhomogeneous version]\label{thm : Causality and invertibility inh}
The following assertions are true.
\begin{enumerate}
    \item Assume that there exist constants $c,c'>0$ such that 
    \begin{equation*}
        \mathrm{Re} \, \llangle \mathcal{B}_{\mathrm{inh}}u, u \rrangle \ge c \| \nabla_x u \|^2_{L^2(\mathbb{R};L^2_\omega(\mathbb{R}^n))} -c' \|u \|^2_{L^2(\mathbb{R};L^2_\omega(\mathbb{R}^n))} \quad \text{for all} \ u \in V_0.
    \end{equation*}
    Then $\mathcal{H}_{\text{inh}}+\kappa$ is invertible, for all $\kappa \ge  \kappa_0 = \kappa_0(c,c')$.
    \item Assume that that there exists a constant $c'>0$ such that
    \begin{equation*}
        \mathrm{Re} \,( \mathcal{A}(t)(u,v)+ \beta_{\mathrm{inh}}(t)(u, u)) \ge -c' \|u \|^2_{2,\omega} \quad \text{for all} \ t\in \mathbb{R} \ \ \text{and} \ \ u \in H^1_{\omega}(\mathbb{R}^n).
    \end{equation*}
    Then $\partial_t+\mathcal{B}_{\mathrm{inh}}+\kappa$ is causal in the sense of Theorem \ref{thm: Causality inh}, for any $\kappa\ge c'$.
\end{enumerate}
\end{thm}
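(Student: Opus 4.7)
The plan is to adapt the proofs of Theorem \ref{thm : Causality and invertibility} to the inhomogeneous setting, where the parameter $\kappa$ serves to absorb both the weaker lower bound (the $-c'\|u\|^2_{L^2}$ excess) and the new $L^2$-only contributions that arise from $\beta_\infty$ and from the $V_0$-topology itself.

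For Part (1), I follow the Lax--Milgram/Hilbert-transform strategy of Theorems \ref{thm: Invertibility} and \ref{thm: Invertibility inh}: test $(\mathcal{H}_{\mathrm{inh}}+\kappa)u$ against $(1+\delta H_t)u$ for $u\in V_0$ and a free $\delta>0$. Since $H_t$ is skew-adjoint on $L^2(\mathbb{R};L^2_\omega(\mathbb{R}^n))$ and commutes with $D_t^{1/2}$, one extracts $\delta\|D_t^{1/2}u\|^2_{L^2(\mathbb{R};L^2_\omega(\mathbb{R}^n))}$ from the parabolic trick and $\kappa\|u\|^2_{L^2(\mathbb{R};L^2_\omega(\mathbb{R}^n))}$ from the scalar correction. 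The hypothesis then supplies $\mathrm{Re}\,\llangle\mathcal{B}_{\mathrm{inh}}u,u\rrangle\geq c\|\nabla_x u\|^2_{L^2(\mathbb{R};L^2_\omega(\mathbb{R}^n))}-c'\|u\|^2_{L^2(\mathbb{R};L^2_\omega(\mathbb{R}^n))}$, while the cross term $\delta\,\mathrm{Re}\,\llangle\mathcal{B}_{\mathrm{inh}}u,H_tu\rrangle$ is controlled through the continuity bounds of $\mathcal{A}$, $\beta_{\mathrm{h}}$ and $\beta_\infty$. In particular, $\beta_\infty$ produces contributions of the form $P_{\infty,\infty}\|u\|_{L^2}\|\nabla_x u\|_{L^2}$ and $P_{\infty,\infty}^2\|u\|^2_{L^2}$ (Lemma \ref{lem: conditions coeff inh}), which are split by a Young inequality between the $\|\nabla_x u\|^2$ coefficient and the $\|u\|^2_{L^2}$ coefficient. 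Choosing first $\delta$ small enough to retain positive coefficients of $\|D_t^{1/2}u\|^2$ and $\|\nabla_x u\|^2$, then $\kappa_0$ large enough to dominate all residual $L^2$-only terms, yields coercivity $\mathrm{Re}\,\llangle(\mathcal{H}_{\mathrm{inh}}+\kappa)u,(1+\delta H_t)u\rrangle_{V_0^\star,V_0}\gtrsim\|u\|^2_{V_0}$. The Lax--Milgram lemma combined with the isomorphism property of $(1+\delta H_t)^\star$ on $V_0^\star$ then concludes.

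For Part (2), I mirror the argument of Theorem \ref{thm : Causality and invertibility}(2). Given $u\in\dot{\Sigma}^{r,q}(I)\cap L^2(I;L^2_\omega(\mathbb{R}^n))$ solving $\partial_t u+\mathcal{B}_{\mathrm{inh}}u+\kappa u=0$ on an interval $I$ containing a neighborhood of $-\infty$, I rewrite the equation as $\partial_t u=-\mathcal{A}u-\beta_{\mathrm{h}}u-\beta_\infty u-\kappa u$ and verify that the right-hand side fits the source-term template of Proposition \ref{prop: Lions inh}: Lemma \ref{lem: div} handles the divergence part of $\mathcal{A}u$, Lemma \ref{lem:xSobolev} delivers the $\beta_{\mathrm{h}}$-contribution as an element of $L^{r'}(I;D_{S,-2/r})$ (parameters $\rho=r$, $\beta=2/r$), and Lemma \ref{lem: conditions coeff inh} places $\beta_\infty u$ and $\kappa u$ in $L^2(I;L^2_\omega(\mathbb{R}^n))+L^2(I;D_{S,-1})$. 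Proposition \ref{prop: Lions inh} then gives $u\in C_0(\overline{I};L^2_\omega(\mathbb{R}^n))$, absolute continuity of $t\mapsto\|u(t)\|^2_{2,\omega}$ on $\overline{I}$, and the energy identity \eqref{eq:integralidentity}. Choosing $\tau\in\overline{I}$ with $\underline{S}:=\|u(\tau)\|^2_{2,\omega}=\sup_{\overline{I}}\|u\|^2_{2,\omega}$ and letting $\sigma\to-\infty$, the pointwise hypothesis yields
\begin{equation*}
\mathrm{Re}\,\langle(\mathcal{B}_{\mathrm{inh}}(t)+\kappa)u(t),u(t)\rangle_{2,\omega}\geq(\kappa-c')\|u(t)\|^2_{2,\omega},
\end{equation*}
so that $\underline{S}\leq -2(\kappa-c')\|u\|^2_{L^2((-\infty,\tau);L^2_\omega(\mathbb{R}^n))}\leq 0$ whenever $\kappa\geq c'$; hence $u\equiv 0$.

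The main obstacle is the two-parameter bookkeeping in Part (1): unlike in the homogeneous version of Theorem \ref{thm : Causality and invertibility}, the lower bound is not coercive in $\|\nabla_x u\|^2$ alone, and the $\beta_\infty$ cross term genuinely couples $\|u\|_{L^2}$ with $\|\nabla_x u\|_{L^2}$, so $\delta$ and $\kappa$ must be chosen with care and in the correct order to secure coercivity on all of $V_0$. In Part (2) the delicate technical point is verifying that $\partial_t u$ fits Proposition \ref{prop: Lions inh}'s source-term hypotheses, which is ensured by the embedding chain of Corollary \ref{cor:embeddings} together with the interpretation of $\beta_{\mathrm{h}}u$ as a fractional source via Lemma \ref{lem:xSobolev}.
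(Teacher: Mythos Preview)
Your proposal is correct and follows essentially the same approach as the paper. The only difference is presentational: for Part~(1) the paper first observes that taking $\kappa_0:=c+c'$ immediately upgrades the hypothesis to $\mathrm{Re}\,\llangle(\mathcal{B}_{\mathrm{inh}}+\kappa)u,u\rrangle\ge c\|u\|^2_{L^2(\mathbb{R};H^1_\omega(\mathbb{R}^n))}\simeq c\|u\|^2_{L^2(\mathbb{R};D_{\tilde S,1})}$, and then invokes Theorem~\ref{thm : Causality and invertibility} verbatim with $\tilde S=(1-\Delta_\omega)^{1/2}$ in place of $S$, whereas you carry out the Lax--Milgram/$H_t$ computation directly (which the paper also mentions as an alternative yielding the more explicit form $\kappa_0=C(\delta,M,n,c,c')(1+P_{\infty,\infty}^2)$ with $\delta$ independent of $P_{\infty,\infty}$). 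For Part~(2) the paper simply calls the argument ``straightforward'' and omits the details you spell out; your verification that $\partial_t u$ fits the hypotheses of Proposition~\ref{prop: Lions inh} and the subsequent energy argument are exactly what is intended.
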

\begin{proof}
For the first point on invertibility, taking $\kappa_0 := c + c'$, we have, for all $\kappa \ge \kappa_0$,
\begin{equation*}
\mathrm{Re}\,\llangle (\mathcal{B}_{\mathrm{inh}} + \kappa)u, u \rrangle 
    \ge c \left( 
        \| \nabla_x u \|^2_{L^2(\mathbb{R};L^2_\omega(\mathbb{R}^n))} 
        + \|u \|^2_{L^2(\mathbb{R};L^2_\omega(\mathbb{R}^n))} 
    \right)
    = c \|u \|^2_{L^2(\mathbb{R};H^1_\omega(\mathbb{R}^n))}
    \simeq c \|u \|^2_{L^2(\mathbb{R};D_{\tilde{S},1})},
\end{equation*}
for all $u \in V_0$. The argument then follows exactly the same lines as in the proof of Theorem \ref{thm : Causality and invertibility}, with $\tilde{S}$ in place of $S$, that is, the inhomogeneous version of Theorem \ref{thm : Causality and invertibility}. In this case, the corresponding constant $\delta$ obtained there depends on $P_{r,q}$ as well as on $P_{\infty,\infty}$, in addition to the structural constants ($c$, $c'$, $n$, $[\omega]_{A_2}$, $[\omega]_{RH_{\frac{q}{2}}}$, and $M$). We emphasize, also, that by separating the homogeneous and inhomogeneous parts, applying the assumptions, and taking suitable bounds for the bounded and unbounded coefficients, and proceeding as in the proof of Theorem~\ref{thm : Causality and invertibility}, one can choose $\kappa_0$ sufficiently large, of the form $\kappa_0 = C(\delta, M, n, c, c') (1 + P_{\infty,\infty}^2)$, where $\delta$ depends on $c$, $c'$, $n$, $[\omega]_{A_2}$, $[\omega]_{RH_{\frac{n}{n-2}}}$, $M$, and $P_{r,q}$, but not on $P_{\infty,\infty}$.

The second point on causality is straightforward and we omit the details.

\end{proof}

\begin{rem}\label{rem: decoupage}
We observe that if $r > 2$, then for any $a, b \in L^{\frac{2r}{r - 2}}(\mathbb{R}; L^{\frac{2q}{q - 2}}(\mathbb{R}^n)^n) + L^\infty(\mathbb{R}^{1+n})^n$ and $c \in L^{\frac{r}{r - 2}}(\mathbb{R}; L^{\frac{q}{q - 2}}(\mathbb{R}^n)) + L^\infty(\mathbb{R}^{1+n})$, and for every $\varepsilon > 0$, we can write $y = y_{\mathrm{h}} + y_\infty$ for each $y \in \{a, b, c\}$ such that $P_{r,q}(a_{\mathrm{h}}, b_{\mathrm{h}}, c_{\mathrm{h}}) < \varepsilon$ and $P_{\infty,\infty}(a_\infty, b_\infty, c_\infty) < \infty$, by truncating the unbounded parts at large heights $\ell > 0$ (namely, $y_{\mathrm{h}} = y\,\mathbb{1}_{\{|y| > \ell\}}$ and $y_\infty = y\,\mathbb{1}_{\{|y| \le \ell\}}$) together with the dominated convergence theorem for 
$L^{\frac{2q}{q - 2}}(\mathbb{R}^n)$- (resp.\ $L^{\frac{q}{q - 2}}(\mathbb{R}^n)$-) valued functions. In other words, we can always decompose each coefficient into an unbounded part and a bounded part, in such a way that the conditions of Theorems \ref{thm: Invertibility inh} and \ref{thm: Causality inh} are satisfied. However, when $r = 2$ ($n \ge 3$ and $q = \frac{2n}{n - 2}$), we cannot perform the same truncation as in the case $r > 2$, since $\frac{2r}{r - 2} = \frac{r}{r - 2} = \infty$. In this case, we assume directly that $P_{r,q}$ is small or that there exist constants $c, c' > 0$ such that
\begin{equation*}
\mathrm{Re} \,( \mathcal{A}(t)(u,v)+ \beta_{\mathrm{inh}}(t)(u, u)) \ge c \| \nabla_x u \|^2_{2,\omega} - c' \| u \|^2_{2,\omega} \quad \text{for all} \ t\in \mathbb{R} \ \ \text{and} \ \ u \in H^1_{\omega}(\mathbb{R}^n),
\end{equation*}
which ensures that the setting falls within the scope of Theorem \ref{thm : Causality and invertibility inh}, covering both causality and invertibility.
\end{rem}

\subsection{The Cauchy problem on \texorpdfstring{$(0,\mathfrak{T})$}{tempsfini}}
We now study the Cauchy problem in the inhomogeneous setting.

\begin{thm}[The Cauchy problem on $(0, \mathfrak{T})$]\label{thm: Pb de Cauchy inh}
Assume that, for some $\kappa \ge 0$, $\mathcal{H}_{\mathrm{inh}} + \kappa: V_0 \to V_0^\star$ is invertible and $\partial_t + \mathcal{B}_{\mathrm{inh}} + \kappa$ is causal. Let $\mathfrak{T}>0$. Let $\rho \in [2, \infty]$, and define $\beta = \frac{2}{\rho} \in [0, 1]$. Let $h \in L^{\rho'}((0,\mathfrak{T});L^2_\omega(\mathbb{R}^n))$, and $\psi \in L^2_\omega(\mathbb{R}^n)$.
\begin{enumerate}
\item There exists a unique $u \in \dot{\Sigma}^{r,q}((0,\mathfrak{T})) \cap L^2((0,\mathfrak{T});L^2_\omega(\mathbb{R}^n))$ solution to the Cauchy problem 
    \begin{align}\label{eq: Pb Cauchy inhomogène}
    \left\{
    \begin{array}{ll}
        \partial_t u + \mathcal{B}_{\mathrm{inh}}u  =  (-\Delta_\omega)^{\beta/2}h \quad \mathrm{in} \  \mathcal{D}'((0,\mathfrak{T})\times \mathbb{R}^n), \\
        u(t) \rightarrow \psi \  \mathrm{ in } \ \mathcal{D'}(\mathbb{R}^n) \ \mathrm{as} \ t \rightarrow 0^+.
    \end{array}\right.
    \end{align} 
    Moreover, $u\in C([0,\mathfrak{T}];L^2_\omega(\mathbb{R}^n))$, with $u(0)=\psi$, $t \mapsto \| u(t)  \|^2_{2,\omega}$ is absolutely continuous on $[0,\mathfrak{T}]$ and we can write the energy equalities. Furthermore, there exists a constant $C$ independent of $\psi$ and $h$ such that
    \begin{align*}
            \sup_{0\le t \le \mathfrak{T}} \| u(t) \|_{2,\omega}+ \| \nabla_x u\|_{L^2((0,\mathfrak{T});L^2_\omega(\mathbb{R}^n))} 
            \leq C  (  \left \| h \right \|_{L^{\rho'}((0,\mathfrak{T});L^2_\omega(\mathbb{R}^n))}+  \| \psi  \|_{2,\omega}  ).
    \end{align*} 
    \item There exists a unique fundamental solution $\Gamma=(\Gamma(t,s))_{0\leq s \leq t \leq \mathfrak{T}}$  for $\partial_t+\mathcal{B}_{\mathrm{inh}}$ in the sense of Definition \ref{def: FS} in $(0,\mathfrak{T})$. In particular, for all $t \in [0,\mathfrak{T}]$, we have the following representation of $u$ : 
    \begin{align*}
    u(t) = \Gamma(t,0)\psi  +\int_{0}^{t} \Gamma(t,\tau) (-\Delta_\omega)^{\beta/2}h(\tau)\ \mathrm d \tau,
    \end{align*}
    where the integral is weakly defined in $L^2_\omega(\mathbb{R}^n)$, and also strongly defined in the Bochner sense when $\rho = \infty$, as described in Theorem \ref{thm: representation}.
    \end{enumerate}
\end{thm}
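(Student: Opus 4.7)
The plan is to follow the same scheme as in Theorem \ref{thm: Pb Cauchy homogène}, but working with the inhomogeneous framework built around $\tilde{S} = (1-\Delta_\omega)^{1/2}$ as outlined in Section \ref{section 5}. First, I would reduce to the case $\kappa = 0$ by the standard exponential shift $v(t) := e^{-\kappa t} u(t)$: a direct computation shows that $u$ solves the Cauchy problem for $\partial_t + \mathcal{B}_{\mathrm{inh}}$ with data $(\psi, (-\Delta_\omega)^{\beta/2} h)$ on $(0,\mathfrak{T})$ if and only if $v$ solves the Cauchy problem for $\partial_t + \mathcal{B}_{\mathrm{inh}} + \kappa$ with data $(\psi, e^{-\kappa t}(-\Delta_\omega)^{\beta/2} h)$. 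Since $\mathfrak{T} < \infty$, the factor $e^{\pm \kappa t}$ is bounded on $[0,\mathfrak{T}]$, so membership in $\dot{\Sigma}^{r,q}((0,\mathfrak{T})) \cap L^2((0,\mathfrak{T}); L^2_\omega(\mathbb{R}^n))$, $C([0,\mathfrak{T}]; L^2_\omega)$, and all the quantitative bounds transfer back and forth. Hence I may assume $\textbf{(I)}$ and $\textbf{(C)}$ directly for $\mathcal{H}_{\mathrm{inh}}$ and $\partial_t + \mathcal{B}_{\mathrm{inh}}$.

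For existence, I would adapt the three key propositions of Section \ref{section 3} (Propositions~\ref{prop: existence OK}, \ref{thm: existence L1}, \ref{cor: CorRadon}) to the inhomogeneous setting by running the same arguments with $\tilde{S}$ replacing $S$ and $V_0$ replacing $\dot{V}_0$, using Proposition~\ref{prop: Lions inh} in place of Proposition~\ref{prop: Lions non borné}. This yields the existence, on all of $\mathbb{R}$, of a unique $\tilde{u} \in \dot{\Sigma}^{r,q}(\mathbb{R}) \cap L^2(\mathbb{R}; L^2_\omega(\mathbb{R}^n))$ solving
\begin{equation*}
\partial_t \tilde{u} + \mathcal{B}_{\mathrm{inh}} \tilde{u} = \delta_0 \otimes \psi + (-\Delta_\omega)^{\beta/2} h \quad \text{in } \mathcal{D}'(\mathbb{R}^{1+n}),
\end{equation*}
where $h$ has been extended by zero outside $(0,\mathfrak{T})$. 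Causality forces $\tilde{u} \equiv 0$ on $(-\infty, 0)$ and $\lim_{t\to 0^+}\tilde{u}(t) = \psi$ in $L^2_\omega(\mathbb{R}^n)$. Setting $u := \tilde{u}|_{(0,\mathfrak{T})}$ yields a solution in the required class, and the continuity, energy equality, and quantitative bound are inherited directly from Proposition~\ref{prop: Lions inh} and the abstract estimates.

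For uniqueness, suppose $u \in \dot{\Sigma}^{r,q}((0,\mathfrak{T})) \cap L^2((0,\mathfrak{T}); L^2_\omega(\mathbb{R}^n))$ solves the homogeneous problem with $\psi = 0$, $h = 0$. By Proposition~\ref{prop: Lions inh} (or Point (1) of Remarks~\ref{rem: Lions inh}), $u \in C([0,\mathfrak{T}]; L^2_\omega(\mathbb{R}^n))$ with $u(0) = 0$. Extending by zero on $(-\infty, 0]$, the $L^2_\omega$-continuity at $t=0$ allows me to verify by a standard test-function argument that the extension lies in $\dot{\Sigma}^{r,q}((-\infty,\mathfrak{T})) \cap L^2((-\infty,\mathfrak{T}); L^2_\omega)$ and satisfies $\partial_t u + \mathcal{B}_{\mathrm{inh}} u = 0$ in $\mathcal{D}'((-\infty, \mathfrak{T}) \times \mathbb{R}^n)$. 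Since $(-\infty,\mathfrak{T})$ is a neighborhood of $-\infty$, the causality assumption $\textbf{(C)}$ (in the sense of Theorem~\ref{thm: Causality inh}) forces $u = 0$.

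For the fundamental solution of part (2), I would construct $\Gamma(t,s)$, for $0 \le s \le t \le \mathfrak{T}$, via the Dirac-source analog: solve on $\mathbb{R}$ the problem with source $\delta_s \otimes \psi$, use causality to get vanishing on $(-\infty, s)$, and set $\Gamma(t,s)\psi := \tilde{u}(t)$. Uniqueness (in the sense of Definition~\ref{def: FS} restricted to $(0,\mathfrak{T})$), the Chapman--Kolmogorov and adjointness identities, and the full representation formula follow exactly as in Theorem~\ref{thm: representation}, after observing that the integral $\int_0^t \Gamma(t,\tau)(-\Delta_\omega)^{\beta/2} h(\tau)\,\mathrm{d}\tau$ agrees with the weak/Bochner construction from Theorem~\ref{thm: representation}(3) once $h$ is extended by zero outside $(0,\mathfrak{T})$. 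The main technical obstacle is the uniqueness step: the zero extension across $t = 0$ must preserve both the regularity class and the equation globally on $(-\infty,\mathfrak{T})$, and this is precisely why the uniqueness class is enlarged from $L^1((0,\mathfrak{T}); L^2_\omega)$ (used in the homogeneous setting) to $L^2((0,\mathfrak{T}); L^2_\omega)$ here — the latter is what makes Proposition~\ref{prop: Lions inh} applicable and yields the $L^2_\omega$-continuity at $t = 0$ needed to close the argument.
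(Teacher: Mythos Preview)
Your proposal is correct and follows essentially the same blueprint as the paper for existence and for part (2): both solve the global-in-time problem on $\mathbb{R}$ with the Dirac source $\delta_0\otimes\psi$ (after the exponential shift and replacing $S$ by $\tilde S$), restrict to $(0,\mathfrak{T})$, and then build the fundamental solution exactly as in Section~\ref{sub: FS}.

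The one place where you diverge is the uniqueness step. The paper, after the shift $v=e^{-\kappa t}u$, extends $v$ to \emph{all} of $\mathbb{R}$ by gluing a forward solution $v_{\mathfrak T}$ on $(\mathfrak T,\infty)$ with matching trace at $\mathfrak T$, and then invokes the inhomogeneous analogue of Proposition~\ref{thm: Uniqueness} (global uniqueness from invertibility of $\mathcal{H}_{\mathrm{inh}}+\kappa$). You instead extend by zero only on $(-\infty,0]$, land in $\dot{\Sigma}^{r,q}((-\infty,\mathfrak T))\cap L^2((-\infty,\mathfrak T);L^2_\omega)$, and apply the causality hypothesis \textbf{(C)} directly on the half-line interval $(-\infty,\mathfrak T)$. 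Your route is shorter and uses only the causality assumption, while the paper's route uses the invertibility assumption; both are available by hypothesis, so both are valid. The paper's extra gluing at $\mathfrak T$ is unnecessary for uniqueness per se, but it has the side benefit of showing that the fundamental solution is consistently defined across all bounded intervals (cf.\ Remark~\ref{rem: FS}).
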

\begin{proof}
    We begin with existence. We extend $h$ by $0$ on $[\mathfrak{T},\infty) \times \mathbb{R}^n$. Apply Theorem \ref{thm: Pb Cauchy homogène} with $\Tilde{S}$ replacing $S$ to the auxiliary Cauchy problem 
    \begin{align*}
    \left\{
    \begin{array}{ll}
        \partial_t v + (\mathcal{B}_{\mathrm{inh}}+\kappa)v  =  (-\Delta_\omega)^{\beta/2} e^{-\kappa t}h \quad \mathrm{in} \  \mathcal{D}'((0,\infty)\times \mathbb{R}^n), \\
        v(t) \rightarrow \psi \  \mathrm{ in } \ \mathcal{D'}(\mathbb{R}^n) \ \mathrm{as} \ t \rightarrow 0^+,
    \end{array}\right.
    \end{align*} 
    and obtain a weak solution $v \in \dot{\Sigma}^{r,q}((0,\infty)) \cap L^2((0,\infty);L^2_\omega(\mathbb{R}^n))$. The function $u:=e^{\kappa t} v$ restricted to $[0,\mathfrak{T}]$ gives us a weak solution with the desired properties. 

    Next, we check uniqueness. Assume that $u \in \dot{\Sigma}^{r,q}((0,\mathfrak{T})) \cap L^2((0,\mathfrak{T});L^2_\omega(\mathbb{R}^n))$ is a weak solution to \eqref{eq: Pb Cauchy inhomogène} with $h=0$ and $\psi=0$. Set $v = e^{-\kappa t}u$ on $(0,\mathfrak{T})$, so that $v \in \dot{\Sigma}^{r,q}((0,\mathfrak{T})) \cap L^2((0,\mathfrak{T});L^2_\omega(\mathbb{R}^n))$ and $v$ is a solution to the Cauchy problem
    \begin{align*} \left\{ \begin{array}{ll} \partial_t v + (\mathcal{B}_{\mathrm{inh}}+\kappa)v = 0 \quad \mathrm{in} \ \mathcal{D}'((0,\mathfrak{T})\times \mathbb{R}^n), \\ v(t) \rightarrow 0 \ \mathrm{ in } \ \mathcal{D'}(\mathbb{R}^n) \ \mathrm{as} \ t \rightarrow 0^+, \end{array}\right.
    \end{align*}
    In particular, by Proposition \ref{prop: Lions inh}, we have $v \in C([0,\mathfrak{T}];L^2_\omega(\mathbb{R}^n))$ with $v(0) = 0$. By restriction from $\mathbb{R}$, as in the existence part, we can construct $v_\mathfrak{T} \in \dot{\Sigma}^{r,q}((\mathfrak{T},\infty)) \cap L^2((\mathfrak{T},\infty);L^2_\omega(\mathbb{R}^n)) \cap C_0([\mathfrak{T},\infty);L^2_\omega(\mathbb{R}^n))$ with initial data $v_\mathfrak{T}(\mathfrak{T}) = v(\mathfrak{T})$, which satisfies $\partial_t v + (\mathcal{B}_{\mathrm{inh}}+\kappa)v = 0$ in $\mathcal{D}'((\mathfrak{T},\infty)\times \mathbb{R}^n)$. For $t \in (-\infty, 0] \times \mathbb{R}^n$, we set $v_0(t) = 0$. Using the $L^2_\omega(\mathbb{R}^n)$-valued continuity, we can glue $v_0$, $v$, and $v_\mathfrak{T}$ together to obtain $\tilde{v} \in \dot{\Sigma}^{r,q}(\mathbb{R}) \cap L^2(\mathbb{R};L^2_\omega(\mathbb{R}^n))$ solving $\partial_t \tilde{v} + (\mathcal{B}_{\mathrm{inh}}+\kappa)\tilde{v} = 0$ in $\mathcal{D}'(\mathbb{R}^{1+n})$. Using the inhomogeneous version of Proposition \ref{thm: Uniqueness}, we obtain $\tilde{v} = 0$, hence $v = 0$, and therefore $u = 0$.

    Finally, definition, existence and uniqueness of the fundamental solution $\Gamma$ can be obtained by proceeding as in Section \ref{sub: FS}.

\end{proof}

\begin{rem}
If $\mathcal{H}_{\mathrm{inh}}: V_0 \to V_0^\star$ is invertible and $\partial_t+\mathcal{B}_{\mathrm{inh}}$ is causal (the case $\kappa=0$), then, as in the homogeneous case, we may take $\mathfrak{T}=\infty$ with a source term of the form $(-\Delta_\omega)^{\beta/2} h_1 + h_2$ with $h_1,h_2 \in L^{\rho'}((0,\infty);L^2_\omega(\mathbb{R}^n))$ where $\rho \in [2,\infty]$, and $\beta=\tfrac{2}{\rho} \in [0,1]$. The existence and uniqueness class is $\dot{\Sigma}^{r,q}((0,\infty)) \cap L^2((0,\infty);L^2_\omega(\mathbb{R}^n)),$ with regularity in $C_0([0,\infty);L^2_\omega(\mathbb{R}^n))$.
\end{rem}

\begin{rem}
If $a=b=0$, then $\beta_\infty u=c_\infty u \in L^1((0,\mathfrak{T});L^2_\omega(\mathbb{R}^n))$ for any $u \in L^1((0,\mathfrak{T});L^2_\omega(\mathbb{R}^n))$. Thus, we may extend the uniqueness class to $\dot{\Sigma}^{r,q}((0,\mathfrak{T})) \cap L^1((0,\mathfrak{T});L^2_\omega(\mathbb{R}^n))$ by Remark \ref{rem: Lions inh}, Point (1).
\end{rem}

\begin{rem}
As mentioned for the homogeneous Cauchy problem, the coefficients of the elliptic part $\mathcal{B}$—namely $A$, $a_{\mathrm{h}}$, $a_\infty$, $b_{\mathrm{h}}$, $b_\infty$, $c_{\mathrm{h}}$, and $c_\infty$—may be defined on $(0,\mathfrak{T}) \times \mathbb{R}^n$. We then extend $A$ by $I_n$, and extend $a_{\mathrm{h}}$, $a_\infty$, $b_{\mathrm{h}}$, $b_\infty$, $c_{\mathrm{h}}$, and $c_\infty$ by $0$, on $\mathbb{R} \setminus (0,\mathfrak{T}) \times \mathbb{R}^n$.
\end{rem}

\begin{rem}\label{rem: FS}
The proof of Theorem \ref{thm: Pb de Cauchy inh} shows that for any $\kappa_0 \ge 0$ such that $\mathcal{H}_{\mathrm{inh}} + \kappa_0 : V_0 \to V_0^\star$ is invertible and $\partial_t + \mathcal{B}_{\mathrm{inh}} + \kappa_0$ is causal, if $\Gamma_{\kappa_0}$ is the fundamental solution for $\partial_t + \mathcal{B}_{\mathrm{inh}} + \kappa_0$, then
\begin{equation*}
    \Gamma(t,s) = e^{\kappa_0 (t-s)} \Gamma_{\kappa_0}(t,s), \quad \text{for all } 0\le s < t \le \mathfrak{T}, \quad \forall \mathfrak{T}>0.
\end{equation*}
Moreover, since the Cauchy problem \eqref{eq: Pb Cauchy inhomogène} can be solved uniquely on any bounded interval $(a,b) \subset \mathbb{R}$, instead of just $(0,\mathfrak{T})$, the fundamental solution $\Gamma$ is defined on $\mathbb{R}$, and the above equality holds throughout $\mathbb{R}$.
\end{rem}

\section{Bounds for the fundamental solution}\label{section 6}

In this section, we establish classical estimates for the fundamental solution in our degenerate setting with both bounded and unbounded lower-order terms. These are the $L^2$ off-diagonal estimates (also known as Gaffney estimates) and pointwise Gaussian upper bounds, under the assumption of Moser’s $L^2$-$L^\infty$ estimates on local weak solutions.

\subsection{\texorpdfstring{$L^2$}{L2}-decay for the fundamental solution}

Let us first recall some relevant literature concerning $L^2$ off-diagonal estimates. In the unweighted case without lower-order terms and with time-independent coefficient matrix $A$, these estimates follow from Davies’ exponential trick \cite{Davies87}. Fabes and Stroock \cite{Fabes1986} extended this approach to time-dependent $A$; see also \cite{hofmann2004gaussian} for related results. In the degenerate case without lower-order terms, Davies’ exponential trick still applies for time-independent $A$; for a more general framework, see \cite{davies1992heat}. In the context of $A_2$-weights, such estimates appear in \cite{cruz2014corrigendum} under the additional assumptions that $A$ is real-valued and symmetric, and have been extended to real-valued, time-dependent $A$ in \cite{ataei2024fundamental}. For the fully general case of complex, time-dependent $A$, see \cite{baadi2025degenerate}, which employs the Grönwall lemma as a key tool. While similar ideas apply when lower-order coefficients are bounded, handling unbounded lower-order terms is more subtle, even in the unweighted case. A novel approach addressing this in the unweighted setting is presented in \cite{auscheregert2023universal}, and we will see that these ideas extend to the weighted setting, providing complete details due to the presence of the weight $\omega$, while refining and adding more precision to certain arguments. 

Taking into account the invertibility and causality results from the previous section, 
we consider an elliptic operator $\mathcal{B}_{\mathrm{inh}}$ as in \eqref{eq: Binh}, 
with lower-order coefficients decomposed as in that section and satisfying 
\eqref{eq: quantité Prq} and \eqref{eq: quantité Pinfini}, and we adopt the notation 
$P_{r,q}$ and $P_{\infty,\infty}$ as used there. We consider two cases: 
\begin{itemize}
\item Case $r > 2$: we assume that \eqref{eq: ellipticité} holds, and by Remark \ref{rem: decoupage}, we also assume that $P_{r,q} \le \varepsilon_0$ as in Theorems \ref{thm: Invertibility inh} and \ref{thm: Causality inh}. 
\item Case $r = 2$: we assume the existence of constants $c, c' > 0$ such that
\begin{equation}\label{eq: cas2}
    \mathrm{Re} \left( \mathcal{A}(t)(u, u) + \beta_{\mathrm{inh}}(t)(u, u) \right) 
\ge c \| \nabla_x u \|^2_{2,\omega} - c' \| u \|^2_{2,\omega}, 
\quad \text{for all } t \in \mathbb{R} \text{ and } u \in H^1_\omega(\mathbb{R}^n).
\end{equation}
In this case, Theorem \ref{thm : Causality and invertibility inh} applies.
\end{itemize}
In both situations, Theorem \ref{thm: Pb de Cauchy inh} applies. We now state the following $L^2$ off-diagonal estimates for the fundamental solution $\Gamma$ constructed therein.

\begin{thm}\label{thm :L2 off diagonal} Fix $\mathfrak{T}>0$. Then, there exist constants $c,C>0$ and such that
\begin{equation}
\| \Gamma(t,s) \psi \|_{L^2_\omega(F)} \le C e^{-\frac{\mathrm{d}(E,F)^2}{4c(t-s)}+cP_{\infty,\infty}^2(t-s)} \|\psi \|_{L^2_\omega(E)},
\end{equation}
for all $t,s\in [0,\mathfrak{T}]$ with $s<t$, all measurable sets $E,F \subset \mathbb{R}^n$ and all $\psi \in L^2_\omega(\mathbb{R}^n)$, with support in $E$. Here, $\mathrm{d}(E,F)$ denotes the Euclidean distance between $E$ and $F$.
\begin{itemize}
\item In the case $r>2$, the constants $c$ and $C$ depend only on $M$, $\nu$, $[\omega]_{A_2}$, $[\omega]_{RH_{\frac{q}{2}}}$, $n$, and $q$.
\item In the case $r=2$, they depend on $c$, $c'$, $n$, $[\omega]_{A_2}$, $[\omega]_{RH_{\frac{n}{n-2}}}$, $M$, and also on $P_{r,q}$.
\end{itemize}
\end{thm}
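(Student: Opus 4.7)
The plan is to apply Davies' exponential perturbation method to the fundamental solution, adapted to the degenerate weighted setting with unbounded lower-order terms. Fix $s \in [0,\mathfrak{T})$, a datum $\psi \in L^2_\omega(\mathbb{R}^n)$ supported in $E$, and set $u(t):=\Gamma(t,s)\psi$. By Theorem \ref{thm: Pb de Cauchy inh}, $u$ is the unique solution in $\dot{\Sigma}^{r,q}((s,\mathfrak{T})) \cap C([s,\mathfrak{T}];L^2_\omega(\mathbb{R}^n))$ of $\partial_t u + \mathcal{B}_{\mathrm{inh}}u = 0$ with $u(s)=\psi$. Pick a bounded Lipschitz weight $\phi\colon\mathbb{R}^n\to[0,\infty)$ with $\|\nabla\phi\|_\infty\le 1$, $\phi|_E\equiv 0$ and $\phi|_F\ge \mathrm{d}(E,F)$, \emph{e.g.}~$\phi(x)=\min(\mathrm{d}(x,E),\mathrm{d}(E,F))$; for a parameter $\alpha>0$ to be optimized, set $v(t):=e^{\alpha\phi}u(t)$. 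Boundedness of $\phi$ ensures that $v$ inherits all functional properties of $u$, with $v(s)=\psi$.

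Applying the energy equality of Proposition \ref{prop: Lions inh} to $u$ tested against $e^{2\alpha\phi}u$ gives, for $s\le\sigma<\tau\le\mathfrak{T}$,
\begin{equation*}
\|v(\tau)\|_{2,\omega}^2-\|v(\sigma)\|_{2,\omega}^2 = -2\,\mathrm{Re}\int_\sigma^\tau\bigl(\mathcal{A}(t)(u,e^{2\alpha\phi}u)+\beta_{\mathrm{inh}}(t)(u,e^{2\alpha\phi}u)\bigr)\,\mathrm{d}t.
\end{equation*}
Rewriting via $u=e^{-\alpha\phi}v$ and $\nabla_x(e^{2\alpha\phi}u)=e^{\alpha\phi}(\nabla_x v+\alpha\nabla\phi\,v)$, the $e^{\pm\alpha\phi}$ factors cancel in every pairing and produce integrals in $v,\nabla_x v$ alone. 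The principal part, combined with G\aa rding's inequality \eqref{eq: ellipticité}, yields
\begin{equation*}
-\mathrm{Re}\,\mathcal{A}(t)(u,e^{2\alpha\phi}u)\le -\nu\|\nabla_x v\|_{2,\omega}^2+2\alpha M\|\nabla_x v\|_{2,\omega}\|v\|_{2,\omega}+\alpha^2 M\|v\|_{2,\omega}^2,
\end{equation*}
while the bounded lower-order pairings contribute, after Cauchy--Schwarz, quantities bounded by $P_{\infty,\infty}(1+\alpha)\|v\|_{2,\omega}\|\nabla_x v\|_{2,\omega}$ and $(\alpha P_{\infty,\infty}+P_{\infty,\infty}^2)\|v\|_{2,\omega}^2$. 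Absorbing the $\|v\|\|\nabla_x v\|$ terms into $\tfrac{\nu}{4}\|\nabla_x v\|_{2,\omega}^2$ by Young's inequality, and splitting cross terms via $2\alpha P_{\infty,\infty}\le \alpha^2+P_{\infty,\infty}^2$ so that $\alpha$ and $P_{\infty,\infty}$ appear only \emph{additively} in the final exponent, leaves a $\|v\|_{2,\omega}^2$-coefficient of the form $C(\alpha^2+P_{\infty,\infty}^2)$.

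The genuinely unbounded pairings $\beta_{\mathrm{h}}(t)(v,\nabla_x v+\alpha\nabla\phi\,v)$ cannot be bounded pointwise in time. After integrating on $(\sigma,\tau)$, one uses Lemma \ref{lem:conditions coeff} (or its Lorentz analog in Section \ref{section 4}) together with the mixed-norm interpolation of Proposition \ref{prop:embeddings}(4) on the finite interval $(\sigma,\tau)$ to obtain
\begin{equation*}
\int_\sigma^\tau |\beta_{\mathrm{h}}(t)(v,\nabla_x v+\alpha\nabla\phi\,v)|\,\mathrm{d}t \le C P_{r,q}(1+\alpha)\,\underline{S}(\tau)^{\frac12-\frac1r}\,\underline{I}(\tau)^{\frac1r+\frac12},
\end{equation*}
where $\underline{S}(\tau):=\sup_{s\le t\le\tau}\|v(t)\|_{2,\omega}^2$ and $\underline{I}(\tau):=\int_s^\tau\|\nabla_x v(t)\|_{2,\omega}^2\,\mathrm{d}t$. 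In the case $r>2$, the smallness $P_{r,q}\le\varepsilon_0$ imposed through Theorems \ref{thm: Invertibility inh}--\ref{thm: Causality inh} and Young's inequality lets one absorb this contribution into $\tfrac{\nu}{4}\underline{I}(\tau)+C(1+\alpha^2)\int_s^\tau \underline{S}(t)\,\mathrm{d}t$. In the critical case $r=2$, interpolation is bypassed entirely by invoking the coercivity \eqref{eq: cas2}, which directly controls $\mathrm{Re}((\mathcal{A}+\beta_{\mathrm{inh}})(t)(v,v))$ from below. Either way one reaches
\begin{equation*}
\underline{S}(\tau)+\tfrac{c_0}{2}\underline{I}(\tau)\le \|\psi\|_{2,\omega}^2+C(\alpha^2+P_{\infty,\infty}^2)\int_s^\tau\underline{S}(t)\,\mathrm{d}t,\qquad \tau\in[s,\mathfrak{T}],
\end{equation*}
with $c_0=\nu$ or $c_0=c$ depending on the case. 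Gr\"onwall then gives $\underline{S}(\tau)\le e^{C(\alpha^2+P_{\infty,\infty}^2)(\tau-s)}\|\psi\|_{2,\omega}^2$, and since $\phi\ge \mathrm{d}(E,F)$ on $F$,
\begin{equation*}
\|\Gamma(t,s)\psi\|_{L^2_\omega(F)}\le e^{-\alpha\mathrm{d}(E,F)}\|v(t)\|_{2,\omega}\le e^{-\alpha\mathrm{d}(E,F)+\frac{C}{2}(\alpha^2+P_{\infty,\infty}^2)(t-s)}\|\psi\|_{2,\omega}.
\end{equation*}
Optimizing at $\alpha=\mathrm{d}(E,F)/(C(t-s))$ produces the claimed Gaussian factor $\exp(-\mathrm{d}(E,F)^2/(4c(t-s)))$ together with the growth $\exp(cP_{\infty,\infty}^2(t-s))$; restricting $s,t$ to $[0,\mathfrak{T}]$ is used only to keep all intermediate constants time-independent.

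The principal obstacle I expect is the unbounded lower-order block. These coefficients cannot be absorbed pointwise in time, so one must set up a \emph{closed} self-improving inequality for the pair $(\underline{S}(\tau),\underline{I}(\tau))$ using the mixed-norm embedding $\dot{V}_0 \hookrightarrow L^r L^q_{\omega^{q/2}}$ on a finite interval. The extra factor $(1+\alpha)$ produced by the Davies conjugation must not couple multiplicatively with $P_{r,q}$ (which would destroy the perturbative absorption); the correct Young-type split $\alpha P_{r,q}\le \alpha^2+P_{r,q}^2$ confines all $\alpha$-dependence to the additive $\alpha^2(t-s)$ term, which is then dispatched by the Gaussian optimization. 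The clean $r=2$ case is treated separately via \eqref{eq: cas2}, which explains the different dependence of constants in the two regimes stated in the theorem.
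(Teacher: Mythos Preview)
Your strategy---Davies conjugation followed by an energy/Gr\"onwall argument---is different from the paper's route, which instead proves invertibility of the \emph{conjugated} variational operator $\mathcal{H}_{\mathrm{inh}}+\beta_\chi+\kappa$ on $V_0$, reads off a uniform $L^2$ bound for its fundamental solution $\Gamma^\chi_{\kappa_0}$, and then uses a parabolic \emph{scaling} argument (exploiting that $P_{r,q}$ is scale-invariant) to pass from $t-s=1$ to arbitrary $t-s$. No Gr\"onwall lemma is invoked there.

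Your argument can be made to work, but as written it has a real gap in the treatment of the $\alpha$-cross term coming from the unbounded first-order coefficients. After conjugation, the new zero-order contribution is $\alpha\langle(a_{\mathrm h}-b_{\mathrm h})\!\cdot\!\nabla\phi\,v,v\rangle_{2,\omega}$. Since $a_{\mathrm h}-b_{\mathrm h}\in L^{\frac{2r}{r-2}}L^{\frac{2q}{q-2}}$ (first-order space, not the potential space $L^{\frac{r}{r-2}}L^{\frac{q}{q-2}}$), H\"older forces the pairing against $\|v\|_{L^rL^q_{\omega^{q/2}}}\,\|v\|_{L^2L^2_\omega}$, \emph{not} against $\|v\|_{L^rL^q_{\omega^{q/2}}}\,\|\nabla_x v\|_{L^2L^2_\omega}$. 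Your displayed bound $CP_{r,q}(1+\alpha)\underline{S}^{\frac12-\frac1r}\underline{I}^{\frac1r+\frac12}$ is therefore wrong for the $\alpha$-part; worse, if one blindly applies your split $\alpha P_{r,q}\le\alpha^2+P_{r,q}^2$ to that (incorrect) expression, one produces an $\alpha^2\underline{I}^{\,\cdot}$ term that cannot be absorbed for large $\alpha$, and the inequality does not close. The correct route is
\[
\alpha P_{r,q}\,\|v\|_{L^rL^q_{\omega^{q/2}}}\,\|v\|_{L^2L^2_\omega}\ \le\ \tfrac12 P_{r,q}^2\,\|v\|_{L^rL^q_{\omega^{q/2}}}^2+\tfrac12\alpha^2\!\int_s^\tau\|v\|_{2,\omega}^2,
\]
so that the first piece (via interpolation and smallness of $P_{r,q}$) is absorbed into $\underline{S}+\underline{I}$ and only the second feeds Gr\"onwall. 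The same oversight recurs in your $r=2$ discussion: coercivity \eqref{eq: cas2} controls the diagonal pairing $(\mathcal{A}+\beta_{\mathrm{inh}})(t)(v,v)$, but the $\alpha$-cross term $\alpha\langle(a-b)\!\cdot\!\nabla\phi\,v,v\rangle$ survives and must be estimated separately (here one gets $\epsilon\underline{I}+C_\epsilon P_{r,q}^2\alpha^2\!\int\|v\|^2$, which is why the constants in that regime necessarily depend on $P_{r,q}$).

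The paper avoids this bookkeeping altogether by a truncation trick: writing $(a_{\mathrm h}-b_{\mathrm h})\!\cdot\!\nabla\chi=V_{\mathrm h}+V_\infty$ with $V_{\mathrm h}=\mathbb{1}_{|a_{\mathrm h}-b_{\mathrm h}|>\ell}(\cdot)$ and choosing $\ell\sim\|\nabla\chi\|_\infty$ so that $V_{\mathrm h}$ is \emph{small} in the potential norm (hence absorbed into the coercive lower bound) while $V_\infty$ is \emph{bounded} by $C\|\nabla\chi\|_\infty^2$ (hence pushed into $\kappa_0$). This decouples $\alpha$ from $P_{r,q}$ at the operator level rather than at the estimate level, and yields the bound for the conjugated fundamental solution directly; scaling then produces the $(t-s)$-dependence. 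Your Gr\"onwall approach is more hands-on and, once the cross-term is bounded correctly, gives the same conclusion with less machinery but more inequalities.
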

\begin{proof}
We will prove the result for all $-\infty < s < t < \infty$, since the fundamental solution is defined on $\mathbb{R}$, see Remark \ref{rem: FS}. For a bounded Lipschitz function $\chi : \mathbb{R}^n \to \mathbb{R}^+$, we define the operator on $\mathbb{R} \times \mathbb{R}^n$ obtained by conjugating $\partial_t +
\mathcal{B}_{\mathrm{inh}}$ with the multiplication by $e^{\chi}$. More precisely, in the weak sense $\llangle \cdot, \cdot \rrangle$ defined in Definition \ref{def: distributions}, we have
\begin{equation}\label{eq: Conjugaison}
e^{\chi}(\partial_t+\mathcal{B}_{\mathrm{inh}})e^{-\chi}= \partial_t+\mathcal{B}_{\mathrm{inh}} + \beta_\chi  \quad \text{in} \ \mathcal{D}'(\mathbb{R}\times \mathbb{R}^n),
\end{equation}
with
\begin{align*}
\llangle \beta_\chi u,\varphi \rrangle:= \beta_\chi( u,\varphi):&=\llangle -\omega^{-1}(A \nabla_x \chi)u ,\nabla_x \varphi \rrangle+ \llangle \omega^{-1}A \nabla_x u \cdot \nabla_x \chi ,\varphi \rrangle\\& \hspace{0.5cm}+\llangle -\omega^{-1}A \nabla_x \chi \cdot \nabla_x \chi ,\varphi \rrangle+\llangle (a-b)\cdot(\nabla_x \chi) \, u ,\varphi \rrangle
\\&= \llangle a_\chi u,\nabla_x\varphi \rrangle+ \llangle b_\chi \cdot \nabla_x u,\varphi \rrangle+\llangle c_\chi u,\varphi \rrangle,
\end{align*}
with
\begin{align*}
    a_\chi &:= -\omega^{-1} A \nabla_x \chi,\\
    b_\chi &:= \omega^{-1} A^T \nabla_x \chi,\\
    c_\chi &:= -\omega^{-1} A \nabla_x \chi \cdot \nabla_x \chi + (a - b) \cdot \nabla_x \chi,
\end{align*}
where $A^T$ denotes the real transpose of $A$. 
The coefficients $a_\chi$ and $b_\chi$ are bounded by the term $\| \omega^{-1} A \|_{L^\infty(\mathbb{R}^{n+1})} \| \nabla_x \chi \|_{L^\infty(\mathbb{R}^n)} = M \| \nabla_x \chi \|_{L^\infty(\mathbb{R}^n)}$. The first term in $c_\chi$ is bounded by $M \| \nabla_x \chi \|_{L^\infty(\mathbb{R}^n)}^2$. However, the second term in $c_\chi$ is more delicate to handle directly, which makes it necessary to distinguish between the cases $r>2$ and $r=2$.\newline
\textit{\textbf{Proof in the case $r>2$.}} 
We proceed in several steps.\\[0.3em]
\textit{Step 1: Decomposition and lower bound for the inverse of $\partial_t + \mathcal{B}_{\mathrm{inh}} + \beta_\chi$.} We may write $a - b = a_\infty - b_\infty + a_{\mathrm{h}} - b_{\mathrm{h}}$, where $P_{r,q}(a_{\mathrm{h}},b_{\mathrm{h}}) \le \varepsilon_0$ and $P_{\infty,\infty}(a_\infty,b_\infty) < \infty$. Thus, the second term of $c_\chi$ can be written as $(a_\infty - b_\infty) \cdot \nabla_x \chi + (a_{\mathrm{h}} - b_{\mathrm{h}}) \cdot \nabla_x \chi$. The first part is controlled by $2 P_{\infty,\infty} \| \nabla_x \chi \|_{L^\infty(\mathbb{R}^n)}$. Let $\partial_t + \mathcal{B}_{\mathrm{h}}$ denote the degenerate parabolic operator associated with $A$, $a_{\mathrm{h}}$, $b_{\mathrm{h}}$, and $c_{\mathrm{h}}$ and let $\mathcal{H}_\mathrm{h}$ be the variational operator associated to $\partial_t + \mathcal{B}_{\mathrm{h}}$. Remark that, since $P_{r,q} \le \varepsilon_0$ by hypothesis, $\mathcal{H}_{\mathrm{h}}$ satisfies \eqref{eq: LaxM H} on $V_0$ with $\delta/2$, where $\delta = \delta(M, \nu)$. For $V \in L^{\frac{r}{r - 2}}(\mathbb{R}; L^{\frac{q}{q - 2}}(\mathbb{R}^n))$, the multiplication by $V$ defines a bounded operator 
$\dot{V}_0^{\mathrm{loc}} \to \dot{V}^\star_0$, 
given by $\llangle V u, \varphi \rrangle_{\dot{V}^\star_0,\dot{V}_0} := \llangle V u, \varphi \rrangle$, 
with bound $C \|V\|_{L^{\frac{r}{r - 2}}(\mathbb{R}; L^{\frac{q}{q - 2}}(\mathbb{R}^n))}$, where 
$C = C([\omega]_{A_2}, [\omega]_{RH_{\frac{q}{2}}}, n, q)$ is a constant. In particular, there exists a constant $\lambda = \lambda ([ \omega  ]_{A_2},[ \omega  ]_{RH_{\frac{q}{2}}},n,q)$ such that if $\|V\|_{L^{\frac{r}{r - 2}}(\mathbb{R}; L^{\frac{q}{q - 2}}(\mathbb{R}^n))} \leq \lambda $, then 
\begin{align*}
\mathrm{Re} \llangle (\mathcal{H}_\mathrm{h}+V)u, (1+\delta H_t)u \rrangle_{\dot{V}^\star_0,\dot{V}_0} \geq \frac{\delta}{4} \|u \|_{\dot{V}_0}^2, \quad \text{for all} \ u \in V_0.
\end{align*}
For all $\ell > 0$, using Markov's inequality, the truncation $V_{\mathrm{h}} := \mathbb{1}_{|a_{\mathrm{h}} - b_{\mathrm{h}}| > \ell}(a_{\mathrm{h}} - b_{\mathrm{h}}) \cdot \nabla_x \chi$ satisfies $$\|V_{\mathrm{h}}\|_{L^{\frac{r}{r - 2}}(\mathbb{R}; L^{\frac{q}{q - 2}}(\mathbb{R}^n))} \leq 4\varepsilon_0^2 \ell^{-1} \| \nabla_x \chi \|_{L^\infty(\mathbb{R}^n)}.$$
We then choose $\ell > 0$ with $4\varepsilon_0^2 \ell^{-1} \| \nabla_x\chi \|_{L^\infty(\mathbb{R}^n)} = \lambda$. On the other hand, the complementary truncation $V_\infty := \mathbb{1}_{|a_{\mathrm{h}} - b_{\mathrm{h}}| \le \ell}(a_{\mathrm{h}} - b_{\mathrm{h}}) \cdot \nabla_x \chi$ has the bound $$\|V_\infty\|_{L^\infty(\mathbb{R}^{1+n})} \leq \ell \|\nabla_x \chi \|_{L^\infty(\mathbb{R}^n)} = \frac{4\varepsilon_0^2}{\lambda} \| \nabla_x\chi \|^2_{L^\infty(\mathbb{R}^n)}.$$
We set $\beta_\infty:=\mathcal{B}_{\text{inh}}-\mathcal{B}_{\text{h}}$ and $\Tilde{\beta}_\chi:=\beta_\chi-V_{\mathrm{h}}$, so that we have the following decomposition:
\begin{equation*}
\partial_t+ \mathcal{B}_{\text{inh}}+\beta_\chi+\kappa=(\partial_t+ \mathcal{B}_{\text{h}}+V_{\mathrm{h}})+(\Tilde{\beta}_\chi+\beta_\infty)+\kappa.
\end{equation*}
The variational operator associated to $\partial_t + \mathcal{B}_{\text{h}} + V_{\mathrm{h}}$ is $\mathcal{H}_\mathrm{h} + V_{\mathrm{h}}$, and the term $\Tilde{\beta}_\chi + \beta_\infty$ has first order coefficients bounded by $P_{\infty,\infty} + \| \nabla_x\chi \|_{L^\infty(\mathbb{R}^n)}$, and zero order coefficients bounded by $ P_{\infty,\infty}^2 + \| \nabla_x\chi \|^2_{L^\infty(\mathbb{R}^n)}$, up to multiplicative constants depending only on $\nu$, $M$, $[\omega]_{A_2}$, $[\omega]_{RH_{\frac{q}{2}}}$, $n$, and $q$.

Proceeding as in the proof of Theorem \ref{thm: Invertibility inh}, we obtain the lower bound
\begin{equation*}
\mathrm{Re} \llangle (\mathcal{H}_{\mathrm{inh}} + \beta_\chi + \kappa) u, (1 + \delta H_t) u \rrangle_{V_0^\star, V_0} \ge \frac{\delta}{8} \| u \|_{V_0}^2,
\end{equation*}
for all $\kappa \ge \kappa_0 := 1 + C(\delta) \big(P_{\infty,\infty}^2 + \| \chi \|_{L^\infty(\mathbb{R}^n)}^2 \big)$ and all $u \in V_0$.
\newline
\textit{Step 2:  Norm bounds and scaling.} We denote by $\Gamma^{\chi}_{k_0}$ the fundamental solution for $\partial_t+ \mathcal{B}_{\text{inh}}+\beta_\chi+\kappa_0$ and $\Gamma_{k_0}$ the fundamental solution for $\partial_t+ \mathcal{B}_{\text{inh}}+\kappa_0$. Using \eqref{eq: Conjugaison}, we have $\Gamma_{k_0}=e^{-\chi}\Gamma^{\chi}_{k_0}e^{\chi}$ and by Remark \ref{rem: FS}, we have $\Gamma(t,s)=e^{k_0(t-s)} \Gamma_{k_0}$, for all $s<t$. Thus, for all $\psi \in L^2_\omega(\mathbb{R}^n)$ and all $s<t$, 
\begin{equation*}
\|e^\chi \Gamma(t,s) \psi \|_{L^2_\omega(\mathbb{R}^n)} \le e^{\kappa_0(t-s)} \| \Gamma^{\chi}_{k_0}(t,s) e^\chi \psi\|_{L^2_\omega(\mathbb{R}^n)}.
\end{equation*}
We know that $\Gamma^{\chi}_{k_0}(t,s))$, ${s<t}$, are uniformly bounded operators on $L^2_\omega(\mathbb{R}^n)$ with a bound of type $$C(\delta)(1+M+P_{\infty,\infty}^2 + \|\nabla_x \chi \|^2_{L^\infty(\mathbb{R}^n)}).$$
We first assume that $t-s=1$. Using this, we have
\begin{equation*}
\|e^\chi \Gamma(t,s) \psi \|_{L^2_\omega(\mathbb{R}^n)} \le (C(\delta) e)(1+M+P_{\infty,\infty}^2 + \| \nabla_x\chi \|^2_{L^\infty(\mathbb{R}^n)}) e^{c_\delta(\| \nabla_x\chi \|^2_{L^\infty(\mathbb{R}^n)}+P_{\infty,\infty}^2 )}\| e^\chi \psi\|_{L^2_\omega(\mathbb{R}^n)}.
\end{equation*}
Using the elementary inequality $(1+M+x)e^{c_\delta x} \le (2+M)e^{(c_\delta+1)x}$ for all $x\ge 0$, we deduce that 
\begin{equation}\label{eq: scaling1}
\|e^\chi \Gamma(t,s) \psi \|_{L^2_\omega(\mathbb{R}^n)} \le (C(\delta) e)(2+M) e^{(c_\delta+1)(\| \nabla_x\chi \|^2_{L^\infty(\mathbb{R}^n)}+P_{\infty,\infty}^2 )}\| e^\chi \psi\|_{L^2_\omega(\mathbb{R}^n)}.
\end{equation}
A scaling argument will allow to remove the assumption $t-s=1$. By definition, $u:=\Gamma(\cdot,s)\psi$ is the unique solution in $\dot{\Sigma}^{r,q}(\mathbb{R}) \cap L^2(\mathbb{R};L^2_\omega(\mathbb{R}^n))$ to the equation $$\partial_t u+\mathcal{B}_{\mathrm{inh}}u=\delta_s \otimes \psi \quad \text{in} \ \mathcal{D}'(\mathbb{R}\times \mathbb{R}^n).$$
       Fix $R>0$. Taking $\varphi \in \mathcal{D}(\mathbb{R}\times \mathbb{R}^n)$ and testing the above equation against $\varphi_R(t,s):=\varphi(s+\frac{t-s}{R^2},\frac{x}{R})$, we deduce that $u_R(t,x):=u(s+R^2(t-s),Rx)$ solves $$\partial_t u_R+\mathcal{B}^R_{\mathrm{inh}}u_R=\delta_s \otimes \psi_R \quad \text{in} \ \mathcal{D}_R'(\mathbb{R}\times \mathbb{R}^n),$$
       where this distributional anti-duality bracket is exactly the one in \ref{def: distributions}, but with $\omega_R(x):=\omega(Rx)$ instead of $\omega$, $\psi_R(x):=\psi(Rx)$, and the elliptic part $\mathcal{B}^R_{\mathrm{inh}}$ given by
       $$\mathcal{B}^R_{\mathrm{inh}}u_R=-\omega_R^{-1}\mathrm{div}_x(A_R\nabla_x u_R) -\omega_R^{-1} \mathrm{div}_x(\omega_R \, a_R u_R)+b_R \cdot \nabla_x u_R + c_R u_R,$$
       with
       \begin{align*}
           &A_R(t,x)=A(s+R^2(t-s),Rx),\\
           &a_R(t,x):=R \, a(s+R^2(t-s),Rx), \ b_R(t,x):=R\,b(s+R^2(t-s),Rx),\\
           &c_R(t,x):=R^2 \, c(s+R^2(t-s),Rx).
       \end{align*}
        Note that $\omega_R \in A_2(\mathbb{R}^n)\cap RH_{\frac{q}{2}}(\mathbb{R}^n)$ with $[\omega_R]_{A_2} = [\omega]_{A_2}$ and $[\omega_R]_{RH_{\frac{q}{2}}} = [\omega]_{RH_{\frac{q}{2}}}$. Moreover, we have $P^R_{\infty,\infty} = R\,P_{\infty,\infty}$, while $P_{r,q}$ is scale-invariant, \textit{i.e.} $P^R_{r,q} = P_{r,q}$, since $\frac{2}{r} = n\left( \frac{1}{2} - \frac{1}{q} \right)$ by the definition of admissibility. Now, applying \eqref{eq: scaling1} to the fundamental solution of $\partial_t+\mathcal{B}^R_{\mathrm{inh}}$ at $t$ such that $t-s=1$ with $\chi_R(x):=\chi(Rx)$, and changing variables in space, yields
       \begin{equation*}
           \|e^\chi \Gamma(s+R^2,s) \psi \|_{L^2_\omega(\mathbb{R}^n)} \le (C(\delta) e)(2+M) e^{(c_\delta+1)(\| \nabla_x\chi \|^2_{L^\infty(\mathbb{R}^n)}+P_{\infty,\infty}^2 )R^2}\| e^\chi \psi\|_{L^2_\omega(\mathbb{R}^n)}.
       \end{equation*}
       In particular, for all $t>s$ and $\psi\in L^2_\omega(\mathbb{R}^n)$, one has
       \begin{equation}\label{eq: scaling2}
           \|e^\chi \Gamma(t,s) \psi \|_{L^2_\omega(\mathbb{R}^n)} \le (C(\delta) e)(2+M) e^{(c_\delta+1)(\| \nabla_x\chi \|^2_{L^\infty(\mathbb{R}^n)}+P_{\infty,\infty}^2 )(t-s)}\| e^\chi \psi\|_{L^2_\omega(\mathbb{R}^n)}.
       \end{equation}
       \newline
       \textit{Step 3: choice of $\chi$.} Let $E,F \subset \mathbb{R}^n$ be two measurable sets and let $s<t$. We can assume that $t-s< \mathrm{d}(E,F)^2$, otherwise we conclude easily using \eqref{eq: scaling2} with $\chi=0$. Fix $N> \frac{\mathrm{d}(E,F)^2}{(c_\delta+1)(t-s)}$,
       and set
       $$\chi(x):=\inf(\frac{\mathrm{d}(E,F)\mathrm{d}(x,E)}{2(c_\delta+1)(t-s)},N), \quad \text{for all} \ x\in \mathbb{R}^n.$$
       Then, we have $\chi_{\scriptscriptstyle{\vert F}} \ge \frac{\mathrm{d}(E,F)^2}{2(c_\delta+1)(t-s)}$, $\chi_{\scriptscriptstyle{\vert E}}=0$ and $\| \nabla_x\chi \|_{L^\infty(\mathbb{R}^n)} \le \frac{\mathrm{d}(E,F)}{2(c_\delta+1)(t-s)}$. Therefore, for $\psi \in L^2_\omega(\mathbb{R}^n)$ with support in $E$, applying \eqref{eq: scaling2} yields 
       \begin{align*}
           e^{\frac{\mathrm{d}(E,F)^2}{2(c_\delta+1)(t-s)}} \|\Gamma(t,s) \psi \|_{L^2_\omega(F)} &\leq  \|e^\chi \Gamma(t,s) \psi \|_{L^2_\omega(\mathbb{R}^n)} \\&\le C(\delta,M) e^{(c_\delta+1)(\frac{\mathrm{d}(E,F)^2}{4(c_\delta+1)^2(t-s)^2}+P_{\infty,\infty}^2 )(t-s)}\| \psi\|_{L^2_\omega(\mathbb{R}^n)},
       \end{align*}
       with $C(\delta,M)=(C(\delta) e)(2+M)$. Therefore, we obtain
       \begin{equation*}
           \|\Gamma(t,s) \psi \|_{L^2_\omega(F)} \le C(\delta,M) e^{-\frac{\mathrm{d}(E,F)^2}{4(c_\delta+1)(t-s)}+(c_\delta+1)P_{\infty,\infty}^2(t-s)}\| \psi\|_{L^2_\omega(\mathbb{R}^n)}.
       \end{equation*}
\textit{\textbf{Proof in the case $r=2$.}} Integrating \eqref{eq: cas2} against $t$, we obtain
\begin{equation}\label{eq: Cas2}
\mathrm{Re} \, \llangle \mathcal{B}_{\mathrm{inh}}u, u \rrangle \ge c \| \nabla_x u \|^2_{L^2(\mathbb{R};L^2_\omega(\mathbb{R}^n))} -c' \|u \|^2_{L^2(\mathbb{R};L^2_\omega(\mathbb{R}^n))} \quad \text{for all} \ u \in V_0.
\end{equation}
As $r = 2$, we have $n \ge 3$, $q = \frac{2n}{n - 2} = 2^\star$, and therefore $\frac{q}{q - 2} = \frac{n}{2}$. For $V \in L^{\infty}(\mathbb{R}; L^{\frac{n}{2}}(\mathbb{R}^n))$, we have
\begin{align*}
           |\llangle V u, v \rrangle| &\le \|V\|_{L^{\infty}(\mathbb{R}; L^{\frac{n}{2}}(\mathbb{R}^n))} \| u \|_{L^2(\mathbb{R}; L^{2^\star}_{\omega^{2^\star/2}}(\mathbb{R}^n))} \| v \|_{L^2(\mathbb{R}; L^{2^\star}_{\omega^{2^\star/2}}(\mathbb{R}^n))} 
           \\ &\le C \|V\|_{L^{\infty}(\mathbb{R}; L^{\frac{n}{2}}(\mathbb{R}^n))} \| \nabla_x u \|_{L^2(\mathbb{R}; L^{2}_{\omega}(\mathbb{R}^n))} \| \nabla_x v \|_{L^2(\mathbb{R}; L^{2}_{\omega}(\mathbb{R}^n))},
\end{align*}
with $C = C([\omega]_{A_2}, [\omega]_{RH_{\frac{n}{n-2}}}, n)$ a constant. In particular, if $\|V\|_{L^{\infty}(\mathbb{R}; L^{\frac{n}{2}}(\mathbb{R}^n))} \leq \frac{c}{2C}$, then \eqref{eq: Cas2} still holds upon adding $V$, with $\frac{c}{2}$ in place of $c$. By definition, we decompose $a - b = a_\infty - b_\infty + a_{\mathrm{h}} - b_{\mathrm{h}}$, where $a_{\mathrm{h}}, b_{\mathrm{h}} \in L^{\infty}(\mathbb{R}; L^{n}(\mathbb{R}^n))$, so that $P_{r,q}(a_{\mathrm{h}},b_{\mathrm{h}}) < \infty$ and $P_{\infty,\infty}(a_\infty,b_\infty) < \infty$, but we do not control these quantities. As before, we decompose $(a_{\mathrm{h}} - b_{\mathrm{h}})\cdot \nabla_x \chi$ into $V_{\mathrm{h}} + V_\infty$, with $V_{\mathrm{h}} \in L^{\infty}(\mathbb{R}; L^{\frac{n}{2}}(\mathbb{R}^n))$ and $V_\infty \in L^{\infty}(\mathbb{R}^{1+n})$, by setting
\begin{align*}
    &V_{\mathrm{h}} := \mathbb{1}_{|a_{\mathrm{h}} - b_{\mathrm{h}}| > \ell}(a_{\mathrm{h}} - b_{\mathrm{h}})\cdot \nabla_x \chi,\\
    &V_\infty := \mathbb{1}_{|a_{\mathrm{h}} - b_{\mathrm{h}}| \le \ell}(a_{\mathrm{h}} - b_{\mathrm{h}})\cdot \nabla_x \chi,
\end{align*}
and we choose $\ell > 0$ such that $$\frac{\|a_{\mathrm{h}} - b_{\mathrm{h}}\|^2_{L^{\infty}(\mathbb{R}; L^{n}(\mathbb{R}^n))}\|\nabla_x \chi\|_{L^\infty(\mathbb{R}^n)}}{\ell} = \frac{c}{2C},$$ so that

\begin{align*}
    &\|V_{\mathrm{h}} \|_{L^{\infty}(\mathbb{R}; L^{\frac{n}{2}}(\mathbb{R}^n))} \leq \frac{\|a_{\mathrm{h}}-b_{\mathrm{h}} \|^2_{L^{\infty}(\mathbb{R}; L^{n}(\mathbb{R}^n))}\| \nabla_x\chi \|_{L^\infty(\mathbb{R}^n)}}{\ell}=\frac{c}{2C}, \\&\|V_\infty \|_{L^{\infty}(\mathbb{R}^{1+n})} \le \ell \| \nabla_x\chi \|_{L^\infty(\mathbb{R}^n)}= \frac{2C \|a_{\mathrm{h}}-b_{\mathrm{h}} \|^2_{L^{\infty}(\mathbb{R}; L^{n}(\mathbb{R}^n))}}{c} \| \nabla_x\chi \|_{L^\infty(\mathbb{R}^n)}^2.
\end{align*}
The adapted decomposition this time is simply
\begin{equation*}
\partial_t+ \mathcal{B}_{\text{inh}}+\beta_\chi+\kappa= (\partial_t+ \mathcal{B}_{\text{inh}}+V_{\mathrm{h}})+\beta_\chi-V_{\mathrm{h}}+\kappa.
\end{equation*}
The operator $\partial_t + \mathcal{B}_{\mathrm{inh}} + V_{\mathrm{h}}$ still satisfies \eqref{eq: Cas2} with $\frac{c}{2}$ instead of $c$, and $\beta_\chi - V_{\mathrm{h}}$ has first order coefficients bounded by $2M \| \nabla_x \chi \|_{L^\infty(\mathbb{R}^n)}$, and zeroth order coefficients bounded by the term $$M \| \nabla_x \chi \|_{L^\infty(\mathbb{R}^n)}^2 + 2P_{\infty,\infty}(a_\infty, b_\infty) \| \nabla_x \chi \|_{L^\infty(\mathbb{R}^n)} + \tilde{C} \| \nabla_x \chi \|_{L^\infty(\mathbb{R}^n)}^2,$$
where $\tilde{C} := \frac{2C \| a_{\mathrm{h}} - b_{\mathrm{h}} \|^2_{L^{\infty}(\mathbb{R}; L^{n}(\mathbb{R}^n))}}{c}$. We write this bound of zeroth order coefficients as $$C_1 (1 +P_{\infty,\infty}^2+ \| \nabla_x \chi \|_{L^\infty(\mathbb{R}^n)}^2),$$
where $C_1$ depends also on $P_{r,q}$ but not on $P_{\infty,\infty}$.

Proceeding as in the proof of Theorem \ref{thm : Causality and invertibility inh}, we prove that we have the lower bound
\begin{equation*}
           \mathrm{Re} \llangle (\mathcal{H}_{\text{inh}}+\beta_\chi+\kappa)u, (1+\delta H_t)u \rrangle_{V^\star_0,V_0} \geq \frac{\delta}{4} \|u \|_{V_0}^2 \quad \text{for all} \ u\in V_0,
\end{equation*}
and for all 
$\kappa \ge k_0:=1+C_2(1 +P_{\infty,\infty}^2+ \|\nabla_x \chi \|^2_{L^\infty(\mathbb{R}^n)}),$with $C_2$ depending on $c$, $c'$, $n$, $[ \omega  ]_{A_2}$, $[ \omega  ]_{RH_{\frac{n}{n-2}}}$, $M$ and also on $P_{r,q}$, but not on $P_{\infty,\infty}$ and $\delta$ is independent of $\chi$ and $P_{\infty,\infty}$. Now, it is easy to conclude by proceeding exactly as in the case of $r>2$: the fundamental solution $\Gamma^{\chi}_{k_0}(t,s)$, ${s<t}$, are uniformly bounded operators on $L^2_\omega(\mathbb{R}^n)$ with a bound $$C_3(1+M+P_{r,q}^2+P_{\infty,\infty}^2 + \|\nabla_x \chi \|^2_{L^\infty(\mathbb{R}^n)}),$$
with $C_3$ depending on $c$, $c'$, $n$, $[ \omega  ]_{A_2}$, $[ \omega  ]_{RH_{\frac{n}{n-2}}}$, $M$, and we write this bound as $$C_4(1 +P_{\infty,\infty}^2 + \| \nabla_x \chi \|^2_{L^\infty(\mathbb{R}^n)}),$$
with $C_4$ depending only on $c$, $c'$, $n$, $[\omega]_{A_2}$, $[ \omega  ]_{RH_{\frac{n}{n-2}}}$, $M$ and $P_{r,q}$. Using an elementary inequality as before in the case $r>2$, we obtain that, for all $s<t$ with $t-s=1$,
\begin{equation}\label{eq: scaling3}
    \|e^\chi \Gamma(t,s) \psi \|_{L^2_\omega(\mathbb{R}^n)} \le 2C_4 e^{(C_2+1)(1+P_{\infty,\infty}^2 +\|\nabla_x\chi \|^2_{L^\infty(\mathbb{R}^n)})}\| e^\chi \psi\|_{L^2_\omega(\mathbb{R}^n)}.
\end{equation}
A scaling argument, together with choosing $\chi$ exactly as before, yields that
\begin{equation*}
        \| \Gamma(t,s) \psi \|_{L^2_\omega(F)} \le 2C_4 e^{C_2+1} e^{-\frac{\mathrm{d}(E,F)^2}{4(C_2+1)(t-s)}+(C_2+1)P_{\infty,\infty}^2(t-s)} \|\psi \|_{L^2_\omega(E)},
\end{equation*}
for all $s<t$, all measurable sets $E,F \subset \mathbb{R}^n$ and all $\psi \in L^2_\omega(\mathbb{R}^n)$ with support in $E$.
\end{proof}

\subsection{Pointwise Gaussian upper bounds and Moser’s \texorpdfstring{$L^2$}{L2}-\texorpdfstring{$L^\infty$}{Linfini} estimates}

As for pointwise Gaussian bounds, Hofmann and Kim \cite{hofmann2004gaussian} proved, in the case $\omega=1$, the equivalence between Moser’s $L^2$–$L^\infty$ estimates and Gaussian upper bounds, extending Aronson’s results to small complex perturbations of real coefficients. In the weighted case ($\omega \neq 1$), \cite{baadi2025degenerate} recently extended this result by establishing the same equivalence, so that Gaussian upper bounds follow immediately for real $A$, with Gaussian lower bounds then deduced from the Harnack inequality and H\"older continuity.

In the presence of lower-order terms, we will also derive Gaussian upper bounds using the assumption of Moser’s $L^2$-$L^\infty$ estimates for local weak solutions. We plan to study this under our assumption on $\omega$ and for real coefficients in a subsequent work.

For any $(t,x) \in \mathbb{R} \times \mathbb{R}^n$ and $r>0$, we set $Q_r(t,x):=(t-r^2,t]\times B(x,r)$ and $Q_r^\star(t,x):=[t,t+r^2)\times B(x,r)$ where $B(x,r)$ is the Euclidean ball of radius $r$ and center $x$. Thus, $Q_r(t,x)$ and $Q_r^\star(t,x)$ denote the usual forward and backward in time parabolic cylinders. For any $x \in \mathbb{R}^n$ and $r>0$, we set $\omega_r(x):=\omega(B(x,\sqrt{r}))=\int_{B(x,\sqrt{r})}\omega(y) \, \mathrm{d}y$. Finally, we define the measure $\mu$ on $\mathbb{R}^{n+1}$ by $\mathrm{d} \mu(t,x):= \omega(x)\mathrm{d}x\mathrm{d}t.$

\begin{defn}[Local weak solutions]
    Fix an admissible pair $(r, q)$ and assume that $P_{r,q} < \infty$ and $P_{\infty,\infty} < \infty$, as in the previous section on Gaffney estimates. We say that $u$ is a local weak solution to the equation $\partial_t u + \mathcal{B}_{\mathrm{inh}}u=0$ in an open set $\Omega=I \times \mathcal{O} \subset \mathbb{R}^{1+n}$, with $I \subset \mathbb{R}$ and $\mathcal{O} \subset \mathbb{R}^n$ open sets, if $u \in L^\infty(I; L^2_\omega(\mathcal{O}))$ with $\nabla_x u \in L^2(I; L^2_\omega(\mathcal{O})^n)$ and satisfies the equation $\partial_t u + \mathcal{B}_{\mathrm{inh}}u=0$ in $\mathcal{D'}(\Omega)$ as in Definition \ref{def: distributions} with $\mathcal{O}$ replacing $\mathbb{R}^n$. Local weak solutions to the equation $-\partial_t v + \mathcal{B}_{\mathrm{inh}}^\star v=0$ are defined similarly.
\end{defn}

\begin{rem}
For a local weak solution $u$ as above, localizing in space and applying the first embedding of (4) in Proposition \ref{prop:embeddings} together with Lemma \ref{lem:xSobolev} yield $u \in L^r(I; L^q_{\omega^{q/2}, \mathrm{loc}}(\mathcal{O}))$. Moreover, if $\mathcal{O'}$ is an open set with $\overline{\mathcal{O'}} \subset \mathcal{O}$, then such local weak solutions are continuous in time with values in $L^2_\omega(\mathcal{O'})$. This follows directly from Point (2) of Remark \ref{rem: Lions inh}.
\end{rem}

\begin{defn}[Moser's $L^2$-$L^\infty$ estimates]\label{def:Moser}
    We say that $\partial_t  + \mathcal{B}_{\mathrm{inh}}$, respectively $-\partial_t  + \mathcal{B}_{\mathrm{inh}}^\star$, satisfies Moser’s $L^2$-$L^\infty$ estimates if there exist constants $B>0$ and $0<R_0\le\infty$ such that that for all $R\in (0,R_0)$, $(t_0,x_0) \in \mathbb{R}^{1+n}$, and all local weak solutions of $\partial_t u  + \mathcal{B}_{\mathrm{inh}}u=0$ on a neighborhood of $Q_{2R}(t_0,x_0)$, respectively $-\partial_t v  + \mathcal{B}_{\mathrm{inh}}^\star v=0$ and $Q_{2R}^\star(t_0,x_0)$, respectively, have local bounds of the form, respectively,
    \begin{equation}\label{Moser H}
        \supess_{Q_{R}(t_0,x_0)} \left | u \right |=\sup_{t \in (t_0-R^2, t_0]} \left ( \supess_{B(x_0,R)} \left | u(t,\cdot) \right | \right ) \leq B \left ( \frac{1}{\mu(Q_{2R}(t_0,x_0))} \int_{Q_{2R}(t_0,x_0)} \left | u \right |^2 \ \mathrm d\mu\right  )^{1/2},
    \end{equation}
    \begin{equation}\label{Moser Hstar}
        \supess_{Q_{R}^\star(t_0,x_0)} \left | v \right |=\sup_{t \in [t_0,t_0+R^2)} \left ( \supess_{B(x_0,R)} \left | v(t,\cdot) \right | \right ) \leq B \left ( \frac{1}{\mu(Q_{2R}^\star(t_0,x_0))} \int_{Q_{2R}^\star(t_0,x_0)} \left | v \right |^2 \ \mathrm d\mu\right  )^{1/2}.
    \end{equation}
    The above equalities follow from the local $L^2_\omega$-valued continuity of local weak solutions. See \cite[Appendix A]{baadi2025degenerate}.
\end{defn}

The main result of this section is the following.

\begin{thm}[Pointwise Gaussian upper bounds]\label{Thm: pointwise bounds}
Assume that we are in the setting of Theorem \ref{thm :L2 off diagonal}. If $\partial_t  + \mathcal{B}_{\mathrm{inh}}$ and $-\partial_t  + \mathcal{B}_{\mathrm{inh}}^\star$ satisfy Moser's $L^2$-$L^\infty$ estimates, then $\partial_t  + \mathcal{B}_{\mathrm{inh}}$ has Gaussian upper bounds, that is, for all $t, s \in \mathbb{R}$ with $0<t - s < 4R_0^2$, $\Gamma(t,s)$ is an integral operator with a kernel $\Gamma(t,x;s,y)$ satisfying a pointwise Gaussian upper bound, that is, for almost every $(x,y) \in \mathbb{R}^{2n}$,
\begin{equation}\label{pgb}
    \left | \Gamma(t,x;s,y) \right | \leq \frac{\Tilde{C}}{\sqrt{\omega_{t-s}(x)}\sqrt{\omega_{t-s}(y)}} e^{-\frac{16}{c} \frac{|x-y|^2}{t-s}+cP_{\infty,\infty}^2(t-s)},
\end{equation}
where $\Tilde{C} = \Tilde{C}(D,B,C)$, $C$ is the constant from Theorem \ref{thm :L2 off diagonal}, and $c$ is the constant from the same theorem. Moreover, in the case $R_0 < \infty$, there exist constants $K_0 = K_0([\omega]_{A_2}, n) \ge 1$ and $k_0 = k_0([\omega]_{A_2}, n) \ge 1$ such that, for all $\ell \ge 1$ and all $t,s$ with $\ell(4R_0^2) \le t-s < (\ell+1)(4R_0^2)$, the operator $\Gamma(t,s)$ is an integral operator with kernel $\Gamma(t,x;s,y)$, which satisfies, for almost every $(x,y) \in \mathbb{R}^{2n}$,
\begin{equation}\label{eq: iteration}
    \left | \Gamma(t,x;s,y) \right | \leq \frac{\Tilde{C}^{\ell+1} K_0^{\ell+2}}{\sqrt{\omega_{t-s}(x)}\sqrt{\omega_{t-s}(y)}} e^{-\tfrac{16|x-y|^2}{k_0^{2}c (t-s)}+cP_{\infty,\infty}^2 (t-s)}.
\end{equation}
The function $\Gamma(t,x;s,y)$ is referred to as the generalized fundamental solution of $\partial_t  + \mathcal{B}_{\mathrm{inh}}$.

\end{thm}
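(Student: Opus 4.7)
The plan is to obtain \eqref{pgb} for $0<t-s<4R_0^2$ by combining the Gaffney estimate of Theorem 6.1 with Moser's $L^2$–$L^\infty$ estimate; the general case \eqref{eq: iteration} then follows by iterating through Chapman–Kolmogorov.

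First, I would use Moser and Gaffney to derive pointwise $L^2$–$L^\infty$ and, by duality, localized $L^1$–$L^2$ bounds. Take $R\sim\sqrt{t-s}$ with $4R^2\le t-s$ so that $Q_{2R}(t,x)\subset(s,\infty)\times\mathbb{R}^n$. Applying \eqref{Moser H} to $u=\Gamma(\cdot,s)\psi$ at $(t,x)$ bounds $|u(t,x)|^2$ by the spacetime average over $Q_{2R}(t,x)$; the spatial integrals $\|\Gamma(\tau,s)\psi\|_{L^2_\omega(B(x,2R))}$ are then controlled by Theorem 6.1 with $E=\operatorname{supp}\psi$ and $F=B(x,2R)$, giving (since $\tau-s\le t-s$ on the relevant range)
$$
|(\Gamma(t,s)\psi)(x)|\le\frac{BC}{\sqrt{\omega(B(x,2R))}}\,\exp\!\Bigl(-\tfrac{(\mathrm{d}(E,x)-2R)_+^2}{4c(t-s)}+cP_{\infty,\infty}^2(t-s)\Bigr)\|\psi\|_{L^2_\omega(E)}.
$$
The analogous bound for $\tilde\Gamma(s,t)\phi$ follows by running the same argument with \eqref{Moser Hstar} and using that $\tilde\Gamma(s,t)=\Gamma(t,s)^*$ inherits the Gaffney estimate by duality. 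Dualizing the pointwise bound for $\tilde\Gamma$, I obtain the localized $L^1$–$L^2$ estimate: for $\psi$ supported in $B(y,R)$ and any measurable set $H$,
$$
\|\Gamma(t,s)\psi\|_{L^2_\omega(H)}\le\frac{B C'}{\sqrt{\omega(B(y,2R))}}\,\exp\!\Bigl(-\tfrac{(\mathrm{d}(H,y)-C'R)_+^2}{4c(t-s)}+cP_{\infty,\infty}^2(t-s)\Bigr)\|\psi\|_{L^1_\omega(B(y,R))},
$$
where $A_2$–doubling of $\omega$ has been used to uniformize the bound over $y'\in B(y,R)$.

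Next, I would compose at the midpoint $\tau=(s+t)/2$. By Chapman–Kolmogorov, $\Gamma(t,s)=\Gamma(t,\tau)\circ\Gamma(\tau,s)$. For $\psi$ supported in $B(y,\rho)$ with $\rho\sim\sqrt{t-s}$, decompose $\mathbb{R}^n$ into annuli $A_k$ around $x$, apply the $L^1$–$L^2$ bound on $(s,\tau)$ to $\|\Gamma(\tau,s)\psi\|_{L^2_\omega(A_k)}$, then apply the pointwise $L^2$–$L^\infty$ bound for $\Gamma(t,\tau)$ to $\mathbb{1}_{A_k}\Gamma(\tau,s)\psi$, and sum. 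Combining the two Gaussian exponents via $\tfrac{a^2}{A}+\tfrac{b^2}{B}\ge\tfrac{(a+b)^2}{A+B}$ together with the triangle inequality $|z-y|+|z-x|\ge|x-y|$ produces a single Gaussian in $|x-y|$; the super-geometric decay in $k$ makes the sum finite, and $A_2$–doubling comparisons of $\omega(B(z,\,\cdot\,))$ with $\omega(B(y,\sqrt{t-s}))$ and $\omega(B(x,\sqrt{t-s}))$ yield the symmetric denominator. The outcome is the pointwise estimate
$$
\|\Gamma(t,s)\psi\|_{L^\infty(B(x,\rho))}\le\frac{\tilde C\,e^{-c'|x-y|^2/(t-s)+cP_{\infty,\infty}^2(t-s)}}{\sqrt{\omega_{t-s}(x)\,\omega_{t-s}(y)}}\|\psi\|_{L^1_\omega(B(y,\rho))},
$$
so the Dunford–Pettis theorem produces the integral kernel $\Gamma(t,x;s,y)$ satisfying \eqref{pgb} for a.e.\ $(x,y)\in B(x_0,\rho)\times B(y_0,\rho)$; translating the centers covers all of $\mathbb{R}^{2n}$.

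For $t-s\ge 4R_0^2$, I would iterate: partition $[s,t]$ into $\ell+1$ subintervals of length $\le 4R_0^2$, express the kernel as an $(\ell+1)$-fold convolution by Chapman–Kolmogorov, and integrate the product of the bounds \eqref{pgb} on each factor. Jensen's inequality $\sum_i|z_i-z_{i+1}|^2\ge|x-y|^2/(\ell+1)$ produces the dilated constant $k_0^2$ in the Gaussian, and iterated doubling used to compare $\omega_{t_{i+1}-t_i}(z_i)$ with $\omega_{t-s}(x)$ and $\omega_{t-s}(y)$ yields the multiplicative factor $K_0^{\ell+2}$, giving \eqref{eq: iteration}. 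The main difficulty is in the composition step: the weighted volume factor $\omega(B(z,2R))^{-1}\omega(z)$ is not globally integrable in $z$, so the convergence of the $z$–integration relies essentially on the Gaussian cutoff produced by combining the two halves, and producing the precisely symmetric factor $\sqrt{\omega_{t-s}(x)\omega_{t-s}(y)}$ (rather than an asymmetric one) demands careful use of the $A_2$–doubling to compare weighted volumes at $x$, $y$, and $z$.
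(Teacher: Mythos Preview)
Your approach is essentially correct but differs from the paper's in two significant ways, and the paper's route is cleaner in both.

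\textbf{Base case $0<t-s<4R_0^2$.} You work directly from the off-diagonal statement of Theorem~\ref{thm :L2 off diagonal}, combining it with Moser via an annular decomposition and summing. The paper instead reaches back into the \emph{proof} of Theorem~\ref{thm :L2 off diagonal} and uses the exponentially weighted bound
\[
\|e^\chi \Gamma(t,s) e^{-\chi}\psi\|_{L^2_\omega}\le C\,e^{cP_{\infty,\infty}^2(t-s)}\,e^{c\|\nabla_x\chi\|_{L^\infty}^2(t-s)}\|\psi\|_{L^2_\omega}
\]
(Davies' trick). Moser is then applied to $U=\Gamma(\cdot,s)e^{-\chi}\psi$, giving a pointwise bound on $e^\chi U$; duality and Chapman--Kolmogorov at the midpoint yield a single $L^1_\omega\to L^\infty$ bound for $\sqrt{\omega_{t-s}}\,e^\chi\Gamma(t,s)\sqrt{\omega_{t-s}}\,e^{-\chi}$, and Dunford--Pettis produces the kernel. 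The Gaussian decay comes at the very end by optimizing over the Lipschitz function $\chi$ for fixed $x,y$. This completely bypasses the annular summation and the doubling gymnastics you correctly flag as the main difficulty: the exponential weight factors multiplicatively through composition, so no discrete decomposition is needed.

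\textbf{Iteration for $t-s\ge 4R_0^2$.} You propose Jensen plus direct integration of the weighted Gaussian factors in the intermediate variables. This can be made to work, but the paper's device is more elegant: it uses the known \emph{two-sided} Gaussian bounds for the degenerate heat kernel $\mathcal K_{t-s}$ of $e^{(t-s)\Delta_\omega}$. The base-case bound \eqref{pgb} is rewritten as $|\Gamma(t,x;s,y)|\le \Tilde C K_0\,e^{cP_{\infty,\infty}^2(t-s)}\mathcal K_{ck_0(t-s)/16}(x,y)$ via the lower bound; the $\ell$-fold convolution then collapses by the \emph{semigroup property} of $\mathcal K$, and the upper bound on $\mathcal K$ recovers the Gaussian. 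This replaces your hand computation of $\int \omega_r(z)^{-1}e^{-c|z-\cdot|^2/r}\,\mathrm d\omega(z)$ by a single application of additivity, and explains exactly where the constants $K_0,k_0$ (depending only on $[\omega]_{A_2},n$) come from.
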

\begin{rem}
The factor $\frac{1}{\sqrt{\omega_{t-s}(x)}\sqrt{\omega_{t-s}(y)}}$ appearing in \eqref{pgb} may be replaced by one of 
\begin{equation*}
\frac{1}{\omega_{t-s}(x) }, \ \ \frac{1}{\omega_{t-s}(y) }, \ \ \frac{1}{\max(\omega_{t-s}(x) ,\omega_{t-s}(y))},
\end{equation*}
and $\Tilde{C}$ and $c$ are replaced respectively by $C^\circ=C^\circ(\Tilde{C},c,D)$ and $2c$. See \cite[Rem. 3]{cruz2014corrigendum}.
\end{rem}
\begin{proof}[Proof of Theorem \ref{Thm: pointwise bounds}]
Let $\gamma \geq0$ and $\chi \in \mathrm{Lip}(\mathbb{R}^n)$ a bounded Lipschitz function such that $\left \| \nabla_x \chi \right \|_{L^\infty(\mathbb{R}^n)} \leq \gamma.$ We fix $\psi \in \mathcal{D}(\mathbb{R}^n)$ and also fix $s \in \mathbb{R}$. Under the hypothesis of Theorem \ref{thm :L2 off diagonal}, we rewrite \eqref{eq: scaling2} and \eqref{eq: scaling3} (after scaling) as
\begin{equation}\label{UUUU}
\|e^\chi \Gamma(t,s) e^{-\chi} \psi \|_{L^2_\omega(\mathbb{R}^n)} \le C e^{cP_{\infty,\infty}^2 (t-s)} e^{c\| \nabla_x\chi \|^2_{L^\infty(\mathbb{R}^n)}(t-s)} \|  \psi\|_{L^2_\omega(\mathbb{R}^n)} \quad \text{for all} \ t>s,
\end{equation}
with $C$, $c$ the constants obtained there that are independent of $\chi$. By duality, for all $t>s$, we have
\begin{equation*}
\|e^{-\chi} \Tilde{\Gamma}(s,t) e^{\chi} \psi \|_{L^2_\omega(\mathbb{R}^n)} \le C e^{cP_{\infty,\infty}^2 (t-s)} e^{c\| \nabla_x\chi \|^2_{L^\infty(\mathbb{R}^n)}(t-s)} \|  \psi\|_{L^2_\omega(\mathbb{R}^n)}.
\end{equation*}
For all $t>s$, we set 
\begin{equation*}
    U(t):=\Gamma(t,s)e^{-\chi} \psi.
\end{equation*}
For all $t, s \in \mathbb{R}$ with $0 < t - s < 4R_0^2$, Moser's $L^2$–$L^\infty$ estimate \eqref{Moser H} with $R = \frac{\sqrt{t-s}}{2} < R_0$ implies that for all $x \in \mathbb{R}^n$ and for almost every $z \in B(x,R)$,
\begin{align*}
    \left | U(t,z) \right |^2\leq \frac{B^2}{\mu(Q_{\sqrt{t-s}}(t,x))}\int_{s}^{t}
\int_{B(x,\sqrt{t-s})} \left | U(\tau,y) \right |^2\ \mathrm d\mu,
\end{align*}
hence, 
\begin{align*}
     |e^{\chi(z)} U(t,z) |^2 &\leq \frac{B^2}{(t-s)\omega_{t-s}(x)}\int_{s}^{t}
\int_{B(x,\sqrt{t-s})}e^{2(\chi(z)-\chi(y)}  | e^{\psi(y)}U(\tau,y)  |^2\ \mathrm d\omega(y)  \mathrm d\tau
\\ & \leq \frac{B^2 e^{4\gamma \sqrt{t-s}}}{(t-s)\omega_{t-s}(x)} \int_{s}^{t}
\int_{B(x,\sqrt{t-s})} | e^{\chi(y)}U(\tau,y) |^2\ \mathrm d\omega(y) \mathrm d \tau
\\ & \leq \frac{B^2 e^{4\gamma \sqrt{t-s}}}{(t-s)\omega_{t-s}(x)} \int_{s}^{t} \|e^{\chi}U(\tau)   \|^2_{2,\omega} \ \mathrm d\tau.
\end{align*}
Using \eqref{UUUU}, we have for almost every $z \in B(x,R)$,
\begin{equation}\label{JJJJ}
     |e^{\chi(z)} U(t,z)  |^2  \leq \frac{C^2B^2 e^{4\gamma \sqrt{t-s}}}{(t-s)\omega_{t-s}(x)}  \left ( \int_{s}^{t} e^{2cP_{\infty,\infty}^2 (\tau-s)} e^{2c \gamma^2(\tau-s)} \, \mathrm d\tau \right ) \left \|\psi  \right \|^2_{2,\omega}.
\end{equation}
Remark that for all $z \in B(x,R)$, we have $B(z,\frac{\sqrt{t-s}}{2})\subset B(x,\sqrt{t-s})$ and using \eqref{DoublingMuck} we obtain $$\omega_{t-s}(z)=\omega( B(z,\sqrt{t-s})) \le D \, \omega(B(z,\frac{\sqrt{t-s}}{2})) \le D \omega_{t-s}(x).$$
As $\tau-s \le t-s \ (< 4R_0^2)$, then in \eqref{JJJJ} we can majorate, for almost every $z \in B(x,R)$, with
\begin{equation*}
     |e^{\chi(z)} U(t,z)  |^2  \leq \frac{D C^2B^2 e^{4\gamma \sqrt{t-s}}}{\omega_{t-s}(z)}  e^{2cP_{\infty,\infty}^2 (t-s)} e^{2c \gamma^2(t-s)} \left \|\psi  \right \|^2_{2,\omega}.
\end{equation*}
Therefore,
\begin{equation}\label{AAAAAA}
     \| \sqrt{\omega_{t-s}}\ e^{\chi} \Gamma(t,s)e^{-\chi} \psi  \|_{L^\infty(\mathbb{R}^n)}\leq \sqrt{D} CB e^{cP_{\infty,\infty}^2 (t-s)}  e^{2\gamma \sqrt{t-s}+c \gamma^2(t-s)}  \left \|\psi  \right \|_{2,\omega}.
\end{equation}
Using \eqref{AAAAAA} and a duality argument, we obtain
\begin{equation*}
     \| e^{-\chi} \Tilde{\Gamma}(s,t) (\sqrt{\omega_{t-s}}\ e^{\chi}\psi ) \|_{2,\omega}\leq \sqrt{D} CB e^{cP_{\infty,\infty}^2 (t-s)}  e^{2\gamma \sqrt{t-s}+c \gamma^2(t-s)}  \left \|\psi  \right \|_{L_\omega^1(\mathbb{R}^n)}.
\end{equation*}
Using the same computations and arguments with $\Tilde{\Gamma}(s,t)$ (by using \eqref{Moser Hstar}), a duality argument yields
\begin{equation}\label{AAAAAAA}
     \|e^{\chi}\Gamma(t,s)(\sqrt{\omega_{t-s}}\ e^{-\chi}\psi )  \|_{2,\omega}\leq \sqrt{D} CB e^{cP_{\infty,\infty}^2 (t-s)}  e^{2\gamma \sqrt{t-s}+c \gamma^2(t-s)}  \left \|\psi  \right \|_{L_\omega^1(\mathbb{R}^n)}.
\end{equation}
Using the Chapman-Kolmogorov identities, we write $\Gamma(t,s)=\Gamma(t,\frac{t+s}{2})\Gamma(\frac{t+s}{2},s).$ Hence,
\begin{align*}
    \sqrt{\omega_{t-s}}\ &e^\chi \Gamma(t,s)(\sqrt{\omega_{t-s}}\ e^{-\chi}\psi)=\sqrt{\omega_{t-s} }\ e^{\chi}\Gamma(t,\frac{t+s}{2})e^{-\chi} ( e^{\chi}\Gamma(\frac{t+s}{2},t) \sqrt{\omega_{t-s}}\ e^{-\chi} \psi )
    \\&=\sqrt{\frac{\omega_{t-s}}{\omega_{\frac{t-s}{2}}}}\sqrt{\omega_{\frac{t-s}{2}}}\ e^{\chi}\Gamma(t,\frac{t+s}{2})e^{-\chi} \left ( e^{\chi}\Gamma(\frac{t+s}{2},t) \sqrt{\omega_{\frac{t-s}{2}}}\ e^{-\chi}\sqrt{\frac{\omega_{t-s}}{\omega_{\frac{t-s}{2}}}} \, \psi \right ).
\end{align*}
Notice that by the doubling property \eqref{DoublingMuck}, we have 
\begin{equation}\label{Doubling}
    \left \|  \sqrt{\frac{\omega_{t-s}}{\omega_{\frac{t-s}{2}}}} \right \|_{L^\infty(\mathbb{R}^n)} \leq D^{1/2}.
\end{equation}
Combining \eqref{AAAAAA}, \eqref{AAAAAAA}, \eqref{Doubling} and the above Chapman-Kolmogorov identity, we deduce that
\begin{equation}\label{gamma=0}
    \| \sqrt{\omega_{t-s}}\ e^{\chi} \Gamma(t,s)(\sqrt{\omega_{t-s}}\ e^{-\chi}\psi) \|_{L^\infty(\mathbb{R}^n)}\leq D^{3/2} C^2B^2 e^{cP_{\infty,\infty}^2 (t-s)}  e^{2\sqrt{2}\gamma \sqrt{t-s}+c \gamma^2(t-s)}  \left \|\psi  \right \|_{L_\omega^1(\mathbb{R}^n)}
\end{equation}
Since $L^\infty(\mathbb{R}^n)=L^\infty_\omega(\mathbb{R}^n)$, the estimate \eqref{gamma=0} and the Dunford-Pettis theorem \cite{dunford1940linear} ensures that, for all $0<t-s< 4R_0^2$, $\Gamma(t,s)$ is an integral operator with a unique kernel $\Gamma(t,x;s,y)$ satisfying, for almost all $x,y \in \mathbb{R}^n$, the estimate
\begin{align}\label{CCCCC}
    \left | \Gamma(t,x;s,y) \right | \leq \frac{D^{3/2} C^2B^2}{\sqrt{\omega_{t-s}(x)}\sqrt{\omega_{t-s}(y)}} \times e^{cP_{\infty,\infty}^2 (t-s)} e^{2\sqrt{2}\gamma \sqrt{t-s}+c \gamma^2(t-s)} e^{\chi(y)-\chi(x)}.
\end{align}
When $\gamma=0$, that is $\chi$ is constant, we obtain the following bound
\begin{align}\label{FF}
    \left | \Gamma(t,x;s,y) \right | \leq \frac{D^{3/2} C^2B^2}{\sqrt{\omega_{t-s}(x)}\sqrt{\omega_{t-s}(y)}} \times e^{cP_{\infty,\infty}^2 (t-s)} .
\end{align}
To prove \eqref{pgb}, we fix $s,t$ with $0<t-s< 4R_0^2$ and $x\neq y\in \mathbb{R}^n$ for which \eqref{CCCCC} is valid. If $\frac{|x-y|}{2\sqrt{2\sqrt{2}} \sqrt{t-s}} < 2$, we have $1\le e^{\frac{2}{c}} e^{-\frac{|x-y|^2}{16c(t-s)}}$ and using simply \eqref{FF}, we obtain
\begin{equation*}
    \left | \Gamma(t,x;s,y) \right | \leq \frac{D^{3/2} C^2B^2}{\sqrt{\omega_{t-s}(x)}\sqrt{\omega_{t-s}(y)}} e^{\frac{2}{c}} \times e^{-\frac{|x-y|^2}{16c(t-s)}+cP_{\infty,\infty}^2 (t-s)}.
\end{equation*}
Now, assume that $\frac{|x-y|}{2\sqrt{2\sqrt{2}} \sqrt{t-s}} \ge 2$. Let $\gamma:=\frac{|x-y|}{4c(t-s)}$, take $N>\gamma |x-y|$ and set, for all $z\in \mathbb{R}^n$, $$\chi(z):=\inf(\gamma|z-y|,N).$$
It is a bounded Lipschitz function satisfying $\| \nabla_x \chi \|_{L^\infty(\mathbb{R}^n)}=\gamma$, $\chi(x)=\gamma|x-y|$ and $\chi(y)=0$. Remark that $2\sqrt{2}\gamma \sqrt{t-s} \le \tfrac{\chi(x)}{2}$, thus, \eqref{CCCCC} becomes
\begin{align*}
    \left | \Gamma(t,x;s,y) \right | \leq \frac{D^{3/2} C^2B^2}{\sqrt{\omega_{t-s}(x)}\sqrt{\omega_{t-s}(y)}} \times e^{cP_{\infty,\infty}^2 (t-s)} e^{-\frac{\chi(x)}{2}+c \gamma^2(t-s)}.
\end{align*}
Remark also that $-\tfrac{\chi(x)}{2}+c\gamma^2(t-s)=-\tfrac{|x-y|^2}{16c(t-s)}$. Therefore, we obtain
\begin{align*}
    \left | \Gamma(t,x;s,y) \right | \leq \frac{D^{3/2} C^2B^2}{\sqrt{\omega_{t-s}(x)}\sqrt{\omega_{t-s}(y)}} \times e^{cP_{\infty,\infty}^2 (t-s)} e^{-\tfrac{|x-y|^2}{16c(t-s)}}.
\end{align*}
This concludes the proof in the case $R_0 = \infty$.
\subsubsection*{\textbf{\underline{Iteration in the case $R_0 < \infty$:}}}
We now suggest a way to iterate and thereby obtain \eqref{eq: iteration}. First, it is known that, for all $s<t$, the degenerate heat semigroup $e^{(t-s)\Delta_\omega}$ has a kernel $\mathcal{K}_{t-s}(x,y)$ satisfying two-sided Gaussian bounds \cite{cruz2014corrigendum, ataei2024fundamental, baadi2025degenerate}. More precisely, there exist constants $K_0 = K_0([\omega]_{A_2}, n) > 1$ and $k_0 = k_0([\omega]_{A_2}, n) > 1$ such that, for almost all $x,y \in \mathbb{R}^n$,
\begin{equation}\label{eq: semigroup}
   \frac{K_0^{-1}}{\sqrt{\omega_{t-s}(x)}\sqrt{\omega_{t-s}(y)}} e^{-\frac{k_0 |x-y|^2}{t-s}} \le \mathcal{K}_{t-s}(x,y) \le \frac{K_0}{\sqrt{\omega_{t-s}(x)}\sqrt{\omega_{t-s}(y)}}  e^{-\frac{|x-y|^2}{k_0(t-s)}}.
\end{equation}
The case $0 < t - s < 4R_0^2$ is covered by \eqref{pgb}, which has now been established. In this case, using the lower bound in \eqref{eq: semigroup}, we have, for almost all $x,y \in \mathbb{R}^n$,

\begin{equation}\label{eq: inegalite}
     \left | \Gamma(t,x;s,y) \right | \leq \Tilde{C} \, K_0 \, e^{cP_{\infty,\infty}^2 (t-s)} \, \mathcal{K}_{\frac{ck_0}{16}(t-s)}(x,y) .
\end{equation}
Fix $\ell \ge 1$ and let $s,t \in \mathbb{R}$ satisfy $\ell(4R_0^2) \le t-s < (\ell+1)(4R_0^2)$. For each $0 \le j \le \ell+1$, define
$$r_j := s + \frac{j(t-s)}{\ell+1},$$
so that $r_0 = s$, $r_{\ell+1} = t$, and $r_{j+1} - r_j = \frac{t-s}{\ell+1} \in (0, 4R_0^2)$. By the Chapman–Kolmogorov identities (see Theorem \ref{thm: representation}, Point (5)), we have
\begin{equation*}
    \Gamma(t,s)=\Gamma(t,r_\ell)\Gamma(r_\ell,r_{\ell-1})\dots\Gamma(r_1,s). 
\end{equation*}
In particular, $\Gamma(t,s)$ is an integral operator with kernel $\Gamma(t,x;s,y)$. Moreover, using the Chapman–Kolmogorov identities for $\Gamma$ and for the semigroup kernel (equivalently, the additivity of the semigroup), together with inequality \eqref{eq: inegalite}, we have, for almost every $x,y \in \mathbb{R}^n$, 
\begin{align*}
    \left | \Gamma(t,x;s,y) \right | &\leq \int_{(\mathbb{R}^n)^\ell} \left ( \prod_{j=0}^{\ell} \left | \Gamma(r_{j+1},z_{j+1};r_j,z_j) \right | \right ) \, \mathrm{d}\omega(z_\ell) \, \mathrm{d}\omega(z_{\ell-1})\dots\mathrm{d}\omega(z_1) \quad ( z_{\ell+1}:=x \ \text{and} \ z_0:=y) \\&\le (\Tilde{C} K_0)^{\ell+1}  \left ( e^{cP_{\infty,\infty}^2 \frac{t-s}{\ell+1}} \right )^{\ell+1} \int_{(\mathbb{R}^n)^\ell} \left ( \prod_{j=0}^{\ell} \mathcal{K}_{\frac{ck_0}{16}(r_{j+1}-r_j)}(z_{j+1},z_j) \right )  \, \mathrm{d}\omega(z_\ell)\, \mathrm{d}\omega(z_{\ell-1})\dots\mathrm{d}\omega(z_1)  \\&= (\Tilde{C} K_0)^{\ell+1} e^{cP_{\infty,\infty}^2 (t-s)} \mathcal{K}_{\frac{ck_0}{16}(t-s)}(x,y)
    \\& \le \frac{(\Tilde{C} K_0)^{\ell+1} K_0}{{\sqrt{\omega_{t-s}(x)}\sqrt{\omega_{t-s}(y)}}}e^{cP_{\infty,\infty}^2 (t-s)} e^{-\tfrac{16|x-y|^2}{c k_0^2 (t-s)}}
    \\&= \frac{\Tilde{C}^{\ell+1} K_0^{\ell+2}}{\sqrt{\omega_{t-s}(x)}\sqrt{\omega_{t-s}(y)}} e^{-\tfrac{16|x-y|^2}{c k_0^2 (t-s)}+cP_{\infty,\infty}^2 (t-s)},
\end{align*}
where, in the penultimate line, we used the upper bound in \eqref{eq: semigroup}.
\end{proof}

\begin{rem}
The factor $k_0^{-2}$ in \eqref{eq: iteration} arises from the lack of explicit formulas for the degenerate heat semigroup kernels, unlike in the unweighted case, where the kernels are explicit Gaussians and the factor $\frac{16}{c}$ in \eqref{pgb} remains unchanged under iteration.
\end{rem}

\begin{cor}\label{cor: cor}
Assume that we are in the setting of Theorem \ref{Thm: pointwise bounds}. Then, for all $t, s \in \mathbb{R}$ with $s<t$, the following properties hold.
\begin{enumerate}
\item (Adjointess property) For almost every $(x,y) \in \mathbb{R}^{2n}$, we have
$$ \Tilde{\Gamma}(s,y;t,x) = \overline{\Gamma(t,x;s,y)},$$
where $\Tilde{\Gamma}(s,y;t,x)$ is generalized fundamental solution of the adjoint operator $-\partial_t  + \mathcal{B}^\star_{\mathrm{inh}}$.
\item (Chapman-Kolmogorov identities) If $r\in (s,t)$, then for almost every $(x,y)\in \mathbb{R}^{2n}$, we have
 $$ \int_{\mathbb{R}^n} \Gamma(t,x;r,z) \Gamma(r,z;s,y) \ \mathrm{d}\omega(z)= \Gamma(t,x;s,y). $$
\end{enumerate}
\end{cor}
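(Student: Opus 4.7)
The plan is to descend from the operator-level identities of Theorem \ref{thm: representation} (Point (4) for adjointness, Point (5) for Chapman-Kolmogorov) to their kernel-level counterparts, using the pointwise Gaussian upper bounds just established in Theorem \ref{Thm: pointwise bounds} to justify Fubini and to invoke essential uniqueness of integral kernels.

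For (1), I would start from the operator identity $\Tilde{\Gamma}(s,t) = \Gamma(t,s)^\star$ on $L^2_\omega(\mathbb{R}^n)$. Testing against arbitrary $\psi,\Tilde{\psi} \in \mathcal{D}(\mathbb{R}^n)$, the relation $\langle \Gamma(t,s)\psi,\Tilde{\psi}\rangle_{2,\omega} = \langle \psi,\Tilde{\Gamma}(s,t)\Tilde{\psi}\rangle_{2,\omega}$ expands into two double integrals in $(x,y)$, with Fubini applying on each side thanks to the pointwise kernel bounds. Comparing the two expressions gives
\begin{equation*}
\iint \bigl(\Gamma(t,x;s,y) - \overline{\Tilde{\Gamma}(s,y;t,x)}\bigr)\psi(y)\overline{\Tilde{\psi}(x)} \, \mathrm{d}\omega(y)\mathrm{d}\omega(x) = 0,
\end{equation*}
and letting $\psi \otimes \Tilde{\psi}$ range over $\mathcal{D}(\mathbb{R}^n)\otimes\mathcal{D}(\mathbb{R}^n)$ (e.g.\ approximating indicators of small balls and using Lebesgue differentiation in the doubling measure $\omega$) yields $\Tilde{\Gamma}(s,y;t,x) = \overline{\Gamma(t,x;s,y)}$ for almost every $(x,y) \in \mathbb{R}^{2n}$.

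For (2), I would similarly apply $\Gamma(t,s) = \Gamma(t,r)\Gamma(r,s)$ to $\psi \in \mathcal{D}(\mathbb{R}^n)$, so that for almost every $x$,
\begin{equation*}
\int_{\mathbb{R}^n} \Gamma(t,x;s,y)\psi(y)\,\mathrm{d}\omega(y) = \int_{\mathbb{R}^n} \Gamma(t,x;r,z) \left[\int_{\mathbb{R}^n} \Gamma(r,z;s,y)\psi(y)\,\mathrm{d}\omega(y)\right] \mathrm{d}\omega(z).
\end{equation*}
Swapping the $y$ and $z$ integrations on the right by Fubini and then invoking the same kernel uniqueness argument as in (1) produces the claimed identity. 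The only point to check is absolute integrability of $|\Gamma(t,x;r,z)\Gamma(r,z;s,y)\psi(y)|$ in $(y,z)$: by the Gaussian bound of Theorem \ref{Thm: pointwise bounds}, the inequality $\tfrac{|x-y|^2}{t-s} \le 2\tfrac{|x-z|^2}{t-r} + 2\tfrac{|z-y|^2}{r-s}$, and the doubling property of $\omega$ (used to compare $\omega_{t-r}$, $\omega_{r-s}$ with $\omega_{t-s}$ at the relevant centres), the $z$-integral collapses to a Gaussian-type expression in $|x-y|$ that is locally integrable against $\psi$.

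The main obstacle is this bookkeeping for Fubini, which the quantitative kernel bounds make routine; the remainder is a direct translation of operator equalities into pointwise kernel identities. Note that this same Gaussian composition argument was already used implicitly inside the iteration of Theorem \ref{Thm: pointwise bounds}, so the mechanism is available off the shelf.
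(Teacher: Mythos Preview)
Your proposal is correct and takes essentially the same approach as the paper: the paper's proof is a one-liner invoking the operator-level adjointness and Chapman--Kolmogorov identities from Theorem \ref{thm: representation}, Points (4) and (5), and you have simply spelled out in more detail how to pass from those operator identities to the pointwise kernel statements.
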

\begin{proof}
This follows directly from the adjointness property together with the Chapman--Kolmogorov identities, as stated in Theorem \ref{thm: representation}, Points (4) and (5).
\end{proof}

\subsubsection*{\textbf{Copyright}}
A CC-BY 4.0 \url{https://creativecommons.org/licenses/by/4.0/} public copyright license has been applied by the authors to the present document and will be applied to all subsequent versions up to the Author Accepted Manuscript arising from this submission.

\bibliographystyle{alpha}
\bibliography{main.bib}

\end{document}